%% file: PaperDraft.tex
\documentclass[11pt]{article}

\usepackage{color, amsmath, amssymb, amsthm, graphicx, bbm, footnote, mathtools, bm, enumitem, subcaption, dsfont}

\usepackage[top=1in, bottom=1in, left=1in, right=1in]{geometry}

\usepackage[toc,page,header]{appendix}
\usepackage{titletoc}
\usepackage{titling}
\allowdisplaybreaks

\usepackage{textcomp}

\usepackage{algorithm} 
\usepackage{algpseudocode}
\usepackage{booktabs,tabularx,multirow,threeparttable}

\usepackage{setspace}
\usepackage{adjustbox}
\usepackage[colorlinks,allcolors=blue]{hyperref}
\usepackage[round]{natbib}  
\usepackage{authblk}
\usepackage[para]{footmisc}  

\usepackage{verbatim,fix-cm,tikz}
\usetikzlibrary{calc,arrows.meta,shapes.geometric}

\usepackage{apptools}
\AtAppendix{\counterwithin{theorem}{section}}
\AtAppendix{\counterwithin{remark}{section}}

\newtheorem{theorem}{Theorem}
\newtheorem{definition}{Definition}
\newtheorem{proposition}[theorem]{Proposition}

\newtheorem{lemma}[theorem]{Lemma}
\newtheorem{remark}{Remark}
\newtheorem{assump}{Assumption}

\DeclareMathOperator*{\argmax}{arg\,max}
\DeclareMathOperator*{\argmin}{arg\,min}
\DeclarePairedDelimiter\ceil{\lceil}{\rceil}

\newcommand{\norm}[1]{\left|\left| #1 \right|\right|}

\renewcommand{\hat}{\widehat}
\renewcommand{\tilde}{\widetilde}

\title{\textbf{Gradient-Guided Density Peak Clustering}}
\author{Yikun Zhang$^{1,2,\ast}$ \;\text{and}\;  Yen-Chi Chen$^{3,\dagger}$}
\date{\small	
	\textit{$^1$Department of Statistics, University of Chicago\\
		$^2$NSF-Simons AI Institute for the Sky (SkAI Institute)}\\
		$^{\ast}$\href{mailto:yikunz@uchicago.edu}{yikunz@uchicago.edu}\; \\
		\textit{$^3$Department of Statistics, University of Washington}\\
	$^{\dagger}$\href{mailto:yenchic@uw.edu}{yenchic@uw.edu} \\~\\	
	\today}
\begin{document}
	\maketitle

\begin{abstract}
Density peak clustering (DPC) connects each observation to its nearest neighbor of higher density and identifies cluster centers as high-density observations with unusually large nearest neighbor uphill shifts. The resulting uphill paths from observations to cluster centers, however, can be irregular and unstable in low-density regions, making the clustering assignments sensitive to local perturbations and obscuring the population geometry of the DPC graph. In this paper, we introduce \emph{gradient-guided density peak clustering} (GGDPC), which performs a gradient ascent step before each nearest neighbor uphill search. We develop a stability theory that relates the GGDPC graph to the gradient ascent flow of the population density. In particular, we establish consistency of GGDPC under five complementary criteria: recovery of local modes, adjusted Rand index, dendrogram (cluster tree), path length, and waterfall measure. 
Together, these results provide new statistical, geometric, and topological interpretations of DPC-type clustering algorithms.
\par\medskip
\noindent \textbf{Keywords:} Clustering; density mode; gradient flow; dynamical system; dendrogram.
\end{abstract}

\section{Introduction}
\label{sec:Intro}

Density peak clustering (DPC; \citealt{rodriguez2014clustering}) is a simple and widely used density-based clustering method. Given density values or estimates of them at the observations, DPC connects each observation, except for the sample global mode, to its nearest neighbor among observations of higher density. The length of this directed edge is the \emph{1-nearest-neighbor (1NN) uphill distance} of the observation. Cluster centers (or density modes) are then identified as observations with unusually large 1NN uphill distances, together with high density values, through a decision diagram; see the bottom left panel of \autoref{fig:old_faithful} for an illustration. Removing the outgoing edges from these cluster centers thus forms a partition of observations according to the resulting connected components of the graph; see the top left panel of \autoref{fig:old_faithful}. This clustering construction combines elements of mode clustering \citep{fukunaga1975estimation,chacon2012clusters,chacon2015population,menardi2016review,chen2016comprehensive,arias2023unifying,arias2025clustering}, density level set clustering \citep{rinaldo2010generalized,steinwart2011adaptive,rinaldo2012stability}, and hierarchical clustering \citep{jain1988algorithms,nielsen2016hierarchical} in a single algorithmic procedure.

%

DPC offers several practical advantages. First, it does not require a pre-specified number of clusters, and its decision diagram gives an interpretable mechanism for center selection. Second, once density values are available at the observations, the clustering is built from simple 1NN searches rather than by iteratively estimating the gradient and running a full gradient ascent process from every observation \citep{cheng1995mean,comaniciu2002mean}. These features have made DPC attractive in a broad range of applications; see \cite{wei2023overview,wang2024density} for recent reviews.


\begin{figure}[!th]
	\captionsetup[subfigure]{justification=centering}
	\begin{subfigure}[t]{0.49\linewidth}
		\centering
		\includegraphics[width=1\linewidth]{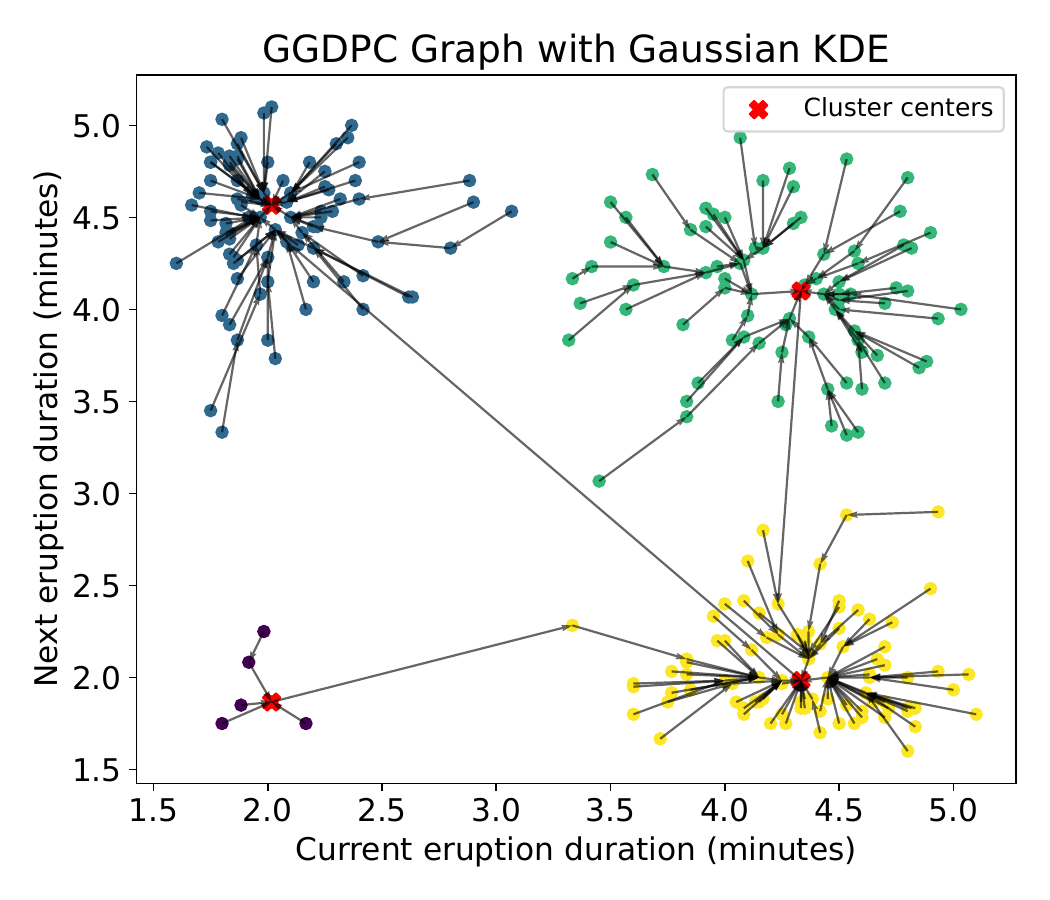}
	\end{subfigure}
	\hfil
	\begin{subfigure}[t]{0.49\linewidth}
		\centering
		\includegraphics[width=1\linewidth]{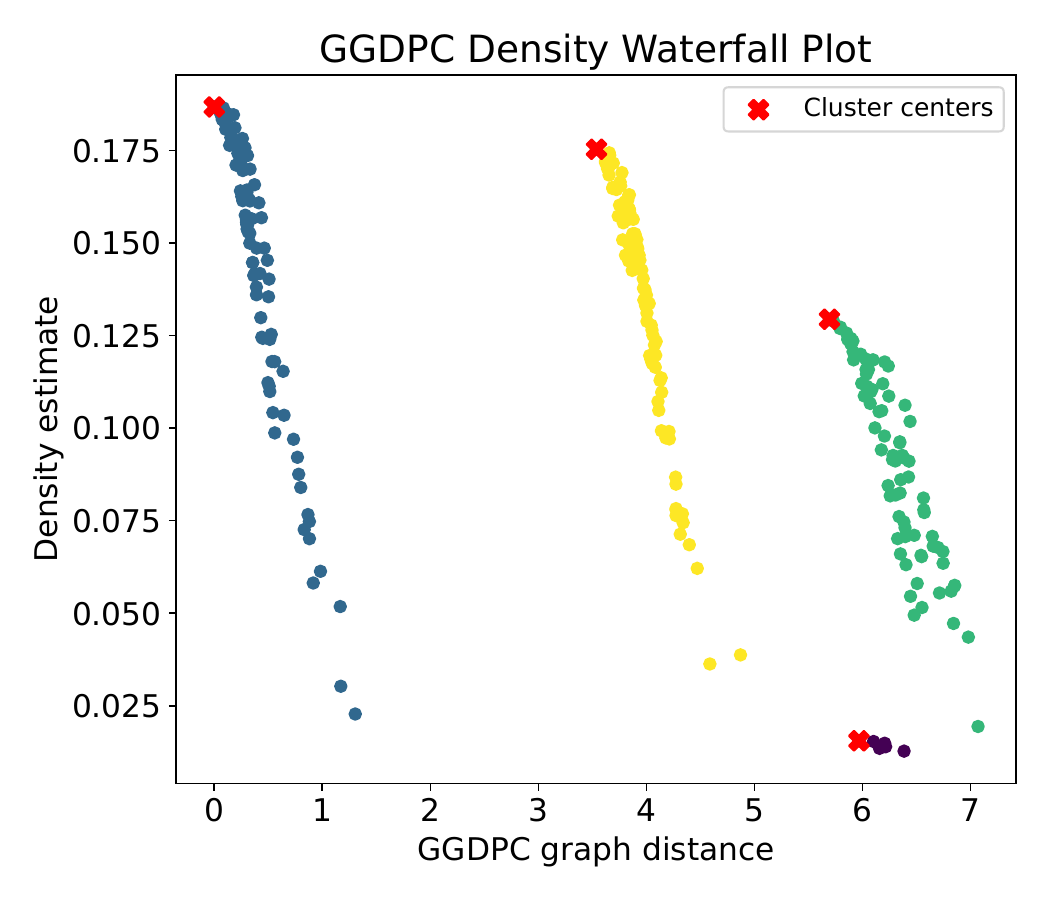}
	\end{subfigure}
	\begin{subfigure}[c]{0.49\linewidth}
		\centering		\includegraphics[width=1\linewidth]{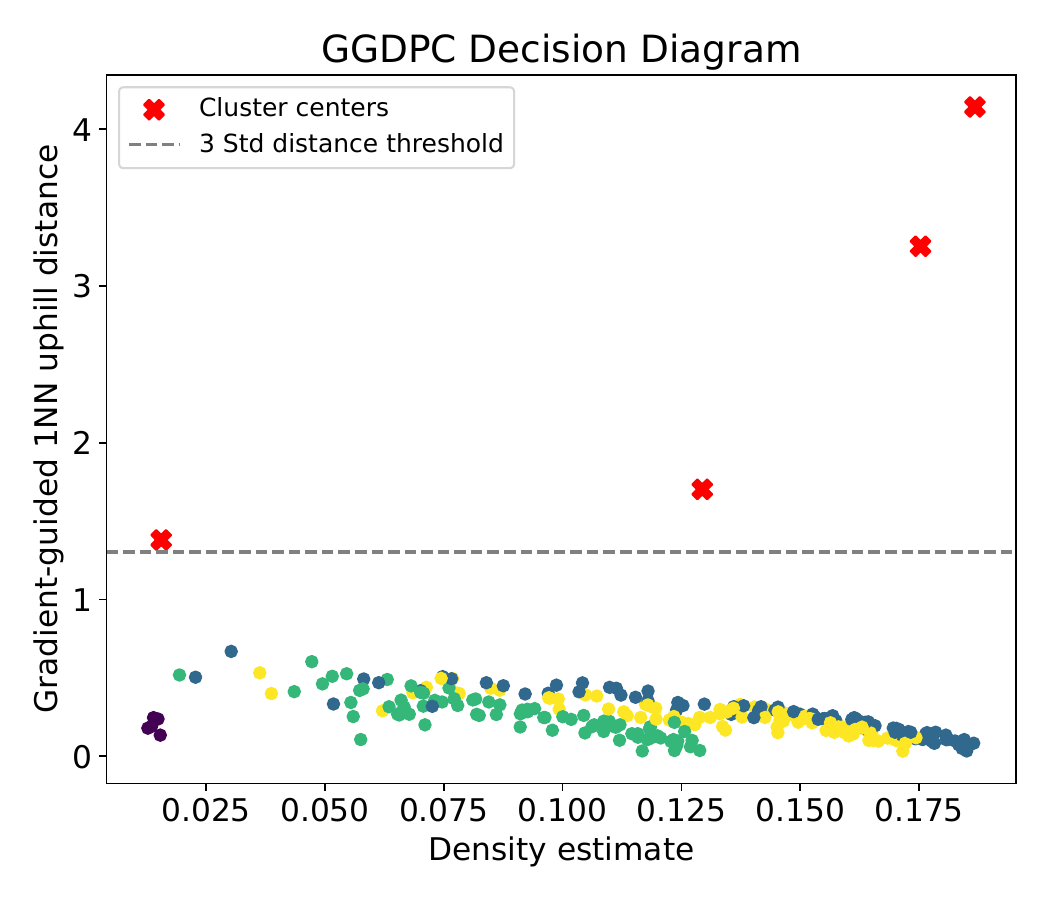}
	\end{subfigure}
	\hfil
	\begin{subfigure}[c]{0.49\linewidth}
		\centering		\includegraphics[width=1\linewidth]{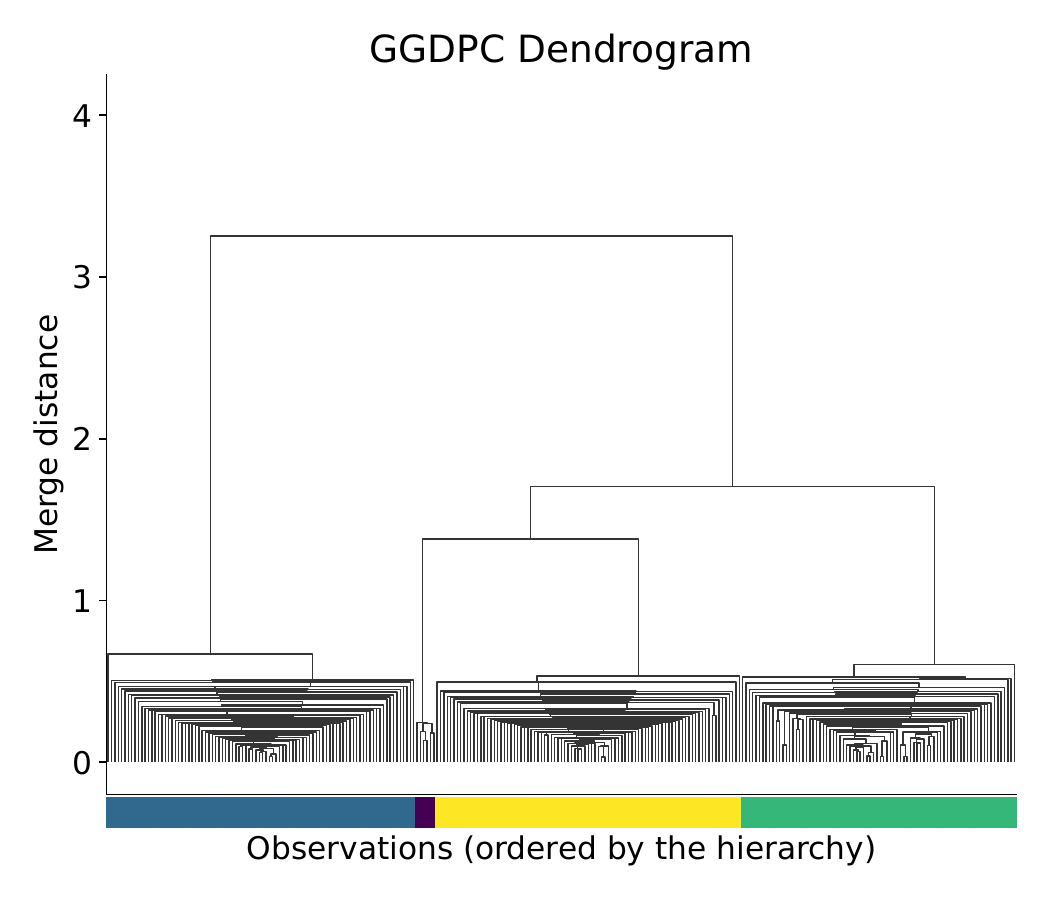}
	\end{subfigure}
	\caption{Four complementary summaries of GGDPC applied to pairs of consecutive eruption durations in the Old Faithful data. \textbf{Top left (GGDPC graph):} the directed graph constructed by GGDPC on the observations. Red crosses mark the selected cluster centers and colors indicate the resulting clusters.
		\textbf{Top right (density waterfall plot):} estimated density plotted against the global GGDPC graph distance. The colored branches trace the density profiles associated with the selected centers and produce the characteristic waterfall pattern.
		\textbf{Bottom left (decision diagram):} the gradient-guided 1NN uphill distance plotted against estimated density. The gray dashed line shows the distance threshold used in this illustration, equal to the sample mean uphill distance plus three sample standard deviations.
		\textbf{Bottom right (GGDPC dendrogram):} the hierarchy obtained by thresholding gradient-guided 1NN uphill distances. The vertical axis records merge distance and the color strip indicates the selected clustering.}
	\label{fig:old_faithful}
\end{figure}

Nevertheless, the 1NN uphill rule that makes DPC computationally attractive is also the source of a fundamental instability. When an observation is away from density modes, its 1NN of higher density need not lie near the local gradient ascent direction. Consequently, a single perturbation can redirect all downstream observations in that branch of the DPC graph to a different cluster center. This phenomenon is often described as the \emph{domino effect} or \emph{chain reaction} in the literature \citep{xie2016robust,seyedi2019dynamic}; see also Section 5.3 in \cite{deng2025density}.
Empirically, it can produce irregular boundaries between adjacent clusters; see \autoref{fig:gmm_graph} for an illustration under a symmetric two-component Gaussian mixture. More importantly for statistical theory, the same phenomenon makes the full DPC path from a general starting point to a density mode difficult to compare with a stable population object.
Existing theoretical analyses of DPC and the closely related quick shift algorithm \citep{vedaldi2008quick} provide important guarantees for density mode estimation and related local clustering properties around the modal regions \citep{jiang2017consistency,jiang2018quickshift++,verdinelli2018analysis,tobin2023theoretical}, but the asymptotic behavior of the entire 1NN uphill path from an arbitrary starting point remains much less understood.

\begin{figure}[!t]
	\captionsetup[subfigure]{justification=centering}
	\begin{subfigure}[t]{0.49\linewidth}
		\centering
		\includegraphics[width=1\linewidth]{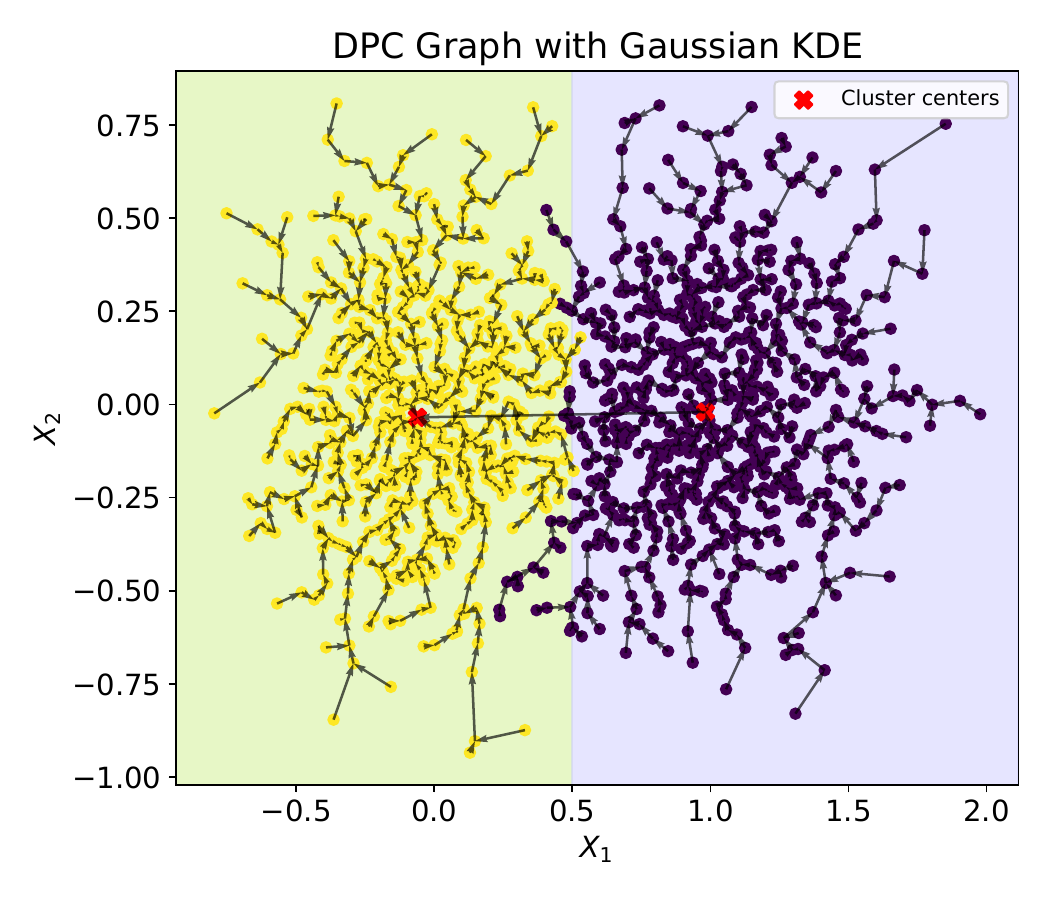}
	\end{subfigure}
	\hfil
	\begin{subfigure}[t]{0.49\linewidth}
		\centering
		\includegraphics[width=1\linewidth]{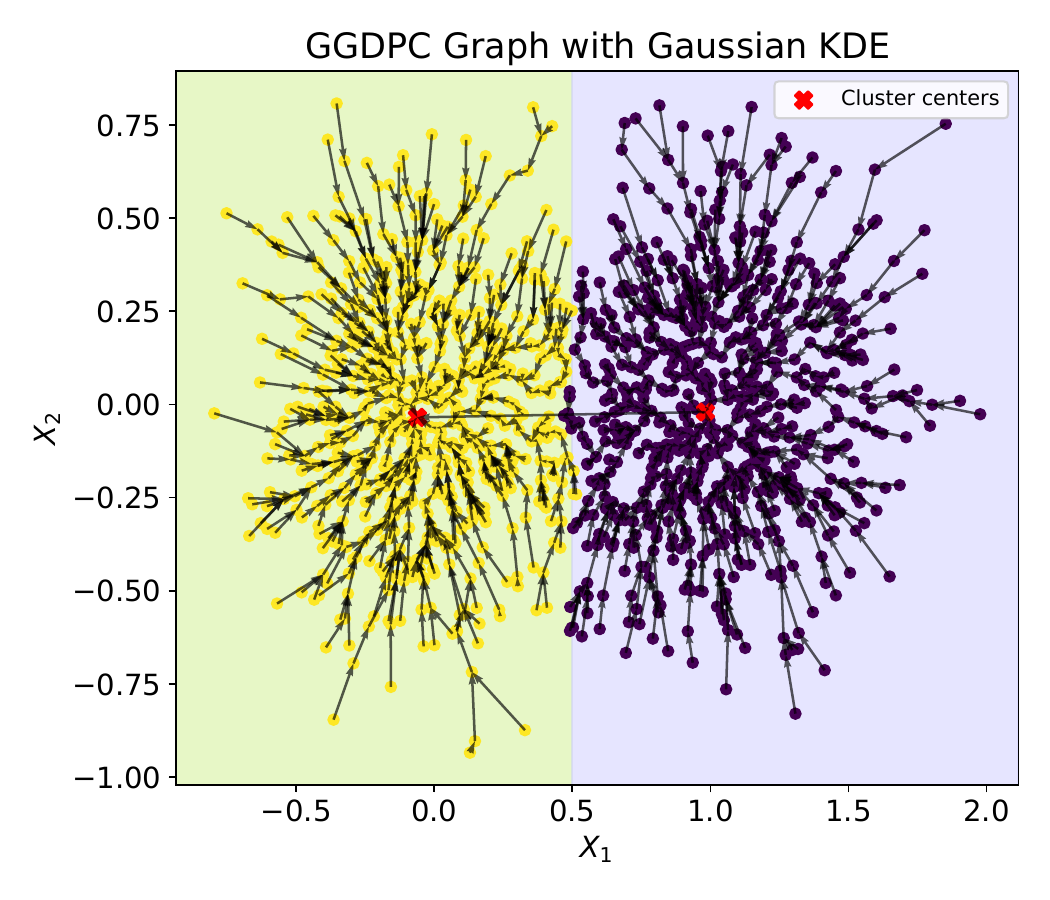}
	\end{subfigure}
	\caption{Comparison of DPC (left) and GGDPC (right) for $n=1500$ observations generated from the Gaussian mixture $0.5 \cdot \mathcal{N}(\bm{\mu}_1,0.09 \bm{I}_2) + 0.5 \cdot \mathcal{N}(\bm{\mu}_2,0.09 \bm{I}_2)$, where $\bm{\mu}_1=(0,0)^T$, $\bm{\mu}_2=(1,0)^T$, and $\bm{I}_2\in \mathbb{R}^{2\times 2}$ is the identity matrix. By symmetry, the population separatrix lies at $x_1=0.5$. The DPC graph exhibits more branches and cluster assignments that cross this boundary, whereas GGDPC better follows the local gradient ascent geometry and reduces such propagation.}
	\label{fig:gmm_graph}
\end{figure}

\subsection{Main Contributions}

To address this instability, we propose \emph{gradient-guided density peak clustering} (GGDPC), a minimal modification of DPC. Before searching for the 1NN of higher density, GGDPC takes a one-step gradient ascent from each observation and performs the 1NN uphill search around that update. The gradient step supplies a stable local direction, while the subsequent 1NN search keeps the GGDPC procedure on the observed data cloud and preserves the computational advantage of DPC. Furthermore, near a density mode where the gradient is small, GGDPC retains the large 1NN uphill shifts for distinguishing those well-separated cluster centers. Thus, GGDPC regularizes the geometry of the ascending path while preserving the simple and interpretable graph structure of DPC.

\autoref{fig:old_faithful} summarizes four complementary views of GGDPC on consecutive eruption durations from the Old Faithful Geyser data \citep{azzalini1990look}. Besides the directed clustering graph and the decision diagram inherited from DPC, we design two new informative plots for GGDPC. The first is a \emph{density waterfall plot}, which displays estimated density against GGDPC graph distance to the sample global mode and reveals the density profile around each cluster center. The second is a \emph{GGDPC dendrogram}, obtained by thresholding gradient-guided 1NN uphill distances and following how graph components merge as the threshold increases. These new plots also motivate two of the population objects developed in our theory.
After introducing GGDPC in \autoref{sec:GGDPC}, we establish the following consistency and stability properties of GGDPC under regularity conditions.
\begin{enumerate}[leftmargin=1.5em,label=\textbf{\arabic*.}]
	\item \textbf{Convergence of GGDPC clustering:} In \autoref{sec:GGDPC_conv_clu}, we establish consistency of the separated density modes selected by GGDPC and prove convergence of the resulting clustering assignments to the population modal partition under the adjusted Rand index (ARI).
	
	\item \textbf{Stability of the GGDPC dendrogram:} In \autoref{sec:clu_tree}, we view the GGDPC graph across distance thresholds as a cluster tree or dendrogram and define its population counterpart, the \emph{modal distance dendrogram}, whose merge heights are determined by distances from non-global modes to their upper level sets. We then prove convergence of the empirical GGDPC dendrogram to this population dendrogram under the Gromov-Hausdorff distance.
	
	\item \textbf{Path-length stability of GGDPC:} In \autoref{sec:stability}, we show that GGDPC ascending paths approximate the geometry of the population gradient ascent flow. Specifically, we establish convergence of the GGDPC path length to the corresponding gradient ascent flow length, uniformly over almost every starting point in the density support.
	
	\item \textbf{Consistency of GGDPC graph distance and density waterfalls:} In \autoref{subsec:conv_graph_dist}, we prove convergence of the GGDPC graph distance to a deterministic population limit that combines gradient ascent flow lengths within modal basins with modal projection distances across basins.
	In \autoref{subsec:den_water}, we further establish convergence of the empirical density waterfall measure to its two-dimensional population analogue under the Wasserstein-1 distance.
\end{enumerate}
Notably, we also derive explicit convergence rates for the above theoretical results. As the target becomes increasingly geometrically refined, the analysis becomes more demanding and the corresponding rates generally become slower. These results provide a new route toward the pathwise consistency question for 1NN hill-climbing procedures, including the quick shift algorithm, raised in Section 6 of \cite{arias2025clustering}.

\subsection{Other Related Work}

The methodological literature on DPC is extensive; see \cite{wei2023overview,wang2024density,deng2025density} for recent reviews. Many variants of DPC alter the density estimate, neighborhood construction, assignment rule, or selection criterion for cluster centers to improve its robustness and mitigate the domino effect; see, for example, \cite{xie2016robust,li2018comparative,jiang2019novel,seyedi2019dynamic,hou2020density}. Our objective is different. Rather than introducing another local correction solely for empirical robustness, we modify the 1NN uphill search so that the resulting graph admits a direct comparison with a smooth population dynamical system.

GGDPC is also closely related to mean shift, a widely used mode clustering method that assigns observations to clusters according to the modes reached by iterative gradient-ascent updates \citep{cheng1995mean,comaniciu2002mean,li2007note,carreira2015review,arias2016errata}. Our theory further connects to the literature on clustering stability and density cluster trees. Stability under perturbations has long been used to study and validate clustering procedures \citep{lange2004stability,ben2006sober,von2010clustering}, while consistency of density-based cluster trees has been studied through Hartigan-type separation and related metrics \citep{hartigan1981consistency,hartigan1985statistical,chaudhuri2010rates,rinaldo2010generalized,rinaldo2012stability,eldridge2015beyond}. For our dendrogram analysis, we adopt the metric viewpoint of \cite{carlsson2010characterization}, under which a dendrogram is represented by an ultrametric and different dendrograms are compared through the Gromov-Hausdorff distance. The modal distance dendrogram induced by GGDPC, however, is distinct from the usual density cluster tree \citep{stuetzle2003estimating,stuetzle2010generalized}, because its merge heights are determined by geometric modal projection distances rather than by density levels at saddle points.


\section{Problem Setup and Background}
\label{sec:problem_setup}

For any $\bm{x}\in \mathbb{R}^d$, let $\norm{\bm{x}}$ denote the usual Euclidean norm and $B(\bm{x},r):=\left\{\bm{y}\in \mathbb{R}^d: \norm{\bm{x}-\bm{y}} < r\right\}$. For any matrix $A \in \mathbb{R}^{m\times n}$, we define $\norm{A}_{\max} = \max_{j,k}|A_{jk}|$ with $A_{jk}$ being its $(j,k)$ entry. We say that a function $f:\mathbb{R}^d \to \mathbb{R}$ is $C^M$ if it is $M$-times continuously differentiable. For a smooth function $f$, we let $\partial^{[\alpha]} f =\frac{\partial^{|[\alpha]|}}{\partial x_1^{\alpha_1} \cdots \partial x_d^{\alpha_d}} f$ denote the partial derivative of $f$ under a multi-index $[\alpha] = \left(\alpha_1,...,\alpha_d\right)$ with $\alpha_j\geq 0$ and $|[\alpha]|=\sum_{j=1}^d \alpha_j$. We write its gradient and Hessian matrix at $\bm{x}\in \mathbb{R}^d$ as $\nabla f(\bm{x})$ and $\nabla^2 f(\bm{x})$, respectively. We also denote the supremum norm of $f$ by $\norm{f}_{\infty}= \sup_{\bm{x}\in \mathbb{R}^d} |f(\bm{x})|$. If $\bm{f}:\mathbb{R}^d\to \mathbb{R}^k$ is vector-valued, then $\norm{\bm{f}}_{\infty} = \sup_{\bm{x}\in \mathbb{R}^d} \max_{1\leq j\leq k} |\bm{f}_j(\bm{x})|$. Throughout the theoretical results, however, supremum norms comparing an estimator with the probability density function or one of its derivatives are taken over its support.

For any two probability measures $P_1,P_2$ on $\left(\mathbb{R}^d, \mathcal{B}\right)$ with finite $q$-th moments, where $\mathcal{B}$ is the Borel $\sigma$-field on $\mathbb{R}^d$ and $q\geq 1$, their Wasserstein-$q$ distance is defined by
$$\mathrm{Wass}_q(P_1,P_2) := \inf_{P_J\in \mathcal{J}(P_1,P_2)} \left[\int_{\mathbb{R}^d\times \mathbb{R}^d} \norm{\bm{x}-\bm{y}}^q dP_J(\bm{x},\bm{y})\right]^{\frac{1}{q}},$$
where $\mathcal{J}(P_1,P_2)$ consists of all joint distributions for $(\bm{X},\bm{Y})$ that have marginal distributions $P_1$ and $P_2$. 
For two nonempty sets $A_1,A_2\subset \mathbb{R}^d$, let $\mathrm{Leb}(A_1)$ and $\mathrm{Leb}(A_2)$ denote their Lebesgue measures, respectively, and let
$$\mathrm{Haus}(A_1,A_2) = \max\left\{\sup_{\bm{x}\in A_1} d(\bm{x},A_2),\, \sup_{\bm{y}\in A_2} d(\bm{y},A_1)\right\}$$
denote their Hausdorff distance, where $d(\bm{x},A_1) = \inf_{\bm{y}\in A_1} \norm{\bm{x}-\bm{y}}$.

We write $a_n \lesssim b_n$ (or equivalently $b_n \gtrsim a_n$) if $a_n$ is bounded above by a constant multiple of $b_n$, and $a_n \asymp b_n$ if both $a_n \lesssim b_n$ and $a_n \gtrsim b_n$ hold. We also use the standard asymptotic Landau notation throughout the paper. For deterministic sequences ${h_n}$ and ${g_n}$ with $g_n > 0$, we write $h_n = O(g_n)$ if $\frac{|h_n|}{g_n}$ is bounded for all sufficiently large $n$, and $h_n = o(g_n)$ if $\frac{|h_n|}{g_n} \to 0$ as $n \to \infty$. For a random sequence $X_n$, $X_n=o_P(g_n)$ means that $\frac{X_n}{g_n}$ converges to 0 in probability, while $X_n=O_P(g_n)$ indicates that $\frac{X_n}{g_n}$ is bounded in probability as $n\to \infty$.

Let $\mathbb{X}_n=\left\{\bm{X}_1,...,\bm{X}_n \right\}$ be a random sample of independent and identically distributed (i.i.d.) observations from a distribution $P$ on $\mathbb{R}^d$. 
We impose the following regularity conditions throughout the paper.

\begin{assump}[Regular Morse density]
	\label{assump:density_reg} \leavevmode \noindent
	\begin{enumerate}[label=(\alph*)]
		\item The distribution $P$ is supported on a compact set $\mathcal{C} \subset \mathbb{R}^d$, and has a Lebesgue density $p$ satisfying $\inf_{\bm{x}\in \mathcal{C}} p(\bm{x})\geq p_{\min}>0$. Moreover, there exist constants $C_{\mathcal{C}},\epsilon_0>0$ such that $\mathrm{Leb}\left(\mathcal{C}\cap B(\bm{x},\epsilon) \right) \geq C_{\mathcal{C}} \cdot \mathrm{Leb}\left(B(\bm{x},\epsilon) \right)$ for every $\bm{x}\in \mathcal{C}$ and $0< \epsilon \leq \epsilon_0$.
		
		\item The restriction of $p$ to $\mathcal{C}$ admits a three-times continuously differentiable extension to an open neighborhood of $\mathcal{C}$, with derivatives uniformly bounded up to third order in that neighborhood.
		
		\item The extension of $p$ has finitely many critical points in the neighborhood of $\mathcal{C}$, and they all lie in the interior of $\mathcal{C}$. In addition, $\nabla^2 p(\bm{x})$ is non-singular at every critical point in $\mathcal{C}$, and the local modes have pairwise distinct density values.
		
		\item The set $\mathcal{C}$ is forward invariant for the vector field $\nabla p$, \emph{i.e.}, every solution of ${\bm{\gamma}}_{\bm{x}}'(t)=\nabla p(\bm{\gamma}_{\bm{x}}(t))$ with $\bm{\gamma}_{\bm{x}}(0) = \bm{x}\in \mathcal{C}$ remains in $\mathcal{C}$ for all $t\geq 0$.
	\end{enumerate}
\end{assump}

The compact support condition in Assumption~\ref{assump:density_reg}(a) is not essential to our theory and could be relaxed under suitable tail and localization conditions. The additional thickness condition rules out arbitrarily sharp cusps and lower-dimensional structures at the boundary of $\mathcal{C}$, which is a standard regularity condition in support estimation \citep{cuevas1990pattern,cuevas1997plug}. Assumption~\ref{assump:density_reg}(b) imposes standard smoothness on $p$ through a smooth extension and is thus compatible with a compactly supported distribution whose density is bounded away from zero on its support. Assumption~\ref{assump:density_reg}(c) requires the relevant critical points of $p$ in a neighborhood of $\mathcal{C}$ to be finite and non-degenerate. In particular, the extension is a Morse function on that neighborhood \citep{milnor1963morse,banyaga2004lectures}. The assumption of distinct modal densities is not needed for GGDPC path-length stability in \autoref{sec:stability}. Finally, Assumption~\ref{assump:density_reg}(d) makes the gradient ascent flow of $p$ intrinsic to $\mathcal{C}$, so that the smooth extension outside the support does not affect the clustering geometry.

\subsection{Density Peak Clustering}
\label{subsec:dpc_review}

Throughout the paper, ties in (estimated or population) density values and 1NN searches are resolved by a fixed strict ordering of the observations. Under Assumption~\ref{assump:density_reg}(a), such ties occur with probability zero, but this convention makes all finite-sample quantities below well-defined.

The classical density peak clustering (DPC) method introduced in \cite{rodriguez2014clustering} takes estimated density values $\hat{p}$ on $\mathbb{X}_n=\left\{\bm{X}_1,...,\bm{X}_n\right\}$ as input and defines 
$$\tilde{\Phi}_n(\bm{x}) \in \argmin\limits_{\bm{X}_i\in \mathbb{X}_n} \left\{\norm{\bm{X}_i-\bm{x}}: \hat{p}(\bm{X}_i) > \hat{p}(\bm{x})\right\}$$ 
as the 1NN of $\bm{x}\in \mathbb{R}^d$ with higher estimated density. The map $\bm{x}\mapsto \tilde{\Phi}_n(\bm{x})$ is undefined when no observation ranks above $\bm{x}$ and, under the above tie-breaking convention, is unique whenever it is well-defined. The corresponding 1NN uphill distance from $\bm{x}$ is given by
\begin{equation}
	\label{1NN_DPC_dist}
	\tilde{w}(\bm{x}) = 
	\begin{cases}
		\infty & \text{ if } \tilde{\Phi}_n(\bm{x}) \text{ is undefined},\\
		\norm{\tilde{\Phi}_n(\bm{x}) - \bm{x}} & \text{ otherwise}.
	\end{cases}
\end{equation}
Following \cite{tobin2023theoretical}, we describe the DPC algorithm through a directed acyclic graph, also referred to as a $\Delta$-tree in \cite{li2018comparative}.

Let $\tilde{G}$ denote the DPC graph with vertex set $V(\tilde{G})=\mathbb{X}_n$. Its edge set $E(\tilde{G})$ contains $\bm{X}_i \to \bm{X}_j$ whenever $\bm{X}_j = \tilde{\Phi}_n(\bm{X}_i)$ for $i,j=1,...,n$ and $i\neq j$. Each vertex $\bm{X}_i$ is assigned the weight $\tilde{w}(\bm{X}_i)$ defined in \eqref{1NN_DPC_dist}.
Then, DPC identifies the cluster centers as $\tilde{\mathcal{M}}=\left\{\bm{X}_i\in V(\tilde G): \tilde{w}(\bm{X}_i) > \lambda\right\}$ for a threshold $\lambda >0$. 
Removing the outgoing edges from the selected centers partitions $\tilde{G}$ into connected components, which define the resulting clusters. Alternative thresholding rules for cluster centers that combine uphill distances with density estimates are also applicable; see \autoref{app:thres_beyond_dist} for further discussion.
DPC may also designate observations with low estimated densities as noise. For instance, if $q_{\alpha}$ denotes the empirical lower $\alpha$-quantile of $\left\{\hat{p}(\bm{X}_i): i=1,...,n\right\}$ for $\alpha \in (0,1)$, the noise set can be defined as $\left\{\bm{X}_i\in V(\tilde G): \hat{p}(\bm{X}_i) < q_{\alpha}\right\}$.

Importantly, the DPC construction does not require $\hat{p}$ to be a consistent estimator of $p$. It is sufficient for $\hat{p}$, or its limiting function $\bar{p}$, to preserve the ordering induced by $p$, \emph{i.e.}, $\hat{p}(\bm{x}) > \hat{p}(\bm{y})$ or $\bar{p}(\bm{x}) > \bar{p}(\bm{y})$ whenever $p(\bm{x}) > p(\bm{y})$.

\subsection{Density Mode Clustering}
\label{subsec:mode_clu}

Given any $\bm{x}\in \mathcal{C}$, the gradient ascent flow (or integral curve) of a differentiable density $p$ starting at $\bm{x}$ is the function $\bm{\gamma}_{\bm{x}}:[0,\infty) \to \mathbb{R}^d$ defined by the ordinary differential equation
\begin{equation}
	\label{gradient_flow}
	\bm{\gamma}_{\bm{x}}'(t)=\nabla p(\bm{\gamma}_{\bm{x}}(t)),\qquad \bm{\gamma}_{\bm{x}}(0) = \bm{x}.
\end{equation}
The destination of the gradient ascent flow starting at $\bm{x}$ is defined as $\mathrm{dest}(\bm{x}) = \lim_{t\to \infty} \bm{\gamma}_{\bm{x}}(t)$.
Under Assumption~\ref{assump:density_reg}, \eqref{gradient_flow} is well-defined on $[0,\infty)$, with $\mathrm{dest}(\bm{x})$ being a critical point of $p$; see Section 9.3 in \cite{hirsch2012differential}. Moreover, $\mathrm{dest}(\bm{x}) \in \mathcal{M}$ for almost every $\bm{x} \in \mathcal{C}$ except for a set with Lebesgue measure zero, where $\mathcal{M}$ denotes the set of local modes of $p$. Consequently, density mode clustering defines population clusters through the \emph{basins of attraction} $\mathcal{C}_1,...,\mathcal{C}_{|\mathcal{M}|}$ for the $|\mathcal{M}|$ local modes, which are defined by
\begin{equation}
	\label{basin_attraction}
	\mathcal{C}_j = \left\{\bm{x}\in \mathcal{C}: \mathrm{dest}(\bm{x}) = \bm{m}_j\right\} \quad \text{ for } \quad \bm{m}_j\in \mathcal{M}.
\end{equation}
In the sequel, $\partial \mathcal{C}_j$ denotes the boundary of $\mathcal{C}_j$ relative to the support $\mathcal{C}$, which is also known as the \emph{separatrix} of gradient ascent flow \eqref{gradient_flow}. For $\bm{x}\in \mathbb{R}^d$ and $r>0$, let $d(\bm{x},\partial \mathcal{C}_a) = \inf_{\bm{y}\in \partial \mathcal{C}_a} \norm{\bm{x} - \bm{y}}$ and $\mathcal{C}_j\ominus r:= \left\{\bm{x}\in \mathcal{C}_j: d(\bm{x},\partial \mathcal{C}_j) \geq r\right\}$.

In practice, $p$ is commonly estimated by the kernel density estimator (KDE; \citealt{parzen1962estimation,scott2015multivariate}) as:
\begin{equation}
	\label{KDE}
	\hat{p}(\bm{x}) = \frac{1}{nh^d} \sum_{i=1}^n K\left(\frac{\norm{\bm{x} - \bm{X}_i}}{h}\right),
\end{equation}
where $h>0$ is a smoothing bandwidth parameter and $K:[0,\infty)\to [0,\infty)$ is a kernel profile satisfying $\int_{\mathbb{R}^d} K(\norm{\bm{u}})\, d\bm{u}=1$ and $\int_{\mathbb{R}^d} \norm{\bm{u}}^2 K(\norm{\bm{u}})\, d\bm{u} <\infty$. The set $\hat{\mathcal{M}}$ of estimated local modes from $\hat{p}$ can be obtained using the mean shift algorithm \citep{fukunaga1975estimation,comaniciu2002mean}, which iterates the following formula until convergence:
$$\bm{x}^{(0)} = \bm{x}, \qquad \bm{x}^{(k+1)} = \frac{\sum_{i=1}^n \bm{X}_i K\left(\frac{\norm{\bm{x}^{(k)} - \bm{X}_i}}{h}\right)}{\sum_{j=1}^n K\left(\frac{\norm{\bm{x}^{(k)} - \bm{X}_j}}{h}\right)} \quad \text{ for } k=0,1,....$$
Under standard conditions, the mean shift iteration converges to an estimated local mode $\hat{\bm{m}}_j\in \hat{\mathcal{M}}$ \citep{cheng1995mean,li2007note,ghassabeh2013convergence,ghassabeh2015sufficient}. The iteration can be viewed as a gradient ascent procedure with an adaptive step size and approximates the gradient ascent flow of $\hat{p}$ as the step size tends to 0 \citep{arias2016errata}. Accordingly, we define the sample basins of attraction by
\begin{equation}
	\label{basin_attraction_est}
	\hat{\mathcal{C}}_j = \left\{\bm{x}\in \mathbb{R}^d: \hat{\mathrm{dest}}(\bm{x}) = \hat{\bm{m}}_j\right\} \quad \text{ for } \quad \hat{\bm{m}}_j\in \hat{\mathcal{M}},
\end{equation}
where $\hat{\mathrm{dest}}(\bm{x})$ denotes the destination of the gradient ascent flow of $\hat{p}$ starting at $\bm{x}$. The observations in $\mathbb{X}_n=\left\{\bm{X}_1,...,\bm{X}_n\right\}$ are thus clustered according to their destination local modes. 

\section{Gradient-Guided Density Peak Clustering}
\label{sec:GGDPC}

While DPC is computationally efficient and scalable, its statistical analysis is complicated by the irregular and unstable behavior of the 1NN uphill paths underlying the algorithm. In this section, we propose a more stable variant, termed \emph{gradient-guided density peak clustering} (GGDPC). Inspired by density mode clustering in \autoref{subsec:mode_clu}, GGDPC inserts a gradient ascent step before each 1NN uphill search.

Let $\hat{p}$ and $\hat{g}$ be estimators of $p$ and its gradient $\nabla p$, respectively. When $\hat{p}$ is differentiable, a natural choice is $\hat{g}=\nabla\hat{p}$. For a general GGDPC implementation, $\hat{g}$ can be any consistent estimator of $\nabla p$. 
For any $\bm{x} \in \mathbb{R}^d$, we define the gradient-guided 1NN uphill point by
\begin{equation}
	\label{gg_1NN_uphill}
	\hat{\Phi}_n(\bm{x}) \in \argmin_{\bm{X}_i\in \mathbb{X}_n} \left\{\norm{\bm{X}_i - \left(\bm{x} + \eta_n \hat{g}(\bm{x})\right)}: \hat{p}(\bm{X}_i) > \hat{p}(\bm{x}) \right\},
\end{equation}
where $\eta_n>0$ is a step size parameter depending on the sample size $n$. As in DPC, $\hat{\Phi}_n(\bm{x})$ is undefined when no observation ranks above $\bm{x}$ under $\hat{p}$ and the strict ordering rule. The corresponding gradient-guided 1NN uphill distance is
\begin{equation}
	\label{1NN_GGDPC_dist}
	\hat{w}_n(\bm{x}) = 
	\begin{cases}
		\infty & \text{ if }\hat{\Phi}_n(\bm{x}) \text{ is undefined},\\
		\norm{\hat{\Phi}_n(\bm{x}) - \bm{x}} & \text{ otherwise}.
	\end{cases}
\end{equation}
A key feature of $\hat{\Phi}_n(\bm{x})$ is that the higher-density constraint is imposed relative to the \emph{original} point $\bm{x}$ rather than the gradient ascent update $\bm{x}+\eta_n\hat{g}(\bm{x})$. This choice is important for two reasons. First, near the global mode, there may be no observations with estimated densities higher than that at the gradient ascent update. Second, near a local mode, imposing the higher-density constraint at the updated point may direct multiple nearby observations toward different higher-density regions, creating spurious cluster centers within the same modal region.
Thus, the gradient ascent update in \eqref{gg_1NN_uphill} provides a stable search direction, while the density ordering remains anchored at the original observation.

Analogous to DPC, GGDPC induces a directed acyclic graph $G$ with vertex set $V(G)=\mathbb{X}_n$. For each observation $\bm{X}_i$ that is not a sample global mode, we add the directed edge $\bm{X}_i \to \bm{X}_j$ into its edge set $E(G)$ whenever $\bm{X}_j = \hat{\Phi}_n(\bm{X}_i)$ and $i\neq j$. We also associate $\bm{X}_i$ with the weight $\hat{w}_n(\bm{X}_i)$ defined in \eqref{1NN_GGDPC_dist}. For a threshold $\lambda>0$, GGDPC identifies the cluster centers as $\hat{\mathcal{M}}=\left\{\bm{X}\in V(G): \hat{w}_n(\bm{X}) > \lambda\right\}$. Removing the outgoing edges from these cluster centers yields the truncated graph $G_{\lambda}$.
Each remaining observation is assigned to a cluster center by following its directed path in $G_{\lambda}$. Algorithm~\ref{algo:ggdpc} summarizes the proposed GGDPC procedure.

\begin{algorithm}[t]
\caption{The Gradient-Guided Density Peak Clustering (GGDPC) Algorithm}
\label{algo:ggdpc}
\begin{algorithmic}
	\State {\bf Input:} Data $\mathbb{X}_n=\left\{\bm{X}_1,...,\bm{X}_n\right\}$, density estimator $\hat{p}$, gradient estimator $\hat{g}$, step size $\eta_n >0$, and threshold value $\lambda >0$.
	\begin{enumerate}
		\item Compute $\hat{p}(\bm{X}_i)$ and $\hat{g}(\bm{X}_i)$ for each observation $\bm{X}_i\in \mathbb{X}_n$.
		
		\item For each $\bm{X}_i$ that is not a sample global mode, derive its gradient-guided 1NN uphill point 
		$$\hat{\Phi}_n(\bm{X}_i) \in \argmin_{\bm{X}_j\in \mathbb{X}_n} \left\{\norm{\bm{X}_j - \left(\bm{X}_i + \eta_n \hat{g}(\bm{X}_i)\right)}: \hat{p}(\bm{X}_j) > \hat{p}(\bm{X}_i) \right\},$$
		and add $\bm{X}_i \to \hat{\Phi}_n(\bm{X}_i)$ to the edge set $E(G)$.
		
		\item Compute the gradient-guided 1NN uphill distance
		$$\hat{w}_n(\bm{X}_i) = 
		\begin{cases}
			\infty & \text{ if } \bm{X}_i \text{ is the sample global mode},\\
			\norm{\hat{\Phi}_n(\bm{X}_i) - \bm{X}_i} & \text{ otherwise},
		\end{cases} \quad \text{ for each } \bm{X}_i\in \mathbb{X}_n.$$
		
		\item Set $\hat{\mathcal{M}}=\left\{\bm{X}_i\in \mathbb{X}_n: \hat{w}_n(\bm{X}_i) > \lambda\right\}$. 
		For each $\bm{X}_i$, follow the directed edges in $E(G)$ until reaching a cluster center $\hat{\bm{m}}_j\in \hat{\mathcal{M}}$, and assign $\bm{X}_i$ to the corresponding cluster $\hat{\mathcal{C}}_j$.
	\end{enumerate}
	\State {\bf Output:} Cluster sets $\hat{\mathcal{C}}_1,...,\hat{\mathcal{C}}_{|\hat{\mathcal{M}}|}$.
\end{algorithmic}
\end{algorithm}

As illustrated in \autoref{fig:gmm_graph}, the gradient ascent step directs the subsequent 1NN search toward higher-density regions and stabilizes the resulting uphill paths, particularly in low-density regions. Near local modes, where the gradient is small, GGDPC behaves similarly to DPC. Thus, GGDPC alleviates irregular cross-boundary propagation while preserving the robust cluster-center identification mechanism of DPC.

For several theoretical results below, we use the following regularity condition on $\hat{p}$ and $\hat{g}$, which is satisfied, for instance, by the KDE in \eqref{KDE} under an appropriate differentiable kernel. It facilitates explicit convergence rates and uniform control of the GGDPC updates.
In \autoref{app:proof_ggdpc_estimated}, we provide a weaker condition (Assumption~\ref{assump:sample_uphill}) that is sufficient for stability of the sample GGDPC path in \autoref{subsec:stability_sample}. That weaker condition allows the gradient estimator $\hat{g}$ to differ from $\nabla\hat{p}$ and $\hat{p}$ to be non-differentiable.

\begin{assump}[Differentiability of the density estimator]
	\label{assump:sample_den_diff}
	There is a fixed open neighborhood of $\mathcal{C}$ on which the density estimator $\hat{p}$ is twice continuously differentiable, $\hat{g}=\nabla \hat{p}$, and its partial derivatives are bounded in probability up to the second order.
\end{assump}

\section{Convergence of GGDPC Clustering}
\label{sec:GGDPC_conv_clu}

In this section, we study the convergence of GGDPC clustering from two complementary perspectives. We first establish consistency of the cluster centers selected by GGDPC, and then analyze the agreement between the resulting clustering assignments and the population modal partition using the ARI.

\subsection{Modal Consistency}
\label{subsec:mode_consistency}

Under Assumption~\ref{assump:density_reg}(c), let 
$$\mathcal{M}=\left\{\bm{x}\in \mathcal{C}: \nabla p(\bm{x})=0, \rho_{\max}(\nabla^2 p(\bm{x})) < 0\right\} = \left\{\bm{m}_1,...,\bm{m}_{|\mathcal{M}|}\right\}$$ 
denote the set of local modes of $p$, where $\rho_{\max}(\nabla^2 p(\bm{x}))$ is the largest eigenvalue of $\nabla^2 p(\bm{x})$. Under Assumption~\ref{assump:density_reg}(c), we assume without loss of generality that $p(\bm{m}_1) > \cdots > p(\bm{m}_{|\mathcal{M}|})$. Specifically, $\bm{m}_1$ is the unique global mode.

For each non-global mode $\bm{m}_j\in \mathcal{M}$, we define its strict upper level set within $\mathcal{C}$ by
$$\mathcal{U}_j := \mathcal{U}_{\bm{m}_j}=\left\{\bm{x}\in \mathcal{C}: p(\bm{x}) > p(\bm{m}_j)\right\}$$
and denote its closure by $\overline{\mathcal{U}}_j$. The projection set of $\bm{m}_j$ onto $\overline{\mathcal{U}}_j$ is
\begin{equation}
	\label{mode_proj_upper}
	\Pi_{\overline{\mathcal{U}}_j}(\bm{m}_j) := \argmin_{\bm{x}\in \overline{\mathcal{U}}_j} \norm{\bm{x} - \bm{m}_j}.
\end{equation}
We define the population uphill shift distance from $\bm{m}_j$ to the higher-density region $\overline{\mathcal{U}}_j$ by
$$\psi_j:=w(\bm{m}_j) = \begin{cases}
	\infty & \text{ if } \bm{m}_j \text{ is a global mode of } p,\\
	d(\bm{m}_j,\overline{\mathcal{U}}_j) & \text{ otherwise}.
\end{cases}$$
The scalar $\psi_j$ is well-defined even when the projection $\Pi_{\overline{\mathcal{U}}_j}(\bm{m}_j)$ consists of more than one point. In particular, when the projection set is a singleton, $\psi_j=\norm{\bm{m}_j - \Pi_{\overline{\mathcal{U}}_j}(\bm{m}_j)}$ for $j\geq 2$.
Then, we define the $\lambda$-separated upper mode set of $p$ for a fixed threshold $\lambda>0$ by
$$\mathcal{M}_{\lambda} = \left\{\bm{m}_j\in \mathcal{M}: \psi_j > \lambda\right\}.$$
For the GGDPC graph, we recall from \eqref{1NN_GGDPC_dist} that $\hat{\mathcal{M}}_{\lambda} = \left\{\bm{X}_i\in \mathbb{X}_n: \hat{w}_n(\bm{X}_i) > \lambda\right\}$ is the corresponding set of GGDPC cluster centers, or equivalently the sinks of the truncated graph $G_{\lambda}$ at threshold $\lambda$.

\begin{theorem}[Consistency of GGDPC modes]
\label{thm:modal_const}
Suppose that Assumption~\ref{assump:density_reg} holds. If $\eta_n = o(1)$, $\norm{\hat{p}-p}_{\infty}=o_P(1)$, and $\norm{\hat{g} - \nabla p}_{\infty} = o_P(1)$, then for a fixed level $\lambda>0$ satisfying $\lambda\notin \left\{\psi_j:1< j\leq |\mathcal{M}|\right\}$, with probability tending to one, $\left|\mathcal{M}_{\lambda}\right| = \left|\hat{\mathcal{M}}_{\lambda}\right|$ and 
$$\mathrm{Haus}\left(\mathcal{M}_{\lambda}, \hat{\mathcal{M}}_{\lambda}\right) = O_P\left(\left(\frac{\log n}{n}\right)^{\frac{1}{d}} + \norm{\hat{p} - p}_{\infty}^{\frac{1}{2}}\right).$$
If, in addition, Assumption~\ref{assump:sample_den_diff} holds, then the Hausdorff distance rate sharpens to
$$\mathrm{Haus}\left(\mathcal{M}_{\lambda}, \hat{\mathcal{M}}_{\lambda}\right) = O_P\left(\left(\frac{\log n}{n}\right)^{\frac{1}{d}} + \min\left\{\norm{\hat{p} - p}_{\infty}^{\frac{1}{2}},\, \norm{\hat{g} - \nabla p}_{\infty}\right\}\right).$$
\end{theorem}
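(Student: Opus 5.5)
Our plan is to compare, for each observation, its gradient-guided uphill distance $\hat w_n(\bm{X}_i)$ with the population quantity $w(\bm{x}) = d(\bm{x},\overline{\mathcal{U}}_{\bm{x}})$, where $\mathcal{U}_{\bm{x}}=\{\bm{y}\in\mathcal{C}: p(\bm{y})>p(\bm{x})\}$. We then show that the large values of $\hat w_n$ concentrate at exactly one observation near each $\bm{m}_j\in\mathcal{M}_\lambda$.

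\textbf{Step 1: probabilistic preliminaries.} Write $r_n:=(\log n/n)^{1/d}$ and $\epsilon_n:=\norm{\hat p-p}_\infty$. By Assumption~\ref{assump:density_reg}(a) (thickness and $p\ge p_{\min}$) and a standard VC/union bound over a finite cover, with probability tending to one every ball $B(\bm{x},Cr_n)\cap\mathcal{C}$ with $\bm{x}\in\mathcal{C}$ contains an observation. Because the ordering constraint uses $\hat p$, an observation $\bm{X}_j$ with $p(\bm{X}_j)>p(\bm{x})+2\epsilon_n$ is admissible for $\bm{x}$. Conversely, any admissible $\bm{X}_j$ satisfies $p(\bm{X}_j)>p(\bm{x})-2\epsilon_n$.

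\textbf{Step 2: non-modal observations have small $\hat w_n$.} Fix $\delta>0$ small and let $\bm{x}\in\mathcal{C}\setminus\bigcup_j B(\bm{m}_j,\delta)$. Then either $\norm{\nabla p(\bm{x})}\ge c_\delta>0$, or $\bm{x}$ is near a saddle or minimum. In the latter case the Morse lemma shows that higher-density points exist within distance $O(\delta)$. In the gradient case, the point $\bm{x}+s\nabla p(\bm{x})/\norm{\nabla p(\bm{x})}$ with $s\asymp \delta$ lies in $\mathcal{U}_{\bm{x}}$ with margin $\gtrsim s$ (Assumption~\ref{assump:density_reg}(d) keeps us inside $\mathcal{C}$). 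Hence there is an admissible observation within $O(\delta+\eta_n+r_n)$ of $\bm{x}+\eta_n\hat g(\bm{x})$, and $\hat w_n(\bm{x})\le C\delta+o_P(1)<\lambda$. It is convenient to choose $\delta$ below $\min_j\psi_j$ and below the gap $\min_j|\psi_j-\lambda|$. The same argument works for modes with $\psi_j<\lambda$: a point of $\overline{\mathcal{U}}_j$ lies at distance $\psi_j$, so the nearest admissible observation to $\bm{x}+\eta_n\hat g(\bm{x})$ is within $\psi_j+O(\delta+\eta_n+r_n+\epsilon_n^{1/2})<\lambda$, using that $\hat g$ is bounded. Here we use that $\mathcal{U}_j$ is the closure of strict upper level sets whose boundaries move by $O(\epsilon_n^{1/2})$ near the mode, by the quadratic behavior of $p$.

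\textbf{Step 3: each $\bm{m}_j\in\mathcal{M}_\lambda$ yields exactly one center.} Inside $B(\bm{m}_j,\delta)$, let $\hat{\bm{X}}^{(j)}$ be the observation with maximal $\hat p$. Every other observation $\bm{x}$ there has the admissible point $\hat{\bm{X}}^{(j)}$ within $2\delta<\lambda$, since $\eta_n\hat g\to0$. So at most one center lies in the ball. For $\hat{\bm{X}}^{(j)}$ itself, every admissible observation lies outside $B(\bm{m}_j,\delta)$ and has $p>p(\hat{\bm{X}}^{(j)})-2\epsilon_n$. By semicontinuity of the sets $\{p> p(\bm{m}_j)-t\}$ as $t\downarrow0$ and the Morse structure, its distance is $\ge\psi_j-o_P(1)>\lambda$. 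This gives $|\hat{\mathcal{M}}_\lambda|=|\mathcal{M}_\lambda|$.

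\textbf{Step 4: rate.} We have $p(\bm{m}_j)-p(\hat{\bm{X}}^{(j)})\le 2\epsilon_n+\big(p(\bm{m}_j)-\max_{\bm{X}_i\in B(\bm{m}_j,r_n)}p(\bm{X}_i)\big)\lesssim \epsilon_n+r_n^2$. The negative definite Hessian then gives $\norm{\hat{\bm{X}}^{(j)}-\bm{m}_j}\lesssim \epsilon_n^{1/2}+r_n$.

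Under Assumption~\ref{assump:sample_den_diff}, we instead argue through the gradient. The estimated mode $\tilde{\bm m}_j$ of $\hat p$ satisfies $\nabla\hat p(\tilde{\bm m}_j)=0$, hence $\norm{\nabla p(\tilde{\bm m}_j)}\le\norm{\hat g-\nabla p}_\infty$. Nondegeneracy of $\nabla^2p$ then gives $\norm{\tilde{\bm m}_j-\bm{m}_j}\lesssim\norm{\hat g-\nabla p}_\infty$. The $\hat p$-maximal observation lies within $O(r_n)$ of $\tilde{\bm m}_j$, because $\hat p$ is locally strongly concave with bounded second derivatives in probability. Taking the minimum of the two bounds gives the stated rate.

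\textbf{Main obstacle.} The hardest part is the lower bound in Step 3. We must show that the $\hat p$-ordering does not admit observations much closer than $\psi_j$, i.e.\ $d(\bm{m}_j,\{p>p(\bm{m}_j)-t\}\setminus B(\bm{m}_j,\delta))\to\psi_j$ as $t\to0$. We would prove this by a compactness argument, using that $\overline{\mathcal{U}}_j$ does not touch $\bm{m}_j$ and that points near level $p(\bm{m}_j)$ that are closer must accumulate at a point of $\overline{\mathcal{U}}_j$. The role of the condition $\lambda\notin\{\psi_j\}$ is exactly to absorb this vanishing slack.
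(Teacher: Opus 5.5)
Your architecture matches the paper's proof. The sample mode is the $\hat p$-maximal observation in a small ball around $\bm m_j$, localized through the population bounds $c\norm{\bm x-\bm m_j}^2\le p(\bm m_j)-p(\bm x)\le C\norm{\bm x-\bm m_j}^2$. Every other observation gets an admissible point within a fixed distance below $\lambda$: the ball maximizer inside modal balls, and a fixed-margin uphill point elsewhere. The case $\psi_j<\lambda$ is handled by exhibiting an admissible observation near $\overline{\mathcal U}_j$, and $\psi_j>\lambda$ by compactness. The paper obtains the fixed-margin uphill observation from a union bound over sets $A(\bm x)$ of positive mass rather than from uniform coverage; that difference is immaterial.

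The step that does not hold as written is the differentiable case of your Step 4. You use that $\hat p$ is locally strongly concave. The hypotheses give only $\norm{\hat g-\nabla p}_\infty=o_P(1)$ and $\nabla^2\hat p=O_P(1)$; Hessian consistency $\norm{\nabla^2\hat p-\nabla^2 p}_\infty=o_P(1)$ is not assumed in this theorem. Gradient consistency alone allows $\hat p$ to have a smoothed flat top of width $\asymp\norm{\hat g-\nabla p}_\infty$ around $\bm m_j$. In that case the $\hat p$-maximal observation need not lie within $O(r_n)$ of any critical point $\tilde{\bm m}_j$. The stated rate is still true, but the argument must avoid curvature of $\hat p$. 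The paper compares $\hat{\bm X}^{(j)}$ with the observation $\bm X_{(j)}$ nearest to $\bm m_j$. Since $\hat p(\hat{\bm X}^{(j)})\ge\hat p(\bm X_{(j)})$ and $\hat g-\nabla p=\nabla(\hat p-p)$ on the segment joining them,
\[
p(\bm X_{(j)})-p(\hat{\bm X}^{(j)})\le\norm{\hat g-\nabla p}_\infty\norm{\hat{\bm X}^{(j)}-\bm X_{(j)}}.
\]
The population quadratic bounds then give $c\norm{\hat{\bm X}^{(j)}-\bm m_j}^2\le Cr_n^2+\norm{\hat g-\nabla p}_\infty\bigl(r_n+\norm{\hat{\bm X}^{(j)}-\bm m_j}\bigr)$. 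Hence $\norm{\hat{\bm X}^{(j)}-\bm m_j}\lesssim r_n+\norm{\hat g-\nabla p}_\infty$, with no need for $\tilde{\bm m}_j$ at all.

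Three smaller points.
\begin{itemize}
\item In Step 2 for $\psi_j<\lambda$, no $\epsilon_n^{1/2}$ rate is needed. Quadratic behavior near the mode is also not the right justification, since the nearest point of $\overline{\mathcal U}_j$ sits at the fixed distance $\psi_j$ from $\bm m_j$, where the modal expansion says nothing. Because $\lambda-\psi_j$ is a fixed gap, simply fix $\bm y_j\in\mathcal U_j$ with $\norm{\bm y_j-\bm m_j}<\lambda$; it has a fixed density margin over $p(\bm m_j)$, which is what the paper does.
\item In your compactness argument, a limit point at exactly level $p(\bm m_j)$ lies in $\overline{\mathcal U}_j$ only because it cannot be another local mode. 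This is where the distinct modal heights of Assumption~\ref{assump:density_reg}(c) enter; they also make $\hat{\bm X}^{(1)}$ the global sample maximizer. If such a point lies on $\partial\mathcal C$ with $\nabla p\neq\bm 0$, forward invariance supplies higher points inside $\mathcal C$.
\item Assumption~\ref{assump:density_reg}(d) concerns the flow, not straight segments. Take the uphill point in Step 2 to be $\bm\gamma_{\bm x}(t)$ for a small fixed $t$ rather than the straight-line step.
\end{itemize}
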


The proof of \autoref{thm:modal_const} is in \autoref{app:proof_modal_const}. Similar modal consistency results for DPC and related algorithms are established in \cite{jiang2017consistency,verdinelli2018analysis,tobin2023theoretical}. The first rate in \autoref{thm:modal_const} separates the sample coverage error $q_n=\left(\frac{\log n}{n}\right)^{\frac{1}{d}}$ from the localization error $\norm{\hat{p} - p}_{\infty}^{\frac{1}{2}}$ induced by density estimation. If $\hat{p}$ is constructed by a (boundary-corrected) KDE \eqref{KDE} under Assumption~\ref{assump:density_reg}(b) and standard kernel regularity conditions, then $\norm{\hat{p} - p}_{\infty}=O_P\left(\left(\frac{\log n}{n}\right)^{\frac{2}{d+4}}\right)$ and $\mathrm{Haus}\left(\mathcal{M}_{\lambda}, \hat{\mathcal{M}}_{\lambda}\right) = O_P\left(\left(\frac{\log n}{n}\right)^{\frac{1}{d+4}}\right)$, which is \emph{not} minimax optimal for mode estimation. However, under the additional Assumption~\ref{assump:sample_den_diff} and a (boundary-corrected) KDE with a differentiable kernel, $\norm{\hat{g} - \nabla p}_{\infty}=O_P\left(\left(\frac{\log n}{n}\right)^{\frac{2}{d+6}}\right)$ and $\mathrm{Haus}\left(\mathcal{M}_{\lambda}, \hat{\mathcal{M}}_{\lambda}\right) = O_P\left(\left(\frac{\log n}{n}\right)^{\min\left\{\frac{2}{d+6}, \frac{1}{d}\right\}}\right)$. When $d\leq 6$, this agrees with the canonical minimax rate $O_P\left(\left(\frac{\log n}{n}\right)^{\frac{2}{d+6}}\right)$ up to a logarithmic factor \citep{romano1988weak,arias2022estimation}. 


\subsection{Convergence of the Adjusted Rand Index}

When reference labels are available, the clustering agreement is commonly assessed by the adjusted Rand index (ARI; \citealt{hubert1985comparing}). Let $\mathcal{P}(\mathbb{X}_n)$ denote the collection of all partitions of $\mathbb{X}_n=\left\{\bm{X}_1,...,\bm{X}_n\right\}$. Given the density $p$ and the basins of attraction $\mathcal{C}_1,...,\mathcal{C}_{|\mathcal{M}|}$ in \eqref{basin_attraction} under Assumption~\ref{assump:density_reg}, the population modal partition of $\mathbb{X}_n$ is defined as $\mathcal{P}_n^*:= \left\{\mathbb{X}_n\cap \mathcal{C}_1,...,\mathbb{X}_n\cap \mathcal{C}_{|\mathcal{M}|}\right\}$. Then, for any estimated clustering $\hat{\mathcal{P}}_n \in \mathcal{P}(\mathbb{X}_n)$, the ARI is defined by
\begin{equation}
	\label{ARI}
	\mathrm{ARI}\left(\hat{\mathcal{P}}_n, \mathcal{P}_n^*\right) = \frac{2(N_{tp} N_{tn} - N_{fp} N_{fn})}{(N_{tp} + N_{fn}) (N_{fn} + N_{tn}) + (N_{tp} + N_{fp}) (N_{fp} +N_{tn})},
\end{equation}
where $N_{tp}$ denotes the number of observation pairs assigned to the same cluster under both $\hat{\mathcal{P}}_n$ and $\mathcal{P}_n^*$; $N_{tn}$ denotes the number assigned to different clusters under both partitions; $N_{fp}$ denotes the number assigned to the same cluster under $\hat{\mathcal{P}}_n$ but to different clusters under $\mathcal{P}_n^*$; and $N_{fn}$ denotes the number assigned to different clusters under $\hat{\mathcal{P}}_n$ but to the same cluster under $\mathcal{P}_n^*$. When $\hat{\mathcal{P}}_n$ and $\mathcal{P}_n^*$ coincide up to relabeling, $\mathrm{ARI}\left(\hat{\mathcal{P}}_n, \mathcal{P}_n^*\right)=1$.

To establish consistency of the GGDPC assignments with the population modal partition, we impose an additional geometric assumption. Let $\Pi_{\partial\mathcal{C}_a}(\bm{x}) := \argmin_{\bm{s}\in\partial\mathcal{C}_a} \norm{\bm{x}-\bm{s}}$ denote the projection set onto $\partial \mathcal{C}_a$.

\begin{assump}[Normal repulsion from the separatrix]
	\label{assump:sep_repulsion}
	The boundary $\partial\mathcal{C}_a$ is a finite union of stable manifolds of saddle points whose Hessian matrices have exactly one positive and $d-1$ negative eigenvalues, and is a $C^2$ hypersurface away from finitely many critical points. Define its regular part by
	$$\mathcal S_{\rm reg} = \left\{\bm{s}\in \partial\mathcal{C}_a: \nabla p(\bm{s})\neq 0 \text{ and } \partial\mathcal{C}_a \text{ is locally a } C^2 \text{ hypersurface near }\bm{s}\right\}.$$
	There exist constants $r_{\mathcal S}, \rho_{\mathcal S}>0$ such that, whenever $\bm{x} \in \mathcal{C}_a$ satisfies $0 < d(\bm{x},\partial \mathcal{C}_a) < r_{\mathcal{S}}$ and at least one closest boundary point belongs to $\mathcal S_{\rm reg}$, the projection $\Pi_{\partial\mathcal{C}_a}(\bm{x})$ is a singleton. Moreover,
	$$\nu(\bm{s})^T\nabla^2p(\bm{s})\nu(\bm{s})\geq \rho_{\mathcal S}$$
	for every $\bm{s}\in\mathcal S_{\rm reg}$ outside sufficiently small fixed neighborhoods of the boundary saddle points, where $\nu(\bm{s})$ denotes the unit normal vector to $\partial\mathcal{C}_a$ at $\bm{s}$ pointing into $\mathcal{C}_a$.
\end{assump}

Assumption~\ref{assump:sep_repulsion} imposes a local repulsion condition along the regular part of the separatrix. In particular, it requires the Hessian of $p$ to be strictly positive in the inward normal direction. Lemma~\ref{lem:sep_repulsion} extends this local repulsion through neighborhoods of boundary saddle points using the stable manifold theorem. Lemma~\ref{lem:flow_separation_sep} later in \autoref{subsec:stability_popul} converts this local property into a global lower bound on the distance of a gradient flow trajectory from the separatrix. We also provide a two-Gaussian mixture example in \autoref{app:sep_repulsion_example} for an illustration, though this assumption holds more generally. A related condition is used in \cite{chen2017statistical} to study the stability of stable and unstable manifolds of a density function.

\begin{theorem}[Convergence of GGDPC under the ARI]
	\label{thm:GGDPC_ARI}
	Suppose that Assumptions~\ref{assump:density_reg},
	\ref{assump:sample_den_diff}, and
	\ref{assump:sep_repulsion} hold for every modal basin of attraction. Let $|\mathcal{M}|\geq 2$ and $\hat{\mathcal{P}}_{n,\lambda}$ be the partition of $\mathbb{X}_n$ produced by GGDPC under a fixed threshold $\lambda \in \left(0, \min_{2\leq j \leq |\mathcal{M}|} \psi_j\right)$. If $\eta_n + \frac{q_n}{\eta_n}=o(1)$, $\norm{\hat{p} - p}_{\infty}=o_P(1)$, and $\norm{\hat{g}-\nabla p}_{\infty}= o_P(1)$ with $q_n=\left(\frac{\log n}{n}\right)^{\frac{1}{d}}$, then
	$$1 - \mathrm{ARI}\left(\hat{\mathcal{P}}_{n,\lambda}, \mathcal{P}_n^*\right) = O_P\left(\eta_n + \frac{q_n}{\eta_n} + \norm{\hat{g}-\nabla p}_{\infty}\right).$$
\end{theorem}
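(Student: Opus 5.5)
Since $\lambda<\min_{j\ge2}\psi_j$, every local mode is $\lambda$-separated, so $\mathcal{M}_\lambda=\mathcal{M}$. By \autoref{thm:modal_const}, with probability tending to one $\hat{\mathcal{M}}_\lambda$ contains exactly one center $\hat{\bm{m}}_j$ within $o_P(1)$ of each $\bm{m}_j$. The clusters of $\hat{\mathcal{P}}_{n,\lambda}$ can therefore be matched with the basins $\mathcal{C}_j$. The plan is to show that, on an event of probability tending to one, every observation whose distance to the separatrix exceeds
$$\delta_n:=C\left(\eta_n+\frac{q_n}{\eta_n}+\norm{\hat{g}-\nabla p}_{\infty}\right)$$
is assigned to the center of its own population basin. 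The ARI is then controlled by counting the remaining observations.

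\textbf{Step 1 (one-step perturbation).} For a non-center observation $\bm{X}_i$, decompose
$$\hat{\Phi}_n(\bm{X}_i)=\bm{X}_i+\eta_n\nabla p(\bm{X}_i)+\bm{e}_i .$$
The target point $\bm{X}_i+\eta_n\hat{g}(\bm{X}_i)$ lies within $\eta_n\norm{\hat{g}-\nabla p}_\infty$ of the population Euler step. Away from modes, $p$ increases along that step by order $\eta_n\norm{\nabla p}^2\gg\norm{\hat{p}-p}_\infty$, so the higher-density constraint removes only part of the ball $B(\cdot,Cq_n)$ around it. By Assumption~\ref{assump:density_reg}(a) and a standard uniform coverage bound, this ball still contains an admissible sample point with high probability. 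Hence $\norm{\bm{e}_i}\lesssim q_n+\eta_n\norm{\hat{g}-\nabla p}_\infty$ uniformly in $i$.

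In the small-gradient regions near modes and saddles, the behaviour is inherited from the modal-consistency argument instead. The uniform error bound uses Assumption~\ref{assump:sample_den_diff}: near the boundary of the higher-density constraint, $\hat{p}$ is nearly linear in $\hat{g}$.

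\textbf{Step 2 (no crossing of the separatrix).} Let $D(\bm{x})=d(\bm{x},\partial\mathcal{C}_a)$. By Assumption~\ref{assump:sep_repulsion} and Lemma~\ref{lem:sep_repulsion}, the flow direction satisfies $\langle\nabla p(\bm{x}),\nu\rangle\ge c\,D(\bm{x})$ near the regular separatrix, where $\nu$ is the inward normal at the projection. One GGDPC step therefore gives
$$D\bigl(\hat{\Phi}_n(\bm{X}_i)\bigr)\ge D(\bm{X}_i)(1+c\eta_n)-\norm{\bm{e}_i}-O(\eta_n^2).$$
If $D(\bm{X}_i)\ge\delta_n$, the gain $c\eta_n\delta_n$ dominates the error terms $q_n+\eta_n\norm{\hat{g}-\nabla p}_\infty+\eta_n^2$. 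This is exactly why $\delta_n$ has the three stated terms: $\eta_n$ from the curvature term, $q_n/\eta_n$ from the coverage error, and $\norm{\hat{g}-\nabla p}_\infty$ from the gradient error.

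Consequently the set $\{D\ge\delta_n\}\cap\mathcal{C}_a$ is forward invariant for the graph dynamics. The repulsion only sharpens once the path is at distance above $r_{\mathcal S}$, and Lemma~\ref{lem:flow_separation_sep} supplies the global lower bound there. Following the path, $p$ increases strictly, so the path reaches a neighbourhood of $\bm{m}_a$. Step 1 together with \autoref{thm:modal_const} then shows that it terminates at $\hat{\bm{m}}_a$ rather than at a spurious center, since all spurious uphill jumps are $o(\lambda)$.

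\textbf{Step 3 (counting).} Let $N_{\rm bad}$ be the number of observations with $D(\bm{X}_i)<\delta_n$, which are the only ones that can be misassigned. The separatrix is a finite union of $C^2$ hypersurfaces, so its $\delta_n$-tube has Lebesgue measure $O(\delta_n)$, and a Bernstein bound gives $N_{\rm bad}/n=O_P(\delta_n)$. Each misassigned point affects at most $n$ pairs, so
$$N_{fp}+N_{fn}=O_P\bigl(n^2\delta_n\bigr).$$
Meanwhile $N_{tp}$, $N_{tn}$ and the ARI denominator are of order $n^2$, because $|\mathcal{M}|\ge2$ and each basin has positive mass. An algebraic bound of the form $1-\mathrm{ARI}\lesssim (N_{fp}+N_{fn})/n^2$ on this event then yields the stated rate.

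\textbf{Main obstacle.} The hardest part is Step 2 near boundary saddle points, where $\nabla p$ vanishes and the linear repulsion degenerates. There I would use the stable-manifold structure from Lemma~\ref{lem:sep_repulsion}: the unstable direction grows the normal distance exponentially while the small gradient keeps the steps short. I would also need to keep the Step 1 coverage argument uniform along the whole path, since paths have length $O(1/\eta_n)$ steps but the coverage event is uniform over the support anyway.
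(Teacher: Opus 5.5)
Your architecture is the paper's. It has an Euler-step representation $\hat\Phi_n(\bm y)=\bm y+\eta_n\nabla p(\bm y)+\bm R_n(\bm y)$, a one-step recurrence for a repulsion function that keeps the path at distance $\gtrsim\eta_n+\frac{q_n}{\eta_n}+\norm{\hat g-\nabla p}_\infty$ from $\partial\mathcal C_a$, a tube count, and the identity $1-\mathrm{ARI}=N(N_{fp}+N_{fn})/[A_n(N-B_n)+B_n(N-A_n)]$.

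\textbf{Gap near boundary saddles.} As written, you do not close the one delicate case. You defer the small-gradient region near saddles to the modal-consistency argument. But Step~3 of the proof of \autoref{thm:modal_const} only gives $\hat w_n(\bm X_i)\leq\epsilon_\lambda+o_P(1)$ there. That is a jump of fixed size, and it says nothing about crossing a separatrix from distance $\delta_n\to0$; you then leave this region as the ``main obstacle.''

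The missing observation is short. In a thin tube inside $\mathcal C_a$, the only nearby critical points lie on $\partial\mathcal C_a$, so $\norm{\nabla p(\bm y)}\gtrsim\norm{\bm y-\bm s}\geq d(\bm y,\partial\mathcal C_a)$ for the nearby critical point $\bm s$. Hence on $\{d(\cdot,\partial\mathcal C_a)\geq\delta_n\}$ the gradient is $\gtrsim\delta_n$, which is exactly what makes your Step~1 representation valid near saddles too.

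Nor does the repulsion degenerate there. Near a saddle, the function $H_a$ of Lemma~\ref{lem:sep_repulsion} is the chart coordinate transverse to the local stable manifold, with $\nabla H_a^T\nabla p\geq C_0H_a$. So $H_a(\hat\Phi_n(\bm y))\geq(1+C_0\eta_n)H_a(\bm y)-C\eta_n\bigl(\eta_n+\frac{q_n}{\eta_n}+\norm{\hat g-\nabla p}_\infty\bigr)$ holds uniformly through saddle neighborhoods. If you work with $H_a$ rather than the raw distance $D$ (which need not be $C^2$ there), no exponential-escape analysis is needed.

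\textbf{Other justifications to replace.}

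First, admissibility cannot come from $\eta_n\norm{\nabla p}^2\gg\norm{\hat p-p}_\infty$. Only $\norm{\hat p-p}_\infty=o_P(1)$ is assumed, with no rate relative to $\eta_n\delta_n^2$. Instead, Taylor-expand $\hat p$ under Assumption~\ref{assump:sample_den_diff}: for $\bm X$ within $\zeta_n$ of the target, $\hat p(\bm X)-\hat p(\bm y)\geq\eta_n\norm{\hat g(\bm y)}^2-\zeta_n\norm{\hat g(\bm y)}-C\bigl(\eta_n\norm{\hat g(\bm y)}+\zeta_n\bigr)^2>0$ once $\norm{\hat g(\bm y)}\gtrsim\delta_n$. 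So the whole ball around the target is admissible, not just part of it.

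Second, drop Lemma~\ref{lem:flow_separation_sep}. It requires Assumption~\ref{assump:C1_linear}, which this theorem does not assume, and it concerns the continuous flow rather than the graph path. It is also unnecessary. Outside the $r_0$-tube, GGDPC steps are short, and the modal ball sits a fixed distance from $\partial\mathcal C_a$. So a path can re-enter the tube only at distance $\geq r_0/2\gg\delta_n$, after which the in-tube recurrence takes over; this is how the paper finishes Step~2.

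Finally, your $\delta_n$ is random, so a fixed-set Bernstein bound does not apply directly. Make the tube count uniform in the radius, for example with the DKW inequality for $d(\bm X_i,\mathcal S_{\rm full})$, using $n^{-1/2}=o(\delta_n)$.
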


The proof of \autoref{thm:GGDPC_ARI} is in \autoref{app:ggdpc_ari_proof}. Balancing the first two terms in the rate gives $\eta_n\asymp \sqrt{q_n}$. In particular, when $\hat{g}$ is constructed using (boundary-corrected) KDE \eqref{KDE} with bandwidth $h_n \asymp \left(\frac{\log n}{n}\right)^{\frac{1}{d+6}}$, 
\begin{equation}
	\label{ARI_rate_kde}
	1 - \mathrm{ARI}\left(\hat{\mathcal{P}}_{n,\lambda}, \mathcal{P}_n^*\right) = O_P\left(\left(\frac{\log n}{n}\right)^{\min\left\{\frac{1}{2d}, \frac{2}{d+6}\right\}}\right),
\end{equation}
and the sample discretization of the GGDPC path dominates the rate as $O_P\left(\left(\frac{\log n}{n}\right)^{\frac{1}{2d}}\right)$ when $d \geq 3$.

\section{GGDPC Dendrogram}
\label{sec:clu_tree}

Varying the GGDPC distance threshold $\lambda$ produces a nested family of partitions of the sample $\mathbb{X}_n$, and hence a natural hierarchical representation of the clustering structure. In this section, we formalize this representation as the \emph{GGDPC dendrogram}. We show that it is equivalent to a single linkage dendrogram constructed from the weighted GGDPC graph and establish its convergence, under the Gromov-Hausdorff distance, to a population \emph{modal distance dendrogram}. Although this population hierarchy is induced by the same density $p$ as the usual density cluster tree, the two hierarchies need not have the same merge topology.

\subsection{Definition and Computation}
\label{subsec:dendro_def}

Recall that $\mathcal{P}(\mathbb{X}_n)$ is the collection of all partitions of $\mathbb{X}_n$. For a fixed threshold $\lambda \geq 0$, let $G_{\lambda}$ be the truncated GGDPC graph obtained by removing every outgoing edge from an observation $\bm{X}_i$ with $\hat{w}_n(\bm{X}_i) > \lambda$. Equivalently, $G_{\lambda}$ retains precisely those directed edges $\bm{X}_i \to \hat{\Phi}_n(\bm{X}_i)$ for which $\hat{w}_n(\bm{X}_i) \leq \lambda$.
We define the \emph{GGDPC dendrogram} $(\mathcal{T}_G,\mathbb{X}_n)$ through a function $\mathcal{T}_G: [0,\infty) \to \mathcal{P}(\mathbb{X}_n)$ by letting $\mathcal{T}_G(\lambda)$ be the partition of $\mathbb{X}_n$ induced by the connected components of $G_{\lambda}$, where edge directions are ignored when defining connectedness.
In particular, each component corresponds to observations whose directed paths terminate at the same cluster center $\hat{\bm{m}}_j \in \hat{\mathcal{M}}$ of $G_{\lambda}$. Since $E(G_{\lambda}) \subset E(G_{\lambda'})$ whenever $0\leq \lambda\leq \lambda'$, the partitions become successively coarser as $\lambda$ increases, so $\mathcal{T}_G$ forms a hierarchy in the following sense.

\begin{proposition}[Hierarchy of the GGDPC dendrogram]
\label{prop:hier_cluster_tree}
For any $0\leq \lambda \leq \lambda' <\infty$, the partition $\mathcal{T}_G(\lambda)$ refines $\mathcal{T}_G(\lambda')$. Equivalently, for every $A \in \mathcal{T}_G(\lambda)$, there exists a unique $A' \in  \mathcal{T}_G(\lambda')$ such that $A \subseteq A'$.
\end{proposition}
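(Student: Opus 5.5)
The plan is to reduce the statement to the monotonicity of edge sets already noted before the proposition, and then to use an elementary fact about connected components of nested graphs. First I verify that $E(G_{\lambda}) \subseteq E(G_{\lambda'})$ for $0 \leq \lambda \leq \lambda'$. By definition, $G_{\lambda}$ retains exactly the directed edges $\bm{X}_i \to \hat{\Phi}_n(\bm{X}_i)$ with $\hat{w}_n(\bm{X}_i) \leq \lambda$. Any such edge satisfies $\hat{w}_n(\bm{X}_i) \leq \lambda \leq \lambda'$, so it is retained in $G_{\lambda'}$ as well. Both graphs share the vertex set $\mathbb{X}_n$.

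Next, I pass to the underlying undirected graphs, since connectedness in $\mathcal{T}_G$ ignores edge directions. Suppose $\bm{X}_i$ and $\bm{X}_k$ lie in the same block of $\mathcal{T}_G(\lambda)$. Then there is a finite undirected path between them whose edges all belong to $E(G_{\lambda})$. By the inclusion above, every edge of this path also belongs to $E(G_{\lambda'})$, so the same path witnesses that $\bm{X}_i$ and $\bm{X}_k$ lie in the same block of $\mathcal{T}_G(\lambda')$. Hence the equivalence relation ``same component in $G_{\lambda}$'' is contained in the relation ``same component in $G_{\lambda'}$''. This containment is exactly the statement that $\mathcal{T}_G(\lambda)$ refines $\mathcal{T}_G(\lambda')$.

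For the equivalent formulation, fix $A \in \mathcal{T}_G(\lambda)$, which is nonempty, and pick any $\bm{X}_i \in A$. Let $A'$ be the block of $\mathcal{T}_G(\lambda')$ containing $\bm{X}_i$. Every other element of $A$ is equivalent to $\bm{X}_i$ under the coarser relation, so $A \subseteq A'$. Uniqueness follows because the blocks of a partition are pairwise disjoint: any $A'' \supseteq A$ meets $A'$ in $\bm{X}_i$, so $A'' = A'$. Conversely, if every block of $\mathcal{T}_G(\lambda)$ sits inside a block of $\mathcal{T}_G(\lambda')$, then refinement holds by definition, which gives the stated equivalence.

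There is no substantive obstacle here. The only point needing care is to confirm that each $G_{\lambda}$ is a well-defined graph on the fixed vertex set $\mathbb{X}_n$. This holds because the tie-breaking convention makes each $\hat{\Phi}_n(\bm{X}_i)$ unique whenever it is defined. For the sample global mode, $\hat{w}_n = \infty$, so it has no outgoing edge in any $G_{\lambda}$ with $\lambda < \infty$, consistent with the edge inclusion used above.
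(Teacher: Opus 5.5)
Your proof is correct and follows essentially the same route as the paper's proof of the general version, Proposition~\ref{prop:general_thres_tree}: the nesting $E(G_\lambda)\subseteq E(G_{\lambda'})$, the transfer of undirected connecting paths from $G_\lambda$ to $G_{\lambda'}$, and uniqueness of the containing block because connected components are disjoint. As in the paper's argument, nothing beyond the nested edge sets is needed, and in particular the tree structure of $G$ plays no role.
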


Proposition~\ref{prop:hier_cluster_tree} follows directly from the tree structure of the GGDPC graph $G$. A more general version is established in Proposition~\ref{prop:general_thres_tree}, where edges are thresholded according to a scoring function that may depend jointly on the 1NN uphill distance $\hat{w}_n(\bm{X})$ and the estimated density $\hat{p}(\bm{X})$. The same construction applies to the original DPC graph. In particular, the GGDPC and DPC dendrograms can be interpreted as a proximity dendrogram in the definition of Chapter 3.2 in \cite{jain1988algorithms}; see also Section 3.1 of \cite{carlsson2010characterization}\footnote{Rigorously, we assume that the observations in $\mathbb{X}_n$ are distinct, which holds almost surely under the absolute continuity of $P$.}.

There are two equivalent ways to construct the GGDPC dendrogram from the graph $G$.

\begin{itemize}
	\item {\bf Divisive Approach:} As suggested in \autoref{sec:GGDPC}, we start from the full graph $G$ at $\lambda \geq \max_{\bm{X}_i\neq \bm{X}^*} \hat{w}_n(\bm{X}_i)$ based on \eqref{gg_1NN_uphill} and decrease $\lambda$ toward 0, where $\bm{X}^*$ is the root node with no outgoing edge. Whenever $\lambda$ passes below the weight of an edge $\bm{X}_i\to \bm{X}_j$, that edge is removed from $G_{\lambda}$. Since the underlying undirected graph of $G$ is a tree, removing each retained edge splits one connected component into two, thereby producing the successive bifurcations of the dendrogram.
	
	\item {\bf Agglomerative Approach:} Choose $d_{\star} > d_{\max} = \max_{1\leq i,j \leq n} \norm{\bm{X}_i - \bm{X}_j}$ and define a symmetric dissimilarity matrix $D\in \mathbb{R}^{n\times n}$ with $D_{ii}=0$ for all $i=1,...,n$ and 
	\begin{equation}
		\label{dist_mat_single_link}
		D_{ij} =D_{ji} = \begin{cases}
			\norm{\bm{X}_i -\bm{X}_j} & \text{ if }\bm{X}_i \text{ and } \bm{X}_j \text{ are adjacent in } G,\\
			d_{\star} & \text{ otherwise}.
		\end{cases}
	\end{equation}
	Thus, after ignoring edge directions, $D$ records the GGDPC edge weights for adjacent observations and assigns a common value $d_{\star}$ larger than every such edge weight to nonadjacent pairs. Applying single linkage clustering to $D$ then yields the same hierarchy as thresholding the GGDPC graph directly.
\end{itemize}

The following proposition formalizes this equivalence, whose proof is in \autoref{app:ggdpc_single_link}.

\begin{proposition}[GGDPC dendrogram via single linkage clustering]
	\label{prop:ggdpc_dendro}
	Let $\mathcal{T}_{\rm SL}(\lambda)$ denote the partition obtained by cutting the single linkage dendrogram induced by $D$ in \eqref{dist_mat_single_link} at height $\lambda$. Then,
	$$\mathcal{T}_{\rm SL}(\lambda)=\mathcal{T}_G(\lambda) \qquad \text{ for every } \lambda \geq 0.$$
	Consequently, the single linkage cluster tree induced by $D$ is identical to the GGDPC dendrogram $(\mathcal{T}_G,\mathbb{X}_n)$.
\end{proposition}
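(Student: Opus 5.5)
We compare the two partitions directly, for a fixed $\lambda\geq 0$. The single linkage partition $\mathcal{T}_{\rm SL}(\lambda)$ is the set of connected components of the undirected graph $H_\lambda$ on $\mathbb{X}_n$ with an edge $\{\bm{X}_i,\bm{X}_j\}$ whenever $D_{ij}\leq\lambda$. This is the standard characterization of single linkage cut at height $\lambda$: two points share a cluster iff they are joined by a chain in which every consecutive dissimilarity is at most $\lambda$. It holds under the usual convention that clusters merge at height $\lambda$ once the linkage equals $\lambda$, which matches the non-strict inequality $\hat{w}_n(\bm{X}_i)\leq\lambda$ defining $G_\lambda$. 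So it suffices to show that $H_\lambda$ and the undirected version of $G_\lambda$ have the same connected components.

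\textbf{Case $\lambda<d_\star$.} A pair with $D_{ij}\leq\lambda$ cannot be nonadjacent in $G$, since nonadjacent pairs have $D_{ij}=d_\star>\lambda$. Hence every edge of $H_\lambda$ is an undirected edge of $G$. Because $G$ is a directed forest in which each non-root vertex has exactly one outgoing edge, any two adjacent vertices $\bm{X}_i,\bm{X}_j$ are joined by exactly one directed edge, say $\bm{X}_i\to\bm{X}_j=\hat{\Phi}_n(\bm{X}_i)$. Its weight is $D_{ij}=\|\bm{X}_i-\bm{X}_j\|=\hat{w}_n(\bm{X}_i)$.

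The one point to check is that these two edges cannot occur in both directions. The density ordering $\hat p(\bm{X}_j)>\hat p(\bm{X}_i)$, together with the strict tie-breaking convention, rules out having both $\bm{X}_i\to\bm{X}_j$ and $\bm{X}_j\to\bm{X}_i$. So $D_{ij}$ is well defined and equals the weight of the unique directed edge. It follows that $\{\bm{X}_i,\bm{X}_j\}\in E(H_\lambda)$ iff the corresponding directed edge $\bm{X}_i\to\bm{X}_j$ satisfies $\hat{w}_n(\bm{X}_i)\leq\lambda$, i.e., iff it is retained in $G_\lambda$. The two graphs therefore have identical edge sets, hence identical components, and $\mathcal{T}_{\rm SL}(\lambda)=\mathcal{T}_G(\lambda)$.

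\textbf{Case $\lambda\geq d_\star$.} Here $H_\lambda$ is complete, so $\mathcal{T}_{\rm SL}(\lambda)=\{\mathbb{X}_n\}$. Every finite edge weight of $G$ is a Euclidean distance between observations, hence at most $d_{\max}<d_\star\leq\lambda$, so $G_\lambda=G$. The undirected version of $G$ is a spanning tree: every non-root vertex follows strictly density-increasing edges, which must terminate at the unique vertex with no outgoing edge, the sample global mode $\bm{X}^*$. Thus $\mathcal{T}_G(\lambda)=\{\mathbb{X}_n\}$ as well.

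Equality for every $\lambda\geq0$ means the two functions $[0,\infty)\to\mathcal{P}(\mathbb{X}_n)$ coincide, so the dendrograms are identical in the sense of Carlsson--M\'emoli. I expect the only delicate points to be the convention on ties at merge heights (handled by the non-strict inequalities on both sides) and the uniqueness of the edge orientation between adjacent vertices, which gives a single well-defined weight $D_{ij}$. Both follow from the tie-breaking convention stated in \autoref{subsec:dpc_review}.
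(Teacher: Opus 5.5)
Your proof is correct and follows essentially the same route as the paper: both use the chain characterization of single linkage at height $\lambda$, match $D$-chains whose links are at most $\lambda$ with paths in the undirected $G_\lambda$, and treat large $\lambda$ as a separate trivial case. Splitting at $d_\star$ rather than at $d_{\max}$, as the paper does, is slightly cleaner. For $\lambda\in[d_{\max},d_\star)$, nonadjacent pairs still have $D_{ij}=d_\star>\lambda$, so the single-cluster conclusion in that range really rests on the edge-set identification your first case provides, together with the spanning-tree property of $G$. The paper's first case instead asserts $D_{ij}\leq\lambda$ for every pair, which is not literally true there.
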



\subsection{Stability of the GGDPC Dendrogram}
\label{subsec:dendro_stability}

We study convergence of the GGDPC dendrogram through its associated ultrametrics under the Gromov-Hausdorff distance \citep{gromov1987hyperbolic,burago2001course}. Recall that an ultrametric $u: \mathbb{X}\times \mathbb{X} \to \mathbb{R}_+$ is a valid metric satisfying the strengthened triangle inequality $\max\left\{u(\bm{x},\bm{z}), u(\bm{z},\bm{y})\right\}\geq u(\bm{x},\bm{y})$ for any $\bm{x},\bm{y},\bm{z}\in \mathbb{X}$.
By the standard correspondence between dendrograms and ultrametrics in Theorem 9 of \cite{carlsson2010characterization}, we define an ultrametric over observations in $\mathbb{X}_n$ by
\begin{equation}
	\label{ultrametric_dendro}
	u(\bm{X}_i,\bm{X}_j):= u_{\mathbb{X}_n}(\bm{X}_i,\bm{X}_j) = \min\left\{r\geq 0: \bm{X}_i,\bm{X}_j \text{ belong to the same block of } \mathcal{T}_G(r)\right\}.
\end{equation}
In particular, $u_{\mathbb{X}_n}(\bm{X}_i,\bm{X}_i) = 0$. Let
$\mathrm{Path}_G(\bm X_i,\bm X_j)$ denote the edge set of the
unique undirected path in $G$ joining $\bm X_i$ and $\bm X_j$. Proposition~\ref{prop:ggdpc_dendro} implies that 
$$u_{\mathbb{X}_n}(\bm{X}_i,\bm{X}_j) = \max\left\{\hat{w}_n(\bm{X}_k): \left(\bm{X}_k, \hat{\Phi}_n(\bm{X}_k)\right) \in \mathrm{Path}_G(\bm X_i,\bm X_j)\right\}$$
when $\bm{X}_i\neq \bm{X}_j$.
For two GGDPC dendrograms $(\mathcal{T}_G, \mathbb{X}_n)$ and $(\mathcal{T}_{G'}, \mathbb{Y}_m)$ on data samples $\mathbb{X}_n$ and $\mathbb{Y}_m$ respectively, their corresponding ultrametrics $u_{\mathbb{X}_n}$ and $u_{\mathbb{Y}_m}$ can be constructed according to \eqref{ultrametric_dendro}. We define the distance between the dendrograms by
$$d_{\rm den}\left((\mathcal{T}_G, \mathbb{X}_n), (\mathcal{T}_{G'}, \mathbb{Y}_m)\right) = \mathrm{GH}\left((\mathbb{X}_n, u_{\mathbb{X}_n}),(\mathbb{Y}_m, u_{\mathbb{Y}_m})\right),$$
where $\mathrm{GH}(\cdot,\cdot)$ denotes the Gromov-Hausdorff distance; see \autoref{app:proof_conv_ggdpc_dendro} for its definition.

We next construct the population counterpart of the GGDPC dendrogram. The following assumption guarantees that each non-global mode $\bm{m}_j$ has a unique and non-degenerate projection \eqref{mode_proj_upper} onto its closed upper level set $\overline{\mathcal{U}}_j$.

\begin{assump}[Non-degenerate modal projection]
	\label{assump:unique_proj}
	For every non-global mode $\bm{m}_j\in \mathcal{M}$, the projection $\bm{z}_j:=\Pi_{\overline{\mathcal{U}}_j}(\bm{m}_j)$ is unique and satisfies the following conditions.
	\begin{enumerate}[label=(\alph*)]
		\item There exists a mode $\bm{m}_{\pi(j)} \in \mathcal{M}$ such that $\bm{z}_j \in \mathcal{C}_{\pi(j)}$, $\nabla p(\bm{z}_j)\neq \bm{0}$, and $d(\bm{z}_j,\partial\mathcal{C}_{\pi(j)})>0$.
		
		\item Let $\mu_j >0$ be the Lagrange multiplier determined by $\bm{z}_j-\bm{m}_j=\mu_j \nabla p(\bm{z}_j)$ under (a). There exists a constant $\kappa_j>0$ such that 
		$$\bm{v}^T\left[I_d - \mu_j \nabla^2 p(\bm{z}_j)\right]\bm{v} \geq \kappa_j$$ 
		for every $\bm{v}\in \mathbb{R}^d$ with $\norm{\bm{v}}=1$ and $\bm{v}^T \nabla p(\bm{z}_j)=0$, where $I_d\in \mathbb{R}^{d\times d}$ is the identity matrix.
	\end{enumerate}
\end{assump}

The uniqueness of $\bm{z}_j=\Pi_{\overline{\mathcal{U}}_j}(\bm{m}_j)$ in Assumption~\ref{assump:unique_proj} rules out exact distance ties between distinct connected components of the higher-density region $\overline{\mathcal{U}}_j$. Such ties may arise, for example, under symmetry of $p$, but are unstable under generic asymmetric perturbations. Assumption~\ref{assump:unique_proj}(a) further ensures that the projection $\bm{z}_j$ does not lie at a critical point of $p$ or on the stable manifold of a non-modal critical point. Notice that $\bm{z}_j$ cannot be another local mode in $\mathcal{M}$ when the modal heights $p(\bm{m}_1),...,p(\bm{m}_{|\mathcal{M}|})$ are distinct. 
The multiplier relation in Assumption~\ref{assump:unique_proj}(b) follows from the first-order condition for the constrained minimization
$$\min_{\bm{y}\in \mathcal{C}\setminus B(\bm{m}_j,r_j)} \frac{1}{2}\norm{\bm{y}-\bm{m}_j}^2 \quad \text{ subject to } \quad p(\bm{y}) \geq p(\bm{m}_j)$$
for some $r_j >0$, together with $\nabla p(\bm{z}_j)\neq 0$ in Assumption~\ref{assump:unique_proj}(a). The second-order condition in Assumption~\ref{assump:unique_proj}(b) is the genuine geometric restriction that excludes any flat tangential region at the projection $\bm{z}_j$. Without this condition, convergence of the 1NN uphill update $\hat{\bm{z}}_{j,n} = \hat{\Phi}_n(\hat{\bm{m}}_{j,n})$ of the sample local mode $\hat{\bm{m}}_{j,n}$ to the population projection $\bm{z}_j$ of the true local mode $\bm{m}_j$ can be arbitrarily slow, as opposed to our rate in Lemma~\ref{lem:emp_mode_proj} of \autoref{app:lem_stab_emp_proj}.
This is a mild non-degeneracy condition, and the two-Gaussian mixture example in \autoref{app:sep_repulsion_example} again provides a concrete setting in which it holds.

Recall from \autoref{subsec:mode_consistency} that $\mathcal{M}=\left\{\bm{m}_1,...\bm{m}_{|\mathcal{M}|}\right\}$ denotes the set of local modes of $p$, and we assume that $p(\bm{m}_1)> \cdots > p(\bm{m}_{|\mathcal{M}|})$. For each $j\geq 2$, $\bm{z}_j=\Pi_{\overline{\mathcal{U}}_j}(\bm{m}_j)$ and $\psi_j=\norm{\bm{m}_j-\bm{z}_j}$ in \eqref{mode_proj_upper}. Under Assumption~\ref{assump:unique_proj}, there is a unique index
$\pi(j)$ such that $\bm z_j\in \mathcal C_{\pi(j)}$.

We define a weighted directed graph $G_{\mathcal{M}}$ with vertex set $\mathcal{M}$ and edges $\bm{m}_j \to \bm{m}_{\pi(j)}$ for $j=2,...,|\mathcal{M}|$, where the edge from $\bm{m}_j$ has weight $\psi_j$. Since $p(\bm z_j)=p(\bm m_j)$, $\nabla p(\bm z_j)\neq \bm{0}$, and the gradient flow from $\bm z_j$ converges to $\bm m_{\pi(j)}$, we know that $p(\bm{m}_{\pi(j)}) > p(\bm{m}_j)$. Thus, the modal density strictly increases along every directed edge, and $G_{\mathcal{M}}$ is a rooted tree with root $\bm{m}_1$.

Let $\mathcal{P}(\mathcal{M})$ be the collection of all partitions of $\mathcal{M}$. For $\lambda \geq 0$, we define a mapping $\mathcal{T}_{\rm mode}:=\mathcal{T}_{G_{\mathcal{M}}}$ from $[0,\infty)$ to $\mathcal{P}(\mathcal{M})$ by letting  $\mathcal{T}_{\rm mode}(\lambda)$ be the partition of $\mathcal{M}$ obtained by retaining the edges of $G_{\mathcal{M}}$ with weights at most $\lambda$. We call $\mathcal{T}_{\rm mode}$ the \emph{modal distance dendrogram} (or cluster tree). Let
$\mathrm{Path}_{G_{\mathcal M}}(\bm m_j,\bm m_k)$ denote the
edge set of the unique undirected path in $G_{\mathcal{M}}$ joining $\bm m_j$ and $\bm m_k$.
Its associated ultrametric is defined by $u_{\mathcal{M}}(\bm{m}_j,\bm{m}_j)=0$ and
\begin{align*}
u_{\mathcal{M}}(\bm{m}_j,\bm{m}_k) &= \max\left\{\psi_{\ell}: \left(\bm{m}_{\ell},\bm{m}_{\pi(\ell)}\right) \in \mathrm{Path}_{G_{\mathcal M}}(\bm m_j,\bm m_k)\right\}.
\end{align*}

\begin{figure}[t]
\centering
\resizebox{\textwidth}{!}{%
	\input{Figures/Cluster_Tree.tikz}
}
\caption{Illustration of a one-dimensional density $p$ for which the modal distance dendrogram $\mathcal{T}_{\rm mode}$ and the density cluster tree $\mathcal{T}_p$ have different merge topologies. The modal projection distances satisfy $\psi_4<\psi_2 \ll\psi_3$, so $\mathcal{T}_{\rm mode}$ in panel (b) first joins the nearby pairs $(\bm{m}_1,\bm{m}_2)$ and $(\bm{m}_3,\bm{m}_4)$ before joining the two pairs. The modal density values satisfy $p(\bm{m}_1) > \cdots > p(\bm{m}_4)$, so the density cluster tree in panel (c) joins $\bm{m}_1,...,\bm{m}_4$ consecutively.
}
\label{fig:Tmode_Tp}
\end{figure}

\begin{remark}
The modal distance dendrogram $\mathcal T_{\rm mode}$ need not have the same merge topology as the density cluster tree $\mathcal T_p$ associated with the same density $p$; see \autoref{fig:Tmode_Tp} for an illustrative example. The distinction arises because the merge heights of $\mathcal T_{\rm mode}$ are determined by the geometric projection distances $\psi_j,j=2,...,|\mathcal{M}|$, whereas those of the usual density cluster tree are determined by density levels of $p$ at its saddle points. 
\end{remark}

We now establish convergence of the GGDPC dendrogram $\mathcal{T}_G$ to the modal distance dendrogram $\mathcal{T}_{\rm mode}$ under the Gromov-Hausdorff distance as $n\to \infty$.

\begin{theorem}[Gromov-Hausdorff convergence of the GGDPC dendrogram]
\label{thm:conv_ggdpc_dendro}
Suppose that Assumptions~\ref{assump:density_reg}, \ref{assump:sample_den_diff}, and \ref{assump:unique_proj} hold. Let $a_n\to \infty$ be any deterministic sequence. Assume further that $\eta_n=o(1)$, $\frac{a_n q_n}{\eta_n} = o(1)$, $\norm{\hat{p} - p}_{\infty}=o_P(1)$, $a_n\norm{\hat{g}-\nabla p}_{\infty}= o_P(1)$, and $\norm{\nabla^2\hat{p} - \nabla^2 p}_{\infty}=o_P(1)$ with $q_n=\left(\frac{\log n}{n}\right)^{\frac{1}{d}}$. Then, 
\begin{align*}
	d_{\rm den}\left((\mathcal{T}_G,\mathbb{X}_n), (\mathcal{T}_{\rm mode}, \mathcal{M})\right) &= O_P\left(\eta_n + a_n\left[\frac{q_n}{\eta_n} + \norm{\hat{g}-\nabla p}_{\infty}\right]\right).
\end{align*}
\end{theorem}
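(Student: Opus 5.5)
We bound the Gromov-Hausdorff distance by exhibiting an explicit correspondence and estimating its distortion. We use $\mathrm{GH}\leq \frac12 \mathrm{dis}(\mathcal R)$ for the correspondence
$$\mathcal{R}=\left\{(\bm X_i,\bm m_j): \bm X_i\in \mathcal{C}_j\right\}.$$
With probability tending to one, every basin contains observations, so $\mathcal R$ is a genuine correspondence between $\mathbb{X}_n$ and $\mathcal M$. For pairs $(\bm X_i,\bm m_j),(\bm X_k,\bm m_l)\in\mathcal R$ we must show
$$\left|u_{\mathbb X_n}(\bm X_i,\bm X_k)-u_{\mathcal M}(\bm m_j,\bm m_l)\right| = O_P\left(\eta_n + a_n\left[q_n/\eta_n+\norm{\hat g-\nabla p}_\infty\right]\right)$$
uniformly. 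The natural split is into \emph{non-modal} edges of $G$ (sources not near any mode) and \emph{modal} edges (sources at the sample cluster centers $\hat{\bm m}_{j,n}$).

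\textbf{Step 1 (non-modal edges are short).} For $\bm X_i$ outside $B(\bm m,r)$ for every $\bm m\in\mathcal M$, the gradient step moves a distance of order $\eta_n\norm{\nabla p}$. The sample covers $\mathcal C$ at scale $q_n$, so some observation lies within $q_n$ of $\bm X_i+\eta_n\hat g(\bm X_i)$. Because $\hat p$ increases at first order along this step (this uses $q_n/\eta_n\to0$, $\eta_n\to 0$ and the sup-norm consistency), that observation is admissible. Hence $\hat w_n(\bm X_i)\lesssim \eta_n+q_n$. Near a mode, the DPC-type argument from the proof of Theorem~\ref{thm:modal_const} shows that every observation other than the local sample maximum has an uphill neighbour within $o_P(1)$. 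Consequently, with probability tending to one, every edge weight is either $o_P(1)$ or comes from one of the $|\mathcal M|-1$ centers $\hat{\bm m}_{j,n}$. By Theorem~\ref{thm:modal_const} with $\lambda$ below $\min_j\psi_j$, these centers are consistent for the modes.

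\textbf{Step 2 (modal edges and their targets).} For $j\ge2$, let $\hat{\bm z}_{j,n}=\hat\Phi_n(\hat{\bm m}_{j,n})$. Lemma~\ref{lem:emp_mode_proj} gives $\norm{\hat{\bm z}_{j,n}-\bm z_j}\to0$ at a rate controlled by Assumption~\ref{assump:unique_proj}(b), and we would invoke it as a black box. It follows that
$$|\hat w_n(\hat{\bm m}_{j,n})-\psi_j|=O_P\left(\eta_n+q_n+\norm{\hat g-\nabla p}_\infty\right),$$
with the $\eta_n$ term coming from the gradient shift at $\hat{\bm m}_{j,n}$, where $\hat g$ is small. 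The remaining task is to show that the directed path from $\hat{\bm z}_{j,n}$ terminates in the component of $\hat{\bm m}_{\pi(j),n}$. Assumption~\ref{assump:unique_proj}(a) guarantees that $\bm z_j$ lies at positive distance from $\partial\mathcal C_{\pi(j)}$. We then use the path-stability results of \autoref{sec:stability}: the GGDPC path from a point at distance $\delta$ from the separatrix stays within $O(a_n[q_n/\eta_n+\norm{\hat g-\nabla p}_\infty])$ of the gradient flow, uniformly in probability. This is where $a_n\to\infty$ enters, as a slack factor converting $O_P$ into an event of probability tending to one. By the flow separation lemma (Lemma~\ref{lem:flow_separation_sep}), the path therefore remains in $\mathcal C_{\pi(j)}$ and reaches $\hat{\bm m}_{\pi(j),n}$. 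The same argument applied to a generic $\bm X_i\in\mathcal C_j$ shows that its path reaches $\hat{\bm m}_{j,n}$, except for observations within the error scale of $\partial\mathcal C_j$.

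\textbf{Step 3 (comparing ultrametrics).} With probability tending to one, $G$ is therefore a tree whose heavy edges are exactly the $|\mathcal M|-1$ modal edges. These connect the clusters according to $G_{\mathcal M}$, with weights $\psi_j+O_P(\cdot)$, and all other edges have weight $o(1)<\min_j\psi_j/2$. The path between $\bm X_i$ and $\bm X_k$ thus crosses exactly the modal edges on $\mathrm{Path}_{G_{\mathcal M}}(\bm m_j,\bm m_l)$. Its maximum weight equals $u_{\mathcal M}(\bm m_j,\bm m_l)$ up to the stated error when $j\ne l$, and is $O(\eta_n+q_n)$ when $j=l$. Observations near separatrices may be assigned to a neighbouring mode. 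To handle them, we redefine $\mathcal R$ to pair each observation with the mode its GGDPC path actually reaches, which remains a valid correspondence, so these observations cost nothing.

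The main obstacle is Step 2 together with the misassignment issue. We need uniform control of the entire path from $\hat{\bm z}_{j,n}$ to the right mode, plus the sharp rate for $\hat{\bm z}_{j,n}\to\bm z_j$ under the projection non-degeneracy. Here $\norm{\nabla^2\hat p-\nabla^2p}_\infty=o_P(1)$ is used to locate $\hat{\bm m}_{j,n}$ and the sample level set $\{\hat p>\hat p(\hat{\bm m}_{j,n})\}$ accurately.
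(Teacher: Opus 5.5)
Your architecture matches the paper's. Theorem~\ref{thm:modal_const} isolates the $|\mathcal M|-1$ heavy edges $\hat{\bm m}_{j,n}\to\hat{\bm z}_{j,n}$, and Lemma~\ref{lem:emp_mode_proj} plus path tracking puts $\hat{\bm z}_{j,n}$ in the component of $\hat{\bm m}_{\pi(j),n}$. Your redefined $\mathcal R$, pairing each observation with the mode its truncated path reaches, is exactly the paper's $F_n$. The distortion is then bounded by the largest non-modal edge weight plus $\max_j|\hat\psi_{j,n}-\psi_j|$.

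The gap is in the first of these two terms, which is where the theorem's rate comes from. Your Step~1 argument needs the gain $\eta_n\norm{\nabla p(\bm X_i)}^2$ to dominate the losses $q_n\norm{\nabla p(\bm X_i)}$ and $\eta_n\norm{\nabla p(\bm X_i)}\,\norm{\hat g-\nabla p}_\infty$, i.e.\ $\norm{\nabla p(\bm X_i)}\gg q_n/\eta_n+\norm{\hat g-\nabla p}_\infty$. This fails in shrinking neighbourhoods of \emph{every} critical point of $p$. In particular it fails near the saddle points, which lie outside your fixed modal balls and which you never treat. Near the modes you only cite the argument of Theorem~\ref{thm:modal_const}. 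That argument gives $\hat w_n(\bm X_i)\leq 2r_j+o_P(1)$ for a \emph{fixed} radius $r_j$, which is neither vanishing nor a rate. So your Step~3 claim that the within-cluster ultrametric is $O(\eta_n+q_n)$ is unsupported, and nothing in your argument produces the term $a_n[q_n/\eta_n+\norm{\hat g-\nabla p}_\infty]$. Treating $a_n$ as a slack factor in the path tracking is not where it is needed.

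The missing idea is to cut out balls of shrinking radius $\epsilon_n=a_n[q_n/\eta_n+\norm{\hat g-\nabla p}_\infty]\to 0$ around all critical points, and treat three cases.
\begin{itemize}
\item \textbf{Outside these balls:} Morse non-degeneracy gives $\norm{\nabla p}\gtrsim\epsilon_n$, so your gradient-step argument goes through and yields $\hat w_n=O_P(\eta_n+q_n)$.
\item \textbf{Inside $B(\bm m_j,\epsilon_n)$:} the sample mode $\hat{\bm m}_{j,n}$ is admissible and lies within $o_P(\epsilon_n)$ of $\bm m_j$, giving $O_P(\epsilon_n+\eta_n)$.
\item \textbf{Inside $B(\bm s,\epsilon_n)$ for a saddle $\bm s$:} step a distance $C\epsilon_n$ along a unit eigenvector $\bm v_{\bm s}$ for a positive eigenvalue of $\nabla^2p(\bm s)$. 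Use $\bm v_{\bm s}^T\nabla^2\hat p\,\bm v_{\bm s}\geq\rho_{\bm s}$ near $\bm s$; this is where $\norm{\nabla^2\hat p-\nabla^2p}_\infty=o_P(1)$ is needed, beyond Lemma~\ref{lem:emp_mode_proj}. Combined with $\sup_{B(\bm s,\epsilon_n)}\norm{\hat g}=O_P(\epsilon_n)$, the quadratic gain of order $C^2\epsilon_n^2$ beats the first-order term (at most of order $C\epsilon_n^2$) and the $O(\epsilon_n q_n)$ sampling loss. Hence an admissible observation lies within $O_P(\epsilon_n)$.
\end{itemize}

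Two smaller points.
\begin{itemize}
\item Lemma~\ref{lem:flow_separation_sep} and the path-stability results of Section~\ref{sec:stability} require Assumptions~\ref{assump:sep_repulsion} and~\ref{assump:C1_linear}, which are not hypotheses of this theorem. Since $\bm z_j$ is at fixed positive distance from $\partial\mathcal C_{\pi(j)}$, a fixed-horizon tracking bound of order $\eta_n+q_n/\eta_n+\norm{\hat g-\nabla p}_\infty=o_P(1)$ plus compactness is enough.
\item Lemma~\ref{lem:emp_mode_proj}, used as a black box, gives $|\hat\psi_{j,n}-\psi_j|=O_P(\sqrt{q_n}+\norm{\hat g-\nabla p}_\infty)$ rather than your stated rate. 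This is harmless, since $\sqrt{q_n}\leq\eta_n+q_n/\eta_n$.
\end{itemize}
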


The proof of \autoref{thm:conv_ggdpc_dendro} is in \autoref{app:proof_conv_ggdpc_dendro}. The limiting behavior can be understood through the 1NN uphill distance $\hat{w}_n(\bm{x})=\norm{\bm{x} - \hat{\Phi}_n(\bm{x})}$ in \eqref{1NN_GGDPC_dist}. As $n\to \infty$, the 1NN uphill distance associated with non-modal observations vanishes, whereas the shifts associated with local modes (or cluster centers) converge to the corresponding population modal projection distances. Consequently, the small-scale branches of the empirical dendrogram collapse, while its persistent upper level structure converges to the modal distance dendrogram. This behavior is illustrated in the bottom right panel of \autoref{fig:old_faithful}. Under the choice $\eta_n\asymp \sqrt{q_n}$,  with $a_n\to\infty$ arbitrarily slowly, and a (boundary-corrected) KDE for $\hat{g}$ with bandwidth $h_n \asymp \left(\frac{\log n}{n}\right)^{\frac{1}{d+6}}$, 
$$d_{\rm den}\left((\mathcal{T}_G,\mathbb{X}_n), (\mathcal{T}_{\rm mode}, \mathcal{M})\right) = O_P\left(\left(\frac{\log n}{n}\right)^{\min\left\{\frac{1}{2d}, \frac{2}{d+6}\right\}}\right)$$
up to an arbitrarily slowly diverging factor. Thus, the GGDPC
dendrogram converges at the same polynomial rate as the ARI in
\eqref{ARI_rate_kde}.

\section{Stability of GGDPC Paths}
\label{sec:stability}

In this section, we study the path-length stability of GGDPC, \emph{i.e.}, whether the length of its 1NN uphill path from any starting point to a density mode converges to the length of the corresponding population gradient ascent flow from the same starting point. 

Let $\mathcal{C}_a$ be a basin of attraction of $\bm{m}^*:=\bm{m}_j \in \mathcal{M}$. For a starting point $\bm{x}\in \mathcal{C}_a$, we define the path length of the population gradient ascent flow to $\bm{m}^*$ by $L(\bm{x}):= \int_0^\infty \norm{\nabla p(\bm{\gamma}_{\bm{x}}(t))}\,dt$. We also define the maximum distance from a point in the closure $\overline{\mathcal{C}}_a$ of $\mathcal{C}_a$ to its nearest observation by
\begin{equation}
	\label{max_1nn_dist}
	\zeta_n:=\sup_{\bm{x}\in \overline{\mathcal{C}}_a}\min_{1\leq i\leq n}\norm{\bm{x}-\bm{X}_i}.
\end{equation}
Under Assumption~\ref{assump:density_reg}(a), Lemma~\ref{lem:1NN_sup_bnd} shows that $\zeta_n = O_P(q_n)$ with $q_n:=\left(\frac{\log n}{n}\right)^{\frac{1}{d}}$.
We first study an oracle version of GGDPC in which both $p$ and $\nabla p$ are known, and then turn to the statistical procedure based on the estimated density $\hat{p}$ and gradient $\hat{g}$.

\subsection{Oracle GGDPC Path Length}
\label{subsec:stability_popul}

Suppose that $p$ and $\nabla p$ are known with $\hat p=p$ and $\hat g=\nabla p$ in GGDPC. Compared with \eqref{gg_1NN_uphill}, the oracle version of the gradient-guided 1NN uphill point is defined by
\begin{equation}
	\label{gg_1NN_uphill_popul}
	\Phi_n(\bm{x}) \in \argmin_{\bm{X}_i\in \mathbb{X}_n} \left\{\norm{\bm{X}_i- (\bm{x}+\eta_n \nabla p(\bm{x}))}: p(\bm{X}_i) > p(\bm{x}) \right\},
\end{equation}
and the associated shifted vector is $\bm{S}_n(\bm{x}):=\Phi_n(\bm{x}) - \bm{x}$. For a starting point $\bm{x}\in \mathcal{C}_a$, the oracle GGDPC iterative path evolves according to
$$\bm{Y}_0^{(n)}=\bm{x},
\qquad \bm{Y}_{k+1}^{(n)}=\Phi_n(\bm{Y}_k^{(n)}).$$
Let 
\begin{equation}
\label{oracle_ggdpc_stop_time}
T_n:=\inf\left\{k\geq 0:\Phi_n(\bm Y_k^{(n)})\text{ is undefined or does not belong to } \mathcal C_a\right\}.
\end{equation}
Thus, $\bm Y_{T_n}^{(n)}$ is the terminal vertex of the path within $\mathcal{C}_a$, and its outgoing edge, if any, is excluded.
Write $L_n(\bm{x}):=\sum_{k=0}^{T_n-1}\norm{\bm{S}_n(\bm{Y}_k^{(n)})}$.
We derive an ``almost uniform'' path-length stability result of GGDPC over the entire basin of attraction $\mathcal{C}_a$, allowing the starting point to approach the basin boundary $\partial\mathcal{C}_a$ as $n\to \infty$. The main difficulty is to ensure that the oracle GGDPC path remains in the same basin of attraction as its population gradient-flow trajectory. This requires additional regularity conditions controlling how the gradient flow separates from the basin boundary $\partial\mathcal{C}_a$.

\begin{assump}[$C^1$-linearization]
\label{assump:C1_linear}
For every saddle point $\bm{s}$ whose stable manifold intersects $\partial\mathcal{C}_a$, the gradient vector field $\nabla p$ is $C^1$-linearizable in a neighborhood of $\bm{s}$.
\end{assump}

The stable manifold of a critical point $\bm{s}\in \mathcal{C}$ consists of all points in $\mathcal{C}$ whose gradient ascent flows converge to $\bm{s}$. Assumption~\ref{assump:C1_linear} requires smooth linearization only in neighborhoods of saddle points that may lie on basin boundaries. A vector field is said to be $C^1$-linearizable near a critical point if, after a local continuously differentiable change of coordinates, its flow behaves like a linear vector field.
Formal definitions of stable and unstable manifolds and smooth linearization, together with some sufficient conditions for Assumption~\ref{assump:C1_linear}, are provided in \autoref{app:tech_dynamic}. The two-Gaussian mixture example in \autoref{app:sep_repulsion_example} also satisfies this linearization condition.

\begin{lemma}[Gradient flow separation from the separatrix]
	\label{lem:flow_separation_sep}
	Suppose that Assumptions~\ref{assump:density_reg}, \ref{assump:sep_repulsion}, and \ref{assump:C1_linear} hold. Then, there exist constants $C_{\mathcal{S}}\in(0,1)$ and $r_0>0$ such that, for every $0<r\leq r_0$ and every  $\bm{x}\in\mathcal{C}_a$ satisfying $d(\bm{x},\partial \mathcal{C}_a)\geq r$, we have that $$d(\bm{\gamma}_{\bm{x}}(t),\partial\mathcal{C}_a)\geq C_{\mathcal{S}}\cdot r
	\quad \text{ for all } \quad t\geq 0.$$
	Consequently, $U_{\frac{C_{\mathcal{S}}r}{2}}(\bm{x})
	:=\left\{\bm{y}\in \mathcal{C}:\inf_{t\geq 0}\norm{\bm{y}-\bm{\gamma}_{\bm{x}}(t)}\leq \frac{C_{\mathcal{S}}\cdot r}{2}\right\}
	\subset\mathcal{C}_a$.
\end{lemma}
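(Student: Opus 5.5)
The plan is to study the distance function $D(t):=d(\bm{\gamma}_{\bm{x}}(t),\partial\mathcal{C}_a)$ along the flow and show it can never fall below a fixed fraction of its initial value. The argument splits $\mathcal{C}_a$ near $\partial\mathcal{C}_a$ into two regimes. The \emph{regular regime} consists of points whose nearest boundary point lies in $\mathcal S_{\rm reg}$, outside small fixed neighborhoods $N_{\bm s}$ of the boundary saddle points. The \emph{saddle regime} consists of points inside some $N_{\bm s}$. Away from both regimes, i.e.\ when $D(t)\geq r_{\mathcal S}$, there is nothing to prove once $r_0<r_{\mathcal S}$. The constant $C_{\mathcal S}$ will be the minimum of the constants produced in the two regimes.

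\textbf{Regular regime.} Write $\bm y=\bm s+D\nu(\bm s)$ with $\bm s=\Pi_{\partial\mathcal{C}_a}(\bm y)$, which is a singleton by Assumption~\ref{assump:sep_repulsion}. Where $D$ is differentiable, $D'(t)=\nu(\bm s)^T\nabla p(\bm y)$. Since $\partial\mathcal{C}_a$ is invariant under the flow, $\nabla p(\bm s)$ is tangent to it, so $\nu(\bm s)^T\nabla p(\bm s)=0$. A Taylor expansion then gives
$$\nu(\bm s)^T\nabla p(\bm y)=D\,\nu^T\nabla^2p(\bm s)\nu+O(D^2)\geq \tfrac{\rho_{\mathcal S}}{2}D>0$$
for $D$ small, using the uniform third-derivative bound in Assumption~\ref{assump:density_reg}(b). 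At points of non-differentiability I will argue with Dini derivatives, using that $D$ is $1$-Lipschitz in space. Hence $D$ is nondecreasing while the trajectory stays in this regime and below $r_{\mathcal S}$. A trajectory can lose distance only while it is in the saddle regime.

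\textbf{Saddle regime (main obstacle).} Near each boundary saddle $\bm s$, Assumption~\ref{assump:C1_linear} provides a $C^1$ diffeomorphism $h$ with $\|Dh\|,\|Dh^{-1}\|\leq \Lambda$ that conjugates the flow to the linear flow $\dot{\bm u}=A\bm u$. Here $A=\nabla^2p(\bm s)$ has one positive and $d-1$ negative eigenvalues. Since $\bm s$ is a boundary saddle for the ascent flow, its stable manifold is $(d-1)$-dimensional and lies in $\partial\mathcal C_a$; it is mapped to the hyperplane $\{u_1=0\}$, where $u_1$ is the coordinate along the unstable (positive) eigendirection. In linear coordinates $|u_1(t)|=|u_1(0)|e^{\lambda_+t}$ is nondecreasing. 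The point to check is that the stable manifold is the only part of $\partial\mathcal C_a$ inside $N_{\bm s}$, which requires $N_{\bm s}$ small and the other boundary pieces to stay away from $\bm s$. Given that, within $N_{\bm s}$ the Euclidean distance to $\partial\mathcal C_a$ is comparable, via $\Lambda$, to $|u_1|$. So while the trajectory is inside $N_{\bm s}$, $D$ can shrink by at most a factor $\Lambda^{-2}$. The trajectory exits $N_{\bm s}$ (or converges to the mode inside it) and then re-enters the regular regime, where $D$ is again nondecreasing.

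\textbf{Gluing.} I will argue that each saddle neighborhood is visited at most once along a trajectory. Repeated passages are ruled out because $p$ strictly increases along the flow and $p(\bm s)$ is attained only on a level set; choosing the neighborhoods $N_{\bm s}$ small enough that $p$-values separate them gives finitely many (at most the number of saddles) multiplicative losses. This yields $C_{\mathcal S}=\Lambda^{-2K}$ (capped below $1$), where $K$ is the number of saddles. The constant $r_0$ is chosen so that $r_0<r_{\mathcal S}$ and so that the Taylor remainder is dominated. Finally, if $\bm y\in U_{C_{\mathcal S}r/2}(\bm x)$, then $\bm y$ is within $C_{\mathcal S}r/2$ of some $\bm\gamma_{\bm x}(t)$, whose distance to $\partial\mathcal C_a$ is at least $C_{\mathcal S}r$. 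The segment from $\bm\gamma_{\bm x}(t)$ to $\bm y$ therefore avoids $\partial\mathcal C_a$ and $\bm y\in\mathcal C_a$, giving the stated inclusion.
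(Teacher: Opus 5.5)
Your proposal is correct and follows essentially the same route as the paper's proof. Near the regular part of the separatrix, a Taylor expansion of $\nu(\bm s)^T\nabla p$, using invariance of the separatrix and the normal Hessian bound, makes the distance nondecreasing; near each boundary saddle, $C^1$-linearization makes the distance comparable to the monotonically growing unstable coordinate; then one takes the minimum of finitely many constants and gets the tube inclusion from the triangle inequality. Your explicit gluing step, which bounds the number of saddle passages via strict monotonicity of $p$, is something the paper leaves implicit. To make ``visited at most once'' airtight, nest each small $N_{\bm s}$ inside a larger linearizing neighborhood, chosen so that an orbit leaving the larger one cannot regain $N_{\bm s}$, and track $|u_1|$ over the whole sojourn there.
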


The proof of Lemma~\ref{lem:flow_separation_sep} is in \autoref{app:proof_flow_sep}. We now present the path-length stability result toward the separatrix by allowing the initial distance $d(\bm{x},\partial\mathcal{C}_a)$ from the separatrix to shrink with $n$.

\begin{theorem}[Stability of the oracle GGDPC path]
	\label{thm:GGDPC_path_full}
	Suppose that Assumptions~\ref{assump:density_reg}, \ref{assump:sep_repulsion}, and \ref{assump:C1_linear} hold. Let $\delta_n \downarrow 0$ be some deterministic sequence, and let $C_{\mathcal{S}}>0$ be the constant in Lemma~\ref{lem:flow_separation_sep}. If $\eta_n+ \frac{q_n}{\eta_n}=o(\delta_n)$ and $\frac{q_n\log n}{\eta_n^{d+1}}=o(1)$ with $q_n = \left(\frac{\log n}{n}\right)^{\frac{1}{d}}$, then
	$$\sup_{\bm x\in\mathcal C_a\ominus(2\delta_n/C_{\mathcal{S}})} \left|L_n(\bm{x})-L(\bm{x})\right| =O_P\left(\frac{q_n}{\eta_n}\left|\log\left(\frac{q_n}{\eta_n}\right)\right| +\frac{q_n}{\eta_n\delta_n} + \frac{\eta_n}{\delta_n} + \frac{q_n\log n}{\eta_n^{d+1}} \right).$$
\end{theorem}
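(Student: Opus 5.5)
We compare the oracle GGDPC path with the population gradient flow by viewing the GGDPC step as a perturbed Euler step for \eqref{gradient_flow} with time step $\eta_n$.

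\textbf{Step 1: one-step accuracy.} Fix $\bm{y}$ in the basin with $\norm{\nabla p(\bm{y})}$ not too small. The target $\bm{y}+\eta_n\nabla p(\bm{y})$ has density larger than $p(\bm{y})$ by about $\eta_n\norm{\nabla p(\bm{y})}^2$. By the covering bound $\zeta_n=O_P(q_n)$ of Lemma~\ref{lem:1NN_sup_bnd}, there is an observation within $\zeta_n$ of the target. If $q_n\norm{\nabla p}\ll \eta_n\norm{\nabla p}^2$, the uphill constraint is then inactive near the target. This gives
\[
\norm{\bm{S}_n(\bm{y})-\eta_n\nabla p(\bm{y})}\le \zeta_n
\]
uniformly over $\{\norm{\nabla p(\bm{y})}\gtrsim (q_n/\eta_n)^{1/2}\}$ or a similar region. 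Consequently $\norm{\bm{S}_n(\bm{y})} = \eta_n\norm{\nabla p(\bm{y})} + O(q_n)$. Comparing with the exact flow map over time $\eta_n$, whose local error is $O(\eta_n^2)$ by the $C^3$ bounds, the per-step error has a deterministic part $O(\eta_n^2)$ and a sampling part $O(q_n)$.

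\textbf{Step 2: staying in the basin and global error along the flow-away-from-mode region.} I will run a Gronwall-type argument on the discrete path. Deviations are propagated by the flow's linearization. Away from saddles, the Jacobian of the flow map is bounded on finite times. Near the mode $\bm{m}^*$, the negative-definite Hessian makes the flow contracting. Near boundary saddles, I use Assumption~\ref{assump:C1_linear} to work in linear coordinates, where passage time is logarithmic in the distance to the stable manifold. The accumulated positional deviation should be $O(\eta_n+q_n/\eta_n)$, after amplification controlled by the separation $\delta_n$, which yields the factors $1/\delta_n$. This is $o(\delta_n)$ by hypothesis. Lemma~\ref{lem:flow_separation_sep} then keeps the flow from any $\bm{x}\in\mathcal C_a\ominus(2\delta_n/C_{\mathcal S})$ at distance $\ge 2\delta_n$ from $\partial\mathcal{C}_a$. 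Hence the GGDPC path remains in the tube $U_{\delta_n}$ inside $\mathcal{C}_a$, so $T_n$ is not triggered by leaving the basin prematurely.

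\textbf{Step 3: length comparison.} I split the path at the first time it enters a ball $B(\bm{m}^*,r_n)$, with $r_n$ chosen to be a suitable power of $q_n/\eta_n$. Outside the ball, summing Step 1 gives
\[
L_n=\sum_k \eta_n\norm{\nabla p(\bm{Y}_k)}+O\bigl(\#\text{steps}\cdot q_n\bigr).
\]
This Riemann sum approximates $\int\norm{\nabla p(\bm{\gamma})}dt$ with errors carried by the Step 2 deviation. Exponential convergence near the nondegenerate mode makes the number of steps $O(\eta_n^{-1}\log(1/r_n))$. This produces the term $\frac{q_n}{\eta_n}\lvert\log(q_n/\eta_n)\rvert$, while the terms $\frac{q_n}{\eta_n\delta_n}$ and $\frac{\eta_n}{\delta_n}$ come from the saddle-passage amplification. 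Inside the ball, the remaining flow length is $O(r_n)$. For the GGDPC residual length near the mode, where the gradient is tiny and steps behave like DPC 1NN hops, I bound the total length by a counting argument. Each hop strictly increases $p$, and a volume/covering bound on the number of sample points with larger density in shrinking balls of radius $\eta_n$ holds uniformly by a union bound over $O(n)$ starting points. This is the source of the $\frac{q_n\log n}{\eta_n^{d+1}}$ term, and it also controls where $T_n$ terminates.

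\textbf{Main obstacle.} I expect the hardest part to be Step 2 near boundary saddle points. There the flow lingers and deviations grow at rate $e^{\lambda_+ t}$, and I must show that the growth is compensated so the total amplification is only $O(1/\delta_n)$ uniformly in the starting point. I would handle this in the $C^1$-linearized coordinates by tracking the unstable coordinate explicitly. That coordinate starts at size $\gtrsim\delta_n$ and only has to reach $O(1)$, so the relative error stays bounded by the ratio of the perturbation to $\delta_n$.
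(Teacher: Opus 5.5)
Your outline works, and at the two delicate places it uses the paper's mechanisms. Near a boundary saddle you follow the unstable coordinate: it starts at size $\gtrsim\delta_n$ and grows geometrically, so the relative error is the accumulated perturbation divided by $\delta_n$. This is Lemma~\ref{lem:discrete_saddle_passage}; the paper runs that recurrence in a $C^2$ stable-manifold chart and uses the $C^1$ linearization only in Lemmas~\ref{lem:flow_separation_sep} and \ref{lem:L_holder}. In the modal core you count visited sample points, as in Lemma~\ref{lem:oracle_modal_core}.

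The genuine difference is the length comparison. The paper never compares the path with the flow in time. It uses $L_{\epsilon_n}$ as a Lyapunov function and the telescoping identity
\[
L_{n,\epsilon_n}(\bm x)-L_{\epsilon_n}(\bm x)=\sum_{k<T_{n,\epsilon_n}}\Bigl[\norm{\bm S_n(\bm Y_k^{(n)})}+L_{\epsilon_n}(\bm Y_{k+1}^{(n)})-L_{\epsilon_n}(\bm Y_k^{(n)})\Bigr].
\]
Each summand is then a local one-step defect. It is controlled by $\nabla L_{\epsilon}^T\nabla p=-\norm{\nabla p}$, by the bound $\norm{\nabla^2L_\epsilon(\bm x)}_2\lesssim 1/\max\{\epsilon,\norm{\bm x-\bm m^*}\}$ of Lemma~\ref{lem:flow_length_regularity}, and near a saddle by the local Lipschitz constant $1/|u_k|$ of $L_{\epsilon_n}$. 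The $1/\delta_n$ factors come from $\sum_k\eta_n/|u_k|\lesssim 1/\delta_n$, and errors never propagate along the path.

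Your Riemann-sum route needs no regularity of the flow-length function. In the linearizing chart it needs only Lipschitz continuity of the conjugacy, because you compare each step with the exact flow map $\bm\gamma_{\bm y}(\eta_n)$. The price is a time-synchronized deviation carried through the saddle, and there your Step~2 claim is wrong. After a passage, $\norm{\bm Y_k^{(n)}-\bm\gamma_{\bm x}(k\eta_n)}$ can be as large as order $\rho_n:=(\eta_n+q_n/\eta_n)/\delta_n$, essentially a time shift along the unstable manifold. This is $o(1)$ but not $o(\delta_n)$.

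That is harmless, so restate Step~2 this way:
\begin{enumerate}[label=(\alph*)]
\item Basin invariance near the saddle comes from the sign of $u$.
\item After exit, the trajectory stays a fixed distance from $\partial\mathcal C_a$, so an $o(1)$ deviation suffices.
\item You never need the tube $U_{\delta_n}$. You only need distance $\gtrsim\delta_n$ from the separatrix, so that $\norm{\nabla p}\gg q_n/\eta_n$ along the path.
\end{enumerate}
In fact, this $O(\rho_n)$ deviation, fed into the comparison of $\sum_k\eta_n\norm{\nabla p(\bm Y_k^{(n)})}$ with $\int_0^\infty\norm{\nabla p(\bm\gamma_{\bm x}(t))}\,dt$, is exactly what produces the terms $\eta_n/\delta_n+q_n/(\eta_n\delta_n)$.

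Three scales must also be corrected to reach the stated rate.

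(i) Your own admissibility inequality gives the one-step bound wherever $\norm{\nabla p(\bm y)}\geq Cq_n/\eta_n$ with $C$ large. So the core radius must be $r_n=Cq_n/\eta_n$, not $(q_n/\eta_n)^{1/2}$.

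(ii) In the core, two facts confine every later vertex to $B(\bm m^*,C_mr_n)$: $p$ is monotone along the path, and $\{\bm y\in\mathcal C_a: p(\bm y)\geq p(\bm m^*)-c_+r_n^2\}\subset B(\bm m^*,C_mr_n)$ by Lemma~\ref{lem:mode_geometry}. Hence each hop is $O(r_n)$, and a single count over this fixed-center ball suffices, with no union bound over starting points. This gives $r_n(nr_n^d+\log n)\lesssim q_n\log n/\eta_n^{d+1}$. With radius $\eta_n$ or $(q_n/\eta_n)^{1/2}$, the same count diverges.

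(iii) The local error against the exact flow is $O(\eta_n^2\norm{\nabla p(\bm y)})$, and you need this form rather than $O(\eta_n^2)$. Over the $O(\eta_n^{-1}\log(\eta_n/q_n))$ contracting steps near the mode, the refined error sums to $O(\eta_n)$. A crude $O(\eta_n^2)$ per step leaves $\eta_n\log(\eta_n/q_n)$. The oracle rate does not absorb that term in general, since no condition such as $\delta_n|\log(q_n/\eta_n)|=O(1)$ is assumed here.
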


The proof of \autoref{thm:GGDPC_path_full} is in \autoref{app:ggdpc_path_full_proof}. The term $\frac{q_n \log n}{\eta_n^{d+1}}$ comes from the modal core region $B\left(\bm{m}^*, \frac{Cq_n}{\eta_n}\right)$ for some constant $C>0$, where the 1NN sampling error is no longer negligible relative to the gradient ascent update. When $\delta_n$ is fixed, we obtain a faster convergence rate $O_P\left(\eta_n + \frac{q_n}{\eta_n}\left|\log\left(\frac{q_n}{\eta_n}\right)\right| + \frac{q_n\log n}{\eta_n^{d+1}} \right)$. The additional terms $\frac{q_n}{\eta_n\delta_n} + \frac{\eta_n}{\delta_n}$ in \autoref{thm:GGDPC_path_full} arise from controlling the GGDPC path near boundary saddle points. Choosing $\eta_n \asymp \left(q_n \log n\right)^{\frac{1}{d+2}}$ and $\delta_n \asymp \left(q_n \log n\right)^{\frac{1}{2(d+2)}}$ gives $\eta_n + \frac{q_n}{\eta_n} = o(\delta_n)$ and 
$$\sup_{\bm x\in\mathcal C_a\ominus(2\delta_n/C_{\mathcal{S}})}\left|L_n(\bm{x})-L(\bm{x})\right|=O_P\left(\left(q_n \log n\right)^{\frac{1}{2(d+2)}}\right) = O_P\left(n^{-\frac{1}{2d(d+2)}} \left(\log n\right)^{\frac{d+1}{2d(d+2)}}\right).$$

\subsection{Stability of the Sample GGDPC Path Length}
\label{subsec:stability_sample}

We now return to the statistical GGDPC procedure, where the density function $p$ and gradient $\nabla p$ are estimated by $\hat{p}$ and $\hat{g}$, respectively.
For any $\bm{x}\in\mathcal{C}_a$, we recall from \eqref{gg_1NN_uphill} that the gradient-guided 1NN uphill shift is defined as $\hat{\bm{S}}_n(\bm{x})=\hat{\Phi}_n(\bm{x})-\bm{x}$. The (sample) GGDPC path is then given by
\begin{equation}
\label{sample_ggdpc_path}
\hat{\bm Y}_0^{(n)}=\bm{x}, \quad \hat{\bm Y}_{k+1}^{(n)}=\hat{\Phi}_n(\hat{\bm Y}_k^{(n)}).
\end{equation}
Let $\hat{T}_n$ be defined analogously to $T_n$ in \eqref{oracle_ggdpc_stop_time}, with $\hat{\Phi}_n$ in place of $\Phi_n$. The sample GGDPC path length is $\hat{L}_n(\bm{x}) = \sum_{k=0}^{\hat{T}_n-1} \norm{\hat{\bm{S}}_n(\hat{\bm Y}_k^{(n)})}$.

\begin{theorem}[Stability of the sample GGDPC path]
\label{thm:GGDPC_estimated}
Suppose that Assumptions~\ref{assump:density_reg}, \ref{assump:sample_den_diff}, \ref{assump:sep_repulsion}, and \ref{assump:C1_linear} hold. Assume that, for a deterministic sequence $\delta_n\downarrow 0$,  $\eta_n+\frac{q_n}{\eta_n}=o(\delta_n)$, $\delta_n\left|\log\left(\frac{q_n}{\eta_n}\right)\right|=O(1)$, $\frac{q_n\log n}{\eta_n^{d+1}}=o(1)$, $\norm{\hat{p} - p}_{\infty}=o_P(1)$, $\norm{\hat{g} - \nabla p}_{\infty} = o_P(\delta_n)$, and $\norm{\nabla^2 \hat{p} - \nabla^2 p}_{\infty}=o_P(1)$ with $q_n = \left(\frac{\log n}{n}\right)^{\frac{1}{d}}$.
Then, 
$$\sup_{\bm x\in\mathcal C_a\ominus(2\delta_n/C_{\mathcal S})} \left|\hat{L}_n(\bm{x})-L(\bm{x})\right|
=O_P\left(\frac{q_n}{\eta_n}\left|\log\left(\frac{q_n}{\eta_n}\right)\right| +\frac{q_n}{\eta_n\delta_n} + \frac{\eta_n}{\delta_n} + \frac{q_n\log n}{\eta_n^{d+1}} +\frac{\norm{\hat{g} - \nabla p}_{\infty}}{\delta_n}\right),$$
where $C_{\mathcal{S}}>0$ is the constant in Lemma~\ref{lem:flow_separation_sep}.
\end{theorem}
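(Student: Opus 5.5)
We reduce the sample statement to the oracle one, \autoref{thm:GGDPC_path_full}. The idea is to treat the sample GGDPC map $\hat{\Phi}_n$ as a perturbation of the oracle map $\Phi_n$ and to rerun the oracle argument with perturbed inputs. Concretely, write the sample shift as
$$\hat{\bm S}_n(\bm x)=\eta_n\hat g(\bm x)+\bm e_n(\bm x),$$
where $\norm{\bm e_n(\bm x)}$ is the distance from the target point $\bm x+\eta_n\hat g(\bm x)$ to its nearest admissible observation. Two facts drive everything that follows.

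\begin{enumerate}[label=(\roman*)]
\item \textbf{Uphill admissibility.} Outside a modal core, the target point has strictly larger $\hat p$ than $\bm x$, up to errors. Indeed,
$$\hat p(\bm x+\eta_n\hat g)-\hat p(\bm x)\ge \eta_n\norm{\nabla\hat p(\bm x)}^2-C\eta_n^2$$
by Assumption~\ref{assump:sample_den_diff} with $\hat g=\nabla\hat p$ and bounded second derivatives. Using $\norm{\hat g-\nabla p}_\infty=o_P(\delta_n)$ and $\norm{\nabla^2\hat p-\nabla^2 p}_\infty=o_P(1)$, this lower bound is of order $\eta_n\norm{\nabla p}^2$. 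So every observation within $O(\zeta_n)$ of the target is admissible. By Lemma~\ref{lem:1NN_sup_bnd} this gives $\norm{\bm e_n}=O_P(q_n)$ uniformly, whenever $\norm{\nabla p(\bm x)}\gtrsim \sqrt{q_n/\eta_n}$. That is exactly the region outside the core ball $B(\bm m^*,Cq_n/\eta_n)$, after adjusting constants.
\item \textbf{Euler step.} The step is an explicit Euler step for $\nabla p$ with total perturbation
$$\eta_n(\hat g-\nabla p)+\bm e_n=\eta_n\,O_P\!\left(\norm{\hat g-\nabla p}_\infty+\frac{q_n}{\eta_n}\right).$$
\end{enumerate}

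Along the trajectory, the perturbation per unit flow time therefore has size
$$\epsilon_n:=\frac{q_n}{\eta_n}+\norm{\hat g-\nabla p}_\infty,$$
instead of the oracle size $q_n/\eta_n$. The plan is to follow the proof of \autoref{thm:GGDPC_path_full} step by step with $\epsilon_n$ in place of $q_n/\eta_n$ wherever the latter appears as a drift error. Where it appears as a pure sampling error, in the core and in the $|\log(q_n/\eta_n)|$ term, we keep $q_n/\eta_n$.

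The key steps, in order, are as follows.

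\begin{enumerate}[label=\textbf{Step \arabic*.}]
\item \textbf{Staying in the basin.} Show that with probability tending to one the sample path stays in $\mathcal C_a$. Start from $\bm x\in\mathcal C_a\ominus(2\delta_n/C_{\mathcal S})$. Lemma~\ref{lem:flow_separation_sep} keeps the flow $\bm\gamma_{\bm x}$ at distance at least $2\delta_n$ from $\partial\mathcal C_a$, so the tube $U_{\delta_n}(\bm x)$ lies in $\mathcal C_a$. A discrete Grönwall argument compares the Euler iterates with the flow. Away from saddle neighborhoods, the contraction toward the flow from Assumption~\ref{assump:sep_repulsion} and the $C^1$-linearization (Assumption~\ref{assump:C1_linear}) make the accumulated deviation $O(\eta_n+\epsilon_n)$, with no exponential blow-up. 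This is $o(\delta_n)$ by hypothesis.
\item \textbf{Saddle neighborhoods.} Use the linearizing coordinates. The unstable component of the deviation is amplified only by a factor $\lesssim 1/\delta_n$ during passage near the saddle. This reproduces the terms $\frac{\eta_n}{\delta_n}$ and $\frac{q_n}{\eta_n\delta_n}$, plus the new term $\frac{\norm{\hat g-\nabla p}_\infty}{\delta_n}$.
\item \textbf{Path-length comparison.} Write
$$\hat L_n-L=\sum_k\left(\norm{\hat{\bm S}_n(\hat{\bm Y}_k)}-\int_{t_k}^{t_{k+1}}\norm{\nabla p(\bm\gamma(t))}\,dt\right).$$
Each summand is bounded by $\eta_n\left(\norm{\hat g-\nabla p}_\infty+\text{deviation}\right)+\norm{\bm e_n}$.
\item \textbf{Summing the errors.} The error $\norm{\bm e_n}=O(q_n)$ accumulates over roughly $\eta_n^{-1}\log(q_n/\eta_n)^{-1}$ steps before the core. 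This yields the $\frac{q_n}{\eta_n}\left|\log\frac{q_n}{\eta_n}\right|$ term. The condition $\delta_n|\log(q_n/\eta_n)|=O(1)$ ensures that the gradient-error contribution $\norm{\hat g-\nabla p}_\infty\cdot|\log(q_n/\eta_n)|$ is dominated by $\norm{\hat g-\nabla p}_\infty/\delta_n$.
\item \textbf{Modal core.} In $B(\bm m^*,Cq_n/\eta_n)$, where $\hat g$ may point the wrong way, reuse the oracle counting bound. Each remaining step still moves to an observation of strictly higher $\hat p$, and $\hat p$ is within $o_P(1)$ of $p$ with the same local quadratic shape. The number of core observations, $O_P\big(n(q_n/\eta_n)^d\big)$, times the step length $O(q_n/\eta_n)$, gives the $\frac{q_n\log n}{\eta_n^{d+1}}$ term. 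The stopping index $\hat T_n$ is reached once the path exits $\mathcal C_a$ or reaches the sample mode, which lies within the core by \autoref{thm:modal_const}.
\end{enumerate}

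\textbf{Main obstacle.} The hardest step is Step 1 together with Step 2: uniform basin retention near boundary saddles when the drift error now contains the deterministic-looking but data-dependent bias $\hat g-\nabla p$. Unlike the 1NN error, this bias is not bounded by the sampling radius and does not average out. My first attempt would be to absorb it into the oracle proof's perturbation budget: check that the oracle argument used only the bound $\norm{\text{perturbation}}\le \epsilon\eta_n$ per step with $\epsilon=o(\delta_n)$, never any independence of the perturbation. If so, $\norm{\hat g-\nabla p}_\infty=o_P(\delta_n)$ is exactly what is needed. A secondary subtlety is that the density ordering now uses $\hat p$ rather than $p$. The monotonicity argument must therefore be run with $\hat p$ along the path, transferring to $p$ only through the uniform Hessian and gradient bounds.
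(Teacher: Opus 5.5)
Your overall architecture matches the paper's. You use the per-step representation $\hat\Phi_n(\bm y)=\bm y+\eta_n\nabla p(\bm y)+\bm R_n(\bm y)$ with $\norm{\bm R_n(\bm y)}\lesssim q_n+\eta_n\norm{\hat g-\nabla p}_\infty+\eta_n^2\norm{\nabla p(\bm y)}$ (Lemma~\ref{lem:sample_admissible}). You rerun the oracle saddle argument with this perturbation (Lemma~\ref{lem:sample_discrete_saddle_passage}), so your ``main obstacle'' resolves as you hoped. You take $O_P(\eta_n^{-1}|\log(q_n/\eta_n)|)$ steps in the modal annulus and finish with a counting bound in a core.

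\textbf{The modal region (genuine gap).} You place the core at $B(\bm m^*,Cq_n/\eta_n)$ and assert that the sample mode lies in it by Theorem~\ref{thm:modal_const}. That theorem only gives $\norm{\hat{\bm m}_{a,n}-\bm m^*}=O_P(q_n+s_n)$, where $s_n:=\norm{\hat g-\nabla p}_\infty$. The hypotheses do not order $s_n$ against $q_n/\eta_n$; under the KDE tuning discussed after the theorem, $s_n\gg q_n/\eta_n$ whenever $d\leq 4$. In that regime:
\begin{itemize}
\item The critical point $\tilde{\bm m}_n$ of $\hat p$ can sit at distance of order $s_n$ from $\bm m^*$. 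Since $\hat p$ strictly increases along the path, the path is drawn to $\tilde{\bm m}_n$ and may never enter your core.
\item On the annulus $Cq_n/\eta_n\leq\norm{\bm x-\bm m^*}\lesssim s_n$, your claim that the uphill gain is of order $\eta_n\norm{\nabla p(\bm x)}^2$ is false: $\nabla\hat p(\tilde{\bm m}_n)=\bm 0$, while $\norm{\nabla p(\tilde{\bm m}_n)}$ can be of order $s_n$. So neither the Euler comparison with $\nabla p$ nor your stopping rule is justified there.
\item Enlarging the counting core to radius $\epsilon_n:=C(q_n/\eta_n+s_n)$ does not help, because the counting bound becomes $n\epsilon_n^{d+1}$, which can diverge.
\end{itemize}
The missing idea is to switch reference points. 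First, stop the population comparison at $B(\bm m^*,\epsilon_n)$; Term I then costs $O(\epsilon_n)$ by Lemma~\ref{lem:mode_geometry}, which is absorbed in the rate. Inside, use Lemma~\ref{lem:estimated_critical_stability}: $\hat p$ has a unique critical point $\tilde{\bm m}_n$ within $O_P(s_n)$ of $\bm m^*$ and is uniformly strongly concave there. This gives the contraction $\norm{\hat\Phi_n(\bm y)-\tilde{\bm m}_n}\leq(1-c\eta_n)\norm{\bm y-\tilde{\bm m}_n}$ outside $B(\tilde{\bm m}_n,C'q_n/\eta_n)$, so the length accumulated down to that ball is $O(\epsilon_n)$. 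Only then do you count observations, in a core centered at $\tilde{\bm m}_n$ (Lemma~\ref{lem:sample_modal_completion}).

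\textbf{The admissibility threshold in (i).} The Taylor remainder is $O(\eta_n^2\norm{\hat g(\bm x)}^2)$, not $C\eta_n^2$. The loss from landing $\zeta_n$ away from the target must be bounded by $\zeta_n\norm{\hat g(\bm x+\eta_n\hat g(\bm x))}\lesssim\zeta_n\norm{\hat g(\bm x)}$, not by $O(\zeta_n)$. Done this way, admissibility holds whenever $\norm{\hat g(\bm x)}\gtrsim q_n/\eta_n$. Your threshold $\norm{\nabla p(\bm x)}\gtrsim\sqrt{q_n/\eta_n}$ describes a ball of radius of order $\sqrt{q_n/\eta_n}\gg q_n/\eta_n$; it is not the same core ``after adjusting constants.'' Counting in that larger ball would give $n(q_n/\eta_n)^{(d+1)/2}$ instead of $q_n\log n/\eta_n^{d+1}$.

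\textbf{The mechanism in Step 1.} Assumption~\ref{assump:sep_repulsion} repels trajectories from the separatrix, but it does not contract iterates toward the reference trajectory. By your own Step 2, the deviation can grow by a factor $1/\delta_n$ through a saddle, so an $o(\delta_n)$ tube around $\bm\gamma_{\bm x}$ need not survive a saddle passage. Basin retention should instead come from the geometric growth $|u_{k+1}|\geq(1+\lambda_j\eta_n/4)|u_k|$ of the unstable chart coordinate. The length comparison is also more robust done as in the paper: telescope $L_{\epsilon_n}$ along the iterates using the exact-flow endpoints $\bm\gamma_{\hat{\bm Y}_k}(\eta_n)$. This needs only the local Lipschitz bound $\lesssim 1/|u_k|$ for $L_{\epsilon_n}$, not time-synchronized tracking of $\bm\gamma_{\bm x}$.
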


The proof of \autoref{thm:GGDPC_estimated} is in \autoref{app:proof_ggdpc_estimated}. Compared with the oracle GGDPC version in \autoref{thm:GGDPC_path_full}, the convergence rate of the sample GGDPC path incurs an additional term $\frac{\norm{\hat{g} - \nabla p}_{\infty}}{\delta_n}$, which reflects the effect of gradient estimation. To make the rate explicit, let $\norm{\hat{g} - \nabla p}_{\infty}=O_P(s_n)$ for a deterministic sequence $s_n\downarrow 0$ and choose $\eta_n \asymp \left(q_n \log n\right)^{\frac{1}{d+2}}$ and $\delta_n \asymp \sqrt{\left(q_n \log n\right)^{\frac{1}{d+2}} + s_n}$. Provided that $\delta_n\left|\log\left(\frac{q_n}{\eta_n}\right)\right|=O(1)$, as holds whenever $s_n$ decays at a polynomial rate, these choices satisfy the conditions in \autoref{thm:GGDPC_estimated} and yield that  
$$\sup_{\bm x\in\mathcal C_a\ominus(2\delta_n/C_{\mathcal S})} \left|\hat L_n(\bm x)-L(\bm x)\right| = O_P\left(\sqrt{\left(q_n \log n\right)^{\frac{1}{d+2}} + s_n}\right).$$
Under Assumption~\ref{assump:density_reg}(b), together with standard kernel and support regularity conditions \citep{gine2002rates,einmahl2005uniform,chacon2011asymptotics}, we can optimally estimate $\hat g=\nabla \hat{p}$ using a (boundary-corrected) KDE with bandwidth $h_n\asymp\left(\frac{\log n}{n}\right)^{\frac{1}{d+6}}$, so that
$\norm{\hat{g}- \nabla p}_{\infty} = O_P(s_n)$ with $s_n=\left(\frac{\log n}{n}\right)^{\frac{2}{d+6}}$.
Consequently, we obtain that 
\begin{equation}
\label{path_length_rate_kde}
\sup_{\bm x\in\mathcal C_a\ominus(2\delta_n/C_{\mathcal S})}\left|\hat L_n(\bm{x})-L(\bm{x})\right| =
O_P\left(\left[\left(\frac{(\log n)^{d+1}}{n}\right)^{\frac{1}{d(d+2)}} + \left(\frac{\log n}{n}\right)^{\frac{2}{d+6}}\right]^{\frac{1}{2}}\right).
\end{equation}
In particular, gradient estimation determines the rate when $d = 1$, while the sample discretization of the GGDPC path determines the rate when $d>1$. The relatively slow rate in \eqref{path_length_rate_kde} is partly driven by sample discretization within modal neighborhoods, where the population gradient $\nabla p$ vanishes and thus provides little directional guidance for the 1NN uphill updates. In \autoref{sec:discussion}, we discuss a simple projection modification that maps observations sufficiently close to an estimated local mode directly to that mode, substantially improving the convergence rate of the GGDPC path length.


\section{Convergence of GGDPC Graph Distance and Density Waterfall}
\label{sec:conv_graph_dist}

As discussed in \autoref{subsec:dpc_review} and \autoref{sec:GGDPC}, the full DPC or GGDPC graph forms a directed acyclic graph (more precisely, an arborescence) oriented toward the sample global mode $\bm{X}^* \in \mathbb{X}_n$. In this section, we establish convergence of the GGDPC graph distance to a deterministic population analogue. This population quantity alternates between gradient flow lengths within the basins of attraction of local modes and modal projection distances between basins. It also provides a natural coordinate for summarizing the GGDPC graph and, when combined with the estimated density, yields a two-dimensional representation of the density landscape associated with each cluster.

\subsection{Stability of the GGDPC Graph Distance}
\label{subsec:conv_graph_dist}

Recall from \autoref{sec:GGDPC} that the full GGDPC graph $G$ is oriented toward the sample global mode $\bm{X}^*$. For any $\bm{X}_i\neq \bm{X}^*$, let $\bm{X}_i\equiv \bm{X}_{i_1} \to \bm{X}_{i_2} \to \cdots \to \bm{X}_{i_{T_i}} \equiv \bm{X}^*$ denote its unique directed path in $G$, where $\bm{X}_{i_{\ell +1}} = \hat{\Phi}_n(\bm{X}_{i_{\ell}})$ by \eqref{gg_1NN_uphill}. We define its GGDPC graph distance to the root $\bm{X}^*$ by
$$\hat{d}_G(\bm{X}_i) = \sum_{\ell=1}^{T_i-1} \norm{\bm{X}_{i_{\ell+1}} - \bm{X}_{i_\ell}}.$$
For any $\bm{x}\in \mathcal{C}$, let $i^*(\bm{x}) \in \argmin_{1\leq i\leq n}\norm{\bm{x}-\bm{X}_i}$ with ties resolved by the fixed convention introduced earlier, and define
\begin{equation}
	\label{GGDPC_graph_dist}
	\hat{d}_G(\bm{x}) = \norm{\bm{x} - \bm{X}_{i^*(\bm{x})}} + \hat{d}_G\!\left(\bm{X}_{i^*(\bm{x})}\right).
\end{equation}

\begin{figure}[t]
	\centering
	\resizebox{0.8\linewidth}{!}{%
		\input{Figures/LM1_figure.tikz}%
	}
	\caption{Schematic illustration of a population GGDPC path from $\bm{x}$ to the global mode $\bm{m}_1$ through the sequence $\bm{m}_{j_1}, \bm{m}_{j_2},\bm{m}_{j_3}\equiv \bm{m}_1$, denoted by blue stars. The cyan and green regions are the upper level sets associated with $\bm{m}_{j_1}$ and $\bm{m}_{j_2}$, respectively. Red triangles denote the modal projections $\bm{z}_{j_\ell} = \Pi_{\overline{\mathcal{U}}_{j_\ell}}(\bm{m}_{j_\ell}), \ell=1,2$. Dashed curves represent gradient flows within their basins of attraction, while solid segments indicate the modal projections. The population GGDPC graph distance is the sum of the lengths of these gradient flow curves and projection segments.}
	\label{fig:ggdpc-path}
\end{figure}

We next construct the population analogue to which $\hat{d}_G(\bm{x})$ converges as $n\to \infty$. 
Consider $\bm{x}\in \mathcal{C}_j$ whose gradient ascent flow converges to $\bm{m}_j \in \mathcal{M}$. Under Assumption~\ref{assump:unique_proj}, if $j\neq 1$, the projection $\bm{z}_j=\Pi_{\overline{\mathcal{U}}_j}(\bm{m}_j)$ lies in the basin $\mathcal{C}_{\pi(j)}$. Since the gradient ascent flow from $\bm{z}_j$ converges to a mode of strictly higher density, repeatedly applying the map $j\mapsto \pi(j)$ generates a finite sequence
\begin{equation}
\label{mode_seq}
j_1=j,\qquad j_{\ell+1}=\pi(j_\ell), \qquad \bm{m}_{j_{T_j}}=\bm{m}_1.
\end{equation}
For $\bm{x}\in \mathcal{C}_j$, write $T_{\bm{x}}:= T_j$. For a truncated graph at threshold $\lambda$, we define $T_{\bm x,\lambda}$ analogously as the first index at which the modal sequence reaches a mode whose outgoing modal projection distance exceeds $\lambda$.
Thus, the population GGDPC path alternates between a gradient ascent flow within a modal basin of attraction and a projection from the resulting local mode to its upper level set; see \autoref{fig:ggdpc-path} for an illustration. We define the population GGDPC graph distance by
\begin{equation}
	\label{GGDPC_graph_dist_pop}
	\bar{d}_G(\bm{x}) = L(\bm{x}) + \sum_{\ell=1}^{T_{\bm{x}}-1} \left[\norm{\bm{m}_{j_\ell} - \Pi_{\overline{\mathcal{U}}_{j_\ell}}(\bm{m}_{j_\ell})} + L\left(\Pi_{\overline{\mathcal{U}}_{j_\ell}}(\bm{m}_{j_\ell}) \right) \right], \qquad \bm{x}\in \mathcal{C}_j,
\end{equation}
where we recall from \autoref{sec:stability} that $\bm{x}\mapsto L(\bm{x})$ is the path length of the population gradient ascent flow from $\bm{x}$ to a local mode.

Let  $\mathcal{N}_{\mathcal{C}}:=\left\{\bm{x}\in\mathcal{C}:\operatorname{dest}(\bm{x})\notin\mathcal{M}\right\}$. Under Assumption~\ref{assump:density_reg}, $\mathcal{N}_{\mathcal{C}}$ is a finite union of stable manifolds of non-modal critical points of $p$ and has Lebesgue measure zero. Hence, we define $\bar{d}_G$ by \eqref{GGDPC_graph_dist_pop} on $\mathcal{C} \setminus \mathcal{N}_{\mathcal{C}}$ and assign an arbitrary finite value to $\bar{d}_G(\bm{x})$ when $\bm{x}\in \mathcal{N}_{\mathcal{C}}$.

Then, under Assumptions~\ref{assump:density_reg} and
\ref{assump:unique_proj}, the population graph distance is essentially bounded as $\operatorname*{ess\,sup}\limits_{\bm{x}\in \mathcal{C}} \bar{d}_G(\bm{x})<\infty$; see Proposition~\ref{prop:finite_graph_distance} in \autoref{app:graph_finite_proof}.

\begin{theorem}[Convergence of the GGDPC graph distance]
\label{thm:GGDPC_graph_dist}
Suppose that Assumptions~\ref{assump:density_reg}, \ref{assump:C1_linear}, \ref{assump:sample_den_diff}, \ref{assump:sep_repulsion}, and \ref{assump:unique_proj} hold for every modal basin of attraction. Let $\mathcal S_{\rm full}:=\bigcup_a\partial\mathcal C_a$. 
Assume that, for a deterministic sequence $\delta_n\downarrow 0$,  $\eta_n+\frac{q_n}{\eta_n}=o(\delta_n)$, $\delta_n\left|\log\left(\frac{q_n}{\eta_n}\right)\right|=O(1)$, $\frac{q_n\log n}{\eta_n^{d+1}}=o(1)$, $\norm{\hat{p} - p}_{\infty}=o_P(1)$, $\norm{\hat{g} - \nabla p}_{\infty} = o_P(\delta_n)$, and $\norm{\nabla^2 \hat{p} - \nabla^2 p}_{\infty}=o_P(1)$ with $q_n = \left(\frac{\log n}{n}\right)^{\frac{1}{d}}$. Then, 
\begin{align*}
	\sup_{\bm x\in\mathcal C\setminus\mathcal S_{\rm full}^{2\delta_n/C_{\mathcal S}}} \left|\hat{d}_G(\bm{x}) - \bar{d}_G(\bm{x}) \right| &= O_P\left(\frac{q_n}{\eta_n}\left|\log\left(\frac{q_n}{\eta_n}\right)\right| +\frac{q_n}{\eta_n\delta_n} + \frac{\eta_n}{\delta_n} + \frac{q_n\log n}{\eta_n^{d+1}} +\frac{\norm{\hat{g} - \nabla p}_{\infty}}{\delta_n} \right),
\end{align*}
where $\mathcal S_{\rm full}^{r}=\{\bm x\in\mathcal C:d(\bm x,\mathcal S_{\rm full})<r\}$ and $C_{\mathcal S}$ is the minimum of the separation constants in Lemma~\ref{lem:flow_separation_sep} over the finitely many modal basins.
\end{theorem}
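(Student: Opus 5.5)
The plan is to decompose the empirical GGDPC path from $\bm{x}$ into the same alternating structure that defines $\bar d_G$ in \eqref{GGDPC_graph_dist_pop}, and then to control each piece with results already available. Fix $\bm{x}\in\mathcal{C}_j\setminus\mathcal S_{\rm full}^{2\delta_n/C_{\mathcal S}}$. The nearest observation $\bm{X}_{i^*(\bm x)}$ satisfies $\norm{\bm x-\bm X_{i^*(\bm x)}}\le\zeta_n=O_P(q_n)$. Since $q_n=o(\delta_n)$, this observation still lies in $\mathcal C_j\ominus(2\delta_n/C_{\mathcal S}-q_n)$, and the shift in margin can be absorbed into the constants. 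The first term in \eqref{GGDPC_graph_dist} is therefore negligible. Also, $|L(\bm x)-L(\bm X_{i^*(\bm x)})|$ must be controlled; I would bound it by $O(q_n/\delta_n)$, using the Lipschitz-type behaviour of $L$ at distance $\gtrsim\delta_n$ from the separatrix that underlies the proof of Theorem~\ref{thm:GGDPC_estimated}. It is then enough to treat starting points that are observations.

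For such an observation, the directed path in $G$ is split at the exit times from successive basins. Within $\mathcal C_{j_1}$, the segment up to $\hat T_n$ is exactly the sample path of \autoref{subsec:stability_sample}. Theorem~\ref{thm:GGDPC_estimated}, applied uniformly over the finitely many basins, gives $|\hat L_n-L|$ at the stated rate, and its terminal vertex lies within $o_P(1)$ of $\bm m_{j_1}$. The terminal vertex should in fact be the sample mode $\hat{\bm m}_{j_1,n}$ selected by GGDPC: by Theorem~\ref{thm:modal_const}, at a threshold $\lambda<\min_j\psi_j$, with probability tending to one, each basin contains exactly one vertex with a large uphill distance, located near $\bm m_{j_1}$. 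The next edge is $\hat{\bm m}_{j_1,n}\to\hat{\bm z}_{j_1,n}=\hat\Phi_n(\hat{\bm m}_{j_1,n})$. Lemma~\ref{lem:emp_mode_proj}, which is available under Assumption~\ref{assump:unique_proj}, gives $\hat{\bm z}_{j_1,n}\to\bm z_{j_1}$ with $\big|\norm{\hat{\bm z}_{j_1,n}-\hat{\bm m}_{j_1,n}}-\psi_{j_1}\big|$ at a rate dominated by the displayed one. Because $d(\bm z_{j_1},\partial\mathcal C_{\pi(j_1)})>0$ is a fixed positive constant, $\hat{\bm z}_{j_1,n}$ lies in $\mathcal C_{\pi(j_1)}\ominus c$ for some fixed $c>0$. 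Theorem~\ref{thm:GGDPC_estimated} with fixed margin, combined with continuity of $L$ near $\bm z_{j_1}$ (again via the flow-length perturbation bound), then controls the next gradient-flow segment. Iterating over the at most $|\mathcal M|$ stages, the errors add, and all of them are bounded by the displayed rate since each stage has either margin $\ge 2\delta_n/C_{\mathcal S}$ or a fixed margin.

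The main obstacle is justifying that the empirical path enters each intermediate sample mode and then leaves through the modal projection edge. Two points need care. First, the path must not leave $\mathcal C_{j_\ell}$ prematurely and must not end at a spurious vertex near the mode. The stopping-time definition \eqref{oracle_ggdpc_stop_time} allows exit only by stepping outside the basin, so I must show that, near $\bm m_{j_\ell}$, the only vertex whose $\hat\Phi_n$ leaves the basin is the highest-density observation of the basin. This is where Theorem~\ref{thm:modal_const} and the gap $\psi_{j_\ell}>0$ enter. Any other vertex near the mode has a higher-density observation within $O_P(q_n/\eta_n+\eta_n)$ inside the basin, and I would take this from the modal-core analysis behind the $q_n\log n/\eta_n^{d+1}$ term. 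The 1NN search from the basin's top vertex also needs care, because it starts from $\hat{\bm m}_{j_\ell,n}+\eta_n\hat g$ rather than from $\bm m_{j_\ell}$; I expect Lemma~\ref{lem:emp_mode_proj} to absorb this. Second, uniformity in $\bm x$ holds because the mode-to-mode segments do not depend on $\bm x$, so their errors are taken once. The supremum then reduces to the supremum in Theorem~\ref{thm:GGDPC_estimated} plus finitely many deterministic-index terms, which yields the stated bound.
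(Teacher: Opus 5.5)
Your proposal follows the paper's proof essentially step for step. It uses the same decomposition into the nearest-observation error, the $O_P(q_n/\delta_n)$ bound on $|L(\bm x)-L(\bm X_{i^*(\bm x)})|$ from Lemma~\ref{lem:L_holder}, within-basin path lengths from Theorem~\ref{thm:GGDPC_estimated} (with terminal vertices identified through Theorem~\ref{thm:modal_const} and the modal-core completion argument), and modal edges plus $|L(\hat{\bm z}_{j,n})-L(\bm z_j)|$ from Lemma~\ref{lem:emp_mode_proj}; the paper gets the parent identification by citing Step~1 of the proof of Theorem~\ref{thm:conv_ggdpc_dendro}, which is the argument you sketch. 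The one check you leave implicit is that the $\sqrt{q_n}$ term from Lemma~\ref{lem:emp_mode_proj} is dominated by the displayed rate, and this follows, as in the paper, from $\eta_n+\frac{q_n}{\eta_n\delta_n}\geq 2\sqrt{q_n/\delta_n}\geq 2\sqrt{q_n}$ for large $n$.
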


The proof of \autoref{thm:GGDPC_graph_dist} is in \autoref{app:GGDPC_graph_dist_proof}. If $\delta_n \equiv \delta$ is a fixed constant, then Assumptions~\ref{assump:C1_linear} and \ref{assump:sep_repulsion} are unnecessary, and the convergence rate improves accordingly. The same rate also holds for the truncated GGDPC graph distance (or local graph distance) $\hat{d}_{G_{\lambda}}$ for any $\lambda >0$ satisfying $\lambda \notin \left\{\psi_j: 1< j \leq |\mathcal{M}|\right\}$. Its population analogue $\bar{d}_{G_{\lambda}}$ is defined similarly as \eqref{GGDPC_graph_dist_pop}, except that the modal sequence \eqref{mode_seq} stops at the first mode $\bm{m}_{j_{T_{\bm{x}},\lambda}}$ whose outgoing projection distance satisfies $\psi_{j_{T_{\bm{x}},\lambda}} > \lambda$. When $\hat{p}$ and $\hat{g}$ are constructed via (boundary-corrected) KDEs with bandwidth $h_n \asymp \left(\frac{\log n}{n}\right)^{\frac{1}{d+6}}$, the rate in \autoref{thm:GGDPC_graph_dist} is identical to \eqref{path_length_rate_kde} under the associated choices of $\eta_n$ and $\delta_n$.

\subsection{Density Waterfalls}
\label{subsec:den_water}

We propose the \emph{global density waterfall plot} as the scatter plot 
$$\left\{\left(\hat{d}_G(\bm{X}_i), \hat{p}(\bm{X}_i)\right), i=1,...,n\right\},$$
with the GGDPC graph distance on the $x$-axis and the estimated density on the $y$-axis. Along each modal branch, the density typically decreases as the graph distance from the global mode increases, producing a waterfall-like pattern.
Similarly, for a fixed threshold $\lambda >0$, the \emph{local density waterfall plot} is obtained by replacing $\hat{d}_G$ with the truncated graph distance $\hat{d}_{G_{\lambda}}$.

To study the asymptotic stability of these plots, we first represent the global density waterfall by its empirical measure
\begin{equation}
	\label{waterfall_measure_emp_est}
	\hat{Q}_n(A) = \frac{1}{n}\sum_{i=1}^n \mathds{1}\left\{\left(\hat{d}_G(\bm{X}_i), \hat{p}(\bm{X}_i)\right) \in A\right\} \quad \text{ for any Borel set } A \in \mathcal{B}(\mathbb{R}^2).
\end{equation}
Its population analogue is the distribution of $\left(\bar{d}_G(\bm{X}), p(\bm{X})\right)$ for $\bm{X}\sim P$. Namely, we define the population global waterfall measure $Q$ by
\begin{equation}
	\label{waterfall_measure_pop}
	Q(A) = \mathbb{P}\left(\left(\bar{d}_G(\bm{X}), p(\bm{X})\right) \in A\right) \quad \text{ for any Borel set } A \in \mathcal{B}(\mathbb{R}^2).
\end{equation}
For a fixed $\lambda>0$, we analogously define the population and empirical local waterfall measures by
$$Q_\lambda(A)= \mathbb{P}\left(\left(\bar{d}_{G_\lambda}(\bm{X}),p(\bm{X})\right)\in A\right), \qquad \hat Q_{n,\lambda}(A) = \frac{1}{n} \sum_{i=1}^n \mathds{1}\left\{
\left(\hat{d}_{G_{\lambda}}(\bm{X}_i),\hat{p}(\bm{X}_i)\right)\in A\right\}.$$
The following theorem establishes convergence of $\hat Q_n$ to $Q$ under the Wasserstein-1 distance, thereby providing a population-level stability guarantee for the density waterfall representation. The same conclusion applies to the local waterfall measures $\hat Q_{n,\lambda}$ and $Q_\lambda$ for any fixed $\lambda>0$ satisfying $\lambda\notin\{\psi_j:1<j\leq|\mathcal M|\}$.

\begin{theorem}[Wasserstein-1 convergence of the density waterfall]
	\label{thm:wasser_waterfall}
	Suppose that Assumptions~\ref{assump:density_reg}, \ref{assump:sample_den_diff}, \ref{assump:sep_repulsion}, \ref{assump:unique_proj}, and \ref{assump:C1_linear} hold for every modal basin of attraction. Assume that, for a deterministic sequence $\delta_n\downarrow 0$,  $\eta_n+\frac{q_n}{\eta_n}=o(\delta_n)$, $\delta_n\left|\log\left(\frac{q_n}{\eta_n}\right)\right|=O(1)$, $\frac{q_n\log n}{\eta_n^{d+1}}=o(1)$, $\norm{\hat{p} - p}_{\infty}=o_P(1)$, $\norm{\hat{g} - \nabla p}_{\infty} = o_P(\delta_n)$, and $\norm{\nabla^2 \hat{p} - \nabla^2 p}_{\infty}=o_P(1)$ with $q_n = \left(\frac{\log n}{n}\right)^{\frac{1}{d}}$. Then, 
	\begin{align*}
		&\mathrm{Wass}_1(\hat{Q}_n, Q) \\
		&= O_P\left(\delta_n + \frac{q_n}{\eta_n}\left|\log\left(\frac{q_n}{\eta_n}\right)\right| +\frac{q_n}{\eta_n\delta_n} + \frac{\eta_n}{\delta_n} + \frac{q_n\log n}{\eta_n^{d+1}} +\frac{\norm{\hat{g} - \nabla p}_{\infty}}{\delta_n} +\norm{\hat{p} -p}_{\infty}+ \frac{\log n}{\sqrt{n}}\right).
	\end{align*}
\end{theorem}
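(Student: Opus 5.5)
We split $\mathrm{Wass}_1(\hat Q_n,Q)$ by the triangle inequality through the intermediate empirical measure
$$Q_n(A)=\frac{1}{n}\sum_{i=1}^n \mathds{1}\left\{\left(\bar d_G(\bm X_i),p(\bm X_i)\right)\in A\right\},$$
which is the empirical measure of the i.i.d.\ sample $\left(\bar d_G(\bm X_i),p(\bm X_i)\right)$ drawn from $Q$. This gives
$$\mathrm{Wass}_1(\hat Q_n,Q)\leq \mathrm{Wass}_1(\hat Q_n,Q_n)+\mathrm{Wass}_1(Q_n,Q).$$

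\textbf{Coupling term.} For $\mathrm{Wass}_1(\hat Q_n,Q_n)$ we use the identity coupling pairing $\left(\hat d_G(\bm X_i),\hat p(\bm X_i)\right)$ with $\left(\bar d_G(\bm X_i),p(\bm X_i)\right)$. This yields
$$\mathrm{Wass}_1(\hat Q_n,Q_n)\leq \frac{1}{n}\sum_{i=1}^n\left|\hat d_G(\bm X_i)-\bar d_G(\bm X_i)\right| +\norm{\hat p-p}_\infty .$$
We split the sum according to whether $\bm X_i$ lies in the tube $\mathcal S_{\rm full}^{2\delta_n/C_{\mathcal S}}$.

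\emph{Outside the tube}, \autoref{thm:GGDPC_graph_dist} bounds each summand uniformly by the displayed rate $R_n$.

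\emph{Inside the tube}, we use a crude bound on $|\hat d_G-\bar d_G|$ and control the fraction of observations that fall there.
\begin{itemize}
\item The tube has $P$-mass $O(\delta_n)$, since $\mathcal S_{\rm full}$ is a finite union of compact pieces of $C^2$ hypersurfaces together with finitely many points, and $p$ is bounded. By a binomial/Bernstein bound, the empirical fraction of observations in the tube is then $O_P(\delta_n+n^{-1/2})$.
\item The population distance $\bar d_G$ is essentially bounded by Proposition~\ref{prop:finite_graph_distance}.
\item We need $\max_i \hat d_G(\bm X_i)=O_P(1)$. We obtain this by showing that every GGDPC path is a chain of strictly increasing $\hat p$-values with total length bounded in probability. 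Each edge length is at most $\eta_n\norm{\hat g}_\infty+\zeta_n$ plus the number of modal jumps, and the crux is bounding the number of edges along any path times their length.
\end{itemize}
The simplest route to the last item is to reuse the argument from the proof of \autoref{thm:GGDPC_graph_dist}. Near separatrices the path merely follows some basin's flow, and within each basin, after the initial step, the path enters the region where the theorem applies. Alternatively, one can bound the inside-the-tube contribution by $\sup|\hat d_G|\cdot O_P(\delta_n)$. Either way this contributes the $\delta_n$ term.

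\textbf{Sampling term.} For $\mathrm{Wass}_1(Q_n,Q)$, note that $Q$ is supported on a bounded subset of $\mathbb R^2$, since $\bar d_G$ is essentially bounded and $p\leq \norm{p}_\infty$. Standard empirical Wasserstein rates in dimension two (Fournier--Guillin, or Dudley's bound) give $E\,\mathrm{Wass}_1(Q_n,Q)\lesssim \frac{\log n}{\sqrt n}$. The critical dimension $2$ is exactly what produces the logarithmic factor, so Markov's inequality yields the $\frac{\log n}{\sqrt n}$ term.

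\textbf{Main obstacle.} The main obstacle is the uniform boundedness in probability of $\hat d_G(\bm X_i)$ for observations near the separatrix, where \autoref{thm:GGDPC_graph_dist} gives no control. We would handle it by observing that such a path, after finitely many steps, either enters $\mathcal C_a\ominus(2\delta_n/C_{\mathcal S})$ for some basin $a$ or reaches a modal neighborhood. We would then bound the length of the initial segment using the facts that the step lengths are $O(\eta_n+q_n)$ and $\hat p$ increases by $\gtrsim\eta_n\norm{\nabla p}^2$ per step away from critical points.

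Combining the three pieces gives the stated bound $R_n+\delta_n+\norm{\hat p-p}_\infty+\frac{\log n}{\sqrt n}$. The local waterfall case $\hat Q_{n,\lambda}$ follows identically, using the truncated version of \autoref{thm:GGDPC_graph_dist}.
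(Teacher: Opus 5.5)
Your architecture matches the paper's proof. You use the triangle inequality through the empirical measure of the pairs $(\bar d_G(\bm X_i),p(\bm X_i))$, then the identity coupling. You split at $\mathcal S_{\rm full}^{2\delta_n/C_{\mathcal S}}$, apply \autoref{thm:GGDPC_graph_dist} on the far part, and bound the near part by essential boundedness of $\bar d_G$ times an $O_P(\delta_n)$ tube fraction. The sampling term comes from the dyadic Fournier--Guillin bound.

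The gap is the step you yourself call the main obstacle, the uniform bound on $\max_i\hat d_G(\bm X_i)$. Your resolution fails exactly where it is needed. The boundary saddles lie on $\mathcal S_{\rm full}$, hence inside the tube, and near them $\norm{\nabla p}$ degenerates. Two things then break:
\begin{itemize}
\item The per-step gain $\eta_n\norm{\nabla p}^2$ in $\hat p$ no longer limits the path to $O(\eta_n^{-1})$ steps.
\item Your edge bound $\eta_n\norm{\hat g}_\infty+\zeta_n$ fails in general once $\norm{\bm x-\widetilde{\bm s}_{j,n}}\lesssim q_n/\eta_n$. There need be no admissible observation within $\zeta_n$ of $\bm x+\eta_n\hat g(\bm x)$.
\end{itemize}
Adding a dyadic shell decomposition does not rescue the crude edge length $O(\eta_n+q_n)$. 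It gives $O(1)$ per shell and $O(|\log(q_n/\eta_n)|)$ overall. The tube contribution is then of order $\delta_n|\log(q_n/\eta_n)|$, which the hypotheses only guarantee to be $O(1)$, not $\delta_n$.

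Stopping each path at its first far vertex and invoking \autoref{thm:GGDPC_graph_dist} from there is a legitimate reduction. But the initial in-tube segment, which may linger near a saddle, still has to be bounded. The claim that ``the path merely follows some basin's flow'' has no support there.

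The paper supplies this bound in Lemma~\ref{lem:int_control_sep}, and its two key ideas are missing from your sketch.
\begin{itemize}
\item \emph{Annuli.} Take annuli $\{r<\norm{\bm x-\widetilde{\bm s}_{j,n}}\leq 2r\}$ around the critical points of $\hat p$ from Lemma~\ref{lem:estimated_critical_stability}. There, edges have length $O(\eta_n r)$ and each step raises $\hat p$ by $\gtrsim\eta_n r^2$, while $\hat p$ varies by only $O(r^2)$ over the annulus. So a path has $O(\eta_n^{-1})$ vertices in the annulus, the annulus contributes $O(r)$, and the dyadic sum is $O(1)$.
\item \emph{Cores.} Take the core $B(\widetilde{\bm s}_{j,n},Aq_n/\eta_n)$ of a non-modal critical point. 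Stepping a distance of order $q_n/\eta_n$ along a positive Hessian eigenvector produces an admissible observation, so each edge is $O(q_n/\eta_n)$. Since $\hat p$ strictly increases along a path, no observation is revisited. Lemma~\ref{lem:uniform_ball_count} then bounds the core contribution by $O_P(q_n\log n/\eta_n^{d+1})$.
\end{itemize}
This is why the lemma yields $\max_i\hat d_G(\bm X_i)=O_P(1+q_n\log n/\eta_n^{d+1})$. That is $O_P(1)$ only under the hypothesis $q_n\log n/\eta_n^{d+1}=o(1)$.
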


The proof of \autoref{thm:wasser_waterfall} is in \autoref{app:waterfall_proof}. The high-level proof idea is to split the observations in $\mathbb{X}_n$ based on their distances to the full separatrix. Let 
$$\mathcal{C}_n^{\rm far} = \mathcal{C} \setminus \mathcal{S}_{\rm full}^{\frac{2\delta_n}{C_{\mathcal{S}}}}, \qquad \mathcal{C}_n^{\rm close} = \mathcal{S}_{\rm full}^{\frac{2\delta_n}{C_{\mathcal{S}}}},$$
where $C_{\mathcal{S}}$ is the separation constant in Lemma~\ref{lem:flow_separation_sep}.  For observations in $\mathcal{C}_n^{\rm far}$, their contributions to the Wasserstein-1 discrepancy are controlled by the uniform convergence of the GGDPC graph distance in \autoref{thm:GGDPC_graph_dist}, together with the density estimation error $\norm{\hat{p} - p}_{\infty}$. For observations in $\mathcal{C}_n^{\rm close}$, Lemma~\ref{lem:int_control_sep} controls the aggregate empirical graph distance, while the $O_P(\delta_n)$ mass of this region controls the population contribution, yielding an overall $O_P(\delta_n)$ term. Thus, the proof may be viewed as an analogue of an $\epsilon$-contamination decomposition \citep{huber1964robust,huber1965robust}, with observations
near the separatrix forming a vanishing contamination component.

To make the rate in \autoref{thm:wasser_waterfall} more explicit, suppose that $\norm{\hat g-\nabla p}_\infty=O_P(s_n)$ and $\norm{\hat p-p}_\infty=O_P(e_n)$ for deterministic sequences $s_n,e_n\downarrow 0$. Again, choosing $\eta_n \asymp \left(q_n \log n\right)^{\frac{1}{d+2}}$ and $\delta_n \asymp \sqrt{\left(q_n \log n\right)^{\frac{1}{d+2}} + s_n}$ with $\delta_n\left|\log\left(\frac{q_n}{\eta_n}\right)\right|=O(1)$ satisfies the rate conditions in \autoref{thm:wasser_waterfall}, and 
$$\mathrm{Wass}_1(\hat{Q}_n, Q) = O_P\left(\left[\left(q_n \log n\right)^{\frac{1}{d+2}} + s_n\right]^{\frac{1}{2}} + e_n + \frac{\log n}{\sqrt{n}}\right).$$
Typically, $\frac{\log n}{\sqrt{n}} \lesssim e_n \lesssim \left[\left(q_n \log n\right)^{\frac{1}{d+2}} + s_n\right]^{\frac{1}{2}}$, so the Wasserstein-1 convergence rate of the density waterfall empirical measure inherits the same trade-off between the sample discretization and gradient estimation as the GGDPC path-length convergence rate in \autoref{thm:GGDPC_estimated}. In particular, under KDEs on $\hat{p}$ and $\hat{g}$, it coincides with \eqref{path_length_rate_kde}.

\section{Empirical Studies}
\label{sec:emp_studies}

We have illustrated GGDPC and its associated clustering visualizations using the Old Faithful geyser data, which record 272 eruptions of the Old Faithful geyser in Yellowstone National Park. In \autoref{fig:old_faithful}, we apply GGDPC to pairs of consecutive eruption durations. Since ground-truth labels are unavailable for this real-world dataset, its clustering structure can only be assessed qualitatively through visualization. 

In this section, we quantitatively compare GGDPC with several DPC-type methods using the two-component Gaussian mixture considered in \autoref{fig:gmm_graph} across different sample sizes. The competing methods include (i) the original DPC; (ii) DPC-KNN-PCA \citep{du2016study}, which estimates density via $k$-nearest neighbors in a principal component space; (iii) SNN-DPC \citep{liu2018shared}, which modifies DPC using the number of shared nearest neighbors between observations; (iv) DPC-CE \citep{guo2022density}, which combines DPC with graph-based connectivity estimation to improve the identification of cluster centers; (v) DPC-DLP \citep{seyedi2019dynamic}, which uses $k$-nearest-neighbor density estimation together with a graph-based dynamic label propagation; and (vi) DPC-MDNN \citep{wang2025improved}, which defines nearest neighbors and density estimates using manifold distances.

\begin{table}[t]
\centering
\begin{threeparttable}
	\setlength{\tabcolsep}{8pt}
	\begin{tabular}{cccccc}
		\toprule
		Sample Size & 500 & 1000 & 2000 & 5000 & 8000 \\
		\midrule
		\multicolumn{6}{c}{\textbf{Adjusted Rand Index (ARI)}} \\
		\midrule
		DPC
		& 0.898 & 0.920 & 0.935 & 0.946 & 0.953 \\
		DPC-KNN-PCA
		& \textbf{0.935} & \textbf{0.939}
		& 0.943 & 0.945 & 0.948 \\
		SNN-DPC
		& 0.881 & 0.871 & 0.872 & 0.874 & 0.878 \\
		DPC-CE
		& 0.840 & 0.886 & 0.911 & 0.928 & 0.941 \\
		DPC-DLP
		& 0.341 & 0.494 & 0.632 & 0.772 & 0.808 \\
		DPC-MDNN
		& 0.211 & 0.077 & 0.010 & 0.000 & 0.000 \\
		GGDPC
		& 0.916 & 0.938 & \textbf{0.953}
		& \textbf{0.963} & \textbf{0.967} \\
		\midrule
		\multicolumn{6}{c}{\textbf{Running Time (seconds)}} \\
		\midrule
		DPC
		& 0.039 & 0.100 & 0.347 & 1.858 & 4.627 \\
		DPC-KNN-PCA
		& \textbf{0.020} & 0.075 & 0.313 & 2.239 & 6.096 \\
		SNN-DPC
		& 0.167 & 1.111 & 9.922 & 159.913 & 632.394 \\
		DPC-CE
		& 0.728 & 1.802 & 4.896 & 20.686 & 45.763 \\
		DPC-DLP
		& 0.074 & 0.337 & 1.634 & 14.996 & 41.864 \\
		DPC-MDNN
		& 0.262 & 1.128 & 5.770 & 56.383 & 180.123 \\
		GGDPC
		& 0.028 & \textbf{0.067} & \textbf{0.203}
		& \textbf{1.061} & \textbf{2.601} \\
		\bottomrule
	\end{tabular}
\end{threeparttable}
\vspace{2mm}
\caption{Clustering performance and computational efficiency of different DPC-type methods on the two-component Gaussian mixture across sample sizes. Larger ARI values indicate better clustering performance, while smaller running times indicate greater computational efficiency.}
\label{tab:gmm_results}
\end{table}

For DPC and GGDPC, we select observations whose 1NN uphill distances exceed the sample mean by at least four sample standard deviations as cluster centers. For the other DPC-type methods, we provide the true number of clusters, namely two, whenever required by the corresponding procedure. Moreover, for methods involving $k$-nearest-neighbor searches, we select $k$ to maximize the ARI relative to the ground-truth labels. This oracle tuning deliberately favors the competing methods. Their remaining tuning parameters are set to the default values recommended by the corresponding implementations, as varying these parameters does not materially change the conclusions. \autoref{tab:gmm_results} reports the ARI \eqref{ARI} and running time over 1000 Monte Carlo replications. All the code in this paper is available at \url{https://github.com/zhangyk8/GGDPC}.


Overall, GGDPC achieves a competitive balance between clustering accuracy and computational efficiency. It attains among the highest ARI values while requiring substantially less computation than most competing DPC-type methods, despite using no ground-truth information for parameter tuning. For a fair comparison, the reported GGDPC running times exclude the optional dendrogram construction.
The poor performance of DPC-MDNN in this experiment appears to arise from its single linkage construction. When the two Gaussian components overlap, the resulting neighborhood graph can contain chains of nearby observations connecting the two components, causing single linkage to merge most observations into a single cluster. Replacing single linkage by average or complete linkage may alleviate this behavior, but such modifications fall outside the DPC-MDNN procedure proposed in \citet{wang2025improved}.

\section{Discussion}
\label{sec:discussion}

In summary, we propose GGDPC as a gradient-guided modification of DPC that improves the empirical stability of its ascending paths while admitting a direct population-level interpretation. 
We establish convergence under five complementary notions of consistency and stability: local modes (\autoref{thm:modal_const}), ARI (\autoref{thm:GGDPC_ARI}), dendrogram (\autoref{thm:conv_ggdpc_dendro}), graph distance (\autoref{thm:GGDPC_graph_dist}), and density waterfall (\autoref{thm:wasser_waterfall}). 
The resulting convergence rates reveal the hierarchy of statistical and geometric difficulty: modal consistency requires the weakest control, followed by ARI and dendrogram convergence, whereas the graph distance and density waterfall consistency require substantially finer control of the GGDPC paths. 
We conclude by discussing several directions for future research.


{\bf 1. Improved rates of convergence:}
The convergence rates derived in \autoref{sec:stability} and several downstream results exhibit a relatively strong dependence on the ambient dimension, partly due to sample discretization within modal neighborhoods. One possible refinement is to modify GGDPC so that every observation lying in an estimated modal neighborhood $B\left(\hat{\bm m}_j,\frac{a_nq_n}{\eta_n}\right)$, where $a_n\to\infty$ arbitrarily slowly, is mapped directly to the corresponding estimated local mode $\hat{\bm m}_j$. Under this modification, the rate in \autoref{thm:GGDPC_path_full} improves to $O_P\left(\eta_n + \frac{q_n}{\eta_n}\left|\log\left(\frac{q_n}{\eta_n}\right)\right| + \frac{q_n}{\eta_n\delta_n}\right)$. For example, taking $\eta_n\asymp q_n^{\frac{1}{3}}$ and $\delta_n \asymp q_n^{\frac{1}{3}} \log n$ yields the rate $O_P\left(\left(\frac{\log n}{n}\right)^{\frac{1}{3d}} \log n\right)$ with corresponding improvements in the downstream convergence results. 

{\bf 2. Minimax theory for separatrices and gradient flow length.}
While minimax theory for gradient and local mode estimation is well developed \citep{stone1982optimal,arias2022estimation}, much less is known about separatrices and gradient flow lengths. Unlike local modes or pointwise gradient values, these are inherently nonlocal geometric quantities. In particular, separatrices depend on the global organization of the gradient field, while flow lengths accumulate information along entire trajectories. Consequently, local estimation errors may propagate over extended regions, creating additional challenges for establishing sharp upper and lower bounds.


{\bf 3. Waterfall plots as visualization tools for local signatures.}
The waterfall plot introduced in this paper, \emph{i.e.}, a scatter plot of one-dimensional distances against (estimated) densities, is not specific to GGDPC or DPC. More generally, it pairs a one-dimensional notion of distance to a cluster representative with a density value, providing a compact summary of local cluster structure in multivariate data. For mean shift clustering, the GGDPC graph distance can be replaced by the accumulated mean shift path length to the limiting local mode. For $k$-means clustering \citep{macqueen1967some,lloyd1982least}, it can instead be replaced by the distance to the assigned cluster centroid. 
The resulting waterfall plots may provide useful insights into within-cluster morphology when direct visualization of the original data is difficult.


\section*{Acknowledgments}

YZ was supported in part by YC's NSF grant DMS-2141808. YC is supported by NSF grants DMS-1952781, 2112907, 2141808, and NIH U24-AG07212. 

\section*{Declaration of the Use of AI-Assisted Technologies}

The authors used OpenAI's ChatGPT (GPT-5.6 Sol) to assist with proof exploration, exposition, language editing, and the refinement of simulation code. All mathematical arguments, results, and computational outputs were independently reviewed and verified by the authors. The authors take full responsibility for the content of the paper, including any remaining errors or omissions.

\vspace{5mm}
\singlespacing
\bibliography{DenPeakClu_bib}

\newpage 
\onehalfspacing

\begin{center}
{\LARGE \textbf{Supplementary Materials to ``Gradient-Guided Density Peak Clustering''}}\\~\\

\end{center}

\appendix

\renewcommand{\thepage}{S\arabic{page}}
\setcounter{page}{1}

\noindent{\bf\LARGE Contents}

\startcontents[sections]
\printcontents[sections]{l}{1}{\setcounter{tocdepth}{2}}

\vspace{5mm}

\section{Technical Concepts in Dynamical Systems}
\label{app:tech_dynamic}

This section collects several concepts in dynamical systems used in our analysis of GGDPC paths. In particular, all notions of dynamical systems are defined under the $C^3$ extension of the density $p:\mathbb{R}^d \to \mathbb{R}$ under Assumption~\ref{assump:density_reg}(b), which we continue to denote by $p$.

\begin{definition}[Stable and unstable manifolds]
	Let $\bm{s}$ be a critical point of the density $p$. Its stable manifold under the gradient ascent flow is
	$$W^s(\bm{s})=\left\{\bm{x}\in \mathcal{C}:\lim_{t\to \infty}\bm{\gamma}_{\bm{x}}(t)=\bm{s}\right\},$$
	and its unstable manifold $W^u(\bm{s})$ consists of points whose backward gradient flow trajectories converge to $\bm{s}$ as $t\to -\infty$, \emph{i.e.},
	$$W^u(\bm{s})=\left\{\bm{x}\in \mathcal{C}: \bm{\gamma}_{\bm{x}}(t) \text{ is defined for all sufficiently negative } t \text{ and } \lim_{t\to -\infty}\bm{\gamma}_{\bm{x}}(t)=\bm{s}\right\}.$$
\end{definition}

By the stable manifold theorem, $W^s(\bm{s})$ and $W^u(\bm{s})$ are locally $C^2$ embedded manifolds in $\mathbb{R}^d$ near $\bm{s}$. Since the Jacobian of the gradient vector field $\nabla p:\mathbb{R}^d \to \mathbb{R}^d$ at $\bm{s}$ is $\nabla^2 p(\bm{s})$, their tangent spaces $T_{\bm{s}}W^s(\bm{s})$ and $T_{\bm{s}}W^u(\bm{s})$ at $\bm{s}$ are the negative- and positive-eigenspaces of $\nabla^2 p(\bm{s})$, respectively. By the non-degeneracy of $\nabla^2 p(\bm{s})$ under Assumption~\ref{assump:density_reg}(c), $T_{\bm{s}}W^u(\bm{s}) \oplus T_{\bm{s}}W^s(\bm{s}) = \mathbb{R}^d$.

We next introduce the concept of smooth linearization used in Assumption~\ref{assump:C1_linear}. Let $f:U \to \mathbb{R}^d$ be a $C^M$ vector field on an open set $U \subset \mathbb{R}^d$. We denote by $\varphi_f^t$ its local flow, \emph{i.e.}, $\varphi_f^t(\bm{x}_0)$ is the solution at time $t$ of $\bm{x}'(t)=f(\bm{x}(t))$ with $\bm{x}(0)=\bm{x}_0$ whenever the solution remains in $U$.

\begin{definition}[$C^N$-conjugation]
	\label{def:conjugate}
	Let $f$ and $g$ be two $C^M$ vector fields with $f(\bm{0}) = g(\bm{0})=\bm{0}$, and let $1 \leq N \leq M$ be an integer. We say that $f$ and $g$ are \emph{$C^N$-conjugate near $\bm{0}$} if there exist neighborhoods $V_1,V_2 \subset \mathbb{R}^d$ of $\bm{0}$ and a $C^N$-diffeomorphism $\Phi:V_1\to V_2$ with $\Phi(\bm{0}) = \bm{0}$ such that
	$$\Phi\!\left(\varphi_f^t(\bm{x})\right) = \varphi_g^t\!\left(\Phi(\bm{x})\right)$$
	whenever both sides are defined and remain in the corresponding neighborhoods.
	The mapping $\Phi$ is called $C^N$-conjugation between $\bm{x}'(t)=f(\bm{x}(t))$ and $\bm{y}'(t)=g(\bm{y}(t))$.
\end{definition}

\begin{definition}[$C^N$-linearization]
	Let $f$ be a $C^M$ vector field with $f(\bm{0})=\bm{0}$ and
	$$f(\bm{x}) = A\bm{x}+F(\bm{x}), \qquad A=Df(\bm{0}),$$
	where $F(\bm{0})=\bm{0}$ and $D F(\bm{0})=0$ is its Jacobian. We say that $f$ admits a \emph{$C^N$-linearization} near $\bm{0}$ if it is $C^N$-conjugate to the linear vector field $\bm{y}'(t)=A \bm{y}(t)$.
\end{definition}


A standard sufficient route to smooth linearization is through the Sternberg (non-resonance) condition \citep{sternberg1957local,sell1985smooth,barreira2007holder}. Let $\rho_1,...,\rho_d$ be the eigenvalues of a hyperbolic matrix $A\in \mathbb{R}^{d\times d}$. We say that $A$ satisfies the \emph{Sternberg non-resonance condition of order $N\geq 2$} if 
$$\rho_i - \sum_{j=1}^d \alpha_j \rho_j \neq 0, \qquad 2\leq \sum_{j=1}^d \alpha_j \leq N$$
for $i=1,...,d$ and nonnegative integers $\alpha_1,...,\alpha_d$. 
In our setting, the gradient vector field $\nabla p$ is $C^2$ under Assumption~\ref{assump:density_reg}(b), and its linearization at a critical point $\bm{s}$ is determined by the Hessian matrix $\nabla^2p(\bm{s})$. In particular, Sternberg-type results provide sufficient nonresonance conditions on the hyperbolic Hessian $\nabla^2 p(\bm{s})$ for the $C^1$-linearization required in Assumption~\ref{assump:C1_linear}. 

Another sufficient condition for $C^1$-linearization is Hartman's spectral
condition \citep{hartman1960local}; see also Section~1 of
\citet{chicone2000linearization}. Let $\bm{s}$ be a hyperbolic saddle point, and suppose that the negative and positive eigenvalues of
$\nabla^2p(\bm{s})$ lie, respectively, in $[-\alpha_L,-\alpha_R]$ and $[\beta_L,\beta_R]$, where $\alpha_L,\alpha_R,\beta_L,\beta_R>0$ with
$\alpha_R\leq\alpha_L$ and $\beta_L\leq\beta_R$.
Let $\mu,\nu>0$ denote the H\"older spectral exponents associated with the stable and unstable parts of the linearized flow, respectively. Then, Hartman's spectral condition states that
$$\alpha_L-\alpha_R<\mu\beta_L, \qquad
\beta_R-\beta_L<\nu\alpha_R.$$
If this condition holds, then a $C^2$ nonlinear vector field is
$C^1$-linearizable in a neighborhood of the hyperbolic saddle point.


%

\section{GGDPC Dendrograms Under General Edge Scoring Rules}
\label{app:thres_beyond_dist}

The default GGDPC dendrogram in \autoref{subsec:dendro_def} is constructed by thresholding each directed edge according to its 1NN uphill distance $\hat{w}_n(\bm{X}_i)$. In this section, we consider a more general construction in which the thresholding rule for each edge may depend jointly on $\hat{w}_n(\bm{X}_i)$ and the estimated density $\hat{p}(\bm{X}_i)$ at the source vertex. For every observation $\bm{X}_i$ that is not the sample global mode, let $E_i = (\bm{X}_i, \hat{\Phi}_n(\bm{X}_i)) \in E(G)$ be a directed edge in the GGDPC graph $G$. We associate $E_i$ with the pair $\left(\hat{w}_n(\bm{X}_i), \hat{p}(\bm{X}_i)\right) \in [0,\infty)^2$. Consider a fixed scoring function $s:[0,\infty) \times [0,\infty) \to [0,\infty)$ that assigns a decision score $s(E_i):= s\left(\hat{w}_n(\bm{X}_i), \hat{p}(\bm{X}_i)\right)$ to the edge $E_i$ in $G$. Then, for a threshold $\lambda \geq 0$, we define the truncated graph
$$G_{\lambda}^{(s)} = \left(\mathbb{X}_n, E_{\lambda}^{(s)}\right),\qquad E_{\lambda}^{(s)} :=\left\{E_i \in E(G): s(E_i)\leq \lambda\right\}.$$
Equivalently, the outgoing edge from $\bm{X}_i$ is removed whenever $s(E_i) > \lambda$. Let $\mathcal T_{G^{(s)}}(\lambda)$ denote the partition of $\mathbb{X}_n$ induced by the connected components of
$G_{\lambda}^{(s)}$, where edge directions are ignored when defining
connectedness.

The following proposition shows that any fixed edge-scoring rule induces
a valid hierarchy.

\begin{proposition}
	\label{prop:general_thres_tree}
Let $s:[0,\infty)^2 \to [0,\infty)$ be any fixed scoring function. For every $0\leq \lambda\leq \lambda' < \infty$, the partition $\mathcal{T}_{G^{(s)}}(\lambda)$ refines $\mathcal{T}_{G^{(s)}}(\lambda')$. Equivalently, for every $A\in \mathcal{T}_{G^{(s)}}(\lambda)$, there exists a unique $A'\in \mathcal{T}_{G^{(s)}}(\lambda')$ such that $A \subseteq A'$.
\end{proposition}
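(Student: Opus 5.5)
The argument is a monotonicity observation about edge sets, with no property of $s$ used beyond its being a fixed function. First, for $0\leq \lambda\leq \lambda'$, I will note that
\[
E_{\lambda}^{(s)}=\{E_i\in E(G): s(E_i)\leq \lambda\}\subseteq \{E_i\in E(G): s(E_i)\leq \lambda'\}=E_{\lambda'}^{(s)}.
\]
This holds because each score $s(E_i)=s(\hat w_n(\bm X_i),\hat p(\bm X_i))$ is a fixed number that does not depend on the threshold, while the vertex set $\mathbb{X}_n$ is common to both graphs. So $G_{\lambda}^{(s)}$ is a spanning subgraph of $G_{\lambda'}^{(s)}$, and this remains true after edge directions are ignored.

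Second, I will show that connectivity transfers. If $\bm X_i$ and $\bm X_j$ lie in the same block of $\mathcal T_{G^{(s)}}(\lambda)$, there is an undirected path joining them that uses only edges of $E_{\lambda}^{(s)}$. Every such edge also lies in $E_{\lambda'}^{(s)}$, so the same path joins them in $G_{\lambda'}^{(s)}$. Hence the equivalence relation ``same connected component'' at level $\lambda$ is contained in the one at level $\lambda'$.

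Third, I will convert this inclusion into the block statement. Given $A\in\mathcal T_{G^{(s)}}(\lambda)$, pick any $\bm X\in A$ and let $A'$ be the block of $\mathcal T_{G^{(s)}}(\lambda')$ containing $\bm X$; this block exists because the components partition $\mathbb{X}_n$. By the second step, every point of $A$ is connected to $\bm X$ at level $\lambda'$, so $A\subseteq A'$. Uniqueness follows because the blocks of a partition are disjoint and $A\neq\emptyset$, so no second block can contain $A$. This gives refinement.

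None of these steps poses a real obstacle; the only point needing care is that the scores are computed once from $G$ and do not change with $\lambda$. The tree structure of $G$ is not even needed. Proposition~\ref{prop:hier_cluster_tree} then follows as the special case $s(w,p)=w$.
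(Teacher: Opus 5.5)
Your proposal is correct and follows essentially the same route as the paper. Both arguments use the inclusion $E_{\lambda}^{(s)}\subseteq E_{\lambda'}^{(s)}$ to make $G_{\lambda}^{(s)}$ a spanning subgraph of $G_{\lambda'}^{(s)}$, transfer undirected paths to level $\lambda'$, and get uniqueness of $A'$ from the fact that connected components form a partition; fixing a base point $\bm X\in A$, as you do, just makes the paper's last step slightly more explicit.
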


\begin{proof}[Proof of Proposition~\ref{prop:general_thres_tree}]
By the definition of $G_{\lambda}^{(s)}$, $s(E_i)\leq \lambda$ implies that $s(E_i)\leq \lambda'$ whenever $\lambda\leq \lambda'$. Hence, $E_{\lambda}^{(s)}\subseteq  E_{\lambda'}^{(s)}$ and $G_{\lambda}^{(s)}$ is a spanning subgraph of $G_{\lambda'}^{(s)}$.
	
Let $A$ be a connected component of $G_{\lambda}^{(s)}$, \emph{i.e.}, $A\in \mathcal{T}_{G^{(s)}}(\lambda)$, and choose any $\bm{X}_i,\bm{X}_j\in A$. Then, there exists an undirected path between $\bm{X}_i$ and $\bm{X}_j$ through edges in $E_{\lambda}^{(s)}$, ignoring their directions. Since $E_{\lambda}^{(s)}\subseteq  E_{\lambda'}^{(s)}$, $\bm{X}_i,\bm{X}_j$ lie in a single connected component $A'$ of $G_{\lambda'}^{(s)}$. Thus, $A\subseteq A'$.
	
Because connected components form a partition, there is a unique component of $G_{\lambda'}^{(s)}$ containing $A$.
\end{proof}

Notably, Proposition~\ref{prop:general_thres_tree} requires no monotonicity, continuity, or smoothness of the scoring function $s$. While it looks surprising, the reason is that $s$ assigns a fixed score to each edge $E_i$ and changing $\lambda$ only changes which edges are retained in $G$. However, additional structure on $s$ may be desirable from a clustering perspective. One natural choice is to require $s$ to be non-decreasing with respect to each of its coordinates:
$$\hat{w}_n(\bm{X}_i)\leq \hat{w}_n(\bm{X}_j), \, \hat{p}(\bm{X}_i) \leq \hat{p}(\bm{X}_j) \implies s(\hat{w}_n(\bm{X}_i), \hat{p}(\bm{X}_i)) \leq s(\hat{w}_n(\bm{X}_j), \hat{p}(\bm{X}_j)).$$
Such scoring functions include the product thresholding rule $\hat{w}_n(\bm{X}_i) \cdot \hat{p}(\bm{X}_i) > \lambda$ in the original DPC paper \citep{rodriguez2014clustering}, its dimensionless alternative $\left[\hat{w}_n(\bm{X}_i) \right]^d \hat{p}(\bm{X}_i)> \lambda$, as well as the robust linear regression on $\left\{\left(\hat{w}_n(\bm{X}_i), \hat{p}(\bm{X}_i)\right): i=1,...,n\right\}$ for determining the thresholding value in \cite{verdinelli2018analysis}.

\subsection{Practical Choice of the Edge Score}

Throughout this paper, we recommend the distance-only score $s(\hat{w}_n(\bm{X}_i), \hat{p}(\bm{X}_i)) = \hat{w}_n(\bm{X}_i)$, because it has several advantages. First, the threshold parameter and the dendrogram heights retain the direct geometric interpretation of the 1NN uphill distance. Second, the resulting dendrogram based on the distance-only score is expressed on the same distance scale as the GGDPC decision diagram; see the bottom two panels of \autoref{fig:old_faithful}. Third, this distance-only scoring choice leads to our stability theory for the GGDPC dendrogram in \autoref{subsec:dendro_stability}. Although Proposition~\ref{prop:general_thres_tree} guarantees a valid dendrogram for any fixed scoring function $s$, the convergence theory in \autoref{thm:conv_ggdpc_dendro} does not automatically extend to general scores without additional assumptions on $s$ and its population analogue.

\subsection{Optional Density Screening}

In applications, very low-density observations may have unusually large uphill distances and therefore generate undesirable small branches of
the dendrogram. A simple screening rule can prevent such observations from becoming cluster centers. 
Let $q_{\alpha}$ be the empirical lower $\alpha$-quantile of $\left\{\hat{p}(\bm{X}_i): i=1,...,n\right\}$ for a prespecified level $\alpha\in (0,1)$, for example with $\alpha=0.05$. Then, we define a scoring function as $s_\alpha(w,p)= w\cdot \mathds{1}\{p\geq q_{\alpha}\}$. Equivalently, for the edge starting at $\bm{X}_i$, we have that
$$s_{\alpha}(E_i):= s_{\alpha}\left(\hat{w}_n(\bm{X}_i), \hat{p}(\bm{X}_i)\right) = \hat{w}_n(\bm{X}_i) \cdot \mathds{1}\left\{\hat{p}(\bm{X}_i) \geq q_{\alpha}\right\}.$$
If $\hat p(\bm X_i)<q_\alpha$, then $s_\alpha(E_i)=0$, so its outgoing
edge is retained for every $\lambda\geq 0$. Consequently, such an
observation cannot become a cluster center solely because of a large
uphill distance. This screening rule attaches low-density observations
to the remaining graph rather than labeling them as noise, and should
therefore be distinguished from explicit noise-detection procedures at the end of \autoref{subsec:dpc_review}.

\section{Proof of Theorem~\ref{thm:modal_const}}
\label{app:proof_modal_const}

We begin with a uniform sample coverage lemma and then prove the consistency of GGDPC cluster centers with the population local modes in \autoref{thm:modal_const}.

\subsection{A Uniform Sample Coverage Lemma}

\begin{lemma}[Uniform sample coverage]
\label{lem:1NN_sup_bnd}
Suppose that Assumption~\ref{assump:density_reg}(a) holds, and the density $p$ is bounded on its support $\mathcal{C}$. Then,
$$\sup_{\bm{x}\in \mathcal{C}}\min_{i=1,...,n} \norm{\bm{x}-\bm{X}_i} = O\left(\left(\frac{\log n}{n}\right)^{\frac{1}{d}}\right) \quad \text{ almost surely}.$$
The same conclusion holds with $\mathcal{C}$ replaced by any of its closed subset.
\end{lemma}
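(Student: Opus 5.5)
We will prove the uniform sample coverage bound with a standard covering argument, the Borel--Cantelli lemma, and the thickness condition in Assumption~\ref{assump:density_reg}(a). Set $r_n = \left(\frac{C_0\log n}{n}\right)^{1/d}$, with a constant $C_0$ chosen below. Since $\mathcal{C}$ is compact, we cover it by a finite $r_n$-net $\{\bm{y}_1,\dots,\bm{y}_{N_n}\}\subset\mathcal{C}$. We may take $N_n \le C_1 r_n^{-d} \le C_1 n$ for a constant $C_1$ depending only on $d$ and the diameter of $\mathcal{C}$; for example, use a maximal $r_n$-separated subset, whose size is controlled by a volume argument in a bounded enlargement of $\mathcal{C}$. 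For any $\bm{x}\in\mathcal{C}$, pick $\bm{y}_k$ with $\norm{\bm{x}-\bm{y}_k}<r_n$. If some observation satisfies $\bm{X}_i\in B(\bm{y}_k,r_n)$, then $\min_i\norm{\bm{x}-\bm{X}_i}<2r_n$. So it suffices to control the event that some net ball contains no observation.

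For each net point, assume $r_n\le\epsilon_0$, which holds for large $n$. The lower bound $p\ge p_{\min}$ and the thickness condition give
$$P\left(B(\bm{y}_k,r_n)\right)\ \ge\ p_{\min}\,\mathrm{Leb}\left(\mathcal{C}\cap B(\bm{y}_k,r_n)\right)\ \ge\ p_{\min}C_{\mathcal{C}}\,v_d\, r_n^d = c\,\frac{C_0\log n}{n},$$
where $v_d$ is the volume of the unit ball and $c=p_{\min}C_{\mathcal{C}}v_d$. Hence the probability that a given ball is empty is at most $(1-c C_0\log n/n)^n\le n^{-cC_0}$. A union bound then gives
$$\mathbb{P}\left(\sup_{\bm{x}\in\mathcal{C}}\min_i\norm{\bm{x}-\bm{X}_i}\ge 2r_n\right)\le C_1 n\cdot n^{-cC_0}.$$
Choose $C_0$ with $cC_0>2$. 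The right-hand side is then summable in $n$, so by Borel--Cantelli the event $\sup_{\bm{x}}\min_i\norm{\bm{x}-\bm{X}_i}<2r_n$ holds for all sufficiently large $n$ almost surely. This is the claimed $O\left(\left(\frac{\log n}{n}\right)^{1/d}\right)$ almost-sure bound. Note that this argument uses only the lower bound on $p$; the boundedness of $p$ assumed in the statement is not actually needed.

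For a closed subset $\mathcal{K}\subset\mathcal{C}$, the supremum over $\mathcal{K}$ is dominated by the supremum over $\mathcal{C}$, so the conclusion is immediate.

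The only delicate step is the mass lower bound for balls centered near the boundary of $\mathcal{C}$. This is exactly where the thickness constant $C_{\mathcal{C}}$ is needed, and it is the reason the net points are chosen inside $\mathcal{C}$ rather than on a grid in $\mathbb{R}^d$. The net-cardinality bound is routine.
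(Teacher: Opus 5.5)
Your proposal is correct and follows essentially the same route as the paper: a finite net of $\mathcal{C}$ with $O(r_n^{-d})$ points, the thickness condition giving each net ball mass of order $\log n/n$, a union bound, and Borel--Cantelli. The only difference is cosmetic (the paper uses an $r_n/2$-net to get the bound $r_n$ rather than $2r_n$), and your remarks are accurate: the upper bound on $p$ is never used, and the closed-subset case follows because the supremum over a subset is smaller.
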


\begin{proof}[Proof of Lemma~\ref{lem:1NN_sup_bnd}]
Let $r_n = C_0\left(\frac{\log n}{n}\right)^{\frac{1}{d}}$ for some sufficiently large constant $C_0>0$. Since the support $\mathcal{C}$ of the density $p$ is compact under Assumption~\ref{assump:density_reg}(a), there exists an $\frac{r_n}{2}$-net $\left\{\bm{z}_1,...,\bm{z}_{N_n}\right\} \subset \mathcal{C}$ such that
$$\mathcal{C} \subset \bigcup_{j=1}^{N_n} B\left(\bm{z}_j, \frac{r_n}{2}\right), \quad N_n \leq \frac{C_1}{r_n^d},$$
where $C_1>0$ is a constant depending only on $\mathcal{C}$ and $d$. Notice that if every ball $B\left(\bm{z}_j, \frac{r_n}{2}\right)$ contains at least one observation from $p$, then every $\bm{x}\in \mathcal{C}$ lies within distance $r_n$ of some observation, so $\sup_{\bm{x}\in \mathcal{C}}\min_{i=1,...,n} \norm{\bm{x}-\bm{X}_i} \leq r_n$. Thus, 
$$\left\{ \sup_{\bm{x}\in \mathcal{C}}\min_{i=1,...,n} \norm{\bm{x}-\bm{X}_i} > r_n\right\} \subset \bigcup_{j=1}^{N_n} \left\{B\left(\bm{z}_j, \frac{r_n}{2}\right) \cap \left\{\bm{X}_1,...,\bm{X}_n \right\} =\emptyset \right\}.$$
It suffices to upper bound the probability of the event on the right-hand side.

Now, for each $j$, by Assumption~\ref{assump:density_reg}(a), we have that
\begin{align*}
	\mathbb{P}\left(\bm{X}_1 \in B\left(\bm{z}_j, \frac{r_n}{2}\right) \right) &\geq p_{\min} \left|\mathcal{C} \cap B\left(\bm{z}_j, \frac{r_n}{2}\right) \right| \geq p_{\min} C_{\mathcal{C}}\left(\frac{r_n}{2}\right)^d := C_2 \cdot r_n^d
\end{align*}
for some constant $C_2 > 0$.
Hence, 
$$\mathbb{P}\left(B\left(\bm{z}_j, \frac{r_n}{2}\right) \cap \left\{\bm{X}_1,...,\bm{X}_n \right\} =\emptyset \right) \leq \left(1- C_2 r_n^d\right)^n \leq e^{-C_2n r_n^d}.$$
By the union bound, we derive that
\begin{align*}
	\mathbb{P}\left(\sup_{\bm{x}\in \mathcal{C}}\min_{i=1,...,n} \norm{\bm{x}-\bm{X}_i} > r_n\right) &\leq \sum_{j=1}^{N_n} \mathbb{P}\left(B\left(\bm{z}_j, \frac{r_n}{2}\right) \cap \left\{\bm{X}_1,...,\bm{X}_n \right\} =\emptyset \right) \\
	&\leq \frac{C_1 e^{-C_2n r_n^d}}{r_n^d}.
\end{align*}
Choosing $r_n=C_0\left(\frac{\log n}{n}\right)^{\frac{1}{d}}$, the above display becomes
\begin{align*}
	\mathbb{P}\left(\sup_{\bm{x}\in \mathcal{C}}\min_{i=1,...,n} \norm{\bm{x}-\bm{X}_i} > r_n\right) &\leq \frac{C_1 e^{-C_2n r_n^d}}{r_n^d} \leq \frac{C_1 \cdot n^{1-C_2C_0^d}}{C_0^d \log n}.
\end{align*}
Now, we pick $C_0>0$ large enough so that $C_2 C_0^d > 3$, so
$$\sum_{n=1}^{\infty} \mathbb{P}\left(\sup_{\bm{x}\in \mathcal{C}}\min_{i=1,...,n} \norm{\bm{x}-\bm{X}_i} > r_n \right) < \infty.$$
By the Borel-Cantelli theorem, we know that 
$$\sup_{\bm{x}\in \mathcal{C}}\min_{i=1,...,n} \norm{\bm{x}-\bm{X}_i} \leq r_n= C_0 \left(\frac{\log n}{n}\right)^{\frac{1}{d}} \quad \text{ almost surely}$$
for all large $n$. The result follows.
\end{proof}

\subsection{Main Proof of \autoref{thm:modal_const}}

\begin{proof}[Proof of \autoref{thm:modal_const}]
Let $q_n = \left(\frac{\log n}{n}\right)^{\frac{1}{d}}$. In the sequel, all comparisons involving $\hat p$ use the fixed strict tie-breaking ordering. We also work on the probability-one event supplied by the standing non-tie condition for the values $\hat p(\bm X_i)$.

{\bf Step 1: Sample local modes and their localization.} We first derive an upper bound for the maximal pairwise distance between the sample and true local modes. Under Assumption~\ref{assump:density_reg}(c), we know that each $\bm{m}_j\in \mathcal{M}$ is non-degenerate. Then, there exist constants $r_j>0$ and $0< c_j\leq C_j < \infty$ for each $\bm{m}_j$ such that
\begin{equation}
	\label{local_model_quadra}
	c_j\norm{\bm{x} -\bm{m}_j}^2 \leq p(\bm{m}_j) - p(\bm{x}) \leq C_j \norm{\bm{x} - \bm{m}_j}^2
\end{equation}
for any $\bm{x}\in B(\bm{m}_j,r_j)$. By shrinking these radii $r_1,...,r_{|\mathcal{M}|}$ if necessary, we can assume that \begin{equation}
	\label{rad_bnd}
	2\max_{j=1,...,|\mathcal{M}|} r_j < \lambda.
\end{equation}
Let $\bm{X}_{(j)} \in \argmin_{1\leq i\leq n} \norm{\bm{X}_i -\bm{m}_j}$ be the closest observation to $\bm{m}_j$. Since $B(\bm{m}_j,q_n) \subset \mathcal{C}$ when $n$ is sufficiently large, our argument in Lemma~\ref{lem:1NN_sup_bnd} shows that
\begin{equation}
	\label{1nn_mode}
	\max_{j=1,...,|\mathcal{M}|}\norm{\bm{X}_{(j)} - \bm{m}_j} \leq C_0 q_n
\end{equation}
with probability tending to one for some large constant $C_0 >0$. For each $B(\bm{m}_j,r_j)$, by \eqref{1nn_mode}, $\mathbb{X}_n\cap B(\bm{m}_j,C_0 q_n)$ is non-empty, and we define the sample local mode as 
$$\hat{\bm{m}}_{j,n} \in \argmax_{\bm{X}_i\in \mathbb{X}_n\cap B(\bm{m}_j,r_j)} \hat{p}(\bm{X}_i),$$
which is unique under the fixed tie-breaking ordering with probability tending to one. Also, \eqref{1nn_mode} implies that $\bm{X}_{(j)} \in B(\bm{m}_j,r_j)$ when $n$ is sufficiently large.
Then, $\hat{p}(\hat{\bm{m}}_{j,n}) \geq \hat{p}(\bm{X}_{(j)})$ and
\begin{align*}
	p(\bm{m}_j) - p(\hat{\bm{m}}_{j,n}) &\leq p(\bm{m}_j) - p(\bm{X}_{(j)}) + 2\norm{\hat{p} - p}_{\infty}\\
	&\leq C_j\norm{\bm{m}_j- \bm{X}_{(j)}}^2 + 2\norm{\hat{p} - p}_{\infty}.
\end{align*}
By \eqref{local_model_quadra} again, we obtain that
$$c_j\norm{\hat{\bm{m}}_{j,n} - \bm{m}_j}^2 \leq p(\bm{m}_j) - p(\hat{\bm{m}}_{j,n}) \leq C_j\norm{\bm{X}_{(j)} - \bm{m}_j}^2 + 2\norm{\hat{p} - p}_{\infty}.$$
As a result, 
\begin{equation}
	\label{mode_distance}
	\max_{j=1,...,|\mathcal{M}|} \norm{\hat{\bm{m}}_{j,n} - \bm{m}_j} \leq \bar{C} \left(q_n + \norm{\hat{p} - p}_{\infty}^{\frac{1}{2}} \right)
\end{equation}
with probability tending to one, for some constant $\bar{C} >0$ independent of $n$.

{\bf Step 2: Uniform availability of nearby higher-density observations away from the modes.}
Suppose now that $\hat{g}=\nabla \hat{p}$ on the chosen mode neighborhoods $B(\bm{m}_j,r_j)$ for $j=1,...,|\mathcal{M}|$. By \eqref{mode_distance}, the line segment between $\bm{X}_{(j)}$ and $\hat{\bm{m}}_{j,n}$ lies in $B(\bm{m}_j,r_j)$ when $n$ is sufficiently large. By the differentiability of $\hat{p}$, we know that
\begin{align*}
	0 &\leq \hat{p}(\hat{\bm{m}}_{j,n}) - \hat{p}(\bm{X}_{(j)}) \\
	&= p(\hat{\bm m}_{j,n})-p(\bm X_{(j)}) +\int_0^1
	\left(\hat g-\nabla p\right) \left(\bm X_{(j)}+t(\hat{\bm m}_{j,n}-\bm X_{(j)})\right)^T \left[\hat{\bm m}_{j,n}-\bm X_{(j)}\right] dt.
\end{align*}
Hence, $p(\bm{X}_{(j)}) - p(\hat{\bm{m}}_{j,n}) \leq \norm{\hat{g} - \nabla p}_{\infty} \norm{\hat{\bm{m}}_{j,n} - \bm{X}_{(j)}}$. By \eqref{local_model_quadra}, we derive that
\begin{align*}
	c_j\norm{\hat{\bm{m}}_{j,n} - \bm{m}_j}^2 &\leq p(\bm{m}_j) - p(\hat{\bm{m}}_{j,n}) \\
	&\leq p(\bm{m}_j) - p(\bm{X}_{(j)}) + \norm{\hat{g} - \nabla p}_{\infty} \norm{\hat{\bm{m}}_{j,n} - \bm{X}_{(j)}} \\
	&\leq C_j\norm{\bm{X}_{(j)} - \bm{m}_j}^2 + \norm{\hat{g} - \nabla p}_{\infty} \norm{\bm{X}_{(j)} - \bm{m}_j} + \norm{\hat{g} - \nabla p}_{\infty} \norm{\hat{\bm{m}}_{j,n} - \bm{m}_j}\\
	&\stackrel{\text{(i)}}{\leq} C_j'\left( q_n^2 + \norm{\hat{g} - \nabla p}_{\infty}^2\right),
\end{align*}
where (i) absorbs a sufficiently small multiple of $\norm{\hat{\bm{m}}_{j,n} - \bm{m}_j}^2$ into the left-hand side. As a result,
$$\max_{j=1,...,|\mathcal{M}|} \norm{\hat{\bm{m}}_{j,n} - \bm{m}_j} \leq \bar{C} \left(q_n + \norm{\hat{g} - \nabla p}_{\infty} \right).$$
Combining with \eqref{mode_distance}, we obtain under the extra differentiability of $\hat{p}$ that
\begin{equation}
	\label{mode_distance2}
	\max_{j=1,...,|\mathcal{M}|} \norm{\hat{\bm{m}}_{j,n} - \bm{m}_j} \leq \bar{C} \left(q_n + \min\left\{\norm{\hat{p} - p}_{\infty}^{\frac{1}{2}},\, \norm{\hat{g} - \nabla p}_{\infty}\right\} \right).
\end{equation}

{\bf Step 3: No observation other than a sample local mode is a sink of the truncated graph $\hat{\mathcal{M}}_{\lambda}$.}
We choose a fixed $\epsilon_\lambda>0$ sufficiently small that 
$$4\epsilon_\lambda<\lambda,
\qquad
2\max_{1\leq j\leq|\mathcal M|}r_j+4\epsilon_\lambda<\lambda,$$
after shrinking the modal radii $r_j$ if necessary. Consider the set $\mathcal{C} \setminus\bigcup_{j=1}^{|\mathcal M|} B^o(\bm m_j,r_j/2)$, where none of its points are local modes of $p$.  
By Assumption~\ref{assump:density_reg}, there exist fixed constants
$c_\lambda,v_\lambda>0$ such that, for every $\bm x\in \mathcal{C} \setminus\bigcup_{j=1}^{|\mathcal M|} B^o(\bm m_j,r_j/2)$, there is
a measurable set $A(\bm x)\subset\mathcal C\cap B(\bm x,\epsilon_\lambda)$ satisfying
\begin{equation}
	\label{uniform_fixed_uphill}
	P\left(\bm{X}\in A(\bm x)\right)\geq v_\lambda,
	\qquad
	\inf_{\bm y\in A(\bm x)}\left[p(\bm y)-p(\bm x)\right]\geq c_\lambda.
\end{equation}
To see this, if $\nabla p(\bm x)\neq 0$, we can select $A(\bm{x})$ by following the population gradient flow in a fixed sufficiently small step. If $\bm x$ is a non-modal critical point, we can also move along the eigenvector direction associated with the positive eigenvalue of $\nabla^2 p(\bm x)$. By the forward-invariance condition and the differentiability of $p$,  
$$P\left(\bm{X}\in A(\bm x)\right) \geq p_{\min}\cdot \mathrm{Leb}(A(\bm{x}) \cap \mathcal{C} \cap B(\bm{x},\epsilon_\lambda))\gtrsim C_{\mathcal{C}} \cdot  \mathrm{Leb}(B(\bm{y},\epsilon_\lambda'))$$
for some $\bm{y} \in A(\bm{x})$ along the above direction and $\epsilon_\lambda' \in (0,\epsilon_\lambda)$.
The uniformity follows from a finite subcover of $\mathcal{C} \setminus\bigcup_{j=1}^{|\mathcal M|} B^o(\bm m_j,r_j/2)$.

Conditionally on $\bm X_i=\bm x\in \mathcal{C} \setminus\bigcup_{j=1}^{|\mathcal M|} B^o(\bm m_j,r_j/2)$, the probability that none of the other $n-1$ observations falls in $A(\bm x)$ is at most $(1-v_\lambda)^{n-1}\leq \exp\left[-(n-1)v_{\lambda}\right]$. A union bound over $i=1,\ldots,n$ 
shows that, with probability $n\exp\left[-(n-1)v_{\lambda}\right]$ tending to one, every observation in $\mathcal{C} \setminus\bigcup_{j=1}^{|\mathcal M|} B^o(\bm m_j,r_j/2)$ has another observation $\bm X_\ell \in A(\bm X_i)$. On the
event $\norm{\hat p-p}_\infty<\frac{c_\lambda}{3}$, such an observation
satisfies that
$$\hat p(\bm X_\ell)- \hat p(\bm X_i) \geq p(\bm X_\ell) - p(\bm X_i) - 2\norm{\hat{p} - p}_{\infty} \geq \frac{c_{\lambda}}{3}> 0.$$ 
Hence, $\bm{X}_\ell$ is admissible in the definition of $\hat{\Phi}_n(\bm{X}_i)$ and, by minimality of the GGDPC update,
$$\hat w_n(\bm X_i) \leq \norm{\bm X_\ell-\bm X_i} +2\eta_n\norm{\hat g(\bm X_i)} \leq \epsilon_\lambda+o_P(1)<\lambda$$
uniformly over $\bm X_i\in \mathcal{C} \setminus\bigcup_{j=1}^{|\mathcal M|} B^o(\bm m_j,r_j/2)$, because
$\sup_{\mathcal C}\norm{\hat g}=O_P(1)$.

Now consider $\bm X_i\in B(\bm m_j,r_j)$ with
$\bm X_i\neq\hat{\bm m}_{j,n}$. By definition of the sample local
mode and the fixed strict ordering,
$\hat p(\hat{\bm m}_{j,n})>\hat p(\bm X_i)$, so
$\hat{\bm m}_{j,n}$ is admissible. Consequently,
\[
\hat w_n(\bm X_i)
\leq \norm{\hat{\bm m}_{j,n}-\bm X_i}
+2\eta_n\norm{\hat g(\bm X_i)}
\leq 2r_j+o_P(1)<\lambda
\]
uniformly. Since the balls $B(\bm m_j,r_j/2)$ and $\mathcal{C} \setminus\bigcup_{j=1}^{|\mathcal M|} B^o(\bm m_j,r_j/2)$ cover
$\mathcal C$, we conclude that
$\hat{\mathcal{M}}_{\lambda} \subset \left\{\hat{\bm{m}}_{1,n},...,\hat{\bm{m}}_{|\mathcal{M}|,n}\right\}$ with probability tending to one.

{\bf Step 4: Classification of the local sample modes.} 
Since $\lambda\notin \left\{\psi_j:1< j\leq |\mathcal{M}|\right\}$, there exists $A_{\lambda} >0$ such that $\left|\psi_j - \lambda\right| > 4A_{\lambda}$ for all $j\geq 2$. We now consider two different cases: (a) $\psi_j< \lambda$ and (b) $\psi_j > \lambda$, for the inclusion of $\hat{\bm{m}}_{j,n}$ in $\hat{\mathcal{M}}_{\lambda}$ or not.

{\bf Case (a) $\psi_j< \lambda$:} By the definition of
$\psi_j$, there exists $\bm{y}_j\in \mathcal{U}_j$ such that $\norm{\bm{y}_j - \bm{m}_j} < \lambda - 3A_{\lambda}$. Since $p(\bm{y}_j) > p(\bm{m}_j)$ and $p$ is continuous, there exists a ball $B(\bm{y}_j,s_j)$ of positive probability such that $p(\bm{y}) > p(\bm{m}_j)$ and $\norm{\bm{y} - \bm{m}_j} < \lambda - 2A_{\lambda}$ for all $\bm{y} \in B(\bm{y}_j,s_j)$. By Assumption~\ref{assump:density_reg}(a), with probability tending to one, $B(\bm{y}_j,s_j)$ contains an observation, say $\bm{X}_{j,n}$, so that
$$p(\bm{X}_{j,n}) > p(\bm{m}_j) \geq p(\hat{\bm{m}}_{j,n}).$$
By \eqref{mode_distance}, we further have that
$$\hat{p}(\bm{X}_{j,n}) - \hat{p}(\hat{\bm{m}}_{j,n}) \geq p(\bm{X}_{j,n}) - p(\hat{\bm{m}}_{j,n}) - 2\norm{\hat{p} - p}_{\infty} > 0, \qquad \norm{\hat{\bm{m}}_{j,n} - \bm{X}_{j,n}} < \lambda - A_{\lambda}$$
when $n$ is sufficiently large. Consequently, 
$$\hat{w}_n(\hat{\bm{m}}_{j,n}) \leq \norm{\bm{X}_{j,n} - \hat{\bm{m}}_{j,n}} + 2\bar{C}_1 \eta_n\left(1+ \norm{\hat{g} - \nabla p}_{\infty}\right) < \lambda$$
with probability tending to one. Thus, $\psi_j < \lambda$ implies that $\hat{\bm{m}}_{j,n} \notin \hat{\mathcal{M}}_{\lambda}$.

{\bf Case (b) $\psi_j > \lambda$:} We claim that with probability tending to one, every observation $\bm{X}_i$ satisfying $\hat{p}(\bm{X}_i) > \hat{p}(\hat{\bm{m}}_{j,n})$ also satisfies $\norm{\bm{X}_i - \hat{\bm{m}}_{j,n}} > \lambda$.

Suppose that the claim is false. Then, along a subsequence, there would exist observations $\bm Y_n$ such that
$$\hat{p}(\bm{Y}_n) > \hat{p}(\hat{\bm m}_{j,n}), \qquad \norm{\bm{Y}_n-\hat{\bm m}_{j,n}}\leq\lambda.$$
By compactness of $\mathcal{C}$, we can further find a subsequence such that $\bm{Y}_n \to \bm{y} \in \mathcal{C}$. By \eqref{mode_distance}, $\hat{\bm{m}}_{j,n} \to \bm{m}_j$ as $n\to \infty$. The uniform consistency of $\hat{p}$ under $\norm{\hat{p} - p}_{\infty}$ and the continuity of $p$ imply that
$$p(\bm{y})\geq p(\bm{m}_j), \qquad \norm{\bm{y}-\bm{m}_j}\leq\lambda,$$
where the first inequality uses $\norm{\hat{p} - p}_{\infty} \to 0$ as $n\to \infty$ with probability tending to one.
Moreover, $\bm Y_n\notin B(\bm{m}_j,r_j)$, because
$\hat{\bm m}_{j,n}$ maximizes $\hat p$ over the observations in $B(\bm{m}_j,r_j)$. Hence, $\bm{y}\neq \bm{m}_j$. Now, if $p(\bm{y}) > p(\bm{m}_j)$, then $\bm{y}\in \overline{\mathcal{U}}_j$. If $p(\bm{y})=p(\bm{m}_j)$, then $\bm{y}\in \overline{\mathcal{U}}_j$ and it cannot be a local mode because distinct modes have distinct density values under Assumption~\ref{assump:density_reg}. Therefore, $\bm{y} \in\overline{\mathcal U}_j$ and 
$$\psi_j = d(\bm{m}_j,\overline{\mathcal{U}}_j) \leq \norm{\bm{y}-\bm{m}_j} \leq \lambda,$$
contradicting $\psi_j>\lambda$. The claim thus follows, and $\psi_j > \lambda$ implies that $\hat{\bm{m}}_{j,n} \in \hat{\mathcal{M}}_{\lambda}$.

Furthermore, by the condition that $\norm{\hat{p} - p}_{\infty}=o_P(1)$, $\hat{\bm{m}}_{1,n}$ is the global sample maximizer of $\hat p$ with probability tending to one, so $\hat{w}_n(\hat{\bm{m}}_{1,n})=\infty$.

{\bf Step 5: Conclusions about one-to-one correspondence between $\hat{\mathcal{M}}_{\lambda}$ and $\mathcal{M}_{\lambda}$ as well as their Hausdorff distance.} Combining the results that $\hat{\mathcal{M}}_{\lambda} \subset \left\{\hat{\bm{m}}_{1,n},...,\hat{\bm{m}}_{|\mathcal{M}|,n}\right\}$ with probability tending to one, $\psi_j < \lambda \implies \hat{\bm{m}}_{j,n} \notin \hat{\mathcal{M}}_{\lambda}$, and $\psi_j > \lambda \implies \hat{\bm{m}}_{j,n} \in \hat{\mathcal{M}}_{\lambda}$, we conclude that
\begin{equation}
	\label{est_mode_cor}
	\hat{\mathcal M}_\lambda = \left\{\hat{\bm m}_{j,n}:\psi_j>\lambda\right\}
\end{equation}
with probability tending to one. This proves that $|\hat{\mathcal M}_\lambda| =|\mathcal M_\lambda|$. Additionally, \eqref{mode_distance} or \eqref{mode_distance2} together with \eqref{est_mode_cor} imply that 
\begin{align*}
	&\mathrm{Haus}\left(\mathcal M_\lambda, \hat{\mathcal M}_\lambda \right) \leq \max_{j:\psi_j > \lambda} \norm{\hat{\bm m}_{j,n}-\bm{m}_j} \\
	&= 
	\begin{cases}
		O_P\left(q_n + \min\left\{\norm{\hat{p} - p}_{\infty}^{\frac{1}{2}},\, \norm{\hat{g} - \nabla p}_{\infty}\right\} \right), & \text{ if }\hat{p} \text{ is continuously differentiable and } \hat{g}=\nabla \hat{p},\\ 
		O_P\left(q_n + \norm{\hat{p} - p}_{\infty}^{\frac{1}{2}} \right), & \text{ otherwise}.
	\end{cases}
\end{align*}
The results thus follow.
\end{proof}

\section{Proof of Theorem~\ref{thm:GGDPC_ARI}}
\label{app:ggdpc_ari_proof}

We begin with an example illustrating Assumptions~\ref{assump:sep_repulsion}, \ref{assump:unique_proj}, and \ref{assump:C1_linear} as well as establish a local repulsion lemma from the separatrix, without requiring Assumption~\ref{assump:C1_linear}. Then, we leverage this result to prove \autoref{thm:GGDPC_ARI}.

\subsection{Example: A Two-Gaussian Mixture for Assumptions~\ref{assump:sep_repulsion}, \ref{assump:unique_proj}, and \ref{assump:C1_linear}}
\label{app:sep_repulsion_example}

Consider a $d$-dimensional Gaussian mixture model
\begin{equation}
	\label{GMM2}
	\pi_1 \cdot \mathcal{N}(-\bm{\mu}, \sigma^2\bm{I}_d) + \pi_2 \cdot \mathcal{N}(\bm{\mu}, \sigma^2\bm{I}_d),
\end{equation}
where $\pi_1+\pi_2=1$ and $\bm{\mu}=(\mu,0,...,0)^T\in \mathbb{R}^d$ with $\mu >0$. It density is given by 
\begin{align*}
	p(\bm{x}) &= \frac{\pi_1}{(2\pi\sigma^2)^{\frac{d}{2}}} \exp\left[-\frac{(x_1+\mu)^2 + \norm{\bm{x}_{-1}}^2}{2\sigma^2}\right] + \frac{\pi_2}{(2\pi\sigma^2)^{\frac{d}{2}}} \exp\left[-\frac{(x_1-\mu)^2 + \norm{\bm{x}_{-1}}^2}{2\sigma^2}\right] \\
	&= \exp\left(-\frac{\norm{\bm{x}_{-1}}^2}{2\sigma^2}\right) p(x_1,\bm{0}),
\end{align*}
with $\bm{x}=(x_1,\bm{x}_{-1})^T\in \mathbb{R}^d$.
Suppose that $p$ is bimodal with an index-$(d-1)$ saddle point $\bm{s}=(s,0,...,0)^T \in \mathbb{R}^d$ with $-\mu < s < \mu$ between the two local modes. Since $\nabla p(\bm{s})=\bm{0}$, we know that
\begin{equation}
	\label{sad_eq}
	\frac{\partial}{\partial x_1} p(\bm{s}) = 0 \quad \implies \quad 
	\pi_1 \exp\left(-\frac{2s\mu}{\sigma^2}\right)
	(s+\mu) = \pi_2(\mu-s).
\end{equation}
Moreover, $\nabla_{\bm x_{-1}}p(s,\bm x_{-1})
=-\frac{p(s,\bm x_{-1})}{\sigma^2} \cdot \bm{x}_{-1}$. Therefore, $\mathcal S = \left\{ \bm{x}\in\mathbb R^d:x_1=s \right\}$ is invariant under the gradient ascent flow and forms the stable manifold (or separatrix) of $\bm{s}$ between two modal basins.

Now, let $\bm{e}_1=(1,0,...,0)^T\in \mathbb{R}^d$, which is normal to $\mathcal{S}$. Moreover, at the saddle point $\bm{s}$, we know from \eqref{sad_eq} that
$$\bm{e}_1^T \nabla^2 p(\bm{s}) \bm{e}_1 = \frac{p(\bm{s})}{\sigma^2} \left( \frac{\mu^2-s^2}{\sigma^2}-1 \right).$$
Since $\bm{s}$ is an index-$(d-1)$ saddle point, its unique normal
eigenvalue is positive so that $\frac{\mu^2-s^2}{\sigma^2}>1$.
For an arbitrary $\bm{s}'=(s,\bm{x}_{-1})\in\mathcal S$,
\begin{align*}
	\bm{e}_1^T\nabla^2 p(\bm{s}')\bm{e}_1&= \frac{\partial^2p}{\partial x_1^2}(s,\bm{x}_{-1})\\
	&= \exp\left(-\frac{\norm{\bm{x}_{-1}}^2}{2\sigma^2}\right)
	\frac{\partial^2p}{\partial x_1^2}(s,\bm 0)\\
	&=\frac{p(s,\bm{x}_{-1})}{\sigma^2}
	\left( \frac{\mu^2-s^2}{\sigma^2}-1\right) >0.
\end{align*}
Consequently, on every compact subset of the regular separatrix,
the normal Hessian is uniformly bounded away from zero, illustrating
the normal repulsion condition in Assumption~\ref{assump:sep_repulsion}.

In particular, the Gaussian mixture in \autoref{fig:gmm_graph}
corresponds, after recentering the first coordinate, to
$\pi_1=\pi_2=1/2$, $\mu=1/2$, and $\sigma^2=0.09$. In this case, $s=0$, or equivalently the separatrix in the original coordinates is
$x_1=0.5$, and
$$\frac{\mu^2-s^2}{\sigma^2}-1 =\frac{0.25}{0.09}-1 =\frac{16}{9}>0.$$

Furthermore, Assumption~\ref{assump:unique_proj} holds for this bimodal Gaussian mixture \eqref{GMM2} whenever the two modal heights are distinct, \emph{e.g.}, $\pi_1\neq \pi_2$. Indeed,
for the non-global mode $\bm m_-=(a,\bm 0)$, we assume without loss of
generality that $a<s<b$, where $\bm s=(s,\bm 0)$ is the saddle point and
$\bm m_+=(b,\bm 0)$ is the global mode. There is a unique $z\in(s,b)$ such that $p(z,\bm{0})=p(a,\bm{0})$. Since
$p(t,\bm x_{-1})\leq p(t,\bm{0})$, the unique projection of $\bm m_-$ onto
$\overline{\mathcal U}_-$ is $\bm z_-=(z,\bm 0)$. Moreover,
$\bm z_-\in\mathcal C_+$, $\nabla p(\bm z_-)\neq 0$, and
$d(\bm z_-,\partial\mathcal C_+)=z-s>0$. Finally, since
$\mu_-=\frac{z-a}{\frac{\partial}{\partial x_1}p(z,\bm{0})}>0$, then every unit vector $\bm v$ satisfying
$\bm v^T\nabla p(\bm z_-)=0$ has $v_1=0$, and hence
$$\bm v^T\left[I_d-\mu_-\nabla^2p(\bm z_-)\right]\bm v = 1+\mu_-\cdot \frac{p(\bm z_-)}{\sigma^2}>1.$$
Therefore, Assumption~\ref{assump:unique_proj} is satisfied.

Finally, Assumption~\ref{assump:C1_linear} also holds under this example. At the saddle point $\bm{s}=(s,\bm{0})$, the Hessian matrix satisfies
$$\nabla^2 p(\bm{s}) = \begin{pmatrix}
\rho_u & & & \\
& -\rho_s & & \\
& & \ddots & \\
& & & -\rho_s
\end{pmatrix},$$
where
$\rho_u = \frac{p(\bm{s})}{\sigma^2} \left(\frac{\mu^2-s^2}{\sigma^2}-1 \right)>0$ and $\rho_s=\frac{p(\bm{s})}{\sigma^2}>0$.
Thus, there is a single unstable eigenvalue, while all stable eigenvalues are identical. Since $p$ is $C^\infty$, Hartman's spectral condition (recall \autoref{app:tech_dynamic}) for $C^1$-linearization is automatically satisfied. Therefore, the gradient vector field $\nabla p$ is thus $C^1$-linearizable in a neighborhood of $\bm{s}$, verifying Assumption~\ref{assump:C1_linear}.

\subsection{Local Repulsion Lemma}

\begin{lemma}[Local repulsion from the separatrix without linearization]
\label{lem:sep_repulsion}
Suppose that Assumptions~\ref{assump:density_reg} and
\ref{assump:sep_repulsion} hold. For every modal basin
$\mathcal{C}_a$, there exist constants $r_0,C_0,C_1,C_2>0$
together with a $C^2$ function $H_a: \mathcal{S}_{\rm part}^{r_0} \cap \mathcal{C}_a \to \mathbb{R}$ such that 
$$C_1 \cdot d(\bm x,\partial\mathcal C_a) \leq H_a(\bm{x})
\leq C_2 \cdot d(\bm x,\partial\mathcal C_a),$$
and
$$\nabla H_a(\bm x)^T\nabla p(\bm x) \geq C_0 H_a(\bm{x})$$
whenever $\bm x\in \mathcal{S}_{\rm part}^{r_0} \cap \mathcal{C}_a$ and
$d(\bm x,\partial\mathcal C_a)<r_0$, where $\mathcal{S}_{\rm part}^{r_0} = \left\{\bm{x}\in \mathcal{C}: d(\bm{x}, \partial C_a) < r_0\right\}$.
\end{lemma}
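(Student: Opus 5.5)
The plan is to build $H_a$ locally and glue the pieces with a partition of unity. Near the regular part of the separatrix, away from saddles, $H_a$ will be the signed distance $\mathrm{dist}(\bm{x}) := d(\bm{x},\partial\mathcal{C}_a)$. Near each boundary saddle point, $H_a$ will be a quadratic form in the unstable coordinate. Throughout, $\partial\mathcal{C}_a$ is compact and consists of finitely many stable manifolds of index-$1$ saddles, by Assumption~\ref{assump:sep_repulsion}.

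\textbf{Regular part.} Fix small neighborhoods $V_{\bm{s}}$ of the boundary saddles $\bm{s}$. Outside them, $\partial\mathcal{C}_a$ is a compact $C^2$ hypersurface with a unique projection, so $\mathrm{dist}$ is $C^2$ on a one-sided tubular neighborhood of width $r_0$. Write $\bm{x} = \bm{y} + t\,\nu(\bm{y})$ with $\bm{y} = \Pi_{\partial\mathcal{C}_a}(\bm{x})$ and $t = \mathrm{dist}(\bm{x})$; then $\nabla \mathrm{dist}(\bm{x}) = \nu(\bm{y})$. The separatrix is flow-invariant, so $\nabla p(\bm{y})$ is tangent to it and $\nu(\bm{y})^T\nabla p(\bm{y}) = 0$. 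A Taylor expansion then gives
$$\nu(\bm{y})^T\nabla p(\bm{x}) = t\,\nu(\bm{y})^T\nabla^2 p(\bm{y})\nu(\bm{y}) + O(t^2) \geq \frac{\rho_{\mathcal{S}}}{2}\,t$$
once $r_0$ is small, using the $C^3$ bounds. So $H_a = \mathrm{dist}$ works there with $C_0 = \rho_{\mathcal{S}}/2$.

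\textbf{Saddle neighborhoods.} At a boundary saddle $\bm{s}$, the Hessian has one positive eigenvalue $\beta$ with eigenvector $\bm{e}_u$, and $W^s(\bm{s})$ is locally the $C^2$ graph $u = \phi(\bm{v})$ over the stable directions. Here $u$ is the coordinate along $\bm{e}_u$, $\bm{v}$ collects the stable coordinates, and $\phi(\bm{0}) = 0$, $\nabla\phi(\bm{0}) = \bm{0}$. Set $h(\bm{x}) = u - \phi(\bm{v})$, signed so that $h > 0$ in $\mathcal{C}_a$, and take $H_a = h$ locally. Because $W^s$ is invariant, $h$ vanishes on it, so $\nabla h^T\nabla p$ is a $C^1$ function that also vanishes on $W^s$. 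Hence $\nabla h(\bm{x})^T\nabla p(\bm{x}) = h(\bm{x})\,k(\bm{x})$ with $k$ continuous and $k(\bm{s}) = \bm{e}_u^T\nabla^2 p(\bm{s})\bm{e}_u = \beta > 0$. Shrinking the neighborhood gives $\nabla h^T\nabla p \geq (\beta/2)\,h$. Comparability $h \asymp \mathrm{dist}$ holds because $W^s$ is a $C^2$ graph with bounded slope and $\nabla h \neq 0$.

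\textbf{Gluing.} I will run the same factorization argument on the regular part as well: $\mathrm{dist}\cdot k$ with $k = \nu^T\nabla^2 p\,\nu + O(t)$, which is consistent with the Taylor bound above. Take a partition of unity $\{\chi_i\}$ subordinate to the saddle balls and the regular region, and set $H_a = \sum_i \chi_i H^{(i)}$, where each $H^{(i)}$ is $C^2$, comparable to $\mathrm{dist}$, and satisfies $\nabla H^{(i)T}\nabla p \geq c\,H^{(i)}$. Then
$$\nabla H_a^T\nabla p = \sum_i \chi_i\,\nabla H^{(i)T}\nabla p + \sum_i H^{(i)}\,\nabla\chi_i^T\nabla p.$$
The first sum is at least $c\,H_a$.

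\textbf{Main obstacle: the cross term.} In $\sum_i H^{(i)}\nabla\chi_i^T\nabla p$, the coefficients $H^{(i)}$ are all of order $\mathrm{dist}$ but need not be equal, and $\sum_i \nabla\chi_i = 0$. I would make the pieces agree to first order on overlaps by normalizing them: replace $h$ by $h/|\nabla h|$ evaluated at the foot point, so that both local functions equal $\mathrm{dist} + O(\mathrm{dist}^2)$ on the overlap annuli. The cross term then becomes
$$\sum_i (H^{(i)} - \mathrm{dist})\,\nabla\chi_i^T\nabla p = O(\mathrm{dist}^2) \leq \frac{c}{2}\,H_a$$
after shrinking $r_0$. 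This requires the overlap annuli to be fixed sets away from $\bm{s}$, where both $h$ and $\mathrm{dist}$ are $C^2$. If that fails, I would instead define $H_a$ near each saddle as $\mathrm{dist}$ outside a smaller ball, transition through a monotone interpolation along flow lines, and use that the time $\nabla p$-trajectories spend in the annulus is bounded.
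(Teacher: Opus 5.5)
Your two local constructions match the paper's proof. On the regular part, the paper also takes $H_a$ to be the signed distance and uses $\nu^T\nabla p=0$ together with the Taylor expansion to get $\nabla H_a^T\nabla p\geq\frac{\rho_{\mathcal S}}{2}H_a$. Near a boundary saddle, it takes the first coordinate of a $C^2$ chart sending $W^s_{\rm loc}(\bm s)$ to $\{u=0\}$ and factorizes $F_1(u,\bm v)=a(u,\bm v)\,u$ with $a(0,\bm 0)$ equal to the positive eigenvalue. That is exactly your $h=u-\phi(\bm v)$ with $\nabla h^T\nabla p=hk$ and $k(\bm s)=\beta$.

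The paper then says only that the result ``follows from compactness'' and never assembles the local pieces into one function. Your gluing step therefore goes beyond the paper, but as written it does not deliver the $C^2$ function the lemma claims. The stable manifold of the $C^2$ field $\nabla p$ is in general only $C^2$, so $\phi\in C^2$ and $\nabla h$ is only $C^1$. The foot-point map $\pi$ is $C^1$, so $g:=\norm{\nabla h}\circ\pi$ is $C^1$. Hence your normalized $h/g$, and with it the glued $H_a$, is in general only $C^1$. It need not even be $C^{1,1}$, which is what the second-order expansion of $H_a$ in Step~2 of the proof of Theorem~\ref{thm:GGDPC_ARI} requires. The normalization also changes the flow derivative to $\nabla(h/g)^T\nabla p=\frac{h}{g}\bigl(k-g^{-1}\nabla g^T\nabla p\bigr)$. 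The repulsion inequality on the saddle ball must therefore be re-verified; it survives only after shrinking the ball, using $\norm{\nabla p(\bm x)}=O(\norm{\bm x-\bm s})$.

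A cleaner repair removes the gluing entirely. Your own graph representation $u=\phi(\bm v)$ shows that $\partial\mathcal C_a$ is a $C^2$ hypersurface \emph{through} each boundary saddle. So the signed distance $d(\cdot,\partial\mathcal C_a)$ is already $C^2$ on a one-sided tube that includes the saddle neighborhoods. Since $T_{\bm s}W^s(\bm s)$ is the negative eigenspace of the symmetric matrix $\nabla^2p(\bm s)$, the unit normal at $\bm s$ is $\pm\bm e_u$, so $\nu^T\nabla^2p\,\nu=\beta>0$ at $\bm s$. By continuity of $\nu$ on the $C^2$ surface, this quantity is at least $\beta/2$ on the nearby separatrix. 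Combine this with $\rho_{\mathcal S}$ away from the saddles. Your regular-part Taylor computation needs only $\nu^T\nabla p=0$ on the invariant separatrix, which also holds at $\bm s$, so it then applies verbatim on the whole tube. Thus $H_a=d(\cdot,\partial\mathcal C_a)$ works with $C_1=C_2=1$, and there is no cross term to control.
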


\begin{proof}[Proof of Lemma~\ref{lem:sep_repulsion}]
We first consider a regular point $\bm{s}\in \partial \mathcal{C}_a$ separated from the finitely many boundary saddle points. By Assumption~\ref{assump:sep_repulsion},
$\partial\mathcal C_a$ is locally a $C^2$ hypersurface, so let
$H_a$ be its (signed) distance function, which is positive on $\mathcal{C}_a$. For any $\bm x\in \mathcal{S}_{\rm part}^{r_0} \cap \mathcal{C}_a$ and
$d(\bm x,\partial\mathcal C_a)<r_0$, we write
$$\bm x=\bm s+r\nu(\bm s),\qquad r=H_a(\bm x)>0,$$
where $\nu(\bm{s})$ is the inward unit normal. Then, we know from the invariance of the separatrix under the gradient flow that $\nu(\bm s)^T \nabla p(\bm s)=0$. By Taylor's expansion,
\begin{align*}
	\nabla H_a(\bm x)^T\nabla p(\bm x) &= \nu(\bm s)^T
	\left[\nabla p(\bm s) +r\nabla^2p(\bm s)\nu(\bm s) +O(r^2) \right]\\
	&=r\cdot \nu(\bm s)^T\nabla^2p(\bm s)\nu(\bm s)
	+O(r^2)\\
	&\geq \frac{\rho_{\mathcal S}}{2} H_a(\bm x),
\end{align*}
where the last inequality follows from Assumption~\ref{assump:sep_repulsion} after possibly decreasing $r_0$.

Now, if $\bm{s}$ is a boundary saddle, then the stable manifold theorem gives a $C^2$ local stable manifold $W^s_{\rm loc}(\bm s)$ of dimension $d-1$.
Choose a $C^2$ coordinate system or local chart $\Psi(\bm x)=(u,\bm v)$
so that $W^s_{\rm loc}(\bm s)=\{u=0\}$.
Let $F(\bm{z})=D\Psi(\Psi^{-1}(\bm{z}))\nabla p\left(\Psi^{-1}(\bm{z})\right)$ denote the transformed vector
field. Writing $F(u,\bm{v}) = \left(F_1(u,\bm{v}), F_{-1}(u,\bm{v})\right)^T$, then the invariance of the stable manifold implies that $F_1(0,\bm v)=0$.
Thus, 
$$F_1(u,\bm v)=a(u,\bm v)\cdot u$$ 
for some function $a$. In particular, $a(0,\bm{0})=\frac{\partial F_1}{\partial u}(0,\bm{0})=\rho_+(\bm s)>0$, which is the unique positive eigenvalue of $\nabla^2p(\bm s)$. Consequently,
after shrinking the neighborhood around $\bm{s}$, we know that $a(u,\bm v)\geq\frac{\lambda_+(\bm s)}{2}$.
Taking $H_a(\bm x)$ to be the signed first coordinate of $\Psi(\bm x)=(u,\bm{v})$ on the basin side yields that
$$\nabla h(\bm x)^T\nabla p(\bm x) =a(\Psi(\bm x))h(\bm x) \geq \frac{\lambda_+(\bm s)}{2} h(\bm x).$$
Since $\Psi$ is a local $C^2$ diffeomorphism, $H_a(\bm x)$ is
comparable to $d(\bm x,\partial\mathcal C_a)$. The result thus follows from compactness of the separatrix.
\end{proof}

\subsection{Main Proof of \autoref{thm:GGDPC_ARI}}

\begin{proof}[Proof of \autoref{thm:GGDPC_ARI}]
Under Assumption~\ref{assump:density_reg}, let $\mathcal{S}_{\rm full} := \bigcup_{a=1}^{|\mathcal{M}|} \partial \mathcal{C}_a$ be all the boundaries of basins of attraction. For every $\bm{x}\in \mathcal{C}\setminus \mathcal{S}_{\rm full}$, let $cl(\bm{x}) \in \left\{1,...,|\mathcal{M}|\right\}$ denote its population cluster label so that $cl(\bm{x}) = a$ whenever $\bm{x}\in \mathcal{C}_a$. Since
$\mathcal S_{\rm full}$ is a finite union of stable
manifolds with dimension less than or equal to $d-1$, it has Lebesgue measure zero. Hence, $cl(\bm X_i)$ is well-defined almost surely for every observation.

We divide the proof into four steps. All the constants denoted by $C_*$ below are fixed and independent of $n$.

{\bf Step 1: Uniform one-step GGDPC approximation near the separatrix.} We consider point $\bm{y}$ satisfying $d(\bm{y}, \partial \mathcal{C}_a) \geq A\left(\eta_n + \frac{q_n}{\eta_n} + \norm{\hat{g}-\nabla p}_{\infty}\right) \to 0$ and $d(\bm{y},\partial \mathcal{C}_a) < r_0$, where $q_n=\left(\frac{\log n}{n}\right)^{\frac{1}{d}}$, $r_0>0$ is defined as Lemma~\ref{lem:sep_repulsion}, and $A_1>0$ is some sufficiently large but fixed constant to be specified below. Let $\bm{z} = \bm{\gamma}_{\bm{y}}(\eta_n)$. By Assumption~\ref{assump:density_reg}, $\bm{z}\in \mathcal{C}_a$, and uniformly over the region of interest for $\bm{y}$, we have that
\begin{equation}
	\label{ARI_exact_flow}
	\bm{z}=\bm y+\eta_n\nabla p(\bm y)+ O\left(\eta_n^2\norm{\nabla p(\bm y)}\right).
\end{equation}
By the definition of $\zeta_n$ in \eqref{max_1nn_dist}, there exists an observation $\bm{X}(\bm{y}) \in \mathbb{X}_n$ satisfying $\norm{\bm{X}(\bm{y}) - \bm{z}}\leq \zeta_n$. We claim that $\bm X(\bm y)$ is admissible in the definition of $\hat\Phi_n(\bm y)$ in \eqref{gg_1NN_uphill} when $A_1$ is sufficiently large. By Assumption~\ref{assump:sample_den_diff} and Taylor's expansion, 
\begin{align*}
	\hat p(\bm X(\bm y))-\hat p(\bm y)
	&=\hat g(\bm y)^T\left[\bm X(\bm y)-\bm y\right] + O\left(\norm{\bm X(\bm y)-\bm y}^2\right)\\
	&= \nabla p(\bm{y})^T \left[\bm X(\bm y)-\bm y\right] + \left[\hat g(\bm y) -\nabla p(\bm{y})\right]^T\left[\bm X(\bm y)-\bm y\right] + O\left(\norm{\bm X(\bm y)-\bm y}^2\right)\\
	&\stackrel{\text{(i)}}{\geq} \eta_n\norm{\nabla p(\bm y)}^2 -C_1\eta_n \norm{\hat{g} - \nabla p}_{\infty}\norm{\nabla p(\bm y)} -C_1\zeta_n\left(\norm{\nabla p(\bm y)}+ \norm{\hat{g} - \nabla p}_{\infty}\right)\\
	&\quad -C_1\eta_n^2\norm{\nabla p(\bm y)} \left(\norm{\nabla p(\bm y)}+\norm{\hat{g} - \nabla p}_{\infty} \right) -C_1 \zeta_n^2,
\end{align*}
where (i) follows from \eqref{ARI_exact_flow}. Since $$\nabla p(\bm{y}) \gtrsim d(\bm{y}, \partial \mathcal{C}_a) \geq A_1\left(\eta_n + \frac{q_n}{\eta_n} + \norm{\hat{g}-\nabla p}_{\infty}\right),$$ 
we know that every negative term above, after division by $\eta_n\norm{\nabla p(\bm{y})}^2$, is bounded by $O\left(\frac{1}{A}\right) + o(1)$. Thus, after choosing $A_1>0$ sufficiently large and then $n$ sufficiently large, we know that $\hat p(\bm X(\bm y)) >\hat p(\bm y)$ and $\bm{X}(\bm{y})$ is thus admissible. By the minimality of $\hat{\Phi}_n(\bm{y})$, 
\begin{align*}
	\norm{\hat\Phi_n(\bm y)- \left[\bm y+\eta_n\hat g(\bm y) \right]}
	&\leq \norm{\bm X(\bm y)-\left[\bm y+\eta_n\hat g(\bm y)\right]} \\
	&\stackrel{\text{(ii)}}{\leq} \zeta_n + \eta_n\norm{\hat{g}-\nabla p}_{\infty} + C_2 \eta_n^2 \norm{\nabla p(\bm{y})},
\end{align*}
where (ii) follows from \eqref{ARI_exact_flow}. Therefore,
\begin{equation}
	\label{ARI_update_representation}
	\hat\Phi_n(\bm y)
	=
	\bm y+\eta_n\nabla p(\bm y)+\bm R_n(\bm y),
\end{equation}
where 
$$\norm{\bm R_n(\bm y)}
\leq
C_2\left[q_n+\eta_n\norm{\hat{g}-\nabla p}_{\infty}+ \eta_n^2\norm{\nabla p(\bm y)}\right]\leq C_3\eta_n\left[\eta_n + \frac{q_n}{\eta_n} + \norm{\hat{g}-\nabla p}_{\infty}\right].$$

{\bf Step 2: Discrete basin invariance away from a shrinking tube.} By Lemma~\ref{lem:sep_repulsion} and \eqref{ARI_update_representation}, there exists a distance-like function $H_a: \mathcal{S}_{\rm part}^{r_0} \cap \mathcal{C}_a \to \mathbb{R}$ such that $H_a(\bm{y}) \geq A_2 \left[\eta_n + \frac{q_n}{\eta_n} + \norm{\hat{g}-\nabla p}_{\infty}\right]$, where $A_2>0$ will be chosen sufficiently large, and 
\begin{align*}
	H_a(\hat\Phi_n(\bm y))
	&= H_a(\bm y) +\eta_n\nabla H_a(\bm y)^T\nabla p(\bm y) +\nabla H_a(\bm y)^T\bm R_n(\bm y) +O\left(\norm{\eta_n\nabla p(\bm y)+\bm R_n(\bm y)}^2\right)\\
	&\stackrel{\text{(iii)}}{\geq} (1+C_0\eta_n) H_a(\bm{y}) - C_4 \eta_n \left[\eta_n + \frac{q_n}{\eta_n} + \norm{\hat{g}-\nabla p}_{\infty}\right] \\
	&\stackrel{\text{(iv)}}{\geq} A_2 \left[\eta_n + \frac{q_n}{\eta_n} + \norm{\hat{g}-\nabla p}_{\infty}\right],
\end{align*}
where (iii) uses Lemma~\ref{lem:sep_repulsion} and the bound on $\norm{\bm{R}_n(\bm{y})}$ in \eqref{ARI_update_representation}, while (iv) follows by choosing $A_2>0$ to be sufficiently large. Thus, the one-step GGDPC update cannot cross the separatrix when it lies in the boundary neighborhoods and remain at distance of order $\eta_n + \frac{q_n}{\eta_n} + \norm{\hat{g}-\nabla p}_{\infty}$ and larger. Since \eqref{ARI_update_representation} holds when $d(\bm{y},\partial \mathcal{C}_a) \geq r_0$, this non-crossing property holds in the entire GGDPC path. In other word, there exists a fixed constant $A_3>0$ such that every iteration $\hat{\bm{Y}}_k^{(n)}$ in \eqref{sample_ggdpc_path} of the truncated GGDPC path lies in $\mathcal{C}_a$ whenever the initial point $\bm{x}$ satisfies $d(\bm{x},\partial \mathcal{C}_a) > A_3\left[\eta_n + \frac{q_n}{\eta_n} + \norm{\hat{g}-\nabla p}_{\infty}\right]$ with probability tending to one.

Now, since $0< \lambda < \min_{2\leq j \leq |\mathcal{M}|} \psi_j$, \autoref{thm:modal_const} implies that $\hat{\mathcal{M}}_{\lambda}=\left\{\hat{\bm m}_{1,n},...,\hat{\bm m}_{|\mathcal{M}|,n}\right\}$ with $\hat{\bm m}_{a,n} \in \argmax_{\bm X_i\in\mathbb X_n\cap B(\bm m_a,r_a)} \hat p(\bm X_i)$ for some small radius $r_a>0$ and $\hat{\bm m}_{a,n} \in \mathcal{C}_a$. Then, for any observation $\bm{X}_i \in \mathcal{C}_a$ satisfying $d(\bm X_i,\mathcal S_{\rm full}) > A_3\left[\eta_n + \frac{q_n}{\eta_n} + \norm{\hat{g}-\nabla p}_{\infty}\right]$, the truncated GGDPC path visiting $\bm{X}_i$ remains in $\mathcal{C}_a$, so
$$\hat{cl}_n(\bm{X}_i) = cl(\bm{X}_i)$$
with probability tending to one, where $\hat{cl}_n$ denotes the GGDPC cluster label after pairing $\hat{\bm{m}}_{a,n}$ with $\bm{m}_a$ as in \autoref{thm:modal_const}.

{\bf Step 3: Controlling the fraction of mis-clustered observations.}
Based on the result in Step 2, on an event whose probability tends to
one,
\begin{equation}
	\label{ARI_misclassified_tube}
	M_n:=\sum_{i=1}^n \mathds{1}
	\left\{ \hat{cl}_n(\bm X_i)\neq cl(\bm X_i)\right\} \leq \sum_{i=1}^n
	\mathds{1}\left\{d(\bm X_i,\mathcal S_{\rm full}) \leq A_3\left[\eta_n + \frac{q_n}{\eta_n} + \norm{\hat{g}-\nabla p}_{\infty}\right]\right\}.
\end{equation}

Since $\mathrm{Leb}
\left\{\bm x\in\mathcal C: d(\bm x,\mathcal S_{\rm full})\leq r
\right\} \leq C r$, we have that
\begin{align*}
	&\mathbb{E}\left[\frac{1}{n}\sum_{i=1}^n
	\mathds{1}\left\{d(\bm X_i,\mathcal S_{\rm full}) \leq A_3\left[\eta_n + \frac{q_n}{\eta_n} + \norm{\hat{g}-\nabla p}_{\infty}\right]\right\}\right] \\
	&= \mathbb{P}\left(d(\bm{X},\mathcal{S}_{\rm full}) \leq A_3 \left[\eta_n + \frac{q_n}{\eta_n} + \norm{\hat{g}-\nabla p}_{\infty}\right]\right) \\
	&\leq C_5 A_3 \left[\eta_n + \frac{q_n}{\eta_n} + \norm{\hat{g}-\nabla p}_{\infty}\right].
\end{align*}
By Markov's inequality and \eqref{ARI_misclassified_tube},
\begin{equation}
	\label{ARI_pointwise_error}
	\frac{M_n}{n} =O_P\left(\eta_n + \frac{q_n}{\eta_n} + \norm{\hat{g}-\nabla p}_{\infty}\right).
\end{equation}

{\bf Step 4: Conversion from observation-wise error to ARI.}
Let $N=\binom{n}{2}$ and use the notation $N_{tp},N_{tn},N_{fp},N_{fn}$ from \eqref{ARI}. Notice that after pairing the estimated clusters with their corresponding population modal basins, every disagreeing pair
contains at least one mis-clustered observation. Consequently,
$$N_{fp}+N_{fn} \leq M_n(n-M_n)+{\binom{M_n}{2}} \leq M_n(n-1).$$
It follows from \eqref{ARI_pointwise_error} that
\begin{equation}
	\label{ARI_pair_rate}
	\frac{N_{fp}+N_{fn}}{N}=O_P\left(\eta_n + \frac{q_n}{\eta_n} + \norm{\hat{g}-\nabla p}_{\infty}\right).
\end{equation}
Let $A_n:=N_{tp}+N_{fn}$ and $B_n:=N_{tp}+N_{fp}$
so that $A_n$ and $B_n$ are the numbers of pairs assigned to the same cluster under the population and GGDPC partitions, respectively. 
A direct algebraic rearrangement of the ARI definition \eqref{ARI} gives
\begin{equation}
	\label{ARI_exact_identity}
	1-\mathrm{ARI} = 1- \frac{2(N_{tp} N_{tn} - N_{fp} N_{fn})}{A_n(N-B_n) + B_n(N-A_n)} = \frac{N(N_{fp}+N_{fn})}{A_n(N-B_n) + B_n(N-A_n)}.
\end{equation}

Since each modal basin contains an open neighborhood of its mode and
$p\geq p_{\min}>0$ on $\mathcal C$ and $|\mathcal{M}|\geq 2$, we know that $P(\bm X\in\mathcal C_a) \in (0,1)$.
By the law of large numbers, 
$$\frac{A_n}{N} \stackrel{P}{\to} \sum_{a=1}^{|\mathcal{M}|} P(\bm X\in\mathcal C_a)^2.$$
Furthermore, $|A_n-B_n|= |N_{fn}-N_{fp}| \leq N_{fp}+N_{fn}$,
so \eqref{ARI_pair_rate} and $\eta_n + \frac{q_n}{\eta_n} + \norm{\hat{g}-\nabla p}_{\infty} \to 0$ imply that
$$\frac{B_n}{N} \stackrel{P}{\to} \sum_{a=1}^{|\mathcal{M}|} P(\bm X\in\mathcal C_a)^2.$$
Therefore,
\begin{align*}
	&\frac{A_n}{N}\left(1-\frac{B_n}{N}\right) +
	\frac{B_n}{N} \left(1-\frac{A_n}{N}\right)\stackrel{P}{\to}
	2\left[\sum_{a=1}^{|\mathcal{M}|} P(\bm X\in\mathcal C_a)^2\right]\left[1 - \sum_{a=1}^{|\mathcal{M}|} P(\bm X\in\mathcal C_a)^2\right]>0.
\end{align*}
Combining this with \eqref{ARI_exact_identity} and
\eqref{ARI_pair_rate} yields
$$1- \operatorname{ARI} \left(\hat{\mathcal P}_{n,\lambda}, \mathcal P_n^*\right) =O_P\left(\frac{N_{fp}+N_{fn}}{N}\right)=
O_P\left(\eta_n + \frac{q_n}{\eta_n} + \norm{\hat{g}-\nabla p}_{\infty}\right).$$
This completes the proof.
\end{proof}

\section{Proof of Theorem~\ref{thm:conv_ggdpc_dendro}}
\label{app:proof_conv_ggdpc_dendro}

Before proving Proposition~\ref{prop:ggdpc_dendro} and \autoref{thm:conv_ggdpc_dendro}, we introduce the notation and definitions needed for the arguments. We also establish a stability lemma that bounds the discrepancy between the 1NN uphill update $\hat{\bm{z}}_{j,n}=\hat{\Phi}_n(\hat{\bm{m}}_{j,n})$ from the empirical local mode $\hat{\bm{m}}_{j,n}$ and the projection $\bm{z}_j=\Pi_{\overline{\mathcal{U}}_j}(\bm{m}_j)$ of the associated population local mode $\bm{m}_j$.

An \emph{ultrametric} on a set $\mathbb{X}$ is a function
$u:\mathbb{X}\times \mathbb{X}\to\mathbb{R}_+$ satisfying, for all
$\bm{x},\bm{x}',\bm{x}''\in\mathbb{X}$,
\begin{enumerate}[label=(\roman*)]
	\item $u(\bm{x},\bm{x}')=0$ if and only if $\bm{x}=\bm{x}'$;
	\item $u(\bm{x},\bm{x}')=u(\bm{x}',\bm{x})$;
	\item $u(\bm{x},\bm{x}') \leq \max\left\{ u(\bm{x},\bm{x}''), u(\bm{x}'',\bm{x}')\right\}$.
\end{enumerate}
The last property is known as the \emph{strong triangle inequality} and implies the usual triangle inequality. Hence, every ultrametric is a metric.

Given two nonempty sets $A$ and $B$, a subset $R \in A\times B$ is called a \emph{correspondence} between $A$ and $B$ if both coordinate projections are surjective. That is, (i) for any $a\in A$, there exists $b\in B$ such that $(a,b)\in R$, and (ii) for any $b\in B$, there exists $a\in A$ such that $(a,b)\in R$. Let $\mathcal{R}(A,B)$ denote the collection of all possible correspondences between $A$ and $B$; see Section 5.1 in \cite{carlsson2010characterization} for some examples of correspondences. 

\begin{definition}
	\label{def:gromov_hausdorff}
	For two compact metric spaces $(\mathbb{X}, d_{\mathbb{X}})$ and $(\mathbb{Y}, d_{\mathbb{Y}})$, their \emph{Gromov-Hausdorff} distance is defined by
	\begin{equation}
		\label{Gromov_Hausdorff}
		\mathrm{GH}\left((\mathbb{X}, d_{\mathbb{X}}),(\mathbb{Y}, d_{\mathbb{Y}})\right) := \frac{1}{2} \inf_{R\in \mathcal{R}(\mathbb{X},\mathbb{Y})} \sup_{(\bm{x},\bm{y}), (\bm{x}',\bm{y}')\in R}\left|d_{\mathbb{X}}(\bm{x},\bm{x}') - d_{\mathbb{Y}}(\bm{y},\bm{y}') \right|.
	\end{equation}
\end{definition}

We next specialize this definition to the finite ultrametric spaces
associated with two dendrograms $(\mathcal T_G,\mathbb X_n)$ and $(\mathcal T_{G'},\mathbb Y_m)$. Let $u_{\mathbb{X}_n}$ and $u_{\mathbb{Y}_m}$ denote their associated ultrametrics. For mappings  $f:\mathbb{X}_n \to \mathbb{Y}_m$ and $g: \mathbb{Y}_m \to \mathbb{X}_n$, we define the \emph{distortions} of $f$ and $g$ by
\begin{align}
	\label{distortion}
	\begin{split}
		&\mathrm{dis}(f):= \max_{\bm{X}_i,\bm{X}_j\in \mathbb{X}_n} \left|u_{\mathbb{X}_n}\left(\bm{X}_i,\bm{X}_j\right) - u_{\mathbb{Y}_m}\left(f(\bm{X}_i),f(\bm{X}_j)\right) \right|, \\
		&\mathrm{dis}(g):= \max_{\bm{Y}_i,\bm{Y}_j\in \mathbb{Y}_m} \left|u_{\mathbb{X}_n}\left(g(\bm{Y}_i),g(\bm{Y}_j)\right) - u_{\mathbb{Y}_m}\left(\bm{Y}_i,\bm{Y}_j\right) \right|.
	\end{split}
\end{align}
Moreover, the joint distortion of $f$ and $g$ can be defined by
\begin{equation}
	\label{joint_distortion}
	\mathrm{dis}(f,g):= \max_{\bm{X}_i\in \mathbb{X}_n, \bm{Y}_i\in \mathbb{Y}_m} \left|u_{\mathbb{X}_n}\left(\bm{X}_i,g(\bm{Y}_i)\right) - u_{\mathbb{Y}_m}\left(\bm{Y}_i,f(\bm{X}_i)\right) \right|.
\end{equation}
The Gromov-Hausdorff distance between $(\mathbb{X}_n, u_{\mathbb{X}_n})$ and $(\mathbb{Y}_m, u_{\mathbb{Y}_m})$ admits the equivalent representation
\begin{equation}
	\label{Gromov_Hausdorff2}
	\mathrm{GH}\left((\mathbb{X}_n, u_{\mathbb{X}_n}),(\mathbb{Y}_m, u_{\mathbb{Y}_m})\right) = \frac{1}{2} \min_{f,g} \max\left\{\mathrm{dis}(f), \mathrm{dis}(g), \mathrm{dis}(f,g)\right\},
\end{equation}
where the minimum is taken over all mappings $f:\mathbb X_n\to\mathbb Y_m$ and $g:\mathbb Y_m\to\mathbb X_n$.
The joint distortion $\mathrm{dis}(f,g)$ controls the compatibility of the mappings $f,g$, and in particular, penalizes deviations from an approximate inverse relationship between them.

\subsection{Proof of Proposition~\ref{prop:ggdpc_dendro}}
\label{app:ggdpc_single_link}

\begin{proof}[Proof of Proposition~\ref{prop:ggdpc_dendro}]
First, when $\lambda \geq d_{\max} =\max_{1\leq i,j \leq n} \norm{\bm{X}_i - \bm{X}_j}$, $D_{ij} \leq \lambda$ and every pair $(\bm{X}_i,\bm{X}_j)$ is connected. Moreover, since every (directed) edge of the GGDPC graph $G$ has a weight at most $d_{\max}$, every edge is retained in $G_{\lambda}$, so $G_{\lambda}=G$ is also connected. Therefore, both procedures return the single cluster $\mathbb{X}_n$.

Second, when $0\leq \lambda < d_{\max}$, if $\bm{X}_i,\bm{X}_j$ lie in the same cluster in $\mathcal{T}_{\rm SL}(\lambda)$, then there exists a collection of observations $\bm{X}_{i_1}=\bm{X}_i,...,\bm{X}_{i_k}=\bm{X}_j$ such that $\norm{\bm{X}_{i_\alpha} - \bm{X}_{i_{\alpha+1}}} \leq \lambda$ for all $\alpha=1,...,k-1$. By the definition of the (undirected) GGDPC graph $G$, $\bm{X}_{i_\alpha}$ and $\bm{X}_{i_{\alpha+1}}$ are connected in $G$ so that $\bm{X}_i,\bm{X}_j$ are in the same connected component of $G_{\lambda}$. On the other hand, if $\bm{X}_i,\bm{X}_j$ lie in the same cluster in $\mathcal{T}_G(\lambda)$, there is a path $\bm{X}_i \to \cdots \to \bm{X}_{i_k}$ (not necessarily directed) connecting $\bm{X}_i,\bm{X}_j$ in $G$, and $\norm{\bm{X}_{i_\alpha} - \bm{X}_{i_{\alpha+1}}} \leq \lambda$ for all $\alpha=1,...,k-1$. Hence, $\bm{X}_i,\bm{X}_j$ are in the same cluster for the single linkage clustering at threshold $\lambda$.

In summary, $\mathcal{T}_{\rm SL}(\lambda)=\mathcal{T}_G(\lambda)$ for every $\lambda\geq 0$.
\end{proof}

\subsection{A Stability Lemma of the Empirical Modal Projection}
\label{app:lem_stab_emp_proj}

\begin{lemma}[Stability of the empirical modal projection]
\label{lem:emp_mode_proj}
Suppose that Assumptions~\ref{assump:density_reg},
\ref{assump:sample_den_diff}, and
\ref{assump:unique_proj} hold. Assume further that $\eta_n=o(1)$, $\norm{\hat{p}-p}_{\infty}=o_P(1)$, $\norm{\hat{g} - \nabla p}_{\infty} = o_P(1)$, and $\norm{\nabla^2\hat{p} - \nabla^2 p}_{\infty} =o_P(1)$. Let $\hat{\bm{m}}_{j,n}$ be the sample local mode associated with $\bm{m}_j$ in \autoref{thm:modal_const}, and for each non-global (sample) mode, set $\hat{\bm{z}}_{j,n} = \hat{\Phi}_n(\hat{\bm{m}}_{j,n})$. Then, for all non-global (sample) modes, we have that
$$\max_{j\geq 2} \norm{\hat{\bm{z}}_{j,n} - \bm{z}_j} = O_P\left(\left(\frac{\log n}{n}\right)^{\frac{1}{2d}} + \norm{\hat{g} - \nabla p}_{\infty} \right).$$
In particular, $\bm{z}_j$ and $\hat{\bm{z}}_{j,n}$ belong to the same basin of attraction $\mathcal{C}_{\pi(j)}$ for $j\geq 2$ with probability tending to one.
\end{lemma}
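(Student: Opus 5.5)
Fix a non-global mode $\bm m_j$ with $j\geq 2$. The plan is to compare the empirical update $\hat{\bm z}_{j,n}=\hat\Phi_n(\hat{\bm m}_{j,n})$ with a deterministic constrained minimizer, and then to use the second-order condition in Assumption~\ref{assump:unique_proj}(b) to convert near-optimality into closeness to $\bm z_j$.

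First, I will show that the search target is essentially the mode itself. Set $\bm c_n:=\hat{\bm m}_{j,n}+\eta_n\hat g(\hat{\bm m}_{j,n})$. By \eqref{mode_distance2}, $\norm{\hat{\bm m}_{j,n}-\bm m_j}=O_P(q_n+\norm{\hat g-\nabla p}_\infty)$. Since $\nabla p(\bm m_j)=0$, we have
\[
\norm{\hat g(\hat{\bm m}_{j,n})}\leq \norm{\hat g-\nabla p}_\infty+C\norm{\hat{\bm m}_{j,n}-\bm m_j}.
\]
Hence $\norm{\bm c_n-\bm m_j}=O_P(q_n+\norm{\hat g-\nabla p}_\infty)$, because $\eta_n=o(1)$.

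Next, I will localize the admissible set. As in Case (b) of Step 4 in the proof of \autoref{thm:modal_const}, every admissible observation (one with $\hat p(\bm X_i)>\hat p(\hat{\bm m}_{j,n})$) lies outside $B(\bm m_j,r_j)$. A compactness argument then shows that $\hat{\bm z}_{j,n}\to\bm z_j$ in probability; this uses uniqueness of the projection in Assumption~\ref{assump:unique_proj}. So I may work inside a small fixed ball around $\bm z_j$, where $\nabla p\neq 0$, and there the level set $\{p=p(\bm m_j)\}$ is a $C^2$ hypersurface.

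\textbf{Upper bound.} I will exhibit an admissible point at distance about $\psi_j$ from $\bm c_n$. Let $\bm y_n:=\bm z_j+t_n\nabla p(\bm z_j)/\norm{\nabla p(\bm z_j)}$ with $t_n\asymp \sqrt{q_n}+q_n+\norm{\hat g-\nabla p}_\infty+\norm{\hat p-p}_\infty$. Take the nearest observation $\bm X$ to $\bm y_n$; by Lemma~\ref{lem:1NN_sup_bnd}, $\norm{\bm X-\bm y_n}\leq C_0q_n$. I must then check that $\hat p(\bm X)>\hat p(\hat{\bm m}_{j,n})$. The difficulty is the density gap: near $\bm z_j$ I cannot afford the crude error $\norm{\hat p-p}_\infty$. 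Instead I will use the differences
\[
\bigl[\hat p(\bm X)-\hat p(\bm z_j)\bigr]\quad\text{and}\quad \bigl[\hat p(\hat{\bm m}_{j,n})-\hat p(\bm m_j)\bigr],
\]
which are controlled through $\hat g$ and $\nabla^2\hat p$ under Assumption~\ref{assump:sample_den_diff}. This leaves the single offset $\hat p(\bm z_j)-\hat p(\bm m_j)$, a fixed-point error of $\hat p-p$. That offset shifts the level set only by $O(\norm{\hat p-p}_\infty)$, which I will keep as a term and absorb into the stated rate. Consequently $\hat w$ at the mode is at most $\psi_j+O_P(t_n+q_n)$, measured from $\bm c_n$.

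\textbf{Lower bound and curvature (main obstacle).} For any admissible $\bm y$ near $\bm z_j$, the same argument gives $p(\bm y)\geq p(\bm m_j)-\epsilon_n$. Assumption~\ref{assump:unique_proj}(b) says that the Hessian of the Lagrangian $\frac12\norm{\bm y-\bm m_j}^2-\mu_j(p(\bm y)-p(\bm m_j))$ is positive definite on the tangent space, with constant $\kappa_j$. Together with the first-order condition, this gives the quadratic growth
\[
\norm{\bm y-\bm m_j}^2-\psi_j^2\;\geq\; c\,\norm{\bm y-\bm z_j}^2-C\epsilon_n
\]
for all $\bm y$ with $p(\bm y)\geq p(\bm m_j)-\epsilon_n$ near $\bm z_j$. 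Combining this with the upper bound for the minimizer $\hat{\bm z}_{j,n}$, and replacing $\bm m_j$ by $\bm c_n$ at cost $O_P(q_n+\norm{\hat g-\nabla p}_\infty)$ in the squared distances, yields
\[
\norm{\hat{\bm z}_{j,n}-\bm z_j}^2\lesssim q_n+\norm{\hat g-\nabla p}_\infty+\epsilon_n .
\]
The square root of $q_n$ produces the $(\log n/n)^{1/(2d)}$ term.

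The delicate point is obtaining the linear (not square-root) dependence on $\norm{\hat g-\nabla p}_\infty$. For this I will refine the argument as follows. The mode error enters the squared-distance comparison as the inner product $\langle \bm c_n-\bm m_j,\,\hat{\bm z}_{j,n}-\bm z_j\rangle$ plus a constant common to all candidates, rather than as the bare norm $\norm{\bm c_n-\bm m_j}$. The constant cancels when candidates are compared, so only the cross term survives, and it is linear in $\norm{\hat{\bm z}_{j,n}-\bm z_j}$. Absorbing it into the quadratic term leaves an error linear in $\norm{\hat g-\nabla p}_\infty$. The same idea handles the gradient contribution to $\epsilon_n$. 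Finally, since $d(\bm z_j,\partial\mathcal C_{\pi(j)})>0$ by Assumption~\ref{assump:unique_proj}(a), the rate tends to zero and therefore $\hat{\bm z}_{j,n}\in\mathcal C_{\pi(j)}$ with probability tending to one.
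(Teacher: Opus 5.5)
Your localization step (admissible points lie outside $B(\bm m_j,r_j)$; compactness plus uniqueness of $\bm z_j$) and your use of quadratic growth from Assumption~\ref{assump:unique_proj}(b) are sound. The argument still does not deliver the stated rate, for two reasons.

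The first problem is the level offset $\hat p(\bm z_j)-\hat p(\hat{\bm m}_{j,n})$. You bound it by $\norm{\hat p-p}_\infty$ and propose to absorb it into the rate. The rate contains no such term, and in general it cannot be absorbed. Take $\hat p=p+c_n$ and $\hat g=\nabla p$ with constants $c_n\to 0$ arbitrarily slowly. All hypotheses hold, and GGDPC is unchanged by the constant shift, so the lemma asserts $O_P(q_n^{1/2})$. Your bound instead carries $c_n$, and even $c_n^{1/2}$ after your quadratic-growth step. The missing observation, which the paper uses, is that $p(\bm z_j)=p(\bm m_j)$ and $\hat g=\nabla\hat p$ under Assumption~\ref{assump:sample_den_diff}. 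Hence the offset is a \emph{difference} of $\hat p-p$ at two points. Integrating $\hat g-\nabla p$ along the segment from $\hat{\bm m}_{j,n}$ to $\bm z_j$ and using the quadratic modal expansion gives $|\hat p(\bm z_j)-\hat p(\hat{\bm m}_{j,n})|\lesssim \norm{\hat g-\nabla p}_\infty+\norm{\hat{\bm m}_{j,n}-\bm m_j}^2$.

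Second, even with that fix, your refinement does not give linear dependence on $\norm{\hat g-\nabla p}_\infty$. The cross-term cancellation handles the perturbation of the center $\bm c_n-\bm m_j$, but a shift of the constraint level is not of that form. Relative to the fixed point $\bm z_j$, it enters at first order in two places:
\begin{itemize}
\item through the outward push $t_n$ of your comparison point, since $\bm z_j-\bm m_j=\psi_j\nabla p(\bm z_j)/\norm{\nabla p(\bm z_j)}$ gives $\norm{\bm y_n-\bm m_j}^2=\psi_j^2+2\psi_j t_n+t_n^2$;
\item through $\epsilon_n$ in your relaxed growth inequality.
\end{itemize}
Your comparison around $\bm z_j$ therefore only yields the square root of the offset.

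The paper avoids this by recentering. It applies the implicit function theorem to the perturbed KKT system $\bm z-\hat{\bm c}_{j,n}-\mu\nabla\hat p(\bm z)=0$, $\hat p(\bm z)=\hat p(\hat{\bm m}_{j,n})$. Its Jacobian at $(\bm z_j,\mu_j)$ is nonsingular by Assumption~\ref{assump:unique_proj}(b) and $\nabla p(\bm z_j)\neq 0$. This gives a continuous projection $\tilde{\bm z}_{j,n}$ with $\norm{\tilde{\bm z}_{j,n}-\bm z_j}=O_P(q_n+\norm{\hat g-\nabla p}_\infty)$. After checking that $\tilde{\bm z}_{j,n}$ is the global projection of $\hat{\bm c}_{j,n}$ onto the estimated upper level set, the paper applies quadratic growth around $\tilde{\bm z}_{j,n}$. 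The comparison observation is taken near $\tilde{\bm z}_{j,n}+C_4q_n\nabla\hat p(\tilde{\bm z}_{j,n})/\norm{\nabla\hat p(\tilde{\bm z}_{j,n})}$. Only the $O(q_n)$ sampling error then enters the comparison, and it produces the $\sqrt{q_n}$ term.

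Relatedly, your $t_n$ should not contain $\sqrt{q_n}$. A push of order $q_n$ suffices because $\nabla p(\bm z_j)\neq 0$. A push of order $\sqrt{q_n}$ costs $\asymp\sqrt{q_n}$ in squared distance and would give only $q_n^{1/4}$, which contradicts your later bound $\norm{\hat{\bm z}_{j,n}-\bm z_j}^2\lesssim q_n+\cdots$.
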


\begin{proof}[Proof of Lemma~\ref{lem:emp_mode_proj}]
Let $q_n = \left(\frac{\log n}{n}\right)^{\frac{1}{d}}$.
By Assumption~\ref{assump:density_reg}, $|\mathcal{M}|$ is finite, and all the local modes have non-degenerate Hessian matrices. Then, we can choose a fixed radius $r_j >0$ for each mode $\bm{m}_j \in \mathcal{M}$ such that $2r_j < \psi_j$, $\overline{B(\bm{m}_j, r_j)} \subset \mathcal{C}_j$, and the balls $B(\bm{m}_j,r_j), j=1,...,|\mathcal{M}|$ are pairwise disjoint. As in the proof of \autoref{thm:modal_const}, we obtain that
$$\hat{\bm{m}}_{j,n} \in\argmax_{\bm{X}_i\in \mathbb{X}_n\cap B(\bm{m}_j,r_j)} \hat{p}(\bm{X}_i)$$
with ties resolved by the fixed strict ordering. Additionally, \autoref{thm:modal_const} implies that
\begin{equation}
	\label{mode_rate}
	\max_{j=1,...,|\mathcal{M}|}\norm{\hat{\bm{m}}_{j,n} - \bm{m}_j} = O_P\left(q_n + \min\left\{\norm{\hat{p} - p}_{\infty}^{\frac{1}{2}},\, \norm{\hat{g} - \nabla p}_{\infty}\right\}\right).
\end{equation}
Now, for a given non-global (sample) mode indexed by $j$, we write $\hat{\bm{c}}_{j,n} :=\hat{\bm{m}}_{j,n} + \eta_n\hat{g}(\hat{\bm{m}}_{j,n})$ and define the closed external estimated upper-level set by
$$\widehat{F}_{j,n} :=\left\{\bm{y}\in\mathcal{C}\setminus
B(\bm{m}_j,r_j): \hat{p}(\bm{y})\geq \hat{p}(\hat{\bm{m}}_{j,n})\right\}.$$
Every observation admissible in the definition of
$\hat{\Phi}_n(\hat{\bm{m}}_{j,n})$ belongs to $\widehat{F}_{j,n}$, because
$\hat{\bm{m}}_{j,n}$ maximizes $\hat p$ over the observations in $B(\bm{m}_j,r_j)$.  Thus, $\hat{\bm z}_{j,n}$ is the minimizer of
$\norm{\bm{X}_i - \hat{\bm{c}}_{j,n}}$ over the observed points satisfying
$\hat{p}(\bm{X}_i)> \hat{p}(\hat{\bm m}_{j,n})$.

{\bf Step 1: Continuous projected point.} To obtain an upper bound for $\norm{\hat{\bm{z}}_{j,n} - \bm{z}_j}$, we first study the continuous projection of $\hat{\bm{c}}_{j,n}$ onto $\hat{F}_{j,n}$. At the population level, the optimal condition of $\bm{z}_j$ for solving 
$$\min_{\bm{y}\in \mathcal{C}\setminus B^o(\bm{m}_j,r_j)} \frac{1}{2}\norm{\bm{y}-\bm{m}_j}^2 \quad \text{ subject to } \quad p(\bm{y}) \geq p(\bm{m}_j)$$
yields that
\begin{equation}
	\label{first_order_cond}
	\bm{z}_j - \bm{m}_j -\mu_j \nabla p(\bm{z}_j)=0, \quad p(\bm{z}_j) - p(\bm{m}_j)=0.
\end{equation}
The Jacobian of this system with respect to $(\bm{z},\mu)$ is
\[
\mathcal{J}_j=
\begin{pmatrix}
	I_d - \mu_j\nabla^2 p(\bm{z}_j) & -\nabla p(\bm{z}_j)\\
	\nabla p(\bm{z}_j)^T &  0
\end{pmatrix},
\]
which is nonsingular. Indeed, if $\mathcal{J}_j\begin{pmatrix}
	\bm{v}\\
	s
\end{pmatrix} =0$, then the second row gives that $\bm{v}^T \nabla p(\bm{z}_j)=0$, while the first row gives that $\left[I_d - \mu_j \nabla^2 p(\bm{z}_j)\right]\bm{v} - \nabla p(\bm{z}_j) s=\bm{0}$. Hence, taking the inner product of the equation in the first row with $\bm{v}$ and using Assumption~\ref{assump:unique_proj}(b) lead to $\bm{v}=0$. The non-zero gradient condition $\nabla p(\bm{z}_j) \neq 0$ also implies that $s=0$.

By the conditions $\norm{\hat{g} - \nabla p}_{\infty} = o_P(1)$ and $\norm{\nabla^2\hat{p} - \nabla^2 p}_{\infty} =o_P(1)$, the implicit function theorem applies on an event with probability tending to one. Conditioning on that event, we obtain a unique local optimal solution $(\tilde{\bm{z}}_{j,n}, \tilde{\mu}_{j,n})$ so that
$$\tilde{\bm{z}}_{j,n}-\hat{\bm{c}}_{j,n} -\tilde\mu_{j,n} \nabla \hat{p}(\widetilde{\bm{z}}_{j,n})=0, \qquad \hat{p}(\widetilde{\bm{z}}_{j,n}) =\hat{p}(\hat{\bm{m}}_{j,n}).$$
Since $\nabla p(\bm{m}_j)=0$, the boundedness of $\norm{\nabla^2 p(\bm{x})}_{\max}$ for any $\bm{x}\in \mathcal{C}$ implies that
\begin{align*}
	\norm{\hat{g}(\hat{\bm{m}}_{j,n})} &= \norm{\hat{g}(\hat{\bm{m}}_{j,n}) - \nabla p(\hat{\bm{m}}_{j,n}) + \nabla p(\hat{\bm{m}}_{j,n}) - \nabla p(\bm{m}_j)} \\
	&\leq C_1\left(\norm{\hat{g} - \nabla p}_{\infty} + \norm{\hat{\bm{m}}_{j,n} - \bm{m}_j}\right)
\end{align*}
for some constant $C_1>0$. Thus, 
\begin{align*}
	\norm{\bm{z}_j -\hat{\bm{c}}_{j,n} -\mu_j \nabla \hat{p}(\bm{z}_j)} &= \norm{\underbrace{\bm{z}_j -\bm{m}_j - \mu_j \nabla p(\bm{z}_j)}_{=\bm{0}} +\bm{m}_j - \hat{\bm{c}}_{j,n} + \mu_j\left[\nabla p(\bm{z}_j) - \nabla \hat{p}(\bm{z}_j)\right]} \\
	&\leq C_2 (1+\eta_n) \left(\norm{\hat{g} - \nabla p}_{\infty} + \norm{\hat{\bm{m}}_{j,n} - \bm{m}_j}\right)
\end{align*}
for some constant $C_2>0$, where we recall the definition $\hat{\bm{c}}_{j,n} :=\hat{\bm{m}}_{j,n} + \eta_n\hat{g}(\hat{\bm{m}}_{j,n})$ in the last inequality. The residual of the equality constraint in \eqref{first_order_cond} is bounded as well, because integrating $\hat g-\nabla p$ along the segment from $\hat{\bm m}_{j,n}$ to $\bm z_j$ and using the quadratic modal expansion gives that
\[
\left|\hat p(\bm z_j)-\hat p(\hat{\bm m}_{j,n})\right|
\leq C_3\left[\norm{\hat g-\nabla p}_\infty +\norm{\hat{\bm m}_{j,n}-\bm m_j}^2\right]
\]
for some constant $C_3>0$.
The non-singularity of $\mathcal{J}_j$ and the implicit function theorem thus implies that
\begin{align}
	\label{proj_conv_rate}
	\begin{split}
		\norm{\tilde{\bm{z}}_{j,n} - \bm{z}_j} &\lesssim (1+\eta_n) \left(\norm{\hat{g} - \nabla p}_{\infty} + \norm{\hat{\bm{m}}_{j,n} - \bm{m}_j}\right) \\
		&= O_P\left(q_n + \norm{\hat{g} - \nabla p}_{\infty}\right).
	\end{split}
\end{align}

We next verify that this local solution $\tilde{\bm{z}}_{j,n}$ is the global continuous projection $\hat{\bm{c}}_{j,n}$ onto $\hat{F}_{j,n}$. If not, along a subsequence, there would exist global minimizers
$\bm{y}_n\in \hat{F}_{j,n}$ outside a fixed neighborhood of $\bm{z}_j$
with $\norm{\bm{y}_n-\hat{\bm{c}}_{j,n}} \leq \norm{\widetilde{\bm{z}}_{j,n}-\hat{\bm{c}}_{j,n}}$. By compactness, pass to a further subsequence with $\bm{y}_n\to \bm{y}$. Uniform convergence of $\hat{p}$ and
$\hat{p}(\hat{\bm{m}}_{j,n})\to p(\bm{m}_j)$ imply
$$\bm{y}\in \mathcal{C}\setminus B^o(\bm{m}_j,r_j), \qquad p(\bm{y})\geq p(\bm{m}_j).$$
Moreover, $\hat{\bm{c}}_{j,n}\to\bm{m}_j$ and
\eqref{proj_conv_rate} imply $\norm{\bm{y}-\bm{m}_j} \leq \psi_j$. By the uniqueness of $\bm{z}_j$, it has to be $\bm{y}=\bm{z}_j$, leading to a
contradiction. Thus, $\tilde{\bm{z}}_{j,n}$ is the global continuous projection with probability tending to one.

{\bf Step 2: Uniform quadratic growth.} The second-order condition in Assumption~\ref{assump:unique_proj}(b) implies that for a fixed neighborhood $W_j$ of $\bm{z}_j$ and some constant $A_j>0$,
\begin{equation}
	\label{quadratic_growth}
	\norm{\bm{y}-\hat{\bm{c}}_{j,n}}^2 -\norm{\tilde{\bm{z}}_{j,n}-\hat{\bm{c}}_{j,n}}^2 \geq A_j\norm{\bm{y}-\tilde{\bm{z}}_{j,n}}^2
\end{equation}
for every $\bm{y}\in\widehat F_{j,n}\cap W_j$, simultaneously for all $j\geq 2$. Indeed, for the constrained minimization $\bm{y}\mapsto \frac{1}{2}\norm{\bm{y}-\hat{\bm{c}}_{j,n}}^2$ subject to $\hat{p}(\bm{y})\geq \hat{p}(\hat{\bm{m}}_{j,n})$, its Hessian matrix is uniformly positive definite along both the tangential direction $\bm{v}$ and the normal inward direction $\bm{s}$ in a fixed neighborhood $W_j$ of $\bm{z}_j$, so that $\norm{\bm{y}-\hat{\bm{c}}_{j,n}}^2 -\norm{\tilde{\bm{z}}_{j,n}-\hat{\bm{c}}_{j,n}}^2 \gtrsim \norm{\bm{v} + \bm{s}}^2$ for $\bm{y}\in \hat{F}_{j,n}\cap W_j$. Moreover, $\norm{\bm{y}-\tilde{\bm{z}}_{j,n}} \leq \norm{\bm{v} + \bm{s}}$ under its associated tangential and normal decompositions.

By Assumption~\ref{assump:sample_den_diff} and the fact that $\norm{\nabla \hat{p}(\tilde{\bm{z}}_{j,n})}$ is bounded away from 0 with probability tending to one, we can choose a sufficiently large fixed $C_4>0$ and set $\bm{y}_{j,n} = \tilde{\bm{z}}_{j,n} + C_4q_n \cdot \frac{\nabla \hat{p}(\tilde{\bm{z}}_{j,n})}{\norm{\nabla \hat{p}(\tilde{\bm{z}}_{j,n})}}$. For all sufficiently large $n$, $\bm{y}_{j,n} \in \mathcal{C}$. By the definition of $\zeta_n$ in \eqref{max_1nn_dist}, there exists an observation $\bm{X}_{j,n}$ with $\norm{\bm{X}_{j,n} - \bm{y}_{j,n}} \leq \zeta_n =O_P(q_n)$. By Taylor's expansion of $\hat{p}$ at $\tilde{\bm{z}}_{j,n}$, together with the uniform $O_P(1)$ Hessian bound, we know that
$$\hat{p}(\bm{X}_{j,n}) > \hat{p}(\tilde{\bm{z}}_{j,n})\geq \hat{p}(\hat{\bm{m}}_{j,n})$$
with probability tending to one; see also Lemma~\ref{lem:pop_1nn_bnd}. Thus, $\bm X_{j,n}$ is admissible and $\norm{\bm{X}_{j,n} - \tilde{\bm{z}}_{j,n}}= O_P(q_n)$. By the defining minimality of $\hat{\bm{z}}_{j,n} = \hat{\Phi}_n(\hat{\bm{m}}_{j,n})$,
$$\norm{\hat{\bm{z}}_{j,n} - \hat{\bm{c}}_{j,n}}^2 \leq \norm{\bm{X}_{j,n}-\hat{\bm{c}}_{j,n}}^2 \leq \norm{\tilde{\bm{z}}_{j,n}-\hat{\bm{c}}_{j,n}}^2 + O_P\left(q_n\right).$$
Hence, $\hat{\bm{z}}_{j,n}\in W_j$ with probability tending to one, and \eqref{quadratic_growth} then gives that
$$\norm{\hat{\bm{z}}_{j,n} - \tilde{\bm{z}}_{j,n}} = O_P(\sqrt{q_n}).$$
Together with \eqref{proj_conv_rate} under the triangle's inequality, this proves the asserted rate. 

Finally, Assumption~\ref{assump:unique_proj}(a) and finiteness
of the mode set imply $\min_{j\geq 2} d(\bm{z}_j,\partial\mathcal{C}_{\pi(j)})>0.$
Since $\max_{j\geq 2} \norm{\hat{\bm{z}}_{j,n} - \bm{z}_j}=o_P(1)$, we conclude that $\bm{z}_j,\hat{\bm{z}}_{j,n}$ belong to the same parent basin of attraction $\mathcal{C}_{\pi(j)}$ for $j\geq 2$ with probability tending to one.
\end{proof}

\subsection{Main Proof of \autoref{thm:conv_ggdpc_dendro}}

\begin{proof}[Proof of \autoref{thm:conv_ggdpc_dendro}]
For each $j\geq 2$, we write $\bm{z}_j:=\Pi_{\overline{\mathcal{U}}_j}(\bm{m}_j)$ and $\psi_j=\norm{\bm{m}_j-\bm{z}_j}$. Throughout the proof, $\zeta_n$ denotes the global coverage radius
$\sup_{\bm x\in\mathcal C}\min_i\norm{\bm x-\bm X_i}=O_P(q_n)$ with $q_n=\left(\frac{\log n}{n}\right)^{\frac{1}{d}}$. The proof is divided into four steps.

{\bf Step 1: Identification of the empirical mode tree.} If $|\mathcal{M}|\geq 2$, we let $\psi_{\min} = \min_{2\leq j\leq |\mathcal{M}|} \psi_j >0$ and fix $\lambda_0 \in \left(0, \frac{\psi_{\min}}{2}\right)$. If $|\mathcal{M}|=1$, then we fix any constant $\lambda_0 >0$. In either case, choose a radius $r< \frac{\lambda_0}{2}$.
Recall from the proof of \autoref{thm:modal_const} that we define 
$$\hat{\bm{m}}_{j,n}  \in \argmax_{\bm{X}_i\in \mathbb{X}_n\cap B(\bm{m}_j,r)} \hat{p}(\bm{X}_i)$$ 
as the sample local mode associated with $\bm{m}_j$. Let $\hat{\bm{z}}_{j,n} = \hat{\Phi}_n(\hat{\bm{m}}_{j,n})$. By \autoref{thm:modal_const} with probability tending to one, besides the root $\hat{\bm{m}}_{1,n}$, the vertices whose outgoing edges have weight larger than $\lambda_0$ are exactly $\hat{\bm{m}}_{2,n},...,\hat{\bm{m}}_{|\mathcal{M}|,n}$. Since the undirected version of $G$ is a tree, deleting the $(|\mathcal{M}|-1)$ edges $\hat{\bm{m}}_{j,n}\to \hat{\bm{z}}_{j,n}$ for $j=2,...,|\mathcal{M}|$ produces exactly $|\mathcal{M}|$ connected components. We label them as $V_{1,n},...,V_{|\mathcal{M}|,n}$ so that $\hat{\bm{m}}_{j,n} \in V_{j,n}$ for $j=1,...,|\mathcal{M}|$. 

By Lemma~\ref{lem:emp_mode_proj}, $\max_{j\geq 2} \norm{\hat{\bm{z}}_{j,n} - \bm{z}_j} = O_P\left(\sqrt{q_n} + \norm{\hat{g} - \nabla p}_{\infty} \right)$. Additionally, by Assumption~\ref{assump:unique_proj}(a), $d(\bm{z}_j, \partial \mathcal{C}_{\pi(j)}) >0$. Hence, we can choose a fixed closed ball $\bar{B}_j$ centered at $\bm{z}_j$ and contained in $\mathcal{C}_{\pi(j)}$. With probability tending to one, $\hat{\bm{z}}_{j,n} \in \bar{B}_j$ simultaneously for all $j\geq 2$.

By \eqref{sample_regular_tracking_full} for a fixed $\delta_n>0$,
$$\sup_{0\leq k\leq T_j/\eta_n} \norm{\hat{\bm{Y}}^{(n)}_k-\bm{\gamma}_{\hat{\bm z}_{j,n}}(k\eta_n)} =O_P\left(\frac{q_n}{\eta_n}+ \norm{\hat{g} - \nabla p}_{\infty}+ \eta_n\right)=o_P(\delta_n)=o_P(1),$$
where $q_n=\left(\frac{\log n}{n}\right)^{\frac{1}{d}}$ comes from the term $\zeta_n=O_P(q_n)$ in \eqref{max_1nn_dist}, $\left\{\hat{\bm{Y}}^{(n)}_k\right\}_{k\geq 0}$ is the GGDPC path starting at $\hat{\bm{z}}_{j,n}$, $t\mapsto \bm{\gamma}_{\hat{\bm{z}}_{j,n}}(t)$ is the gradient ascent flow starting at $\hat{\bm{z}}_{j,n}$, and $T_j<\infty$ is the stopping time when $\bm{\gamma}_{\bm{y}}(T_j) \in B(\bm{m}_{\pi(j)}, r_j)$ for all $\bm{y}\in \bar{B}_j$. Here, $r_j>0$ is chosen so that $B(\bm{m}_{\pi(j)}, r_j)$ lies in $\mathcal{C}_{\pi(j)}$.

Thus, with probability tending to one, every edge used by the GGDPC path starting from $\hat{\bm{z}}_{j,n}$ reaches
$\hat{\bm{m}}_{\pi(j),n}$ without using any edge $\hat{\bm{m}}_{j,n} \to \hat{\bm{z}}_{j,n}$ for $j=2,...,|\mathcal{M}|$. Consequently, 
\begin{equation}
	\label{empirical_parent_dendro}
	\hat{\bm z}_{j,n}\in V_{\pi(j),n}, \quad\text{ for } j=2,...,|\mathcal{M}|,
\end{equation}
with probability tending to one. Contracting each $V_{j,n}$ to one vertex turns the empirical tree into exactly the population mode tree, and the edge $\hat{\bm m}_{j,n} \to \hat{\bm z}_{j,n}$ becomes the edge
$\bm{m}_j \to \bm{m}_{\pi(j)}$.

\textbf{Step 2: Convergence of the modal edge weights.} Let $\hat{\psi}_{j,n} = \norm{\hat{\bm{m}}_{j,n} - \hat{\bm{z}}_{j,n}}$ and $\psi_j = \norm{\bm{m}_j - \bm{z}_j}$ for $j=2,...,|\mathcal{M}|$. By triangle's inequality,
\begin{align*}
	\max_{j=2,...,|\mathcal{M}|} \left|\hat{\psi}_{j,n} - \psi_j\right| &\leq \max_{j=2,...,|\mathcal{M}|} \norm{\hat{\bm{m}}_{j,n} - \bm{m}_j} + \max_{j=2,...,|\mathcal{M}|} \norm{\hat{\bm{z}}_{j,n} - \bm{z}_j} \\
	&\stackrel{\text{(i)}}{=} O_P\left(\sqrt{q_n} + \norm{\hat{g} - \nabla p}_{\infty}\right),
\end{align*}
where (i) follows from \autoref{thm:modal_const} and Lemma~\ref{lem:emp_mode_proj}.

\textbf{Step 3: Uniform collapse of all non-modal edges.} Let $\epsilon_n = a_n\left(\frac{q_n}{\eta_n} + \norm{\hat{g}-\nabla p}_{\infty}\right)$ with $a_n\to \infty$ arbitrarily slowly. For any $\bm{X}_i \in \mathbb{X}_n \setminus \left\{\hat{\bm{m}}_{1,n},...,\hat{\bm{m}}_{|\mathcal{M}|,n}\right\}$, we consider three cases. 

$\bullet$ \emph{Case I:} $\bm{X}_i$ is at least $\epsilon_n$-distance away from all the critical points of $p$. Then, by Lemma~\ref{lem:mode_geometry} and $\norm{\hat{g}-\nabla p}_{\infty}=o_P(1)$, we know that $\norm{\nabla p(\bm{X}_i)} \gtrsim \epsilon_n$ and $\norm{\nabla \hat{p}(\bm{X}_i)} \gtrsim \epsilon_n - \norm{\hat{g}-\nabla p}_{\infty} \gtrsim \epsilon_n$. Let $\bm z_i=\bm{\gamma}_{\bm X_i}(\eta_n)\in\mathcal C$, and we choose an observation $\bm Y_i$ with $\norm{\bm Y_i-\bm z_i}\leq \zeta_n$. Since $\frac{q_n}{\eta_n}+\norm{\hat g-\nabla p}_\infty=o_P(\epsilon_n)$ by the definition of $\epsilon_n$, the exact-flow expansion yields that
$$\bm z_i=\bm X_i+\eta_n\nabla p(\bm X_i)+O\left(\eta_n^2\norm{\nabla p(\bm X_i)}\right).$$
By Taylor's expansion of $\hat p$, we know that
\begin{align*}
\hat p(\bm Y_i) - \hat p(\bm X_i) &\geq \nabla \hat{p}(\bm{X}_i)^T\left[\bm{Y}_i -\bm{z}_i + \eta_n\nabla p(\bm X_i)+O\left(\eta_n^2\norm{\nabla p(\bm X_i)}\right)\right] - o(\left[\epsilon_n\cdot \eta_n + q_n\right]^2)\\
&\gtrsim \norm{\nabla p(\bm{X}_i)}^2 \eta_n\left[1 + \frac{q_n}{\eta_n}+\norm{\hat g-\nabla p}_\infty\right] > 0
\end{align*}
uniformly with probability tending to one. Hence, $\bm Y_i$ is admissible and
$$\hat{w}_n(\bm{X}_i) \leq \eta_n\norm{\hat{g}}_{\infty} + C_1 \zeta_n = O_P\left(\eta_n + q_n\right)$$
for some absolute constant $C_1 >0$.

$\bullet$ \emph{Case II:} $\bm{X}_i\in B(\bm{m}_j,\epsilon_n)$ but $\bm{X}_i\neq \hat{\bm{m}}_{j,n}$ for some $\bm{m}_j \in \mathcal{M}$. Since $\hat{p}(\hat{\bm{m}}_{j,n}) > \hat{p}(\bm{X}_i)$ under the fixed tie-breaking ordering in the proof of \autoref{thm:modal_const}, $\hat{\bm{m}}_{j,n}$ is admissible for the GGDPC update $\hat{\Phi}_n(\bm{X}_i)$, and
$$\hat{w}_n(\bm{X}_i) \leq \norm{\hat{\bm m}_{j,n}-\bm X_i}
+2\eta_n\norm{\hat g(\bm X_i)} = O_P\left(\epsilon_n + \eta_n\right).$$

$\bullet$ \emph{Case III:} $\bm{X}_i\in B(\bm{s},\epsilon_n)$ for some non-modal critical point $\bm{s}$ of the density $p$. Since $\bm s$ is non-modal and $\nabla^2 p(\bm{s})$ is nonsingular, it has a positive eigenvalue. Let $\bm{v}_{\bm{s}}$ be a corresponding unit eigenvector. By the condition $\norm{\nabla^2 \hat{p} - \nabla^2 p}_{\infty}=o_P(1)$ under Assumption~\ref{assump:sample_den_diff}, there exist fixed constants $r_{\bm{s}}>0$ and $\rho_{\bm{s}}>0$ such that, with probability tending to one,
\begin{equation}
	\label{positive_hessian_direction_dendro}
	\bm{v}_{\bm s}^{T}\nabla^2\hat p(\bm y)\bm{v}_{\bm s} \geq \rho_{\bm s}
\end{equation}
uniformly over $\bm y\in B(\bm{s},r_{\bm s})$. Additionally, as shown in Lemma~\ref{lem:pop_1nn_tube}, $\sup_{\bm{y}\in B(\bm{s},\epsilon_n)} \norm{\hat{g}(\bm{y})} = O_P(\epsilon_n)$. 

Now, we choose a sufficiently large fixed constant $C_{\bm s}>0$ and set $\bm y_i:=\bm X_i+C_{\bm s}\epsilon_n\bm v_{\bm s}$. For all sufficiently large $n$, the entire segment joining $\bm X_i$ and $\bm y_i$ lies in $B(\bm s,r_{\bm s})$. By Taylor's expansion and \eqref{positive_hessian_direction_dendro},
\begin{align}
	\label{continuous_uphill_rate_dendro}
	\begin{split}
		\hat p(\bm y_i)-\hat p(\bm X_i) &= C_{\bm s}\epsilon_n
		\bm v_{\bm s}^{T}\hat g(\bm X_i) +\frac{C_{\bm s}^2\epsilon_n^2}{2}
		\bm v_{\bm s}^{T}\nabla^2\hat p(\bm{X}_i)\bm v_{\bm s} + o_P(\epsilon_n^2)\\
		&\geq -C_2 C_{\bm s}\epsilon_n^2
		+\frac{\rho_{\bm s}C_{\bm s}^2}{2}\epsilon_n^2\\
		&\stackrel{\text{(ii)}}{\geq} C_3 \epsilon_n^2
	\end{split}
\end{align}
for some absolute constants $C_2,C_3 >0$, where (ii) follows by choosing $C_{\bm s}>0$ sufficiently large.

By the definition of $\zeta_n$, there is an observation $\bm{Y}_i$ with
$\norm{\bm{Y}_i-\bm{y}_i} \leq \zeta_n$. By Taylor's expansion, boundedness of
$\nabla^2 \hat{p}$, and $\sup_{\bm{y}\in B(\bm{s},\epsilon_n)} \norm{\hat{g}(\bm{y})} = O_P(\epsilon_n)$, we know that
$$\hat{p}(\bm Y_i) - \hat{p}(\bm{y}_i) \geq -C_4\epsilon_n\zeta_n -C_4\zeta_n^2 =o_P(\epsilon_n^2).$$
Combining this display with \eqref{continuous_uphill_rate_dendro} implies that $\hat{p}(\bm{Y}_i) > \hat{p}(\bm{X}_i)$ uniformly with probability tending to one. Thus, $\bm{Y}_i$ is admissible, and
\begin{align*}
	\hat{w}_n(\bm X_i) &\leq \norm{\bm Y_i-\bm X_i} +2\eta_n \norm{\hat g(\bm X_i)} \\
	&\leq C_{\bm s}\epsilon_n+\zeta_n +2\eta_n\norm{\hat g(\bm X_i)} =O_P(\epsilon_n)
\end{align*}
uniformly over $B(\bm{s},\epsilon_n)$.

Combining all these cases, we conclude that
$$\max_{j=1,...,|\mathcal{M}|} \max_{\bm{X}_i \in \hat{V}_{j,n}\setminus \left\{\hat{\bm{m}}_{j,n}\right\}} \hat{w}_n(\bm{X}_i) = O_P\left(\eta_n+\epsilon_n\right) = O_P\left(\eta_n + a_n\left[\frac{q_n}{\eta_n} + \norm{\hat{g}-\nabla p}_{\infty}\right]\right)$$
with $a_n\to \infty$ arbitrarily slowly.

\textbf{Step 4: Construction of a low-distortion correspondence.} On the event established in Step 1, we define the surjective map $F_n:\mathbb{X}_n\to \mathcal{M}$ with $F_n(\bm{X}_i)=\bm{m}_j$ if $\bm{X}_i \in V_{j,n}$ and construct the correspondence
$$R_n:=\left\{(\bm{X}_i,F_n(\bm{X}_i)): \bm{X}_i\in \mathbb{X}_n\right\} \subset \mathbb{X}_n \times \mathcal{M}.$$
Now, fix any $\bm{X}_i,\bm{X}_{i'}\in \mathbb{X}_n$. If $F_n(\bm{X}_i) = F_n(\bm{X}_{i'})$, then the unique undirected path joining them in $G$ contains no edges $\hat{\bm{m}}_{j,n}\to \hat{\bm{z}}_{j,n}$ for $j=2,...,|\mathcal{M}|$. By Proposition~\ref{prop:hier_cluster_tree} and \eqref{ultrametric_dendro},
$$u_{\mathbb{X}_n}(\bm{X}_i,\bm{X}_{i'}) \leq \max_{j=1,...,|\mathcal{M}|} \max_{\bm{X}_i \in \hat{V}_{j,n}\setminus \left\{\hat{\bm{m}}_{j,n}\right\}} \hat{w}_n(\bm{X}_i) = O_P\left(\eta_n + a_n\left[\frac{q_n}{\eta_n} + \norm{\hat{g}-\nabla p}_{\infty}\right]\right)$$
and
$$u_{\mathcal{M}}(F_n(\bm{X}_i), F_n(\bm{X}_{i'}))=0.$$

If $F_n(\bm{X}_i) \neq F_n(\bm{X}_{i'})$, then we know from \eqref{empirical_parent_dendro} that the modal edges on the empirical path between $\bm{X}_i$ and $\bm{X}_{i'}$ are indexed by exactly the same set, say $J(i,i')$, as the edges on the population modal tree path between $F_n(\bm{X}_i)$ and $F_n(\bm{X}_{i'})$. Hence,
$$u_{\mathbb{X}_n}(\bm{X}_i,\bm{X}_{i'}) \leq \max\left\{\max_{j=1,...,|\mathcal{M}|} \max_{\bm{X}_i \in \hat{V}_{j,n}\setminus \left\{\hat{\bm{m}}_{j,n}\right\}} \hat{w}_n(\bm{X}_i),\, \max_{j\in J(i,i')} \hat{\psi}_{j,n}  \right\}$$
and
$$u_{\mathcal{M}}(F_n(\bm{X}_i), F_n(\bm{X}_{i'})) = \max_{j\in J(i,i')} \psi_j.$$
Since all the weights are nonnegative, we know that
\begin{align*}
	&\left|u_{\mathbb{X}_n}(\bm{X}_i,\bm{X}_{i'}) - u_{\mathcal{M}}(F_n(\bm{X}_i), F_n(\bm{X}_{i'}))\right| \\
	&\leq \left|\max\left\{\max_{j=1,...,|\mathcal{M}|} \max_{\bm{X}_i \in \hat{V}_{j,n}\setminus \left\{\hat{\bm{m}}_{j,n}\right\}} \hat{w}_n(\bm{X}_i),\, \max_{j\in J(i,i')} \hat{\psi}_{j,n}  \right\} - \max_{j\in J(i,i')} \psi_j \right| \\
	&\leq \max_{j=1,...,|\mathcal{M}|} \max_{\bm{X}_i \in \hat{V}_{j,n}\setminus \left\{\hat{\bm{m}}_{j,n}\right\}} \hat{w}_n(\bm{X}_i) + \max_{j\in J(i,i')} \left|\hat{\psi}_{j,n} - \psi_j \right|\\
	&\stackrel{\text{(iii)}}{=} O_P\left(\sqrt{q_n} + \eta_n + a_n\left[\frac{q_n}{\eta_n} + \norm{\hat{g}-\nabla p}_{\infty}\right]\right),
\end{align*}
where (iii) follows from the results in Steps 2 and 3 as well as $\zeta_n = O_P(q_n)$. The result follows by noting that
\[
\begin{aligned}
	\operatorname{dis}(R_n)
	&:=\sup_{\bm X_i,\bm X_{i'}\in\mathbb X_n}
	\left|u_{\mathbb X_n}(\bm X_i,\bm X_{i'})
	-u_{\mathcal M}(F_n(\bm X_i),F_n(\bm X_{i'}))\right|\\
	&=O_P\left(\eta_n
	+a_n\left[\frac{q_n}{\eta_n}+\norm{\hat g-\nabla p}_\infty\right]\right)
\end{aligned}
\]
and $\sqrt{q_n}$ is absorbed by this rate because $\sqrt{q_n} \lesssim \eta_n + \frac{a_nq_n}{\eta_n}$.
The correspondence formula \eqref{Gromov_Hausdorff} now gives
$\mathrm{GH}((\mathbb X_n,u_{\mathbb X_n}),(\mathcal M,u_{\mathcal M}))\leq\frac{1}{2} \operatorname{dis}(R_n)$.
\end{proof}

\section{Proof of Theorem~\ref{thm:GGDPC_path_full}}
\label{app:ggdpc_path_full_proof}

We begin with technical notation and present the proof of Lemma~\ref{lem:flow_separation_sep} and other useful lemmas, including the invariance results for gradient-flow and oracle GGDPC paths within the basin of attraction. Then, we conclude with the main proof of \autoref{thm:GGDPC_path_full}.

We focus on a basin of attraction $\mathcal{C}_a$ with local mode $\bm{m}^* \in\mathcal{M}$. For $\epsilon>0$, let $\overline{B(\bm{m}^*,\epsilon)} = \left\{\bm{y}\in \mathbb{R}^d: \norm{\bm{y}-\bm{m}^*}\leq \epsilon \right\}$, and we define the hitting time of the population gradient ascent flow starting from $\bm{x}\in \mathcal{C}_a$ to $\overline{B(\bm{m}^*,\epsilon)}$ as:
$$\tau_{\epsilon}(\bm{x}) =\inf\left\{t\geq 0: \bm{\gamma}_{\bm{x}}(t)\in \overline{B(\bm{m}^*,\epsilon)}\right\}.$$
The gradient-flow length up to this neighborhood is $L_\epsilon(\bm{x})=\int_0^{\tau_\epsilon(\bm{x})}\norm{\nabla p(\bm{\gamma}_{\bm{x}}(t))}\,dt$. As a result, $L(\bm{x})= \lim_{\epsilon \to 0}L_{\epsilon}(\bm{x})=\int_0^\infty \norm{\nabla p(\bm{\gamma}_{\bm{x}}(t))}\,dt$ is the population gradient-flow length from $\bm{x}$ to $\bm{m}^*$.

For the oracle GGDPC path $ \bm{Y}_{k+1}^{(n)}=\Phi_n(\bm{Y}_k^{(n)})$ with $\bm{Y}_0^{(n)} =\bm{x}$, we define $$T_{n,\epsilon}=\min\left\{\inf\left\{k\geq 0:\bm{Y}_k^{(n)}\in \overline{B(\bm{m}^*,\epsilon)} \right\}, T_n\right\}$$ 
as the stopping time of the GGDPC path from $\bm{x}$ to the modal neighborhood $\overline{B(\bm{m}^*,\epsilon)}$, where we recall that $T_n$ is the terminal time of the finite GGDPC path within $\mathcal{C}_a$. Let $L_{n,\epsilon}(\bm{x}):=\sum_{k=0}^{T_{n,\epsilon}-1}\norm{\bm{S}_n(\bm{Y}_k^{(n)})}$.

\subsection{Proof of Lemma~\ref{lem:flow_separation_sep}}
\label{app:proof_flow_sep}

\begin{proof}[Proof of Lemma~\ref{lem:flow_separation_sep}]
Let $\mathcal S=\partial\mathcal C_a$. We treat separately the regular
part of $\mathcal S$ and fixed neighborhoods of its finitely many
boundary saddle points.

First, consider a point $\bm{x}\in \mathcal{C}_a$ in a sufficiently small tubular neighborhood of the regular part of $\mathcal S$. Let $\bm{s}=\Pi_{\mathcal S}(\bm{x})$ and write
$\bm{x}=\bm{s}+\norm{\bm{x} - \bm{s}}\nu(\bm{s})$,
where $\nu(\bm{s}) = \frac{\bm{x} - \bm{s}}{\norm{\bm{x} - \bm{s}}}$ points into $\mathcal{C}_a$. Since the separatrix is invariant under the gradient flow, $\nabla p(\bm{s})$ is tangent to $\mathcal S_{\rm reg}$, and hence $\nu(\bm{s})^T\nabla p(\bm{s})=0$. By Taylor's expansion of $p$ and Assumption~\ref{assump:sep_repulsion}, when $\norm{\bm{x} - \Pi_{\mathcal S}(\bm{x})} \leq r_0$ for some small $r_0>0$,
\begin{align*}
	\frac{d}{dt}\norm{\bm{\gamma}_{\bm{x}}(t) - \Pi_{\mathcal S}(\bm{\gamma}_{\bm{x}}(t))}\Big|_{t=0}
	&=\nu(\bm{s})^T\nabla p(\bm{x}) \\
	&=\nu(\bm{s})^T\left[\nabla p(\bm{x})-\nabla p(\bm{s})\right] \\
	&=\norm{\bm{x}-\bm{s}}\nu(\bm{s})^T\nabla^2 p(\bm{s})\nu(\bm{s})+O\left(\norm{\bm{x}-\bm{s}}^2\right) \\
	&\geq \frac{\rho_{\mathcal S}}{2}\norm{\bm{x}-\bm{s}}.
\end{align*}
Thus, if the gradient ascent flow starts at $\bm{x}$ satisfying $d(\bm{x},\partial \mathcal{C}_a) \geq r$, we know that
$$\nu(\bm{s})^T \nabla p(\bm{x}) \geq \frac{\rho_{\mathcal S} r}{2} >0,$$
so the gradient ascent flow cannot decrease its distance to $\mathcal{S}=\partial \mathcal{C}_a$.

Now, fix a saddle point $\bm{s}_j$. By Assumption~\ref{assump:C1_linear}, there is a $C^1$ linearizing diffeomorphism $\Phi_j$ of the flow on a neighborhood $U_j$ of $\bm{s}_j$ with coordinates $(\bm{u},\bm{v})$ such that the local stable manifold is $\{\bm{u}=0\}$ and
\[
\bm{u}'(t)=A_j^u\bm{u}(t),\qquad
\bm{v}'(t)=-A_j^s\bm{v}(t),
\]
where all eigenvalues of $A_j^u$ and $A_j^s$ have positive real parts. Shrinking $U_j$ if necessary, the distance to the local separatrix is comparable with $\norm{\bm{u}}$:
\[
c_j\norm{\bm{u}(\bm{x})}\leq d(\bm{x},\mathcal S)\leq C_j\norm{\bm{u}(\bm{x})},
\qquad \bm{x}\in U_j\cap\mathcal{C}_a.
\]
Moreover, on the compact set of possible saddle passages, there exists $a_j>0$ such that
\[
\norm{e^{A_j^u t}\bm{u}}\geq a_j\norm{\bm{u}}
\]
for every passage time during which the orbit remains in $U_j$. Hence, a trajectory entering $U_j$ at distance at least $r$ from $\mathcal S$ remains at distance at least a constant multiple of $r$ until it exits $U_j$.

The remaining part of the support obtained after removing the regular tubular neighborhood around $\mathcal{S}_{\rm reg}$ and the finitely many saddle neighborhoods is compact and has positive distance from $\mathcal S$. We thus combine the preceding regular boundary and saddle neighborhood estimates, as well as take the minimum over finitely many constants, which yields a constant $C_{\mathcal{S}}\in(0,1)$ such that
$ d(\bm{\gamma}_{\bm{x}}(t),\mathcal S)\geq C_{\mathcal{S}}r$ for all $t\geq0$. Finally, for the tube inclusion, if $\bm{y}\in U_{C_{\mathcal{S}}r/2}(\bm{x})$, then for some $t\geq 0$,
$$d(\bm{y},\mathcal S)\geq d(\bm{\gamma}_{\bm{x}}(t),\mathcal S)-\norm{\bm{y}-\bm{\gamma}_{\bm{x}}(t)}
\geq \frac{C_{\mathcal{S}}r}{2} >0,$$
so $\bm{y}$ lies on the same side of the separatrix as $\bm{x}$, namely in $\mathcal{C}_a$.
\end{proof}

\subsection{Other Supporting Lemmas}

\begin{lemma}[Uniform local sample count]
	\label{lem:uniform_ball_count}
	Suppose that Assumption~\ref{assump:density_reg}(a-b) holds. Let $\epsilon_n\downarrow 0$ be deterministic with $\log\left(\frac{1}{\epsilon_n}\right)=O(\log n)$. Then,
	$$\sup_{\bm z\in\mathbb R^d}\sum_{i=1}^n \mathds{1}\left\{\bm{X}_i \in B(\bm{z},\epsilon_n)\right\} =O_P\left(n\epsilon_n^d+\log n\right).$$
\end{lemma}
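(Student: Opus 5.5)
The plan is a discretization of $\mathbb{R}^d$ combined with a Bernstein or Chernoff bound and a union bound over grid cells. We may assume $\epsilon_n \leq 1$.

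The grid step is as follows. Cover $\mathcal{C}$, which is compact by Assumption~\ref{assump:density_reg}(a), by a grid of cubes of side length $\epsilon_n/\sqrt{d}$. Let $\{\bm{z}_1,\dots,\bm{z}_{N_n}\}$ be the cube centers, so that $N_n\lesssim \epsilon_n^{-d}$. Any ball $B(\bm z,\epsilon_n)$ meeting $\mathcal{C}$ is contained in $B(\bm{z}_j,2\epsilon_n)$ for some grid point $\bm z_j$. Balls not meeting $\mathcal C$ contain no observations almost surely. Hence
\[
\sup_{\bm z\in\mathbb R^d}\sum_{i=1}^n \mathds{1}\{\bm X_i\in B(\bm z,\epsilon_n)\}\leq \max_{1\leq j\leq N_n} N_j,
\qquad N_j:=\sum_{i=1}^n \mathds{1}\{\bm X_i\in B(\bm z_j,2\epsilon_n)\}.
\]

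Next, bound each count. By Assumption~\ref{assump:density_reg}(b) the density $p$ is bounded by some $p_{\max}$. Each $N_j$ is therefore $\mathrm{Bin}(n,\pi_j)$ with $\pi_j\leq p_{\max}\,\mathrm{Leb}(B(\bm 0,2\epsilon_n)) =: \mu_n/n$, where $\mu_n\asymp n\epsilon_n^d$. Apply the Chernoff bound $\mathbb P(N_j\geq t)\leq (e\mu_n/t)^t$ with $t = K(\mu_n+\log n)$ for a large constant $K\geq e^2$. This gives
\[
\mathbb P(N_j\geq t)\leq (e/K)^t\leq e^{-t}\leq n^{-K}.
\]
The union bound then yields
\[
\mathbb P\Big(\max_j N_j\geq t\Big)\lesssim \epsilon_n^{-d}\, n^{-K}.
\]

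Finally, use the growth condition. The assumption $\log(1/\epsilon_n)=O(\log n)$ gives $\epsilon_n^{-d}\leq n^{C d}$ for some fixed $C$. Choosing $K>Cd$ makes the probability above tend to zero. Therefore $\max_j N_j = O_P(n\epsilon_n^d+\log n)$, which is the claim.

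The only delicate point is making the threshold of order $\log n$ rather than $\log(1/\epsilon_n)$ when $n\epsilon_n^d$ is small. The polynomial covering number is absorbed exactly by this growth condition, so I do not expect a real obstacle; the rest is routine.
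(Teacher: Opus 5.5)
Your proposal is correct and follows essentially the same route as the paper. Both arguments use an $O(\epsilon_n^{-d})$-point discretization of $\mathcal C$, a constant-factor enlargement of the balls, an exponential tail bound for the binomial counts, a union bound, and the growth condition to make $\log N_n=O(\log n)$; the only difference is that you use the multiplicative Chernoff bound where the paper uses Bernstein's inequality. One small slip is the radius: picking $\bm x\in B(\bm z,\epsilon_n)\cap\mathcal C$ and its cell center $\bm z_j$ gives $\|\bm x-\bm z_j\|\leq \epsilon_n/2$, hence only $B(\bm z,\epsilon_n)\subset B(\bm z_j,5\epsilon_n/2)$ and not $B(\bm z_j,2\epsilon_n)$ in general (the paper uses $3\epsilon_n$), but this changes only the constant in $\mu_n$.
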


\begin{proof}
Similar to the proof of Lemma~\ref{lem:1NN_sup_bnd}, we let $\{\bm z_1,...,\bm z_{N_n}\} \subset \mathcal{C}$ be an $\epsilon_n$-net with $N_n\leq \frac{C_1}{\epsilon_n^d}$ for some constant $C_1>0$. If $B(\bm z,\epsilon_n)\cap\mathcal{C} \neq \emptyset$, we can choose $\bm x\in B(\bm z,\epsilon_n)\cap \mathcal{C}$ and $j$ with $\norm{\bm x-\bm z_j}\leq \epsilon_n$. Then,
$$B(\bm z,\epsilon_n)\cap\mathcal C\subset B(\bm z_j, 3\epsilon_n).$$
Since $p$ is bounded on $\mathcal C$ by Assumption~\ref{assump:density_reg}(a-b),
$$\mathbb{P}\left(\bm{X}_i \in B(\bm z_j,3\epsilon_n)\right)\leq C_2\epsilon_n^d$$
for some constant $C_2>0$
For $N_{j,n}=\sum_{i=1}^n\mathds{1}\left\{\bm X_i\in B(\bm z_j,3\epsilon_n)\right\}$, Bernstein's inequality yields, for every $t>0$,
$$\mathbb{P}\left(N_{j,n}>C_2n\epsilon_n^d+\sqrt{2C_2 n\epsilon_n^dt}+\frac t3\right)\leq e^{-t}.$$
By the union bound, 
$$\mathbb{P}\left(\max_{1\leq j\leq N_n} N_{j,n}>C_2n\epsilon_n^d+\sqrt{2C_2n\epsilon_n^dt}+\frac t3\right)\leq N_n \cdot e^{-t}.$$
Taking $t=2\log N_n+ x$ for $N_n\geq 2$ and $x>0$ shows that the probability of the left-hand side of the above display is at most $\frac{e^{-x}}{N_n}$. Together with $\sqrt{2C_2 n\epsilon_n^d t} \leq C_2n\epsilon_n^d + \frac{t}{2}$, we obtain that
$$\max_{1\leq j\leq N_n} N_{j,n} =O_P\left(n\epsilon_n^d+\log N_n\right) =O_P\left(n\epsilon_n^d+\log n\right).$$
The result thus follows.
\end{proof}

\vspace{2mm}

\begin{lemma}[Local geometry near the local mode; see also Lemma 5 in \citealt{dasgupta2014optimal}]
	\label{lem:mode_geometry}
	Suppose that Assumption~\ref{assump:density_reg} holds. Then, there exist constants $c_0,C_0,c_-,c_+,\epsilon_0>0$ such that, for all $\bm{x}\in \overline{B(\bm{m}^*,\epsilon_0)} \subset \mathcal{C}_a$,
	\begin{align*}
		&c_0\norm{\bm{x}-\bm{m}^*}\leq \norm{\nabla p(\bm{x})}\leq C_0\norm{\bm{x}-\bm{m}^*}, \qquad 
		\nabla p(\bm{x})^T(\bm{x}-\bm{m}^*)\leq -c_0\norm{\bm{x}-\bm{m}^*}^2,\\
		& \text{ and } \qquad c_-\norm{\bm x-\bm m^*}^2 \leq p(\bm m^*)-p(\bm x) \leq c_+\norm{\bm x-\bm m^*}^2.
	\end{align*}
	Moreover, there exists $C_m>1$ such that, for all sufficiently small $\epsilon>0$, 
	$$\left\{\bm x\in \mathcal{C}_a:p(\bm x)\geq p(\bm m^*)-c_+\epsilon^2\right\} \subset B(\bm m^*,C_m \epsilon).$$
	Finally, $\sup_{\bm{x}\in B(\bm{m}^*,\epsilon)} L(\bm{x})=O(\epsilon)$.
\end{lemma}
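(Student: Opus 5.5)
The plan is to work entirely in a fixed small ball around the non-degenerate mode $\bm m^*$, using only the $C^3$ regularity in Assumption~\ref{assump:density_reg}(b) and non-degeneracy in Assumption~\ref{assump:density_reg}(c). Let $H:=\nabla^2 p(\bm m^*)$, which is negative definite. Write its eigenvalues as $-\lambda_{\max}\le\cdots\le-\lambda_{\min}<0$. Since $\nabla p(\bm m^*)=\bm 0$ and $\nabla^2 p$ is Lipschitz with some constant $M$ on a neighborhood of $\mathcal C$, Taylor's theorem gives
\[
\nabla p(\bm x)=H(\bm x-\bm m^*)+\bm r(\bm x),\qquad \norm{\bm r(\bm x)}\le M\norm{\bm x-\bm m^*}^2 .
\]
Choosing $\epsilon_0\le \lambda_{\min}/(2M)$ then yields the two-sided gradient bound with $c_0=\lambda_{\min}/2$ and $C_0=\lambda_{\max}+\lambda_{\min}/2$. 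The same expansion gives the inner product bound
\[
\nabla p(\bm x)^T(\bm x-\bm m^*)\le -\lambda_{\min}\norm{\bm x-\bm m^*}^2+M\norm{\bm x-\bm m^*}^3\le -c_0\norm{\bm x-\bm m^*}^2 .
\]
A second-order Taylor expansion of $p$ with cubic remainder similarly gives the quadratic sandwich with $c_-=\lambda_{\min}/4$ and $c_+=\lambda_{\max}$, after shrinking $\epsilon_0$ further. Finally, I would shrink $\epsilon_0$ once more so that $\overline{B(\bm m^*,\epsilon_0)}\subset\mathcal C_a$. This is possible because $\bm m^*$ lies in the interior of $\mathcal C$ by Assumption~\ref{assump:density_reg}(c), and a small ball around an asymptotically stable equilibrium lies in its basin; the latter follows from the inner product inequality, which makes $\norm{\bm\gamma_{\bm x}(t)-\bm m^*}$ strictly decreasing.

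For the level set inclusion, take $\bm x\in\mathcal C_a$ with $p(\bm x)\ge p(\bm m^*)-c_+\epsilon^2$. I would split into two cases. If $\bm x\in \overline{B(\bm m^*,\epsilon_0)}$, the lower quadratic bound gives $\norm{\bm x-\bm m^*}\le \sqrt{c_+/c_-}\,\epsilon$, so any $C_m>\sqrt{c_+/c_-}$ works. If $\bm x\notin B(\bm m^*,\epsilon_0)$, I need $\sup\{p(\bm x):\bm x\in\mathcal C_a\setminus B(\bm m^*,\epsilon_0)\}<p(\bm m^*)$. This is the step needing care, because $\mathcal C_a$ is not closed. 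I would argue via the closure $\overline{\mathcal C}_a$, which is compact. On $\overline{\mathcal C}_a\setminus B(\bm m^*,\epsilon_0)$, $p$ attains its maximum at some $\bm y$.

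If $p(\bm y)\ge p(\bm m^*)$, I would follow the flow from points of $\mathcal C_a$ approaching $\bm y$. The ascent flow from a point of $\mathcal C_a$ increases $p$ and ends at $\bm m^*$, so $p\le p(\bm m^*)$ on $\mathcal C_a$, hence on its closure. Equality would force $\bm y$ to be a critical point with value $p(\bm m^*)$; points of $\partial\mathcal C_a$ flow to non-modal critical points or to other modes. I would rule these out using distinct modal heights, and argue that saddle-type boundary points have strictly lower density than $\bm m^*$ because they are limits of points whose flows pass through them and still climb to $\bm m^*$. The cleanest route is to show that every boundary point's flow destination $\bm s$ satisfies $p(\bm s)<p(\bm m^*)$, via unstable manifolds of $\bm s$ entering $\mathcal C_a$. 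Hence $\sup p<p(\bm m^*)-\kappa$ for some $\kappa>0$. Taking $\epsilon$ small so that $c_+\epsilon^2<\kappa$ excludes this case.

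For $\sup_{B(\bm m^*,\epsilon)}L=O(\epsilon)$, take $\epsilon\le\epsilon_0$. By the inner product bound, $\frac{d}{dt}\norm{\bm\gamma(t)-\bm m^*}^2\le -2c_0\norm{\bm\gamma(t)-\bm m^*}^2$, so $\norm{\bm\gamma_{\bm x}(t)-\bm m^*}\le \epsilon e^{-c_0 t}$ and the trajectory stays in the ball. Combining this with $\norm{\nabla p}\le C_0\norm{\cdot-\bm m^*}$ gives
\[
L(\bm x)\le \int_0^\infty C_0\epsilon e^{-c_0t}\,dt=\frac{C_0}{c_0}\,\epsilon .
\]
I expect the only real obstacle to be the global level set inclusion in the previous paragraph. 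The local estimates are routine Taylor arguments, as in Lemma 5 of \citet{dasgupta2014optimal}.
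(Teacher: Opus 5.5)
Your proposal is correct and matches the paper's proof in structure. Taylor expansions of $\nabla p$ and $p$ at the nondegenerate mode give the gradient, inner-product and quadratic bounds. The exponential decay $\norm{\bm\gamma_{\bm x}(t)-\bm m^*}\le\norm{\bm x-\bm m^*}e^{-c_0t}$, integrated against $\norm{\nabla p}\le C_0\norm{\cdot-\bm m^*}$, then gives $L=O(\epsilon)$. Working with the squared distance, as you do, avoids the paper's caveat about differentiating $\norm{\bm\gamma_{\bm x}(t)-\bm m^*}$ at the mode.

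The one place you diverge is the global gap $\sup_{\mathcal C_a\setminus B(\bm m^*,\epsilon_0)}p<p(\bm m^*)$, which the paper simply asserts. Your plan via maximizers on $\overline{\mathcal C}_a$, boundary critical points and unstable manifolds can be completed, but it is far heavier than needed. Its key claim, that the unstable manifold of a boundary saddle enters $\mathcal C_a$ itself, would require a $\lambda$-lemma argument and need not hold literally without transversality. What you would actually use is only that $W^u(\bm s)\setminus\{\bm s\}$ meets $\overline{\mathcal C}_a$, on which $p\le p(\bm m^*)$.

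A one-line argument suffices instead. If $\bm x\in\mathcal C_a$ and $\norm{\bm x-\bm m^*}\ge\epsilon_0$, the trajectory $\bm\gamma_{\bm x}$ converges to $\bm m^*$, so by continuity it meets the sphere $\partial B(\bm m^*,\epsilon_0)$ at some time $t_0\ge 0$. Since $p$ is nondecreasing along the flow, your quadratic bound gives $p(\bm x)\le p(\bm\gamma_{\bm x}(t_0))\le p(\bm m^*)-c_-\epsilon_0^2$. Hence for $\epsilon<\epsilon_0\sqrt{c_-/c_+}$ the superlevel set lies in $B(\bm m^*,\epsilon_0)$. Your local case then yields the inclusion with $C_m=2\sqrt{c_+/c_-}$, with an explicit gap and no analysis of $\partial\mathcal C_a$ at all.
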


\begin{proof}
Since $\bm{m}^*$ is a non-degenerate local mode under Assumption~\ref{assump:density_reg}, $\nabla p(\bm{m}^*)=0$ and $\nabla^2p(\bm{m}^*)$ is negative definite. By Taylor expansion of $p$ around $\bm{m}^*$, we obtain that
\begin{equation}
	\label{grad_expand}
	\nabla p(\bm{x})=\nabla^2p(\bm{m}^*)(\bm{x}-\bm{m}^*)+O(\norm{\bm{x}-\bm{m}^*}^2),
\end{equation}
which implies that
$$c_0\norm{\bm{x}-\bm{m}^*}\leq \norm{\nabla p(\bm{x})}\leq C_0\norm{\bm{x}-\bm{m}^*}$$
for $\epsilon_0$ sufficiently small, where $c_0,C_0>0$ are two fixed constants. If $\bm{\gamma}_{\bm{x}}(t)\neq \bm{m}^*$, then
\begin{align*}
	\frac{d}{dt}\norm{\bm{\gamma}_{\bm{x}}(t) - \bm{m}^*} = \frac{\bm{\gamma}_{\bm{x}}'(t)^T \left(\bm{\gamma}_{\bm{x}}(t) - \bm{m}^*\right)}{\norm{\bm{\gamma}_{\bm{x}}(t) - \bm{m}^*}} = \frac{\nabla p(\bm{\gamma}_{\bm{x}}(t))^T \left(\bm{\gamma}_{\bm{x}}(t) - \bm{m}^*\right)}{\norm{\bm{\gamma}_{\bm{x}}(t) - \bm{m}^*}} \leq -c_0 \norm{\bm{\gamma}_{\bm{x}}(t) - \bm{m}^*},
\end{align*}
where the last inequality follows from the negative definiteness of $\nabla^2 p(\bm{m}^*)$ and \eqref{grad_expand}. Additionally, 
$$p(\bm x)-p(\bm m^*)
=\frac12(\bm x-\bm m^*)^T\nabla^2 p(\bm{m}^*)(\bm x-\bm m^*)+O(\norm{\bm x-\bm m^*}^3)$$
implies that 
$$c_-\norm{\bm x-\bm m^*}^2 \leq p(\bm m^*)-p(\bm x) \leq c_+\norm{\bm x-\bm m^*}^2$$
for some fixed constants $c_-, c_+>0$.

Now, for a sufficiently small $\epsilon_1>0$, we also know that
$$\sup_{\bm{x}\in \mathcal{C}_a \setminus B(\bm{m}^*,\epsilon_1)} p(\bm{x}) < p(\bm{m}^*).$$
Hence, there exists a sufficiently small $\epsilon\in (0,\epsilon_1)$ so that
$$\left\{\bm x\in \mathcal{C}_a:p(\bm x)\geq p(\bm m^*)-c_+\epsilon^2\right\} \subset B(\bm m^*,C_m \epsilon).$$
Finally, by $\frac{d}{dt}\norm{\bm{\gamma}_{\bm{x}}(t) - \bm{m}^*} \leq -c_0 \norm{\bm{\gamma}_{\bm{x}}(t) - \bm{m}^*}$ and Gr\"onwall's inequality,
$$\norm{\bm{\gamma}_{\bm{x}}(t)-\bm{m}^*}\leq \norm{\bm{x}-\bm{m}^*}e^{-c_0t}.$$
Using $\norm{\nabla p(\bm{\gamma}_{\bm{x}}(t))}\leq C_0\norm{\bm{\gamma}_{\bm{x}}(t)-\bm{m}^*}$ with $\bm{x}\in B(\bm{m}^*,\epsilon)$ gives
$$L(\bm{x}) = \int_0^{\infty} \norm{\nabla p(\bm{\gamma}_{\bm{x}}(t))}dt \leq C_0\int_0^\infty \norm{\bm{x}-\bm{m}^*} e^{-c_0t}\,dt =O(\norm{\bm{x}-\bm{m}^*}).$$
The result follows.
\end{proof}

\vspace{2mm}

\begin{lemma}[Regularity of the gradient-flow length]
\label{lem:flow_length_regularity}
Suppose that Assumption~\ref{assump:density_reg} holds. For every compact set $\mathbb{K}\subset\mathcal C_a$, there is a neighborhood $U_{\mathbb{K}}$ of the collection of gradient flow trajectories $\{\bm\gamma_{\bm x}(t):\bm x\in \mathbb{K},\ t\geq0\}$ whose closure is contained in $\mathcal C_a$. For all sufficiently small $\epsilon>0$, $\bm{x}\mapsto L_\epsilon(\bm{x})$ is $C^2$ on $U_{\mathbb{K}}\setminus\overline{B(\bm m^*,\epsilon)}$ and satisfies $\nabla L_\epsilon(\bm x)^T\nabla p(\bm x)
=-\norm{\nabla p(\bm x)}$.
Moreover, for constants depending only on $\mathbb{K}$,
\[
\norm{\nabla L_\epsilon(\bm x)}\leq C_{\mathbb{K}},\qquad
\norm{\nabla^2L_\epsilon(\bm x)}_2
\leq \frac{C_{\mathbb{K}}}{\max\{\epsilon,\norm{\bm x-\bm m^*}\}}
\]
whenever $\bm x\in U_{\mathbb{K}}\setminus B(\bm m^*,\epsilon)$.
\end{lemma}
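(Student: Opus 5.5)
The approach is to construct the neighborhood $U_{\mathbb{K}}$ from Lemma~\ref{lem:flow_separation_sep}-type separation and then write $L_\epsilon$ through the hitting time $\tau_\epsilon$. Regularity will come from the implicit function theorem and the smooth dependence of the flow on initial data.

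\textbf{Step 1: the neighborhood $U_{\mathbb{K}}$.} Since $\mathbb{K}\subset\mathcal C_a$ is compact and $\mathcal C_a$ is open relative to $\mathcal{C}$, we have $r:=d(\mathbb{K},\partial\mathcal C_a)>0$. The set $\{\bm\gamma_{\bm x}(t):\bm x\in\mathbb{K},t\ge0\}$ is contained in $\mathcal C_a$. It stays at positive distance from $\partial\mathcal C_a$: every such trajectory enters the fixed ball $B(\bm m^*,\epsilon_0)$ of Lemma~\ref{lem:mode_geometry} within a uniformly bounded time $t_{\mathbb{K}}$, by continuity of $\tau_{\epsilon_0}$ and compactness. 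On $[0,t_{\mathbb{K}}]$ the flow map is continuous, and inside $B(\bm m^*,\epsilon_0)$ the trajectory never leaves. Let $U_{\mathbb{K}}$ be a small open tube around this set. Enlarging $\mathbb{K}$ to a compact $\mathbb{K}'\subset\mathcal C_a$ if needed, we may assume the trajectories from $U_{\mathbb{K}}$ reach $\overline{B(\bm m^*,\epsilon)}$ in uniformly bounded time $T_{\mathbb{K},\epsilon}\lesssim 1+\log(1/\epsilon)$ (Gr\"onwall near the mode).

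\textbf{Step 2: smoothness and the transport identity.} For $\bm x\notin\overline{B(\bm m^*,\epsilon)}$, $\tau_\epsilon(\bm x)$ solves $\norm{\bm\gamma_{\bm x}(\tau)-\bm m^*}^2=\epsilon^2$. The crossing is transversal because
$$\frac{d}{dt}\norm{\bm\gamma-\bm m^*}^2=2\nabla p(\bm\gamma)^T(\bm\gamma-\bm m^*)\le-2c_0\epsilon^2<0$$
by Lemma~\ref{lem:mode_geometry}. The flow $(t,\bm x)\mapsto\bm\gamma_{\bm x}(t)$ is $C^2$ since $\nabla p\in C^2$, so the implicit function theorem makes $\tau_\epsilon$ a $C^2$ function. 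Then $L_\epsilon(\bm x)=\int_0^{\tau_\epsilon(\bm x)}\norm{\nabla p(\bm\gamma_{\bm x}(t))}dt$ is $C^2$, because $\nabla p\neq0$ along the path before the hitting time; Assumption~\ref{assump:density_reg}(c) excludes other critical points in $U_{\mathbb{K}}$. The identity $\nabla L_\epsilon^T\nabla p=-\norm{\nabla p}$ follows from the semigroup relation $L_\epsilon(\bm\gamma_{\bm x}(s))=L_\epsilon(\bm x)-\int_0^s\norm{\nabla p(\bm\gamma_{\bm x})}$, differentiated at $s=0$.

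\textbf{Step 3: derivative bounds (the main obstacle).} These bounds must be uniform in $\epsilon$, even though the horizon $\tau_\epsilon\sim\log(1/\epsilon)$ grows. I would split each path at the fixed sphere $\partial B(\bm m^*,\epsilon_0)$ and write
$$L_\epsilon(\bm x)=L_{\epsilon_0}(\bm x)+L_\epsilon(\bm\gamma_{\bm x}(\tau_{\epsilon_0}(\bm x))).$$
The first piece has $\epsilon$-independent $C^2$ bounds on the compact $U_{\mathbb{K}}$, by Step 2 with $\epsilon_0$ fixed.

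For the second piece, I would work inside $B(\bm m^*,\epsilon_0)$ and use the contraction from Lemma~\ref{lem:mode_geometry}. The variational equation $\dot J=\nabla^2p(\bm\gamma)J$ has $\nabla^2 p$ negative definite near the mode, so $\norm{D_{\bm x}\bm\gamma_{\bm x}(t)}\lesssim e^{-ct}$. Then
$$\nabla L_\epsilon=\int_0^{\tau_\epsilon} J^T\nabla^2p\,\frac{\nabla p}{\norm{\nabla p}}\,dt+\norm{\nabla p(\bm\gamma(\tau_\epsilon))}\nabla\tau_\epsilon.$$
The integral is bounded by $\int e^{-ct}dt$. For the boundary term, $\nabla\tau_\epsilon=-\frac{J^T(\bm\gamma-\bm m^*)}{\nabla p^T(\bm\gamma-\bm m^*)}$, which is of size $e^{-c\tau}\epsilon/(c_0\epsilon^2)$. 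Since $e^{-c\tau_\epsilon}\lesssim\epsilon/\norm{\bm x-\bm m^*}$, the product is $O(1)$.

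For the Hessian, one more differentiation produces the factor $1/\norm{\nabla p(\bm\gamma)}$ from differentiating $\nabla p/\norm{\nabla p}$, together with second variations. Along the path $\norm{\nabla p(\bm\gamma(t))}\gtrsim\norm{\bm\gamma(t)-\bm m^*}\ge\epsilon$ and $\norm{J}^2\lesssim e^{-2ct}$. Integrating gives a bound of order $1/\max\{\epsilon,\norm{\bm x-\bm m^*}\}$: the worst contribution comes from the earliest times near the starting radius when $\bm x$ is close to the mode, and from the terminal radius $\epsilon$ otherwise.

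The delicate bookkeeping is matching the exponential contraction rate against the decay of $\norm{\bm\gamma-\bm m^*}$. If the Hessian of $p$ at $\bm m^*$ has disparate eigenvalues, these rates differ. I would first try to handle this by replacing Gr\"onwall with a linearization-based estimate comparing $J$ to $e^{t\nabla^2p(\bm m^*)}$. If that fails, I would accept a constant depending on the eigenvalue ratio, which is still fixed.
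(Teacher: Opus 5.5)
Your Steps~1--2 and the gradient bound are sound, and they follow the paper's skeleton: split at a fixed modal sphere, use the implicit function theorem for the transversal hitting time, and differentiate the semigroup identity at $s=0$. The genuine gap is the Hessian estimate in Step~3, and your fallback does not close it.

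Let $0<\lambda_{\min}\leq\lambda_{\max}$ be the eigenvalues of $-\nabla^2p(\bm m^*)$ and $r=\norm{\bm x-\bm m^*}$. The dangerous terms have size $\norm{J(t)}^2/\norm{\bm\gamma(t)-\bm m^*}$ in the integral and $\norm{J(\tau_\epsilon)}^2/\epsilon$ from the hitting time, with $\norm{J(t)}\lesssim e^{-\lambda_{\min}t}$. When $\bm x$ lies on the fast eigendirection, $\norm{\bm\gamma(t)-\bm m^*}\asymp re^{-\lambda_{\max}t}$ and $e^{-\tau_\epsilon}\asymp(\epsilon/r)^{1/\lambda_{\max}}$. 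So your claim $e^{-c\tau_\epsilon}\lesssim\epsilon/r$ is false for $c=\lambda_{\min}$; that is harmless for the gradient, but not here. Both terms are then of order $\frac{1}{r}(r/\epsilon)^{1-2\lambda_{\min}/\lambda_{\max}}$, which is $O(1/r)$ only if $\lambda_{\max}<2\lambda_{\min}$. Using merely $\norm{\bm\gamma-\bm m^*}\geq\epsilon$, you get $C/\epsilon$, not $C/\max\{\epsilon,r\}$. The excess factor diverges as $\epsilon\to0$, so it cannot be absorbed into ``a constant depending on the eigenvalue ratio.'' Uniformity in $\epsilon$ is the entire content of the bound, since it is later applied with $\epsilon=\epsilon_n\to0$.

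The loss is genuine, not an artifact of crude estimates. Let $p$ be the $\mathcal N(\bm 0,\mathrm{diag}(1,1/4))$ density restricted to a superlevel ellipse and renormalized; this satisfies Assumption~\ref{assump:density_reg}. Since $\nabla p=-p\,\mathrm{diag}(1,4)\bm x$, the flow curves are those of the linear flow, namely $x_2=r(x_1/\delta)^4$ from $(\delta,r)$, and arc length ignores the time change. To second order in $\delta$, the trajectory meets the sphere of radius $\epsilon$ at height $x_2=\epsilon-\frac{\delta^2}{2}r^{-1/2}\epsilon^{-1/2}$. The excess of arc length over the vertical drop is $\frac{\delta^2}{16}(r^{-1/2}\epsilon^{-1/2}-r^{-1})$. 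Hence $\partial_{x_1}^2L_\epsilon(0,r)=\frac98\,r^{-1/2}\epsilon^{-1/2}-\frac{1}{8r}$ for every $r>\epsilon$, including points outside any fixed modal ball. This contradicts $\norm{\nabla^2L_\epsilon}_2\leq C_{\mathbb K}/\max\{\epsilon,r\}$ as $\epsilon\to0$.

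So the stated Hessian bound needs an extra spectral hypothesis such as $\lambda_{\max}<2\lambda_{\min}$, under which your integration does yield $C/r$. In general the best available bound is $\frac{C}{r}(r/\epsilon)^{(1-2\lambda_{\min}/\lambda_{\max})_+}$, with a logarithm at equality. The paper's proof does not supply the missing step either. Inside $B(\bm m^*,r_1)$ it passes to polar coordinates and simply asserts the $C_{\mathbb K}$ and $C_{\mathbb K}/r$ bounds from ``smooth dependence of the flow and its hitting time,'' which is exactly the point you flagged as delicate.
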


\begin{proof}
Since the gradient flow $\bm{\gamma}_{\bm{x}}(t)$ converges to $\bm{m}^*$ for every $\bm{x}\in \mathbb{K}$ and $\mathbb{K}$ is a compact subset of $\mathcal{C}_a$, there exists $T<\infty$ such that
$\bm{\gamma}_{\bm{x}}(t) \in B(\bm{m}^*,r_1)$ for all $t\geq T$ and some $r_1 >0$. Hence, we can take $U_{\mathbb{K}}$ as a small neighbor of
$$\left\{\bm{\gamma}_{\bm{x}}(t): \bm{x}\in \mathbb{K}, 0\leq t \leq T\right\} \cup \overline{B(\bm{m}^*,r_1)}$$
so that $\overline{U_{\mathbb{K}}}\subset\mathcal{C}_a$.

Consider first $U_{\mathbb{K}}\setminus B(\bm{m}^*,r_1)$.
By compactness and the fact that $\bm{m}^*$ is the only critical point on the relevant trajectories,
$$\inf_{\bm{x}\in U_{\mathbb{K}}\setminus B(\bm{m}^*,r_1)}
\norm{\nabla p(\bm{x})}>0$$
after shrinking $U_{\mathbb{K}}$ if necessary. Let $\tau_{r_1}(\bm{x})
=\inf\left\{t\geq0:
\norm{\bm{\gamma}_{\bm{x}}(t)-\bm{m}^*}=r_1\right\}$. If we take
$$F(t,\bm{x}) :=\norm{\bm{\gamma}_{\bm{x}}(t)-\bm{m}^*}^2-r_1^2,$$
then $F(\tau_{r_1}(\bm{x}),\bm{x})=0$ and
$$\partial_t F(\tau_{r_1}(\bm{x}),\bm{x})
= 2\left(\bm{\gamma}_{\bm{x}}(\tau_{r_1}(\bm{x}))-\bm{m}^*\right)^T
\nabla p\bigl(\bm{\gamma}_{\bm{x}}(\tau_{r_1}(\bm{x}))\bigr)
<0.$$
Thus, the implicit function theorem implies that $\tau_{r_1}$ is $C^2$. Together with the $C^2$ dependence of $\bm{\gamma}_{\bm{x}}(t)$ on $\bm{x}$, this shows that $\bm{x}\mapsto L_\epsilon(\bm{x})$ is $C^2$ on $U_{\mathbb{K}}\setminus \overline{B(\bm{m}^*,r_1)}$
with uniformly bounded first and second derivatives.

It remains to consider $B(\bm{m}^*,r_1)\setminus B(\bm{m}^*,\epsilon)$.
Write
$$\bm{x}=\bm{m}^*+r\bm{\theta}, \qquad r=\norm{\bm{x}-\bm{m}^*}, \qquad
\bm{\theta}\in\mathbb{S}^{d-1}.$$
By Taylor expansion,
$$\nabla p(\bm{m}^*+r\bm{\theta}) = r\nabla^2p(\bm{m}^*)\bm{\theta} +O(r^2),$$
uniformly in $\bm{\theta}$. Therefore, the smooth dependence of the flow and its hitting time gives, uniformly for $0<\epsilon\leq r\leq r_1$,
$$\left|\partial_rL_\epsilon(r,\bm{\theta})\right| + \frac{1}{r}
\norm{\nabla_{\bm{\theta}}L_\epsilon(r,\bm{\theta})} \leq C_{\mathbb{K}},$$
and
$$\norm{\nabla^2L_\epsilon(\bm{m}^*+r\bm{\theta})}_2 \leq \frac{C_{\mathbb{K}}}{r}.$$
If $r\asymp \epsilon$, we know that
$$\norm{\nabla^2L_\epsilon(\bm{x})}_2 \leq \frac{C_{\mathbb{K}}}{\epsilon}.$$
Hence,
$$\norm{\nabla L_\epsilon(\bm{x})}
\leq C_{\mathbb{K}}, \qquad
\norm{\nabla^2 L_\epsilon(\bm{x})}_2 \leq \frac{C_{\mathbb{K}}}{\max\{\epsilon,\norm{\bm{x}-\bm{m}^*}\}}.$$

Finally, for every $t$ before the hitting time of $B(\bm{m}^*,\epsilon)$,
$$L_\epsilon(\bm{\gamma}_{\bm{x}}(t)) =L_\epsilon(\bm{x})-\int_0^t \norm{\nabla p(\bm{\gamma}_{\bm{x}}(s))}\,ds.$$
Differentiating with respect to $t$ at $t=0$ yields that $\nabla L_\epsilon(\bm{x})^T\nabla p(\bm{x})
=-\norm{\nabla p(\bm{x})}$, which completes the proof.
\end{proof}

\vspace{2mm}

\begin{lemma}[Population bound for the 1NN uphill shift outside the modal core]
\label{lem:pop_1nn_bnd}
Suppose that Assumption~\ref{assump:density_reg} holds. There exist fixed constants $C_0',c,\epsilon_0>0$ such that $\epsilon_n:=\frac{C q_n}{\eta_n} \to 0$ for any fixed $C\geq C_0'$ and $q_n = \left(\frac{\log n}{n}\right)^{\frac{1}{d}}$. Then, uniformly over $\bm{x}\in B(\bm{m}^*, \epsilon_0) \setminus B(\bm{m}^*, \epsilon_n)$ and with probability tending to one,
$$\Phi_n(\bm{x}) = \bm{x}+\eta_n\nabla p(\bm{x}) + \bm{u}_n(\bm{x}), \quad \norm{\bm{u}_n(\bm{x})}\leq \zeta_n$$ 
and
$$\norm{\Phi_n(\bm{x}) - \bm{m}^*} \leq (1-c\eta_n) \norm{\bm{x}-\bm{m}^*}.$$
\end{lemma}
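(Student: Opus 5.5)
The plan is to show that, outside the modal core $B(\bm{m}^*,\epsilon_n)$, the point of $\mathbb{X}_n$ nearest to the gradient step $\bm{c}(\bm{x}):=\bm{x}+\eta_n\nabla p(\bm{x})$ is admissible, i.e.\ has strictly higher density than $\bm{x}$. The representation of $\Phi_n(\bm{x})$ then follows from minimality, and the contraction follows from the local geometry in Lemma~\ref{lem:mode_geometry}. We work throughout on the event $\zeta_n\leq C_*q_n$, which has probability tending to one by Lemma~\ref{lem:1NN_sup_bnd}. We take $\epsilon_0$ as in Lemma~\ref{lem:mode_geometry}, shrunk further so that $\bm{c}(\bm{x})$ and a $\zeta_n$-ball around it stay inside $B(\bm{m}^*,2\epsilon_0)\subset\mathcal{C}_a$. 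This is possible because $\eta_n\norm{\nabla p(\bm{x})}\leq C_0\eta_n\norm{\bm{x}-\bm{m}^*}$.

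\textbf{Admissibility of the nearest observation.} Let $\bm{X}^\dagger$ be the observation nearest to $\bm{c}(\bm{x})$, so that $\norm{\bm{X}^\dagger-\bm{c}(\bm{x})}\leq\zeta_n$. Write $r=\norm{\bm{x}-\bm{m}^*}\geq\epsilon_n$ and $\bm{X}^\dagger-\bm{x}=\eta_n\nabla p(\bm{x})+\bm{e}$ with $\norm{\bm{e}}\leq\zeta_n$. A second-order Taylor expansion with bounded Hessian gives
\[
p(\bm{X}^\dagger)-p(\bm{x})\geq \eta_n\norm{\nabla p(\bm{x})}^2-\norm{\nabla p(\bm{x})}\zeta_n-C\left(\eta_n^2\norm{\nabla p(\bm{x})}^2+\zeta_n^2\right).
\]
Using $\norm{\nabla p(\bm{x})}\geq c_0r$, the leading term is at least $c_0^2\eta_n r^2$, and the error terms are controlled as follows:
\begin{itemize}
\item The term $\norm{\nabla p(\bm{x})}\zeta_n\leq C_0 r\zeta_n$ is at most $(C_0C_*/C)\,\eta_n r^2$, because $r\geq Cq_n/\eta_n$.
\item The term $\eta_n^2\norm{\nabla p(\bm{x})}^2$ is $o(\eta_n r^2)$, since $\eta_n\to0$.
\item The term $\zeta_n^2$ is at most $(C_*/C)^2\eta_n^2r^2=o(\eta_n r^2)$.
\end{itemize}
Choosing $C_0'$ large makes the right-hand side positive uniformly in $\bm{x}$, so $\bm{X}^\dagger$ is admissible. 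The argmin over the constrained set (higher density) is therefore the unconstrained nearest neighbour of $\bm{c}(\bm{x})$. Hence $\Phi_n(\bm{x})=\bm{X}^\dagger$, and $\bm{u}_n(\bm{x}):=\Phi_n(\bm{x})-\bm{c}(\bm{x})$ satisfies $\norm{\bm{u}_n(\bm{x})}\leq\zeta_n$.

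\textbf{Contraction.} Expand
\[
\norm{\bm{c}(\bm{x})-\bm{m}^*}^2=r^2+2\eta_n\nabla p(\bm{x})^T(\bm{x}-\bm{m}^*)+\eta_n^2\norm{\nabla p(\bm{x})}^2\leq r^2\left(1-2c_0\eta_n+C_0^2\eta_n^2\right).
\]
This uses the inner-product bound $\nabla p(\bm{x})^T(\bm{x}-\bm{m}^*)\leq-c_0r^2$ from Lemma~\ref{lem:mode_geometry}. Hence $\norm{\bm{c}(\bm{x})-\bm{m}^*}\leq(1-c_0\eta_n/2)r$ for large $n$. Adding the perturbation gives
\[
\norm{\Phi_n(\bm{x})-\bm{m}^*}\leq(1-c_0\eta_n/2)r+\zeta_n\leq\left(1-\tfrac{c_0}{2}\eta_n+\tfrac{C_*}{C}\eta_n\right)r,
\]
again using $\zeta_n\leq (C_*/C)\eta_n r$. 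Enlarging $C_0'$ so that $C_*/C\leq c_0/4$ yields the claim with $c=c_0/4$.

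The only delicate point is the uniformity over the shrinking annulus. Every error term has to be measured against $\eta_n r^2$ (for admissibility) or $\eta_n r$ (for the contraction), and the condition $r\geq Cq_n/\eta_n$ is exactly what turns $\zeta_n$ into a small fraction of $\eta_n r$. A single coverage event from Lemma~\ref{lem:1NN_sup_bnd} suffices, since all the remaining bounds are deterministic.
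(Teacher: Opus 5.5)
Your proof is correct and follows the paper's argument. You show that an observation within $\zeta_n$ of the Euler point $\bm{x}+\eta_n\nabla p(\bm{x})$ is admissible by a Taylor bound measured against $\eta_n r^2$, using $r\geq Cq_n/\eta_n$, and then apply the same squared-norm contraction from Lemma~\ref{lem:mode_geometry} with the $\zeta_n\leq (C_*/C)\eta_n r$ perturbation. The only cosmetic differences are that you expand $p$ at $\bm{x}$ rather than at the Euler point, and you identify $\Phi_n(\bm{x})$ exactly as the unconstrained nearest neighbour, whereas the paper shows that every point of $B(\bm{x}+\eta_n\nabla p(\bm{x}),\zeta_n)$ is uphill and then invokes minimality.
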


\begin{proof}
Recall from \eqref{max_1nn_dist} that $\zeta_n=\sup_{\bm{x}\in \overline{\mathcal{C}}_a}\min_{1\leq i\leq n}\norm{\bm{x}-\bm{X}_i} = O_P\left(\left(\frac{\log n}{n}\right)^{\frac{1}{d}}\right)$. Let $\epsilon=\norm{\bm{x} - \bm{m}^*} \geq \epsilon_n$ and $\bm{x}^+=\bm{x}+\eta_n\nabla p(\bm{x})$. By Lemma~\ref{lem:mode_geometry},
$$p(\bm{x}^+)-p(\bm{x})=\eta_n\norm{\nabla p(\bm{x})}^2 + O\left(\eta_n^2 \norm{\nabla p(\bm{x})}^2 \right)\geq C_1\eta_n \epsilon^2$$
for all large $n$ and some fixed constant $C_1>0$. For any $\bm{y}\in B(\bm{x}^+,\zeta_n)$, by the Lipschitz continuity of $p$, Taylor's theorem gives a fixed constant $C_2>0$ such that
\begin{align*}
	p(\bm{y})&\geq p(\bm{x}^+) - \norm{\bm{y} - \bm{x}^+}\norm{\nabla p(\bm{x}^+)} -C_2\norm{\bm y-\bm x^+}^2\\
	&\geq p(\bm{x}) + C_1\eta_n\epsilon^2 - C_0\norm{\bm{y}-\bm{x}^+} \norm{\bm{x}^+-\bm{m}^*}\\
	&\geq p(\bm{x}) + C_1\eta_n\epsilon^2 - C_0\zeta_n \left(\norm{\bm{x}^+ - \bm{x}} + \norm{\bm{x}-\bm{m}^*}\right)\\
	&\geq p(\bm{x}) + C_1\eta_n\epsilon^2 - C_2\zeta_n^2 -C_2\zeta_n\epsilon\\
	&\geq p(\bm{x}) + \eta_n\epsilon^2\left(C_1 - \frac{C_2}{C} - \frac{C_2\eta_n}{C^2}\right)\\
	&> p(\bm{x})
\end{align*}
with probability tending to one, where the last two inequalities use the condition that $\epsilon\geq \epsilon_n=\frac{Cq_n}{\eta_n}\to 0$, $\zeta_n=O_P(q_n)$, and $C>0$ is chosen to be sufficiently large. By definition of $\zeta_n$ in \eqref{max_1nn_dist}, there exists some observation for the 1NN uphill point $\bm{x}^+$ when $n$ is large. Hence, uniformly over $\bm{x}\in \mathcal{C}$, 
$$\norm{\Phi_n(\bm{x}) - (\bm{x}+\eta_n\nabla p(\bm{x}))}\leq \zeta_n.$$
Finally, by Lemma~\ref{lem:mode_geometry},
\begin{align*}
	\norm{\bm x^+-\bm m^*}^2
	&=\epsilon^2 + 2\eta_n(\bm x-\bm m^*)^T\nabla p(\bm x)+ \eta_n^2\norm{\nabla p(\bm x)}^2\\
	&\leq(1-2c_0\eta_n+C_0^2\eta_n^2)\epsilon^2,
\end{align*}
so $\norm{\bm x^+-\bm m^*}\leq (1-C_3\eta_n) \epsilon$ for all large $n$ and some absolute constant $C_3>0$. Hence,
$$\norm{\Phi_n(\bm x)-\bm m^*} \leq(1-C_3\eta_n)\epsilon +C_\zeta q_n
\leq\left(1-C_3\eta_n+\frac{C_\zeta}{C}\eta_n\right)\epsilon$$
for some constant $C_{\zeta} >0$, where the last inequality again follows from the fact that $\epsilon = \norm{\bm{x}-\bm{m}^*}\geq \epsilon_n=\frac{Cq_n}{\eta_n}$.
Taking $C\geq \frac{2C_\zeta}{C_3}$ proves the final result.
\end{proof}

\vspace{2mm}

\begin{lemma}[Oracle modal-core length]
\label{lem:oracle_modal_core}
Suppose that Assumption~\ref{assump:density_reg} holds. Let $\epsilon_n=\frac{Cq_n}{\eta_n}$ with fixed $C\geq C_0'$ as in Lemma~\ref{lem:pop_1nn_bnd}. Then, uniformly over every oracle GGDPC path that enters $B(\bm m^*,\epsilon_n)$ and remains in $\mathcal C_a$ until its terminal vertex, we have that
$$\sum_{k=T_{n,\epsilon_n}}^{T_n-1}\norm{\bm Y_{k+1}^{(n)}-\bm Y_k^{(n)}} =O_P\left(n \epsilon_n^{d+1}\right) =O_P\left(\frac{q_n\log n}{\eta_n^{d+1}}\right).$$
\end{lemma}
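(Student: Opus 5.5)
The plan is to bound the total length by (number of steps) $\times$ (maximal step length), with both factors controlled inside a slightly enlarged modal ball. The oracle path uses the true density $p$, so each step strictly increases $p$. The vertices $\bm Y_k^{(n)}$ for $k\geq T_{n,\epsilon_n}$ are therefore distinct observations. I would first show that all these vertices lie in $B(\bm m^*, C'\epsilon_n)$ for a fixed constant $C'$. Since each visited vertex has density at least $p(\bm Y_{T_{n,\epsilon_n}}^{(n)}) \geq p(\bm m^*) - c_+\epsilon_n^2$, Lemma~\ref{lem:mode_geometry} gives $\bm Y_k^{(n)} \in \{\bm x\in\mathcal C_a: p(\bm x)\geq p(\bm m^*)-c_+\epsilon_n^2\}\subset B(\bm m^*, C_m\epsilon_n)$. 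This uses the hypothesis that the path remains in $\mathcal C_a$ until its terminal vertex, and $\epsilon_n\to 0$ ensures the sublevel inclusion applies.

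Next I would bound the number of steps. This number is at most the number of observations in $B(\bm m^*, C_m\epsilon_n)$. By Lemma~\ref{lem:uniform_ball_count}, applied with radius $C_m\epsilon_n$, this count is $O_P(n\epsilon_n^d + \log n)$. The condition $\log(1/\epsilon_n)=O(\log n)$ holds because $\epsilon_n = Cq_n/\eta_n$ decays at most polynomially. Moreover,
\[
n\epsilon_n^d = C^d\,\frac{\log n}{\eta_n^d}\gtrsim \log n,
\]
so the count is $O_P(n\epsilon_n^d)$. The bound is uniform over paths because it counts observations in a fixed deterministic ball.

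Then I would bound each step length. For a vertex $\bm y$ in the core, the target point is $\bm y+\eta_n\nabla p(\bm y)$, and $\norm{\eta_n\nabla p(\bm y)}\leq C_0\eta_n C_m\epsilon_n = o(\epsilon_n)$. The chosen $\Phi_n(\bm y)$ lies within distance $\norm{\bm X_\ell-(\bm y+\eta_n\nabla p(\bm y))}$ of the target for any admissible $\bm X_\ell$. By the previous paragraph, $\Phi_n(\bm y)$ itself also lies in $B(\bm m^*, C_m\epsilon_n)$ whenever the step stays in $\mathcal C_a$. Hence $\norm{\Phi_n(\bm y)-\bm y}\leq 2C_m\epsilon_n$ directly, with no separate admissible competitor needed. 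Multiplying the two bounds gives
\[
O_P(n\epsilon_n^d)\cdot O(\epsilon_n) = O_P(n\epsilon_n^{d+1}).
\]
Substituting $\epsilon_n=Cq_n/\eta_n$ and $nq_n^d=\log n$ yields $n\epsilon_n^{d+1} = C^{d+1}q_n\log n/\eta_n^{d+1}$.

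The main obstacle is the first step: confirming that the entry vertex $\bm Y^{(n)}_{T_{n,\epsilon_n}}$ lies in $\overline{B(\bm m^*,\epsilon_n)}$, so that its density exceeds $p(\bm m^*)-c_+\epsilon_n^2$. This relies on the quadratic bound in Lemma~\ref{lem:mode_geometry}, which requires $\epsilon_n\le\epsilon_0$. After that, monotonicity of $p$ along the oracle path confines the rest of the path. I would handle this carefully by invoking the sublevel-set inclusion of Lemma~\ref{lem:mode_geometry} restricted to $\mathcal C_a$, using the hypothesis that the path stays in $\mathcal C_a$.
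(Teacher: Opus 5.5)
Your proposal is correct and follows the same route as the paper. Monotonicity of $p$ along the oracle path, together with the sublevel-set inclusion of Lemma~\ref{lem:mode_geometry} inside $\mathcal C_a$, confines all post-entry vertices to $B(\bm m^*,C_m\epsilon_n)$, so every step has length at most $2C_m\epsilon_n$. The number of steps is then controlled by the count of distinct observations in that deterministic ball via Lemma~\ref{lem:uniform_ball_count}, and $n\epsilon_n^d\gtrsim\log n$ absorbs the $\log n$ term. The only cosmetic difference is that the paper writes the bound as $C\epsilon_n[1+\#\{\bm X_i\in B(\bm m^*,C_m\epsilon_n)\}]$ to allow for a starting point that is not an observation; your count over the vertices $\bm Y_{k+1}^{(n)}$, which are all observations, handles this equally well.
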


\begin{proof}
By the non-decreasing numerical value of $p$ and strict increase of density rank, for every $k\geq T_{n,\epsilon_n}$ with $k\leq T_n$,
$$p(\bm Y_k^{(n)}) \geq p(\bm Y_{T_{n,\epsilon_n}}^{(n)}) \geq p(\bm m^*)-c_+\epsilon_n^2.$$
Lemma~\ref{lem:mode_geometry} therefore gives
$$\bm Y_k^{(n)}\in B(\bm m^*,C_m \epsilon_n), \qquad T_{n,\epsilon_n}\leq k\leq T_n.$$
Hence, every remaining 1NN uphill shift has length at most $2C_m \epsilon_n$. Since the density values (or more precisely, their ranks) along the path are strictly increasing, no observation is visited twice. Thus,
$$\sum_{k=T_{n,\epsilon_n}}^{T_n-1}\norm{\bm Y_{k+1}^{(n)}-\bm Y_k^{(n)}} \leq C \epsilon_n\left[1+\sum_{i=1}^n\mathds{1}\left\{\bm X_i\in B(\bm m^*,C_m \epsilon_n)\right\}\right].$$
Additionally, Lemma~\ref{lem:uniform_ball_count} gives
$$\sum_{i=1}^n\mathds{1}\left\{\bm X_i\in B(\bm m^*,C_m \epsilon_n)\right\} =O_P(n\epsilon_n^d+\log n).$$
Since $n\epsilon_n^d=\frac{C^d\log n}{\eta_n^d}\geq C^d\log n$ for all large $n$, the result follows.
\end{proof}

\vspace{2mm}

\begin{lemma}[Stability of the fixed-$\epsilon$ oracle GGDPC path length]
\label{lem:GGDPC_path_eps}
Let $\mathcal{C}_{a,\epsilon}:=\mathcal{C}_a\setminus B(\bm{m}^*,\epsilon)$ for a sufficiently small fixed $\epsilon>0$. Suppose that Assumption~\ref{assump:density_reg} holds. If $\eta_n\to 0$ and $\frac{q_n}{\eta_n}\to 0$ with $q_n=\left(\frac{\log n}{n}\right)^{\frac{1}{d}}$,
then 
$$\sup_{\bm{x}\in \mathcal{C}_{a,\epsilon} \ominus \delta}\left|L_{n,\epsilon}(\bm{x})-L_\epsilon(\bm{x})\right|
=O_P\left(\eta_n+\frac{q_n}{\eta_n}\right)$$
for some fixed $\delta >0$, where $\mathcal{C}_{a,\epsilon} \ominus \delta := \left\{\bm{x}\in \mathcal{C}_{a,\epsilon}: \min_{\bm{y}\in \partial \mathcal{C}_a} \norm{\bm{x} - \bm{y}} \geq \delta\right\}$.
\end{lemma}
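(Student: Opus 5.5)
We compare the oracle GGDPC path with the Euler discretization of the gradient flow and use the $C^2$ regularity of $L_\epsilon$ from Lemma~\ref{lem:flow_length_regularity} to telescope the length error. Fix the compact set $\mathbb{K}:=\overline{\mathcal{C}_{a,\epsilon}\ominus\delta}\subset\mathcal C_a$ and let $U_{\mathbb{K}}$ be the neighborhood given by Lemma~\ref{lem:flow_length_regularity}. Since $\epsilon$ and $\delta$ are fixed, $\nabla p$ is bounded below by a constant $c_\epsilon>0$ on $U_{\mathbb{K}}\setminus B(\bm m^*,\epsilon/2)$. Moreover, $L_\epsilon$ has uniformly bounded first and second derivatives there (the bound $C_{\mathbb K}/\epsilon$ is a constant).

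\textbf{One-step representation.} For $\bm y$ in this region, the argument of Lemma~\ref{lem:pop_1nn_bnd} and Step~1 of the proof of \autoref{thm:GGDPC_ARI} applies with $\hat p=p$ and $\hat g=\nabla p$. Indeed, $\bm y+\eta_n\nabla p(\bm y)$ satisfies $p(\bm y+\eta_n\nabla p(\bm y))-p(\bm y)\geq c\eta_n c_\epsilon^2$, and this gain dominates the perturbation $O(\zeta_n)$ because $q_n/\eta_n\to 0$. Hence any observation within $\zeta_n$ of $\bm y+\eta_n\nabla p(\bm y)$ is admissible, and
$$\Phi_n(\bm y)=\bm y+\eta_n\nabla p(\bm y)+\bm u_n(\bm y),\qquad \norm{\bm u_n(\bm y)}\leq\zeta_n=O_P(q_n),$$
uniformly in $\bm y$ with probability tending to one.

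\textbf{Confinement.} We show that the path stays in $U_{\mathbb K}$ until it enters $\overline{B(\bm m^*,\epsilon)}$. A discrete Gr\"onwall comparison of $\bm Y_k^{(n)}$ with $\bm\gamma_{\bm x}(k\eta_n)$ gives a per-step error of $O(\eta_n^2+q_n)$. The step count before reaching the $\epsilon$-ball is $O(1/\eta_n)$, since $\tau_\epsilon$ is bounded on $\mathbb K$. Over bounded flow time the Lipschitz constant enters only as $e^{\mathrm{Lip}\cdot T}$, so
$$\sup_k\norm{\bm Y_k^{(n)}-\bm\gamma_{\bm x}(k\eta_n)}=O_P\left(\eta_n+\frac{q_n}{\eta_n}\right)=o_P(1).$$
The path therefore stays in $U_{\mathbb K}$. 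It also stays in $\mathcal C_a$ because the separation $\delta$ is fixed, and it enters the $\epsilon$-ball after $T_{n,\epsilon}=O(1/\eta_n)$ steps. (Here the path cannot stall: each step decreases $L_\epsilon$ by at least a constant multiple of $\eta_n$.)

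\textbf{Telescoping.} Write $L_\epsilon(\bm x)=\sum_{k<T_{n,\epsilon}}\left[L_\epsilon(\bm Y_k^{(n)})-L_\epsilon(\bm Y_{k+1}^{(n)})\right]+L_\epsilon(\bm Y_{T_{n,\epsilon}}^{(n)})$. Note that $L_\epsilon(\bm Y_{T_{n,\epsilon}}^{(n)})=0$ up to an overshoot $O(\eta_n+q_n)$. To handle points just inside the ball, extend $L_\epsilon$ by zero and use that the last step has length $O(\eta_n+q_n)$. For each $k$, a second-order Taylor expansion combined with the identity $\nabla L_\epsilon^T\nabla p=-\norm{\nabla p}$ yields
$$L_\epsilon(\bm Y_k)-L_\epsilon(\bm Y_{k+1})=\eta_n\norm{\nabla p(\bm Y_k)}+O(q_n+\eta_n^2).$$
Meanwhile, the step length satisfies $\norm{\bm S_n(\bm Y_k)}=\eta_n\norm{\nabla p(\bm Y_k)}+O(q_n)$. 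Summing the difference of these two quantities over $O(1/\eta_n)$ steps gives
$$|L_{n,\epsilon}(\bm x)-L_\epsilon(\bm x)|=O_P\left(\eta_n+\frac{q_n}{\eta_n}\right),$$
uniformly in $\bm x$, since every constant depends only on $\mathbb K$ and $\epsilon$.

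The main obstacle is the confinement step. The step count and the uniform derivative bounds all require the discrete path to remain in the region where $\nabla p$ is bounded away from zero and $L_\epsilon$ is $C^2$. This must hold simultaneously for all starting points, and with the single random quantity $\zeta_n$ controlling every step. That uniformity is why the proof relies on the global coverage bound from Lemma~\ref{lem:1NN_sup_bnd} rather than on pointwise probability estimates.
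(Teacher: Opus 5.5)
Your proposal is correct and follows the same route as the paper: the one-step representation $\bm S_n(\bm Y_k^{(n)})=\eta_n\nabla p(\bm Y_k^{(n)})+O(\zeta_n)$, a telescoping Taylor expansion of $L_\epsilon$ using $\nabla L_\epsilon^T\nabla p=-\norm{\nabla p}$, and the step count $T_{n,\epsilon}=O(1/\eta_n)$ obtained because $L_\epsilon$ decreases by order $\eta_n$ per step. Your explicit discrete Gr\"onwall confinement argument and your handling of the final step into $\overline{B(\bm m^*,\epsilon)}$ (extending $L_\epsilon$ by zero) supply details the paper's proof leaves implicit, but the argument is the same.
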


\begin{proof}
Recall from \eqref{max_1nn_dist} that $\zeta_n=\sup_{\bm{x}\in \overline{\mathcal{C}}_a}\min_{1\leq i\leq n}\norm{\bm{x}-\bm{X}_i} = O_P\left(\left(\frac{\log n}{n}\right)^{\frac{1}{d}}\right)$. Fix any $\bm{x}=\bm{Y}_0^{(n)} \in \mathcal{C}_{a,\epsilon}$. By Lemma~\ref{lem:pop_1nn_bnd}, for every $k<T_{n,\epsilon}$,
\begin{equation}
	\label{1nn_up_rate}
	\bm{S}_n(\bm{Y}_k^{(n)}):= \Phi_n(\bm{Y}_k^{(n)}) - \bm{Y}_k^{(n)} =\eta_n\nabla p(\bm{Y}_k^{(n)})+\bm{u}_k,
	\qquad
	\norm{\bm{u}_k}\leq \zeta_n.
\end{equation}
By Lemma~\ref{lem:flow_length_regularity}, the corresponding population gradient flow trajectories up to $B(\bm{m}^*,\epsilon)$ lie in a fixed compact subset of $\mathcal C_a$. 

Furthermore, the path length function $L_{\epsilon}: \mathbb{R}^d \to \mathbb{R}$ satisfies $\nabla L_\epsilon(\bm{x})^T\nabla p(\bm{x})=-\norm{\nabla p(\bm{x})}$
on $\mathcal{C}_{a,\epsilon}$, and $L_\epsilon$ is twice continuously differentiable on the compact set $\mathcal{C}_{a,\epsilon}$. Thus,
\begin{align}
	\label{L_taylor}
	\begin{split}
		L_{\epsilon}(\bm{Y}_{k+1}^{(n)}) - L_{\epsilon}(\bm{Y}_k^{(n)}) &= \nabla L_{\epsilon}(\bm{Y}_k^{(n)})^T \bm{S}_n(\bm{Y}_k^{(n)}) + O\left(\norm{\bm{S}_n(\bm{Y}_k^{(n)})}^2 \right) \\
		&= -\eta_n \norm{\nabla p(\bm{Y}_k^{(n)})} + \nabla L_{\epsilon}(\bm{Y}_k^{(n)})^T \bm{u}_k + O(\eta_n^2 + \zeta_n^2) \\
		&\stackrel{\text{(i)}}{=} -\eta_n\norm{\nabla p(\bm{Y}_k^{(n)})}+O(\zeta_n+\eta_n^2),
	\end{split}
\end{align}
where (i) follows from \eqref{1nn_up_rate} with $\norm{\bm{u}_k} \leq \zeta_n$ and $\norm{\nabla^2 L_{\epsilon}(\bm{x})} \lesssim \frac{1}{\max\left\{\norm{\bm{x}-\bm{m}^*}, \epsilon\right\}}$.
Meanwhile, $\norm{\bm{S}_n(\bm{Y}_k^{(n)})}
=\eta_n\norm{\nabla p(\bm{Y}_k^{(n)})}+O(\zeta_n)$. 
Therefore,
\begin{align}
	\label{L_path}
	\begin{split}
		\left|L_{n,\epsilon}(\bm{x}) - L_{\epsilon}(\bm{x}) \right| &\stackrel{\text{(ii)}}{=} \left|\sum_{k=0}^{T_{n,\epsilon} -1} \left[\norm{\bm{S}_n(\bm{Y}_k^{(n)})} + L_{\epsilon}(\bm{Y}_{k+1}^{(n)}) - L_{\epsilon}(\bm{Y}_k^{(n)}) \right] \right|\\
		&\stackrel{\text{(iii)}}{\lesssim} \left|\sum_{k=0}^{T_{n,\epsilon} -1} \left[\eta_n \norm{\nabla p(\bm{Y}_k^{(n)})} + O(\zeta_n) -\eta_n \norm{\nabla p(\bm{Y}_k^{(n)})} + O(\zeta_n + \eta_n^2) \right] \right|\\
		&\leq C_1\cdot T_{n,\epsilon} \left(\zeta_n + \eta_n^2\right)
	\end{split}
\end{align}
for some constant $C_1 >0$,
where (ii) uses the fact that $\bm{Y}_0^{(n)}=\bm{x}$ and $\bm{Y}_{T_{n,\epsilon}}^{(n)} \in B(\bm{m}^*,\epsilon)$ with $L_{\epsilon}(\bm{Y}_{T_{n,\epsilon}}^{(n)})=0$ and (iii) plugs in \eqref{L_taylor}. 

Finally, to bound the stopping time $T_{n,\epsilon}$, since $\norm{\nabla p(\bm{x})}$ is bounded away from 0 on $\mathcal{C}_{a,\epsilon}$, \eqref{L_taylor} implies that
$$L_{\epsilon}(\bm{Y}_k^{(n)}) - L_{\epsilon}(\bm{Y}_{k+1}^{(n)}) \geq C_2 \eta_n$$
for some constant $C_2>0$ and all $k< T_{n,\epsilon}$ when $n$ is sufficiently large. Hence, $T_{n,\epsilon}=O_P(\eta_n^{-1})$. The result thus follows by plugging this probabilistic rate into \eqref{L_path} and noting that $\zeta_n = O_P\left(\left(\frac{\log n}{n}\right)^{\frac{1}{d}}\right)$. The uniform statement follows by taking supremum over the fixed compact set $\mathcal{C}_{a,\epsilon}\ominus\delta$.
\end{proof}

\vspace{2mm}

\begin{lemma}[Length stability in a separatrix tube]
\label{lem:L_holder}
Suppose that Assumptions~\ref{assump:density_reg}, \ref{assump:sep_repulsion}, and \ref{assump:C1_linear} hold. Let $\Gamma_\infty(\bm{x})=\{\bm{\gamma}_{\bm{x}}(t):0\leq t<\infty\}$ be the gradient flow trajectory with $\bm{x}\in \mathcal{C}_a$ and suppose that
$$U_\delta(\bm{x})=\{\bm{y}\in \mathbb{R}^d:d(\bm{y},\Gamma_\infty(\bm{x}))\leq\delta\}\subset \mathcal{C}_a,$$
where $d(\bm{y},\Gamma_\infty(\bm{x})) = \inf\left\{\norm{\bm{y}-\bm{z}}: \bm{z}\in \Gamma_\infty(\bm{x})\right\}$ is the distance from $\bm{y}$ to $\Gamma_\infty(\bm{x})$.
Then, there exist constants $C_L,\delta_0,\epsilon_0>0$ such that, for all $0<\delta\leq\delta_0$ and $0<\epsilon\leq\epsilon_0$,
$$\left|L_\epsilon(\bm{y})-L_\epsilon(\bm{z})\right|
\leq \frac{C_L}{\delta}\norm{\bm{y}-\bm{z}}
\quad \text{ for all } \quad \bm{y},\bm{z}\in U_{\delta/2}(\bm{x})\setminus B(\bm{m}^*,\epsilon).$$
\end{lemma}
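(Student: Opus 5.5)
I would prove Lemma~\ref{lem:L_holder} by turning it into a gradient bound for $L_\epsilon$ on the tube, together with a path-connection argument. On $U_\delta(\bm x)\setminus\overline{B(\bm m^*,\epsilon)}$, Lemma~\ref{lem:flow_length_regularity} shows that $L_\epsilon$ is $C^1$ and satisfies the transport identity $\nabla L_\epsilon^T\nabla p=-\norm{\nabla p}$. Hence it suffices to prove
$$\sup_{\bm y\in U_{3\delta/4}(\bm x)\setminus B(\bm m^*,\epsilon)}\norm{\nabla L_\epsilon(\bm y)}\leq \frac{C}{\delta}.$$
Given such a bound, I connect $\bm y,\bm z\in U_{\delta/2}(\bm x)$ by a path that stays in $U_{3\delta/4}(\bm x)$ and has length $O(\norm{\bm y-\bm z})$. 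When $\norm{\bm y-\bm z}\leq\delta/4$, the straight segment works, and its detour around $B(\bm m^*,\epsilon)$ costs only a constant factor. When $\norm{\bm y-\bm z}>\delta/4$, I use the bound $|L_\epsilon|\leq \operatorname*{ess\,sup} L<\infty$, which gives $|L_\epsilon(\bm y)-L_\epsilon(\bm z)|\lesssim 1\leq \frac{4}{\delta}\norm{\bm y-\bm z}$.

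To bound $\nabla L_\epsilon$, I differentiate $L_\epsilon(\bm y)=\int_0^{\tau_\epsilon(\bm y)}\norm{\nabla p(\bm\gamma_{\bm y}(t))}dt$. This expresses $\nabla L_\epsilon(\bm y)$ through the variational matrix $D\bm\gamma_{\bm y}(t)$ and the hitting-time derivative. The hitting-time derivative is uniformly controlled near the sphere $\partial B(\bm m^*,\epsilon)$ by the transversality $\nabla p(\bm w)^T(\bm w-\bm m^*)\leq -c_0\norm{\bm w-\bm m^*}^2$ from Lemma~\ref{lem:mode_geometry}, exactly as in Lemma~\ref{lem:flow_length_regularity}. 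I split each trajectory into three kinds of pieces.
\begin{itemize}
\item Pieces in a fixed compact region away from all critical points. There the flow time is bounded and $\norm{D\bm\gamma}$ is $O(1)$ by Grönwall's inequality.
\item The final piece inside $B(\bm m^*,r_1)$. There the contraction of Lemma~\ref{lem:mode_geometry} keeps $\norm{D\bm\gamma}$ bounded, and the integrand decays exponentially.
\item Passages through the finitely many saddle neighborhoods $U_j$ whose stable manifolds lie in $\partial\mathcal C_a$. This is the only source of growth.
\end{itemize}
The basin points near non-separatrix saddles are handled by compactness, since their contribution is again $O(1)$.

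The main obstacle is the saddle passage, and I would treat it with Assumption~\ref{assump:C1_linear}. In the $C^1$-linearizing coordinates $(\bm u,\bm v)$, the dynamics are $\bm u'=A^u\bm u$ and $\bm v'=-A^s\bm v$. A point of the tube $U_{3\delta/4}(\bm x)$ lies at distance $\gtrsim\delta$ from $\partial\mathcal C_a$, since $U_\delta(\bm x)\subset\mathcal C_a$. Hence $\norm{\bm u}\gtrsim\delta$ on entry, and the passage time is $T\approx\rho_+^{-1}\log(1/\delta)$. In linear coordinates the derivative of the exit map $(\bm u_0,\bm v_0)\mapsto$ (exit point on $\{\norm{\bm u}=r\}$, together with the exit time) is explicit. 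Its $\bm v$-block is $e^{-A^sT}$, which is bounded. Its $\bm u$-block, after eliminating $T$ through the exit condition, is of order $1/\norm{\bm u_0}\lesssim 1/\delta$; this comes from $\partial T/\partial\bm u_0\sim 1/\norm{\bm u_0}$. The accumulated length is $\int_0^T\norm{\nabla p}\,dt=O(1)$, and its derivative is controlled the same way, giving $O(1/\delta)$.

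Because the conjugation is $C^1$, the bound transfers back to the original coordinates with constant changes only. I would also check that a trajectory passes near only finitely many saddles. Along the sequence of saddles, the $1/\delta$ factor occurs only once at leading order: after the first passage the trajectory exits at a fixed distance $r$ from the separatrix, so later passages contribute $O(1)$ factors. Composing the chain rule over the pieces then gives $\norm{\nabla L_\epsilon}\leq C/\delta$ uniformly in $\epsilon\leq\epsilon_0$. This completes the argument.
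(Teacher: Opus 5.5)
Your proposal is correct and follows essentially the paper's route: all $\delta$-dependence comes from passages near boundary saddles. There you work in $C^1$-linearizing coordinates, where the exit-time derivative is of order $1/|u|\lesssim 1/\delta$, and the regular and modal pieces contribute uniformly bounded terms. Your gradient-bound-plus-segment organization, with far pairs handled by $\sup_{\mathcal C}L<\infty$, makes explicit the global assembly that the paper only sketches. Note that $|u|\gtrsim\delta$ at saddle \emph{entry}, and the $O(1)$ size of later passages, should be justified by Lemma~\ref{lem:flow_separation_sep}, since trajectories started in the tube need not remain in it.
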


\begin{proof}
Away from fixed neighborhoods of the finitely many boundary saddle points, the relevant flow segments of $\bm{x}\mapsto L_\epsilon(\bm{x})$ lie in a compact subset on which the gradient flow is $C^1$ with uniformly bounded derivatives. Hence, a $\delta$-dependent factor of $\bm{x}\mapsto L_\epsilon(\bm{x})$ can arise only when the gradient flow passes through neighborhoods of boundary saddle points (or $\partial \mathcal{C}_a$). 
	
Fix such a saddle point $\bm{s}_j$. By Assumption~\ref{assump:sep_repulsion}, its unstable dimension for gradient ascent is one. By Assumption~\ref{assump:C1_linear}, there is a $C^1$ diffeomorphism $\Phi_j$ that linearizes the flow in a neighborhood $U_j$ of $\bm{s}_j$. We write
$$\Phi_j(\bm{x}) = \left(u(\bm{x}), \bm{v}(\bm{x})\right) \in \mathbb{R} \times \mathbb{R}^{d-1},$$
where $u$ denotes the unstable coordinates and $\bm{v}$ the stable coordinates for the gradient ascent flow after linearization. Then,
$$u'(t) = \lambda_j u(t), \quad \bm{v}'(t) = -A_j^s \bm{v}(t),$$	where every eigenvalue of $A_j^s \in \mathbb{R}^{(d-1)\times (d-1)}$ has a positive real part. This also implies that 
\begin{equation}
	\label{lin_exp}
	u(t) = e^{\lambda_j t} u(0) \quad \text{ and } \quad \bm{v}(t) = e^{-A_j^s t} \bm{v}(0).
\end{equation} 
The local stable manifold is $\{(u,\bm{v}): u=0\}$. Shrinking the coordinate neighborhood if necessary, the bi-Lipschitz continuity of $\Phi_j$ gives
$$c_j|u(\bm y)|\leq d(\bm y,\partial\mathcal C_a)\leq C_j|u(\bm y)|$$
for some constants $c_j,C_j > 0$.
Thus, for $\bm y\in U_{\delta/2}(\bm x)$ in this saddle neighborhood,
\begin{equation}
	\label{saddle_u_lower_new}
	|u(\bm y)|\geq c\delta.
\end{equation}
for some constant $c>0$.
Now, choose a rectangular linearizing neighborhood $V_j=\{(u,\bm v):|u|<u_0,\ \norm{\bm v}<v_0\}$
so small that a forward gradient path or orbit with $u\neq0$ cannot exit it through the stable boundary $\norm{\bm v}=v_0$. By \eqref{lin_exp}, its exit time through $|u|=u_0$ is thus given by
\begin{equation}
	\label{saddle_exit_time_exact_new}
	\tau_j(u)=\frac1{\lambda_j}\log\frac{u_0}{|u|},
	\qquad
	|\partial_u\tau_j(u)|=\frac1{\lambda_j|u|}.
\end{equation}
Let $q_j(u,\bm v) =\norm{\nabla p\!\left(\Phi_j^{-1}(u,\bm v)\right)}$.
Along a passage with $u\neq0$, $q_j$ is continuously differentiable and, since $\nabla p(\bm s_j)=0$ and $p\in C^3$,
\begin{equation}
	\label{qj_bounds_new}
	q_j(u,\bm v)\leq C\bigl(|u|+\norm{\bm v}\bigr),
	\qquad
	\norm{Dq_j(u,\bm v)}\leq C
\end{equation}
for some constant $C>0$. From \eqref{lin_exp}, we write
$$\varphi_t(u,\bm v)=\left(e^{\lambda_jt}u,e^{-A_j^st}\bm v\right),
\qquad
\widetilde\ell_j(u,\bm v)
=\int_0^{\tau_j(u)}q_j(\varphi_t(u,\bm v))\,dt.$$
By Leibniz's rule, \eqref{saddle_exit_time_exact_new}, and \eqref{qj_bounds_new}, we know that
\begin{align}
\label{saddle_length_dv_new}
\begin{split}
	|\partial_u\widetilde\ell_j(u,\bm v)|
	&\leq C\int_0^{\tau_j(u)}e^{\lambda_jt}\,dt
	+C|\partial_u\tau_j(u)|
	\leq \frac{C}{|u|},\\
	\norm{D_{\bm v}\widetilde\ell_j(u,\bm v)}
	&\leq C\int_0^\infty\norm{e^{-A_j^st}}_2\,dt
	\leq C.
\end{split}
\end{align}
We also know that the two points $\bm{y},\bm{z}$ in the lemma statement lie on the same side of the local stable manifold, so the line segment between their $u$-coordinates does not cross zero. Combining \eqref{saddle_length_dv_new} with the mean value theorem and the bi-Lipschitz bounds for $\Phi_j$ yields that
$$|\ell_j(\bm y)-\ell_j(\bm z)| \leq \frac{C}{\delta}\norm{\bm y-\bm z}.$$
Adding the uniformly Lipschitz contributions outside the finitely many saddle neighborhoods proves the lemma.
\end{proof}

\vspace{2mm}

\begin{lemma}[Oracle GGDPC path through a boundary saddle point]
\label{lem:discrete_saddle_passage}
Suppose that Assumptions~\ref{assump:density_reg}, \ref{assump:sep_repulsion}, and \ref{assump:C1_linear} hold.  For a fixed boundary saddle point $\bm s_j$ and its sufficiently small fixed
neighborhood $U_j$, we consider a finite segment of the oracle GGDPC path $\{\bm Y_k^{(n)}\}_{k=k_0}^{k_1+1}\subset U_j\cap\mathcal C_a$ satisfying
\begin{equation}
	\label{perturbed_euler_saddle_new}
	\bm Y_{k+1}^{(n)}
	=\bm Y_k^{(n)}+\eta_n\nabla p(\bm Y_k^{(n)})+\bm \xi_k,
	\qquad
	\norm{\bm \xi_k}
	\lesssim q_n.
\end{equation}
Let $\delta_n\downarrow 0$. If $d(\bm Y_{k_0}^{(n)},\partial\mathcal C_a)\geq c_0\delta_n$ and $\eta_n+\frac{q_n}{\eta_n}=o(\delta_n)$ for some constant $c_0>0$, then the path segment stays on the same side of the local stable manifold of $\bm{s}_j$ and, for some fixed constant $c_1>0$,
$$\inf_{k_0\leq k\leq k_1+1}
	d(\bm Y_k^{(n)},\partial\mathcal C_a)
	\geq c_1\delta_n.$$
If this segment traverses $U_j$ once and
$U_j\cap B(\bm m^*,\epsilon_n)=\emptyset$, then
$$\sum_{k=k_0}^{k_1} \left|\norm{\bm Y_{k+1}^{(n)}-\bm Y_k^{(n)}}
	+L_{\epsilon_n}(\bm Y_{k+1}^{(n)}) -L_{\epsilon_n}(\bm Y_k^{(n)}) \right| \lesssim \frac{\eta_n}{\delta_n} + \frac{q_n}{\eta_n\delta_n}.$$
\end{lemma}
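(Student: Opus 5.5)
I will work entirely in the $C^1$ linearizing chart $\Phi_j(\bm x)=(u(\bm x),\bm v(\bm x))$ on $U_j$ from Assumption~\ref{assump:C1_linear}, exactly as in the proofs of Lemma~\ref{lem:flow_separation_sep} and Lemma~\ref{lem:L_holder}. There $u$ is the one-dimensional unstable coordinate, $|u|\asymp d(\cdot,\partial\mathcal C_a)$, and the exact flow is $(u,\bm v)\mapsto(e^{\lambda_jt}u,e^{-A_j^st}\bm v)$. I will push the perturbed Euler step \eqref{perturbed_euler_saddle_new} through $\Phi_j$. Write $\bm Y_{k+1}^{(n)}=\bm\gamma_{\bm Y_k^{(n)}}(\eta_n)+\bm\xi_k+O(\eta_n^2\norm{\nabla p(\bm Y_k^{(n)})})$. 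By the $C^1$ (hence locally Lipschitz) property of $\Phi_j$, the update becomes
\[
u_{k+1}=e^{\lambda_j\eta_n}u_k+e_k,\qquad |e_k|\lesssim q_n+\eta_n^2\bigl(|u_k|+\norm{\bm v_k}\bigr),
\]
with an analogous relation for $\bm v_{k+1}$. The $\eta_n^2\norm{\nabla p}$ term needs care, because $\Phi_j$ is only $C^1$. I would instead compare the Euler step directly with the vector field $D\Phi_j\nabla p=(\lambda_ju,-A_j^s\bm v)$, using uniform continuity of $D\Phi_j$. This gives $|e_k|\lesssim q_n+o(\eta_n)|u_k|$ plus an $O(\eta_n^2)$ error that will be absorbed.

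\textbf{Step 1: invariance of the side.} Assume $u_{k_0}\geq c\,\delta_n>0$. I would prove by induction that $u_k\geq c\,\delta_n/2$. The recursion gives
\[
u_{k+1}\geq (1+\lambda_j\eta_n/2)\,u_k-Cq_n-C\eta_n^2,
\]
so $u_{k+1}\geq u_k$ as long as $u_k\gtrsim q_n/\eta_n+\eta_n$, which is $o(\delta_n)$ by hypothesis. Hence $u_k$ never drops below its initial level up to a factor. Converting back through the bi-Lipschitz bounds gives $d(\bm Y_k^{(n)},\partial\mathcal C_a)\geq c_1\delta_n$, and the sign of $u$ is preserved. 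The same computation also shows that $u_k$ grows geometrically, $u_k\gtrsim (1+\lambda_j\eta_n/2)^{k-k_0}\delta_n$. Consequently the number of steps in $U_j$ is $N\lesssim \eta_n^{-1}\log(u_0/\delta_n)$.

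\textbf{Step 2: per-step length defect.} For each $k$ I will bound
\[
\Delta_k:=\norm{\bm Y_{k+1}^{(n)}-\bm Y_k^{(n)}}+L_{\epsilon_n}(\bm Y_{k+1}^{(n)})-L_{\epsilon_n}(\bm Y_k^{(n)}).
\]
Lemma~\ref{lem:L_holder} and its proof give, in the chart, $|\partial_u L|\lesssim 1/|u|$ and $\norm{D_{\bm v}L}\lesssim 1$. The exact-flow identity $L(\bm\gamma_{\bm y}(\eta_n))=L(\bm y)-\int_0^{\eta_n}\norm{\nabla p}$ handles the deterministic part: it cancels $\norm{\eta_n\nabla p(\bm Y_k)}$ up to a second-order error $O(\eta_n^2\norm{\nabla^2 L}\,\norm{\nabla p}^2+\eta_n^2)$. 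For the perturbation, the mean value theorem along a segment on which $|u|\geq c\delta_n$ (available by Step 1) gives a contribution $\lesssim q_n(1+1/|u_k|)$. Combining,
\[
|\Delta_k|\lesssim \frac{q_n}{|u_k|}+\eta_n^2\Bigl(1+\frac{1}{|u_k|}\Bigr)+q_n.
\]
Here the Hessian-type factor is again taken only in the $u$ direction, with size $1/|u|$, and I would control it by differentiating \eqref{saddle_length_dv_new} once more in $u$.

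\textbf{Step 3: summation.} The geometric growth from Step 1 gives
\[
\sum_{k}\frac{1}{|u_k|}\lesssim \frac{1}{\delta_n}\sum_{m\geq0}(1+\lambda_j\eta_n/2)^{-m}\lesssim \frac{1}{\eta_n\delta_n}.
\]
The total is therefore $\lesssim q_n/(\eta_n\delta_n)+\eta_n/\delta_n+Nq_n$. Since $Nq_n\lesssim (q_n/\eta_n)\log(1/\delta_n)$ is dominated by the stated terms (given $\delta_n\to0$ slowly relative to the other rates), this yields the claimed bound.

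The main obstacle is Step 2's second-order term. Because $\Phi_j$ is only $C^1$, the curvature of $L_{\epsilon_n}$ near the stable manifold cannot be read off the chart. I would first try to avoid second derivatives of $L$ entirely by comparing $\bm Y_{k+1}$ with the exact flow point $\bm\gamma_{\bm Y_k}(\eta_n)$, using the exact flow identity for $L$. That reduces the second-order error to first-order Lipschitz control (Lemma~\ref{lem:L_holder}, local constant $1/|u_k|$) applied to the displacement $\norm{\bm Y_{k+1}-\bm\gamma_{\bm Y_k}(\eta_n)}\lesssim q_n+\eta_n^2$. This gives exactly the $(q_n+\eta_n^2)/|u_k|$ per-step bound needed.
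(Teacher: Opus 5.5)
Your proposal is correct and, in the form you settle on in your last paragraph, it is essentially the paper's proof. It has the same ingredients: geometric growth $|u_{k+1}|\geq(1+c\eta_n)|u_k|$ of the unstable coordinate, comparison of $\bm Y_{k+1}^{(n)}$ with the exact flow point $\bm\gamma_{\bm Y_k^{(n)}}(\eta_n)$ so that $L_{\epsilon_n}(\bm\gamma_{\bm Y_k^{(n)}}(\eta_n))-L_{\epsilon_n}(\bm Y_k^{(n)})$ exactly cancels the flow length over time $\eta_n$, the local Lipschitz constant $1/|u_k|$ from Lemma~\ref{lem:L_holder} applied to the $O(q_n+\eta_n^2)$ discrepancy, and summation via $\sum_k\eta_n/|u_k|\lesssim 1/\delta_n$. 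The only difference is that you derive the normal recursion in the $C^1$ linearizing chart, whereas the paper Taylor-expands a $C^2$ stable-manifold chart with $\widetilde F_1=a(u,\bm v)\,u$; your route already works by Lipschitz continuity of $\Phi_j$, so your worry about the $\eta_n^2$ term is unfounded, and your Step~3 hedge is also unnecessary because $Nq_n\lesssim\frac{q_n}{\eta_n}\bigl(1+\log\frac{1}{\delta_n}\bigr)\lesssim\frac{q_n}{\eta_n\delta_n}$ for any $\delta_n\in(0,1)$.
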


\begin{proof}
Since $\nabla p\in C^2$ under Assumption~\ref{assump:density_reg}(b), the $C^2$ stable manifold theorem gives a $C^2$ chart $\Psi_j$ that smoothly transforms the local stable manifold of $\bm{s}_j$ to $\{u=0\}$, \emph{i.e.}, $\Psi_j\bigl(U_j\cap\partial\mathcal C_a\bigr)
=\left\{(u,\bm v):u=0\right\}$. Let $\Psi_j(\bm Y_k^{(n)})=(u_k,\bm v_k)$. 

Consider the representation of the population gradient vector field in this chart as:
$$\widetilde{\bm F}(u,\bm v) := D\Psi_j\!\left(\Psi_j^{-1}(u,\bm v)\right)
\nabla p\!\left(\Psi_j^{-1}(u,\bm v)\right).$$
Let $\widetilde F_1$ denote its first coordinate. Then, by Assumption~\ref{assump:sep_repulsion} and invariance of the local stable manifold $\{u=0\}$, we know that $\widetilde F_1(0,\bm v)=0$. Thus, the first coordinate of the transformed population gradient flow after applying $\Phi_j$ satisfies that
$$u'(t)=a(u(t),\bm v(t)) \cdot u(t), \qquad a(0,\bm{0})=\lambda_j>0.$$
After shrinking $U_j$, we may assume $a(u,\bm v)\geq \frac{\lambda_j}{2}$.
Taylor's expansion of the chart $\Psi_j(\bm Y_k^{(n)})=(u_k,\bm v_k)$ in \eqref{perturbed_euler_saddle_new}
gives
\begin{equation}
\label{discrete_u_recurrence_new}
\begin{split}
u_{k+1} &= u_k + D\Psi_{j,1}(\bm{Y}_k^{(n)}) \left(\bm{Y}_{k+1}^{(n)} - \bm{Y}_k^{(n)}\right) + C_1\norm{\bm{Y}_{k+1}^{(n)} - \bm{Y}_k^{(n)}}^2\\
&= u_k + \eta_n D\Psi_{j,1}(\bm{Y}_k^{(n)}) \nabla p(\bm{Y}_k^{(n)}) + D\Psi_{j,1}(\bm{Y}_k^{(n)})\bm{\xi}_k + C_1\norm{\bm{Y}_{k+1}^{(n)} - \bm{Y}_k^{(n)}}^2\\
&=u_k\left[1+ \eta_n a(u_k,\bm{v}_k)\right] + O\left(q_n
+\eta_n^2\norm{\nabla p(\bm Y_k^{(n)})} \right)
\end{split}
\end{equation}
for some fixed constant $C_1>0$.
The bi-Lipschitz property of the chart $\Psi_j$ gives $|u_{k_0}|\asymp d(\bm Y_{k_0}^{(n)},\partial\mathcal C_a)\gtrsim\delta_n$. Moreover,
$q_n+\eta_n^2 =\eta_n\left(\frac{q_n}{\eta_n}+\eta_n\right) =o(\eta_n\delta_n)$. Consequently, \eqref{discrete_u_recurrence_new} implies inductively that
\begin{align*}
|u_{k+1}| &\geq |u_k| \left[1+ \eta_n a(u_k,\bm{v}_k)\right]  -  C_2\left(q_n+ \eta_n^2\norm{\nabla p(\bm Y_k^{(n)})} \right) \\
&\geq \left(1+\frac{\lambda_j}{4}\eta_n\right)|u_k|
\end{align*}
for some constant $C_2>0$, so $\inf_{k_0\leq k\leq k_1+1}
d(\bm Y_k^{(n)},\partial\mathcal C_a)
\geq c_1\delta_n$ for some fixed constant $c_1>0$.

Since the path leaves $U_j$ when $|u_k|$ exceeds a fixed positive value,
the above display also yields that
\begin{equation}
\label{saddle_step_count_new}
(k_1-k_0+1)\eta_n \leq C_3\left(1+\log\frac1{\delta_n}\right) \leq\frac{C_3}{\delta_n},\qquad \sum_{k=k_0}^{k_1}\frac{\eta_n}{|u_k|} \leq\frac{C}{|u_{k_0}|} \leq\frac{C}{\delta_n}
\end{equation}
for some constant $C_3>0$.

Now, let $\bm Z_k^{(n)}=\bm\gamma_{\bm Y_k^{(n)}}(\eta_n)$ be the exact gradient flow endpoint after time $\eta_n$ starting from $\bm Y_k^{(n)}$ and 
$$\ell_k:=\int_0^{\eta_n} \norm{\nabla p(\bm\gamma_{\bm Y_k^{(n)}}(t))}\,dt.$$ 
Because $\nabla^2p$ is bounded on a fixed neighborhood containing the saddle point $\bm{s}_j$, the gradient vector field is Lipschitz there. Taylor's expansion of the exact flow thus gives that, uniformly for $0\leq t\leq\eta_n$,
$$\norm{\bm\gamma_{\bm Y_k^{(n)}}(t)-\bm Y_k^{(n)}} \leq C_4 t\norm{\nabla p(\bm Y_k^{(n)})}$$
for some constant $C_4>0$. Consequently, 
\begin{align}
	\label{grad_flow_ascent_diff1}
	\begin{split}
		\norm{\bm{Z}_k^{(n)} - \bm{Y}_k^{(n)}-\eta_n\nabla p(\bm{Y}_k^{(n)})} &= \norm{\bm{Y}_k^{(n)} + \int_0^{\eta_n} \nabla p(\bm{\gamma}_{\bm{Y}_k^{(n)}}(t))\, dt - \bm{Y}_k^{(n)} - \eta_n \nabla p(\bm{Y}_k^{(n)})} \\
		&= \norm{\int_0^{\eta_n} \left[\nabla p(\bm{\gamma}_{\bm{Y}_k^{(n)}}(t)) - \nabla p(\bm{Y}_k^{(n)}) \right] dt} \\
		&\leq C_5 \int_0^{\eta_n} \norm{\bm{\gamma}_{\bm{Y}_k^{(n)}}(t) - \bm{Y}_k^{(n)}} dt \\
		&= C_5 \int_0^{\eta_n} \norm{\int_0^t \nabla p(\bm{\gamma}_{\bm{Y}_k^{(n)}}(u))du} dt\\
		&\leq C_5 \eta_n^2 \norm{\nabla p(\bm{Y}_k^{(n)})}
	\end{split}
\end{align}
for some constant $C_5>0$ and by \eqref{perturbed_euler_saddle_new},
\begin{align}
	\label{Y_Z_saddle_new}
\begin{split}
\norm{\bm Y_{k+1}^{(n)}-\bm Z_k^{(n)}}
&\lesssim q_n+\eta_n^2 \norm{\nabla p(\bm Y_k^{(n)})}, \\
\left|\norm{\bm Y_{k+1}^{(n)}-\bm Y_k^{(n)}}-\ell_k\right|
&\lesssim q_n+\eta_n^2 \norm{\nabla p(\bm Y_k^{(n)})}.
\end{split}
\end{align}
Also, $L_{\epsilon_n}(\bm Z_k^{(n)}) -L_{\epsilon_n}(\bm Y_k^{(n)})=-\ell_k$. The proof of Lemma~\ref{lem:L_holder}, applied locally with $|u_k|$ in place of the coarser lower bound $\delta_n$, gives
$$|L_{\epsilon_n}(\bm Y_{k+1}^{(n)})-L_{\epsilon_n}(\bm Z_k^{(n)})|
\leq\frac{\norm{\bm Y_{k+1}^{(n)}-\bm Z_k^{(n)}}}{|u_k|}.$$
Summing these inequalities and using \eqref{saddle_step_count_new} and \eqref{Y_Z_saddle_new} gives
\begin{align*}
	&\sum_{k=k_0}^{k_1}
	\left| \norm{\bm Y_{k+1}^{(n)}-\bm Y_k^{(n)}} +L_{\epsilon_n}(\bm Y_{k+1}^{(n)}) -L_{\epsilon_n}(\bm Y_k^{(n)})
	\right|\\
	&=\sum_{k=k_0}^{k_1}
	\left| \norm{\bm Y_{k+1}^{(n)}-\bm Y_k^{(n)}} +L_{\epsilon_n}(\bm Y_{k+1}^{(n)}) -L_{\epsilon_n}(\bm Z_k^{(n)}) -\ell_k
	\right|\\
	&\lesssim \sum_k\left[q_n+\eta_n^2
	\norm{\nabla p(\bm Y_k^{(n)})}\right]
	+\sum_k\left[\frac{q_n+\eta_n^2 \norm{\nabla p(\bm Y_k^{(n)})}}{|u_k|}\right]\\
	&\leq \frac{1}{\delta_n}\left(\eta_n+\frac{q_n}{\eta_n}\right),
\end{align*}
where we used boundedness of $\nabla p$ on $U_j$ in the last inequality. The result thus follows.
\end{proof}

\vspace{2mm}

\begin{lemma}[Population bound for the 1NN uphill shift in the tube]
	\label{lem:pop_1nn_tube}
Suppose that Assumptions~\ref{assump:density_reg}, \ref{assump:sep_repulsion}, and \ref{assump:C1_linear} hold. Let $\mathcal{A}_{n}(\bm{x}_0)=U_{\delta_n}(\bm{x}_0)\setminus B^o(\bm{m}^*,\epsilon_n)$ with $\delta_n \downarrow 0$, where $U_{\delta_n}(\bm x_0) :=\left\{\bm y\in \mathcal C: d(\bm y,\Gamma_\infty(\bm x_0))\leq\delta_n\right\}$.
If $\eta_n + \frac{q_n}{\eta_n} = o(\delta_n)$ with $q_n = \left(\frac{\log n}{n}\right)^{\frac{1}{d}}$, then, with probability tending to one,
$$\Phi_n(\bm{y}) = \bm{y}+\eta_n\nabla p(\bm{y}) + \bm{u}_n(\bm{y}), \qquad \norm{\bm{u}_n(\bm{y})}\lesssim q_n$$
uniformly over $\bm{y}\in\mathcal{A}_n(\bm{x}_0)$. 
\end{lemma}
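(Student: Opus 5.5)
The plan is to repeat the argument of Lemma~\ref{lem:pop_1nn_bnd}, now on the tube $\mathcal{A}_n(\bm{x}_0)$, which has to be handled in two regimes. Fix $\bm{y}\in\mathcal{A}_n(\bm{x}_0)$ and set $\bm{y}^+=\bm{y}+\eta_n\nabla p(\bm{y})$. By the definition of $\zeta_n$ in \eqref{max_1nn_dist}, there is an observation $\bm{X}(\bm{y})$ with $\norm{\bm{X}(\bm{y})-\bm{y}^+}\leq\zeta_n=O_P(q_n)$, by Lemma~\ref{lem:1NN_sup_bnd}. Suppose we can show that, uniformly in $\bm{y}$, every point of $B(\bm{y}^+,\zeta_n)$ has density strictly larger than $p(\bm{y})$. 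Then $\bm{X}(\bm{y})$ is admissible in \eqref{gg_1NN_uphill_popul}, and minimality gives $\norm{\Phi_n(\bm{y})-\bm{y}^+}\leq\zeta_n$. This is exactly the claimed representation with $\norm{\bm{u}_n(\bm{y})}\lesssim q_n$. So the whole proof reduces to a uniform admissibility statement
$$\inf_{\bm{w}\in B(\bm{y}^+,\zeta_n)}p(\bm{w})-p(\bm{y})>0\qquad\text{uniformly over }\bm{y}\in\mathcal{A}_n(\bm{x}_0).$$
The event involved is $\{\zeta_n\leq C_0q_n\}$, which has probability tending to one. We also need $\bm{y}^+\in\overline{\mathcal{C}}_a$, which follows from Lemma~\ref{lem:flow_separation_sep} and the forward invariance in Assumption~\ref{assump:density_reg}(d).

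By Taylor's expansion, for $\bm{w}\in B(\bm{y}^+,\zeta_n)$,
$$p(\bm{w})-p(\bm{y})\geq \eta_n\norm{\nabla p(\bm{y})}^2-C\eta_n^2\norm{\nabla p(\bm{y})}^2-C\zeta_n\norm{\nabla p(\bm{y})}-C\zeta_n^2.$$
This is positive as soon as $\norm{\nabla p(\bm{y})}\geq A\,q_n/\eta_n$ for a large fixed $A$. So the remaining task is a uniform lower bound on $\norm{\nabla p}$ over the tube, and this is the main obstacle. The gradient vanishes at the mode and at the boundary saddle points, and the tube may pass close to such saddles. I would handle it in three regions.

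\textbf{Near the mode.} On $B(\bm{m}^*,\epsilon_0)\setminus B^o(\bm{m}^*,\epsilon_n)$ with $\epsilon_n=Cq_n/\eta_n$, Lemma~\ref{lem:mode_geometry} gives $\norm{\nabla p(\bm{y})}\geq c_0\epsilon_n$. This is precisely Lemma~\ref{lem:pop_1nn_bnd}, which I would invoke directly.

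\textbf{Away from all critical points.} On the part of the tube outside fixed neighborhoods of the mode and of the boundary saddles, $\norm{\nabla p}$ is bounded below by a fixed constant. This uses compactness together with the fact that $U_{\delta_n}(\bm{x}_0)$ stays at distance of order $\delta_n$ from $\partial\mathcal{C}_a$; hence it avoids the stable manifolds of non-modal critical points other than those on the boundary, which are only approached through the saddle neighborhoods.

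\textbf{Saddle neighborhoods $U_j$.} Here I would use the local chart of Lemma~\ref{lem:sep_repulsion} (or the linearization from Assumption~\ref{assump:C1_linear}). Every tube point satisfies $d(\bm{y},\partial\mathcal{C}_a)\gtrsim\delta_n$; this comes from Lemma~\ref{lem:flow_separation_sep} applied to $\Gamma_\infty(\bm{x}_0)$, with the tube radius taken as a small multiple of the separation. In the chart this gives $|u(\bm{y})|\gtrsim\delta_n$. Since $\widetilde F_1=a\,u$ with $a\geq\lambda_j/2$, we get $\norm{\nabla p(\bm{y})}\gtrsim|u(\bm{y})|\gtrsim\delta_n$. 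The hypothesis $q_n/\eta_n=o(\delta_n)$ then yields $\norm{\nabla p(\bm{y})}\gg q_n/\eta_n$, as required.

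The delicate point is this last region: the bound must be uniform in $\bm{y}$ across the tube, not just along the trajectory. So I would check that the chart comparison $|u|\asymp d(\cdot,\partial\mathcal{C}_a)$ holds on all of $U_j\cap\mathcal{C}_a$, which follows from the bi-Lipschitz property of the chart. Combining the three regions gives $\inf_{\mathcal{A}_n}\norm{\nabla p}\geq c\min\{\epsilon_n,\delta_n,1\}\geq A q_n/\eta_n$ for $C$ large, which completes the admissibility claim and hence the lemma.
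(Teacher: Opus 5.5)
Your proposal is correct and follows essentially the same route as the paper. You take an observation within $\zeta_n$ of the Euler point $\bm{y}+\eta_n\nabla p(\bm{y})$, show by Taylor expansion that it is admissible once $\norm{\nabla p(\bm{y})}\gtrsim q_n/\eta_n$, and get that gradient lower bound as $c\min\{\epsilon_n,\delta_n\}$ from excluding $B^o(\bm{m}^*,\epsilon_n)$ and from the $\delta_n$-separation of the tube from $\partial\mathcal{C}_a$ given by Lemma~\ref{lem:flow_separation_sep}.

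The one difference is in the saddle region. There the paper simply uses nondegeneracy: $\norm{\nabla p(\bm{y})}\gtrsim\norm{\bm{y}-\bm{s}}\geq d(\bm{y},\partial\mathcal{C}_a)\gtrsim\delta_n$ for any critical point $\bm{s}\in\partial\mathcal{C}_a$. This is shorter than your stable-manifold chart, and it also covers boundary critical points that are not index-one saddles.
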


\begin{proof}
All the constants denoted by $C_*$ below are fixed and independent of $n$. By Lemma~\ref{lem:flow_separation_sep},
$$d(\bm\gamma_{\bm x_0}(t),\partial\mathcal C_a)
	\geq 2\delta_n,\qquad t\geq 0.$$
Consequently, $U_{\delta_n}(\bm x_0)\subset\mathcal C_a$, and every point of this tube is at distance at least $\delta_n$ from the separatrix.
	
We focus on the event $\zeta_n\leq C_\zeta q_n$, whose probability tends to one by Lemma~\ref{lem:1NN_sup_bnd}. For any fixed $\bm y\in\mathcal A_n(\bm x_0)$, we define the Euler point $\bm y^+:=\bm y+\eta_n\nabla p(\bm y)$. Let $M_p:=\sup_{\bm z\in\mathcal C}\norm{\nabla p(\bm z)}$. Since the distance to $\partial\mathcal C_a$ is $1$-Lipschitz,
$$d(\bm y^+,\partial\mathcal C_a) \geq\delta_n-M_p\eta_n \geq\frac{\delta_n}{2}$$
for all sufficiently large $n$. Thus, $\bm y^+\in\mathcal C_a\subset\mathcal C$.
	
By the definition of $\zeta_n$, we can find an observation $\bm X(\bm y)$ satisfying
\begin{equation}
\label{Euler_candidate_distance_new}
\norm{\bm X(\bm y)-\bm y^+}\leq C_\zeta q_n.
\end{equation}
We now verify that this observation is admissible. Taylor's theorem and the boundedness of $\nabla^2p$ give that
\begin{align}
	p(\bm y^+)-p(\bm y)
	&=\eta_n\norm{\nabla p(\bm y)}^2
	+O\!\left(\eta_n^2\norm{\nabla p(\bm y)}^2\right)\nonumber\\
	&\geq\frac{3}{4}\eta_n\norm{\nabla p(\bm y)}^2
	\label{Euler_population_gain_new}
\end{align}
uniformly for all sufficiently large $n$. The Lipschitz continuity of
$\nabla p$ also gives that
	\[
	\norm{\nabla p(\bm y^+)}
	\leq(1+C_1\eta_n)\norm{\nabla p(\bm y)}
	\leq C_1\norm{\nabla p(\bm y)}.
	\]
	Applying Taylor's theorem at $\bm y^+$ and using
	\eqref{Euler_candidate_distance_new}, we obtain
	\begin{align}
		p(\bm X(\bm y))-p(\bm y^+)
		&\geq-C_2q_n\norm{\nabla p(\bm y^+)}-C_2q_n^2\nonumber\\
		&\geq-C_2C_1 q_n\norm{\nabla p(\bm y)}-C_2q_n^2.
		\label{Euler_candidate_loss_new}
	\end{align}
Since $p$ is Morse and has
	finitely many critical points, local nondegeneracy, the distance lower
	bound from the separatrix, and
	$\bm y\notin B^o(\bm m^*,\epsilon_n)$ imply that
	\begin{equation}
		\label{gradient_lower_tube_shift_new}
		\norm{\nabla p(\bm y)}
		\geq c_g\min\{\delta_n,\epsilon_n\}
	\end{equation}
	for some fixed $c_g>0$, uniformly over the stated tube and all stated
	starting points. Therefore,
	\begin{align*}
		\frac{q_n}{\eta_n\norm{\nabla p(\bm y)}}
		&\leq C_3\max\left\{
		\frac{q_n}{\eta_n\delta_n},
		\frac{q_n}{\eta_n\epsilon_n}
		\right\}\\
		&=C_3\max\left\{
		\frac{q_n}{\eta_n\delta_n},\frac{1}{C}
		\right\}.
	\end{align*}
	The first term converges to zero by our rate condition $\eta_n+\frac{q_n}{\eta_n} = o(\delta_n)$, while the second can be made arbitrarily small by choosing the fixed constant $C$ sufficiently large. In addition,
	\[
	\frac{q_n^2}{\eta_n\norm{\nabla p(\bm y)}^2}
	=\eta_n\left[\frac{q_n}{\eta_n\norm{\nabla p(\bm y)}}
	\right]^2=o(1)
	\]
	uniformly over the same region. Hence, after choosing $C$ sufficiently large and then taking $n$ sufficiently large,
	\[
	C_1C_2q_n\norm{\nabla p(\bm y)}+C_2q_n^2
	\leq\frac12\eta_n\norm{\nabla p(\bm y)}^2.
	\]
	Equations \eqref{Euler_population_gain_new} and
	\eqref{Euler_candidate_loss_new} now give that
	\[
	p(\bm X(\bm y))-p(\bm y)
	\geq\frac14\eta_n\norm{\nabla p(\bm y)}^2>0.
	\]
	Thus, $\bm X(\bm y)$ is admissible in the minimization defining
	$\Phi_n(\bm y)$. By minimality and
	\eqref{Euler_candidate_distance_new},
	\[
	\norm{\Phi_n(\bm y)-\{\bm y+\eta_n\nabla p(\bm y)\}}
	\leq\norm{\bm X(\bm y)-\bm y^+}
	\leq C_\zeta q_n,
	\]
	which establishes the result.
	
	Finally, for every point in the safe region $\left\{\bm y\in\mathcal C_a: d(\bm y,\partial\mathcal C_a)\geq c\delta_n\right\} \setminus B^o(\bm m^*,\epsilon_n)$ with $\epsilon_n=\frac{Cq_n}{\eta_n}$, the same argument applies with $d(\bm y,\partial\mathcal C_a)\geq c\delta_n$. In particular,
	$\bm y^+\in\mathcal C_a$ for all sufficiently large $n$, and
	nondegeneracy gives
	\[
	\norm{\nabla p(\bm y)}
	\geq c_g'\min\{\delta_n,\epsilon_n\},
	\]
	where $c_g'$ may depend on the fixed constant $c>0$. Therefore, the bound also holds uniformly over this region.
\end{proof}

\vspace{2mm}

\begin{lemma}[Basin invariance of the oracle GGDPC path]
	\label{lem:pop_tube_invariance}
	Suppose that Assumptions~\ref{assump:density_reg} and \ref{assump:sep_repulsion} hold. Let $U_{\delta_n}(\bm{x})=\{\bm{y}\in \mathcal{C}:d(\bm{y},\Gamma_\infty(\bm{x}))\leq \delta_n\}\subset\mathcal{C}_a$
	for some sequence $\delta_n\downarrow 0$, and let $\epsilon_n=\frac{Cq_n}{\eta_n}$ for some sufficiently large constant $C>0$ as in the proof of Lemma~\ref{lem:pop_1nn_tube}. If $\eta_n+\frac{q_n}{\eta_n}=o(\delta_n)$ with $q_n = \left(\frac{\log n}{n}\right)^{\frac{1}{d}}$, then there exists a fixed $c_*>0$ such that, uniformly over
	$\bm x\in\mathcal C_a\ominus(2\delta_n/C_{\mathcal S})$, with
	probability tending to one, the oracle GGDPC path $\left\{\bm{Y}_k^{(n)}\right\}_{k\geq 0}$ starting at $\bm{x}$ remains in
	$\mathcal C_a$ until it enters $B(\bm m^*,\epsilon_n)$ and
$$\inf_{0\leq k<T_{n,\epsilon_n}} d(\bm Y_k^{(n)},\partial\mathcal C_a) \geq c_*\delta_n.$$
\end{lemma}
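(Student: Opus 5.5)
The plan is an induction along the path, combining the population separation bound (Lemma~\ref{lem:flow_separation_sep}) with the one-step Euler representation (Lemma~\ref{lem:pop_1nn_tube}) and the local repulsion function $H_a$ (Lemma~\ref{lem:sep_repulsion}).

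\textbf{Setup.} Fix $\bm x\in\mathcal C_a\ominus(2\delta_n/C_{\mathcal S})$. By Lemma~\ref{lem:flow_separation_sep} with $r=2\delta_n/C_{\mathcal S}$, the flow satisfies $d(\bm\gamma_{\bm x}(t),\partial\mathcal C_a)\geq 2\delta_n$ for all $t\geq0$. All subsequent statements are made on the event $\zeta_n\leq C_\zeta q_n$, which has probability tending to one and is uniform in $\bm x$. On this event Lemma~\ref{lem:pop_1nn_tube} gives, uniformly over the safe region
\[
\mathcal R_n:=\{\bm y\in\mathcal C_a:d(\bm y,\partial\mathcal C_a)\geq c\delta_n\}\setminus B^o(\bm m^*,\epsilon_n),
\]
the representation $\Phi_n(\bm y)=\bm y+\eta_n\nabla p(\bm y)+\bm u_n(\bm y)$ with $\norm{\bm u_n(\bm y)}\lesssim q_n$, for any fixed $c>0$.

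\textbf{Induction.} I would show by induction on $k<T_{n,\epsilon_n}$ that $d(\bm Y_k^{(n)},\partial\mathcal C_a)\geq c_*\delta_n$. The base case holds because $d(\bm x,\partial\mathcal C_a)\geq 2\delta_n/C_{\mathcal S}\geq 2\delta_n$, since $C_{\mathcal S}<1$.

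For the inductive step, suppose $\bm Y_k^{(n)}\in\mathcal R_n$ with $c=c_*$. I would split into two cases according to the distance of $\bm Y_k^{(n)}$ from the separatrix.
\begin{enumerate}
\item If $d(\bm Y_k^{(n)},\partial\mathcal C_a)\geq r_0$ (a fixed constant), a single step moves the point by at most $M_p\eta_n+Cq_n=o(1)$. The next point therefore stays at distance at least $r_0/2$, which exceeds $c_*\delta_n$.
\item If $d(\bm Y_k^{(n)},\partial\mathcal C_a)<r_0$, I use $H_a$ from Lemma~\ref{lem:sep_repulsion}. Taylor expansion as in Step~2 of the proof of Theorem~\ref{thm:GGDPC_ARI} gives
\[
H_a(\bm Y_{k+1}^{(n)})\geq(1+C_0\eta_n)H_a(\bm Y_k^{(n)})-C\big(q_n+\eta_n^2\big).
\]
Since $q_n+\eta_n^2=\eta_n\big(\tfrac{q_n}{\eta_n}+\eta_n\big)=o(\eta_n\delta_n)$ and $H_a(\bm Y_k^{(n)})\geq C_1c_*\delta_n$, the error term is dominated by $C_0\eta_nH_a(\bm Y_k^{(n)})$ for large $n$. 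Hence $H_a$ does not decrease, and the distance bound propagates with constant $c_*=C_1\min\{2,\cdot\}/C_2$.
\end{enumerate}

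The comparability constants $C_1,C_2$ of $H_a$ cause a one-time loss at entry into the tube. Because $H_a$ is monotone along the path once inside, this loss does not accumulate; it costs only the factor $C_1/C_2$. Re-entries after an excursion to distance $\geq r_0$ start from distance $r_0/2$, so they are harmless. Near boundary saddles, Lemma~\ref{lem:discrete_saddle_passage} gives the same conclusion directly; it is an alternative to the chart-based $H_a$.

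\textbf{Conclusion.} Each $\bm Y_{k+1}^{(n)}$ thus lies in $\mathcal C_a$: positive distance together with a step of size $o(\delta_n)$ rules out crossing the separatrix. Each such point also lies in $\mathcal R_n$ until the path enters $B(\bm m^*,\epsilon_n)$. This closes the induction.

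\textbf{Main obstacle.} The delicate point is uniformity of the constant $c_*$. It must not depend on the number of steps, which is of order $\eta_n^{-1}\log(1/\delta_n)$. That is exactly why the argument relies on the non-decrease of $H_a$ in the inductive step, rather than on a Gr\"onwall comparison with the flow $\bm\gamma_{\bm x}$, which would accumulate error over the long saddle passage.
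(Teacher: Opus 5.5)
Your proof is correct, but it takes a different route from the paper's.

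\textbf{The paper's route.} The paper argues by tube tracking. It defines the exit time $T^{\rm exit}_n$ of the path from $U_{\delta_n}(\bm x)$ and shows $\mathbb{P}(T^{\rm exit}_n<T_{n,\epsilon_n})\to 0$ by splitting into three regions:
\begin{enumerate}
\item Gr\"onwall-type comparison with the exact flow on regular segments, with accumulated transverse error $O_P(\eta_n+q_n/\eta_n)=o(\delta_n)$;
\item the contraction of Lemma~\ref{lem:pop_1nn_bnd} near $\bm m^*$;
\item Lemma~\ref{lem:discrete_saddle_passage} at boundary saddles.
\end{enumerate}
It then reads off the distance bound from $d(\bm\gamma_{\bm x}(t),\partial\mathcal C_a)\geq 2\delta_n$ given by Lemma~\ref{lem:flow_separation_sep}.

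\textbf{Your route.} You never compare with $\bm\gamma_{\bm x}$. Instead you use $H_a$ from Lemma~\ref{lem:sep_repulsion} as a discrete Lyapunov function, exactly as in Step~2 of the proof of Theorem~\ref{thm:GGDPC_ARI}, fed by the safe-region form of Lemma~\ref{lem:pop_1nn_tube}. This buys three things.
\begin{itemize}
\item The good event (namely $\zeta_n\leq C_\zeta q_n$ together with the uniform Euler representation on $\mathcal R_n$) does not depend on $\bm x$, so uniformity over starting points is automatic.
\item No linearization is used. Your appeal to Lemma~\ref{lem:flow_separation_sep} in the setup plays no role, and Assumption~\ref{assump:C1_linear} enters only through the constant $C_{\mathcal S}$ naming the starting region. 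This matches the hypotheses actually listed in the lemma (Assumptions~\ref{assump:density_reg} and \ref{assump:sep_repulsion} only). The paper's proof, by contrast, calls Lemmas~\ref{lem:flow_separation_sep}, \ref{lem:pop_1nn_tube} and \ref{lem:discrete_saddle_passage}, all stated under Assumption~\ref{assump:C1_linear}.
\item One mechanism handles both the regular part of the separatrix and the saddle passages. You avoid time-matched tracking over a passage of duration $\asymp\log(1/\delta_n)$, where such errors are amplified in the unstable direction. Note that the paper's saddle step, Lemma~\ref{lem:discrete_saddle_passage}, is itself a distance argument of your type (monotone growth of $|u_k|$).
\end{itemize}
What the paper's formulation adds is the stronger containment $\bm Y_k^{(n)}\in U_{\delta_n}(\bm x)$, which it quotes in the proof of Theorem~\ref{thm:GGDPC_path_full}. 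The lemma as stated, however, asks only for the distance bound you prove.

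\textbf{Write-up correction.} State the inductive hypothesis in terms of $H_a$ rather than $d(\cdot,\partial\mathcal C_a)$. Inside $\{d<r_0\}$, assume $H_a(\bm Y_k^{(n)})$ is at least its value at the last entry into that region, or at $\bm x$. The hypothesis $d(\bm Y_k^{(n)},\partial\mathcal C_a)\geq c_*\delta_n$ alone only gives $d(\bm Y_{k+1}^{(n)},\partial\mathcal C_a)\geq (C_1/C_2)\,d(\bm Y_k^{(n)},\partial\mathcal C_a)$, which loses that factor at every step. With this bookkeeping you may take $c_*=2C_1/(C_2C_{\mathcal S})$ for all large $n$, in place of your expression $C_1\min\{2,\cdot\}/C_2$. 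This constant must be fixed before you invoke Lemma~\ref{lem:pop_1nn_tube} with $c=c_*$.
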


\begin{proof}
By Lemma~\ref{lem:flow_separation_sep}, the population path from every
stated starting point satisfies
$$d(\bm\gamma_{\bm x}(t),\partial\mathcal C_a) \geq2 \delta_n, \qquad t\geq0.$$ 
Let $T^{\rm exit}_n=\inf\{k\geq0:\bm Y_k^{(n)}\notin U_{\delta_n}(\bm{x})\}$ and $T_{n,\epsilon_n}=\inf\{k\geq0:\bm Y_k^{(n)}\in B(\bm{m}^*,\epsilon_n)\}$.
	We will prove that $\mathbb{P}(T^{\rm exit}_n<T_{n,\epsilon_n})\to 0$. 
	
	On the event $k< \min\left\{T^{\rm exit}_n,\, T_{n,\epsilon_n}\right\}$, we have $\bm Y_k^{(n)}\in U_{\delta_n}(\bm{x})\setminus B(\bm{m}^*,\epsilon_n)$, so Lemma~\ref{lem:pop_1nn_tube} gives
	\[
	\bm Y_{k+1}^{(n)}
	=\bm Y_k^{(n)}+\eta_n\nabla p(\bm Y_k^{(n)})+\bm u_k,
	\qquad
	\norm{\bm u_k}\lesssim q_n .
	\]
First, away from neighborhoods of the critical points, we let $\bm Z_k^{(n)}=\bm\gamma_{\bm Y_k^{(n)}}(\eta_n)$ be the exact flow endpoint after time $\eta_n$ starting from $\bm Y_k^{(n)}$. The proof of Lemma~\ref{lem:discrete_saddle_passage} shows that
	$$\norm{\bm Y_{k+1}^{(n)}-\bm Z_k^{(n)}} \lesssim q_n+\eta_n^2 \norm{\nabla p(\bm Y_k^{(n)})}.$$
Also, the gradient vector field is bounded away from zero and has bounded derivatives, so our arguments in Lemma~\ref{lem:GGDPC_path_eps} imply that $\min\left\{T^{\rm exit}_n,\, T_{n,\epsilon_n}\right\}=O_P\left(\frac{1}{\eta_n}\right)$. Hence, the accumulated transverse error of GGDPC from the population gradient flow up to time $\min\left\{T^{\rm exit}_n,\, T_{n,\epsilon_n}\right\}$ is of order $O_P\left(\eta_n+\frac{q_n}{\eta_n}\right) = o(\delta_n)$.
	
Second, near the local mode $\bm{m}^*$, Lemma~\ref{lem:pop_1nn_bnd} gives a contraction, so the same bound does not increase before $B(\bm{m}^*,\epsilon_n)$ is reached. 
	
Third, near a boundary saddle point, Lemma~\ref{lem:discrete_saddle_passage} applies.

Combining these three regions shows that the event $T^{\rm exit}_n>T_{n,\epsilon_n}$ holds with probability tending to one. Hence, the oracle GGDPC path remains in $U_{\delta_n}(\bm{x})\subset\mathcal{C}_a$ until it enters $B(\bm{m}^*,\epsilon_n)$, and $\inf_{0\leq k<T_{n,\epsilon_n}} d(\bm Y_k^{(n)},\partial\mathcal C_a) \geq c_*\delta_n$ follows accordingly.
\end{proof}

\subsection{Main Proof of \autoref{thm:GGDPC_path_full}}

\begin{proof}[Proof of \autoref{thm:GGDPC_path_full}]
Choose $\epsilon_n= \frac{C q_n}{\eta_n}$ with $C>0$ sufficiently large. Let $\mathcal{A}_n=U_{\delta_n}(\bm{x})\setminus B(\bm{m}^*,\epsilon_n)$, where $U_{\delta_n}(\bm{x}) =\left\{\bm{y}\in \mathcal{C}: d(\bm{y}, \Gamma_\infty(\bm{x})) \leq \delta_n\right\}$ and $\Gamma_\infty(\bm{x}) = \left\{\bm{\gamma}_{\bm{x}}(t): 0\leq t < \infty\right\}$. Lemma~\ref{lem:pop_tube_invariance} implies that the oracle GGDPC path remains in $U_{\delta_n}(\bm{x})$ until $T_{n,\epsilon_n}$ with probability tending to one. Hence, Lemma~\ref{lem:pop_1nn_tube} may be applied inductively to every iterate before reaching $B(\bm{m}^*,\epsilon_n)$. We decompose 
\begin{equation}
	\label{L_length_decomp_full}
	\left|L_n(\bm{x}) - L(\bm{x}) \right| \leq \underbrace{\left|L_{\epsilon_n}(\bm{x}) - L(\bm{x}) \right|}_{\textbf{Term I}} + \underbrace{\left|L_{n,\epsilon_n}(\bm{x}) - L_{\epsilon_n}(\bm{x}) \right|}_{\textbf{Term II}} + \underbrace{\left|L_n(\bm{x}) - L_{n,\epsilon_n}(\bm{x}) \right|}_{\textbf{Term III}}.
\end{equation}
We know from Lemmas~\ref{lem:mode_geometry} and \ref{lem:oracle_modal_core} that
$$\textbf{Term I} = O(\epsilon_n), \quad \textbf{Term III} = O_P\left(\frac{q_n\log n}{\eta_n^{d+1}}\right).$$
The major changes lie in the derivations for the rate of convergence for \textbf{Term II}. By Lemma~\ref{lem:pop_1nn_tube}, for $k<T_{n,\epsilon_n}$,
$$\bm{S}_n(\bm{Y}_k^{(n)})=\eta_n\nabla p(\bm{Y}_k^{(n)})+\bm{u}_k,
\qquad \norm{\bm{u}_k}\lesssim q_n + \eta_n^2 \norm{\nabla p(\bm{Y}_k^{(n)})}.$$
Away from saddle neighborhoods, similar to Lemma~\ref{lem:pop_1nn_bnd}, we apply Taylor expansion to $\bm{x}\mapsto L_{\epsilon_n}(\bm{x})$ at $\bm{Y}_k^{(n)}$ as:
\begin{equation}
	\label{L_taylor2}
	\begin{split}
		&L_{\epsilon_n}(\bm{Y}_{k+1}^{(n)}) - L_{\epsilon_n}(\bm{Y}_k^{(n)}) \\
		&= \nabla L_{\epsilon_n}(\bm{Y}_k^{(n)})^T \left[\bm{Y}_{k+1}^{(n)} - \bm{Y}_k^{(n)}\right] + O\left(\norm{\nabla^2 L_{\epsilon_n}(\bm{Y}_k^{(n)})}_2 \norm{\bm{S}_n(\bm{Y}_k^{(n)})}^2\right) \\
		&\stackrel{\text{(i)}}{=} -\eta_n \norm{\nabla p(\bm{Y}_k^{(n)})} + O\left(\zeta_n + \norm{\nabla^2 L_{\epsilon_n}(\bm{Y}_k^{(n)})}_2 \left[ \eta_n^2\norm{\nabla p(\bm{Y}_k^{(n)})}^2 + \zeta_n^2\right]\right) \\
		&\stackrel{\text{(ii)}}{=} -\eta_n \norm{\nabla p(\bm{Y}_k^{(n)})} + O\left(\zeta_n + \eta_n^2 \norm{\bm{Y}_k^{(n)} - \bm{m}^*} +  \frac{\zeta_n^2}{\max\left\{\epsilon_n, \norm{\bm{Y}_k^{(n)} - \bm{m}^*}\right\}} \right),
	\end{split}
\end{equation}
where (i) uses the identity $\nabla L_{\epsilon_n}(\bm{x})^T\nabla p(\bm{x})=-\norm{\nabla p(\bm{x})}$ and the uniform boundedness of $\norm{\nabla L_{\epsilon_n}(\bm{x})}$ on $\mathcal{C}_{a,\epsilon_n}\ominus \delta$ for fixed $\delta >0$, while (ii) leverages Lemma~\ref{lem:mode_geometry} when $\bm{x}$ is near $\bm{m}^*$ and the upper bound on the eigenvalue of $\nabla^2 L_{\epsilon_n}(\bm{x})$ for $\bm{x}$ near $\partial B(\bm{m}^*, \epsilon_n)$ as:
$$\norm{\nabla^2 L_{\epsilon_n}(\bm{x})}_2 \lesssim \frac{1}{\max\left\{\epsilon_n, \norm{\bm{x} -\bm{m}^*}\right\}}.$$
As a result, we derive that
\begin{align}
	\label{L_path2}
	\begin{split}
		\textbf{Term II} &\stackrel{\text{(iii)}}{=} \left|\sum_{k=0}^{T_{n,\epsilon_n} -1} \left[\norm{\bm{S}_n(\bm{Y}_k^{(n)})} + L_{\epsilon_n}(\bm{Y}_{k+1}^{(n)}) - L_{\epsilon_n}(\bm{Y}_k^{(n)}) \right] \right|\\
		&\stackrel{\text{(iv)}}{=} \left|\sum_{k=0}^{T_{n,\epsilon_n} -1} \left[\eta_n \norm{\nabla p(\bm{Y}_k^{(n)})} + O(\zeta_n) - \eta_n \norm{\nabla p(\bm{Y}_k^{(n)})} \right.\right.\\
		&\quad \left.\left. + O\left(\zeta_n + \eta_n^2 \norm{\bm{Y}_k^{(n)} - \bm{m}^*} +  \frac{\zeta_n^2}{\max\left\{\epsilon_n, \norm{\bm{Y}_k^{(n)} - \bm{m}^*}\right\}} \right)\right] \right|\\
		&\stackrel{\text{(v)}}{\leq} C_2\left(T_{n,\epsilon_n} \cdot q_n + \eta_n^2 \sum_{k=0}^{T_{n,\epsilon_n}-1} \norm{\bm{Y}_k^{(n)} - \bm{m}^*}\right)
	\end{split}
\end{align}
for some constant $C_2>0$,
where (iii) uses the fact that $\bm{Y}_0^{(n)}=\bm{x}$ and $\bm{Y}_{T_{n,\epsilon_n}}^{(n)} \in B(\bm{m}^*,\epsilon_n)$ with $L_{\epsilon_n}(\bm{Y}_{T_{n,\epsilon_n}}^{(n)})=0$, (iv) plugs in \eqref{L_taylor2}, and (v) uses our rate condition $\epsilon_n\asymp \frac{Cq_n}{\eta_n}$ and $\zeta_n=O_P(q_n)$ to argue that $\frac{\zeta_n^2}{\max\left\{\epsilon_n, \norm{\bm{Y}_k^{(n)} - \bm{m}^*}\right\}} = o(q_n)$ as $\eta_n \to 0$.

Let $k_0$ be the first index at which the path enters a fixed neighborhood $B(\bm{m}^*,\epsilon_0)$ of the local mode. On the preceding regular segment, the argument of Lemma~\ref{lem:GGDPC_path_eps} contributes $O_P\left(\eta_n+\frac{q_n}{\eta_n}\right)$ to \textbf{Term II}. Inside $B(\bm{m}^*,\epsilon_0) \setminus B(\bm{m}^*,\epsilon_n)$, Lemma~\ref{lem:pop_1nn_bnd} gives
$$\norm{\bm{Y}_{k+1}^{(n)} -\bm{m}^*} \leq (1-C_4\eta_n)\norm{\bm{Y}_k^{(n)} -\bm{m}^*}$$ 
for every $k=k_0,...,T_{n,\epsilon_n}-1$, simultaneously with probability tending to one, where $C_4\in(0,1)$ is fixed. Combining the $O_P(\eta_n^{-1})$ duration of the regular segment with this exponential decay yields
$$T_{n,\epsilon_n} = O_P\left(\frac{1+|\log \epsilon_n|}{\eta_n}\right).$$
Moreover, the modal portion of the last sum in \eqref{L_path2} satisfies
$$\eta_n^2 \sum_{k=k_0}^{T_{n,\epsilon_n}-1} \norm{\bm{Y}_k^{(n)} - \bm{m}^*} \lesssim \eta_n^2 \sum_{r=0}^{\infty} (1-C_4\eta_n)^r \norm{\bm{Y}_{k_0}^{(n)} - \bm{m}^*} = O_P\left(\eta_n\right),$$
while its regular portion is also $O_P(\eta_n)$ because it contains $O_P(\eta_n^{-1})$ bounded summands.
Thus, we conclude that
$$\left|L_{n,\epsilon_n}(\bm{x}) - L_{\epsilon_n}(\bm{x}) \right| = O_P\left(\eta_n + \frac{q_n}{\eta_n}\left|\log \epsilon_n\right|\right).$$
	
Inside a saddle neighborhood, by our arguments in Lemma~\ref{lem:discrete_saddle_passage}, we know that
	$$\left|L_{n,\epsilon_n}(\bm{x}) - L_{\epsilon_n}(\bm{x})\right| =O_P\left(\frac{\eta_n}{\delta_n} + \frac{q_n}{\eta_n\delta_n}\right).$$
	Consequently, we bound \textbf{Term II} as:
	\begin{align*}
		\textbf{Term II} &= O_P\left(\eta_n + \frac{q_n}{\eta_n}\left|\log\left(\frac{q_n}{\eta_n}\right)\right|\right) + O_P\left(\frac{\eta_n}{\delta_n} + \frac{q_n}{\eta_n\delta_n}\right) \\
		&= O_P\left(\frac{q_n}{\eta_n}\left|\log\left(\frac{q_n}{\eta_n}\right)\right| + \frac{\eta_n}{\delta_n} + \frac{q_n}{\eta_n\delta_n}\right)
	\end{align*}
	when $\frac{1}{\delta_n}$ is eventually bounded below by a positive constant and $\epsilon_n=\frac{Cq_n}{\eta_n}$.
	
	In summary, combining our new bounds for \textbf{Term I}, \textbf{Term II}, and \textbf{Term III} with \eqref{L_length_decomp_full}, we obtain that
	\begin{align*}
		\left|L_n(\bm{x}) - L(\bm{x}) \right| &= O_P\left(\frac{q_n}{\eta_n}\left|\log\left(\frac{q_n}{\eta_n}\right)\right| + \frac{\eta_n}{\delta_n} + \frac{q_n}{\eta_n\delta_n} + \frac{q_n\log n}{\eta_n^{d+1}} + \epsilon_n\right) \\
		&= O_P\left(\frac{q_n}{\eta_n}\left|\log\left(\frac{q_n}{\eta_n}\right)\right| + \frac{\eta_n}{\delta_n} + \frac{q_n}{\eta_n\delta_n} + \frac{q_n\log n}{\eta_n^{d+1}} \right).
	\end{align*}
Since $\bm{x}\in \mathcal C_a\ominus \left(\frac{2\delta_n}{C_{\mathcal{S}}}\right)$ is arbitrary, we can take a supremum over this shrinking region.
\end{proof}

\section{Proof of Theorem~\ref{thm:GGDPC_estimated}}
\label{app:proof_ggdpc_estimated}

We begin by stating a key local sample uphill condition that guarantees the sample bound and invariance of the sample GGDPC path within the basin of attraction $\mathcal{C}_a$. Then, we provide two sufficient conditions for this local sample uphill condition through a proposition, one of which resembles Assumption~\ref{assump:sample_den_diff} in the main paper. Finally, we combine these results with other supporting lemmas to present the main proof of \autoref{thm:GGDPC_estimated}.

For the sample GGDPC path $\hat{\bm{Y}}_{k+1}^{(n)} = \hat{\Phi}_n(\hat{\bm{Y}}_k^{(n)})$ with $\hat{\bm{Y}}_0^{(n)}=\bm{x}$, we define
$$\hat T_{n,\epsilon}= \min\left\{ \inf\left\{k\geq 0: \hat{\bm{Y}}_k^{(n)}\in \overline{B(\bm{m}^*,\epsilon)}\right\}, \hat{T}_n \right\},
\qquad
\hat L_{n,\epsilon}(\bm{x}) = \sum_{k=0}^{\hat{T}_{n,\epsilon}-1} \norm{\hat{\bm{S}}_n(\hat{\bm{Y}}_k^{(n)})}.$$
The total sample path length $\hat L_n(\bm{x})$ is defined using the terminal sample mode, as in \autoref{subsec:stability_sample}.

\subsection{Local Sample Uphill Condition}

Let $\Gamma_\infty(\bm{x}_0)=\{\bm{\gamma}_{\bm{x}_0}(t): 0\leq t<\infty\}$ be the gradient flow trajectory with $\bm{x}_0\in \mathcal{C}_a$.

\begin{assump}[Local sample uphill availability]
	\label{assump:sample_uphill}
	Let $U_{\delta_n}(\bm{x}_0)=\{\bm{y}\in \mathcal{C}:d(\bm{y},\Gamma_\infty(\bm{x}))\leq\delta_n\}$ and $\mathcal A_n(\bm{x}_0)=U_{\delta_n}(\bm{x}_0)\setminus B^o(\bm{m}^*,\epsilon_n)$ for sequences $\delta_n\downarrow 0$ and $\epsilon_n\downarrow 0$.
	There exists a fixed constant $C_{\rm up}\geq 1$ such that
	$$\liminf_{n\to \infty}\mathbb{P}\left(\left\{\forall \bm{y}\in \mathcal{A}_n(\bm{x}_0), \exists \bm{X}_i\in B\left(\bm{y}+\eta_n\hat{g}(\bm{y}),\,C_{\rm up}\cdot \zeta_n\right)
	\text{ such that }\hat p(\bm{X}_i)>\hat p(\bm{y})
	\right\}\right)=1.$$
\end{assump}

Assumption~\ref{assump:sample_uphill} ensures that, for sufficiently large $n$, a one-step gradient ascent update moves the current iterate into a neighborhood containing observations of higher density. The following proposition provides two sufficient conditions for this assumption. 

\begin{proposition}[Sufficient conditions for local sample uphill availability]
\label{prop:sample_uphill_sufficient}
Suppose that Assumptions~\ref{assump:density_reg}, \ref{assump:sep_repulsion}, and \ref{assump:C1_linear} hold, as well as $d(\bm{x}_0,\partial\mathcal{C}_a)\geq \frac{2\delta_n}{C_{\mathcal{S}}}$, where $C_{\mathcal{S}}$ is the constant in Lemma~\ref{lem:flow_separation_sep}. Assume further that $\eta_n + \frac{q_n}{\eta_n} = o(\min\{\epsilon_n,\delta_n\})$ and $\norm{\hat g-\nabla p}_\infty=o_P(\min\{\epsilon_n,\delta_n\})$ with $q_n=\left(\frac{\log n}{n}\right)^{\frac{1}{d}}$. 
Then, Assumption~\ref{assump:sample_uphill} holds with $C_{\rm up}=1$ under either of the following conditions.
\begin{enumerate}[label=(\alph*)]
	\item For some fixed constant $C>0$, the density estimator $\hat p$ is twice continuously differentiable on 
	$$\mathcal A_n^+ =
	\left\{\bm{y} \in \mathbb{R}^d: d(\bm{y},\mathcal A_n) \leq C\left(\eta_n+\zeta_n\right)\right\},$$
	$\hat g=\nabla\hat p$ on $\mathcal A_n^+$, and $\sup_{\bm{y}\in\mathcal A_n^+}
	\norm{\nabla^2\hat p(\bm{y})}_{\max}=O_P(1)$.
	
	\item $\sup\limits_{\bm{x}\in\mathcal A_n} \sup\limits_{\substack{1\leq i\leq n:\\ \bm{X}_i\in B(\hat{\bm{x}}^+(\bm{x}),\zeta_n)}}
	\left| \left[\hat p(\bm{X}_i)-\hat p(\bm{x})\right] - \left[p(\bm{X}_i) - p(\bm{x})\right]\right| =o_P\left(\eta_n \min\left\{\epsilon_n^2,\delta_n^2\right\}\right)$.
\end{enumerate}
\end{proposition}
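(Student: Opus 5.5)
The plan is to reuse the oracle argument of Lemma~\ref{lem:pop_1nn_tube}, now centred at the sample Euler point $\hat{\bm y}^+:=\bm y+\eta_n\hat g(\bm y)$. We work on the event $\zeta_n\leq C_\zeta q_n$ from Lemma~\ref{lem:1NN_sup_bnd}.

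\textbf{Step 1 (candidate observation).} Fix $\bm y\in\mathcal A_n(\bm x_0)$. By Lemma~\ref{lem:flow_separation_sep} and $d(\bm x_0,\partial\mathcal C_a)\geq 2\delta_n/C_{\mathcal S}$, every point of $U_{\delta_n}(\bm x_0)$ is at distance at least $\delta_n$ from $\partial\mathcal C_a$. Since $\eta_n\norm{\hat g}_\infty=O_P(\eta_n)=o(\delta_n)$, the point $\hat{\bm y}^+$ still lies in $\mathcal C_a\subset\mathcal C$. Hence, by the definition of $\zeta_n$, there is an observation $\bm X(\bm y)$ with $\norm{\bm X(\bm y)-\hat{\bm y}^+}\leq\zeta_n$. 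It then suffices to show that $\hat p(\bm X(\bm y))>\hat p(\bm y)$ uniformly over $\bm y\in\mathcal A_n(\bm x_0)$, with probability tending to one; the ball in Assumption~\ref{assump:sample_uphill} can then be taken with $C_{\rm up}=1$.

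\textbf{Step 2 (population gain).} As in \eqref{gradient_lower_tube_shift_new}, the Morse property, the separatrix distance bound and $\bm y\notin B^o(\bm m^*,\epsilon_n)$ give
\[
\norm{\nabla p(\bm y)}\geq c_g\min\{\delta_n,\epsilon_n\}=:c_g\rho_n .
\]
Write $\bm X(\bm y)-\bm y=\eta_n\nabla p(\bm y)+\bm e$, where $\norm{\bm e}\leq\eta_n\norm{\hat g-\nabla p}_\infty+\zeta_n$. A second-order Taylor expansion of $p$ then gives
\[
p(\bm X(\bm y))-p(\bm y)\geq \eta_n\norm{\nabla p(\bm y)}^2-\norm{\nabla p(\bm y)}\norm{\bm e}-C\bigl(\eta_n^2\norm{\nabla p(\bm y)}^2+\norm{\bm e}^2\bigr).
\]
Divide through by $\eta_n\norm{\nabla p(\bm y)}^2$. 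The error ratios are at most of order
\[
\frac{\norm{\hat g-\nabla p}_\infty}{\rho_n}+\frac{q_n}{\eta_n\rho_n}+\eta_n+\eta_n\Bigl(\frac{\norm{\hat g-\nabla p}_\infty}{\rho_n}\Bigr)^2+\frac{q_n^2}{\eta_n\rho_n^2}.
\]
Each of these is $o_P(1)$ by the hypotheses $\eta_n+q_n/\eta_n=o(\rho_n)$ and $\norm{\hat g-\nabla p}_\infty=o_P(\rho_n)$; for the last term note $\frac{q_n^2}{\eta_n\rho_n^2}=\eta_n\bigl(\frac{q_n}{\eta_n\rho_n}\bigr)^2$. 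Therefore
\[
p(\bm X(\bm y))-p(\bm y)\geq\tfrac12\eta_n\norm{\nabla p(\bm y)}^2\gtrsim\eta_n\rho_n^2,
\]
uniformly over $\mathcal A_n(\bm x_0)$.

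\textbf{Step 3 under (b).} The bound in (b) controls $\bigl|[\hat p(\bm X(\bm y))-\hat p(\bm y)]-[p(\bm X(\bm y))-p(\bm y)]\bigr|$ by $o_P(\eta_n\rho_n^2)$ uniformly. This is dominated by the population gain from Step 2, so $\hat p(\bm X(\bm y))>\hat p(\bm y)$ follows.

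\textbf{Step 3 under (a).} Here we Taylor-expand $\hat p$ directly, using $\hat g=\nabla\hat p$. The segment from $\bm y$ to $\bm X(\bm y)$ has length $O(\eta_n+\zeta_n)$ and so lies in $\mathcal A_n^+$. This gives
\[
\hat p(\bm X(\bm y))-\hat p(\bm y)=\hat g(\bm y)^T\bigl(\eta_n\hat g(\bm y)+\bm e'\bigr)+O_P\bigl(\eta_n^2\norm{\hat g(\bm y)}^2+\zeta_n^2\bigr),
\]
where $\bm e'=\bm X(\bm y)-\hat{\bm y}^+$ satisfies $\norm{\bm e'}\leq\zeta_n$. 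Because $\norm{\hat g(\bm y)}\geq\norm{\nabla p(\bm y)}-\norm{\hat g-\nabla p}_\infty\geq\tfrac{c_g}{2}\rho_n$, the leading term $\eta_n\norm{\hat g(\bm y)}^2\gtrsim\eta_n\rho_n^2$ dominates. The cross term $\zeta_n\norm{\hat g(\bm y)}$ and the remainder are controlled by the same ratio bounds as in Step 2, which gives positivity. This route needs no comparison of $\hat p$ with $p$ at all, only the uniform $O_P(1)$ Hessian bound on $\mathcal A_n^+$.

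\textbf{Main obstacle.} The delicate point is uniformity: all bounds must hold simultaneously over the continuum $\bm y\in\mathcal A_n(\bm x_0)$ on a single event of probability tending to one. This is handled because every random quantity enters only through sup-norms: $\zeta_n$, $\norm{\hat g-\nabla p}_\infty$, and either the Hessian sup over $\mathcal A_n^+$ in (a) or the uniform sup in (b). Consequently no union bound over points is needed.
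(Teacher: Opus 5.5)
Your proposal is correct and follows essentially the same route as the paper. You lower-bound $\norm{\nabla p}$ (and hence $\norm{\hat g}$) on $\mathcal A_n(\bm x_0)$ by a multiple of $\min\{\epsilon_n,\delta_n\}$, and you use Lemma~\ref{lem:flow_separation_sep} to keep the Euler point $\bm y+\eta_n\hat g(\bm y)$ in $\mathcal C_a$, so that an observation lies within $\zeta_n$ of it. You then Taylor-expand $\hat p$ directly under (a), or transfer a population gain of order $\eta_n\min\{\epsilon_n^2,\delta_n^2\}$ to $\hat p$ under (b). The only difference is cosmetic: under (b) you expand $p$ once from $\bm y$ with a lumped error vector $\bm e$, whereas the paper expands first to $\hat{\bm x}^+$ and then from $\hat{\bm x}^+$ to the observation.
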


\begin{proof}
	As shown in Lemma~\ref{lem:pop_1nn_tube}, near the boundary saddle point or local mode regions, there exists a constant $C_1>0$ such that, with probability tending to one, $\norm{\nabla p(\bm{x})}\geq C_1 \min\{\delta_n,\epsilon_n\}$ for all $\bm{x}\in\mathcal{A}_n$. Since $\norm{\hat g-\nabla p}_\infty=o_P(\min\{\epsilon_n,\delta_n\})$, then we know that
	$$\norm{\hat g(\bm{x})}\geq \frac{C_1}{2} \min\{\delta_n,\epsilon_n\}$$
	uniformly over $\mathcal A_n$ with probability tending to one. Moreover, by Assumption~\ref{assump:density_reg} and $\norm{\hat g-\nabla p}_\infty=o_P(\min\{\epsilon_n,\delta_n\})$, we have that $\sup_{\bm{x}\in\mathcal A_n}\norm{\hat g(\bm{x})}=O_P(1)$.
	
	Now, by Lemma~\ref{lem:flow_separation_sep} and
	$d(\bm{x}_0,\partial\mathcal{C}_a)\geq \frac{2\delta_n}{C_{\mathcal{S}}}$, we deduce that $\inf_{t\geq 0} d\left(\bm\gamma_{\bm{x}_0}(t),\partial\mathcal{C}_a\right)
	\geq 2\delta_n$.
	Hence, for every $\bm{x}\in\mathcal A_n(\bm{x}_0)\subset U_{\delta_n}(\bm{x}_0)$,
	$$d(\bm{x},\partial\mathcal{C}_a) \geq
	\inf_{t\geq 0} d\left(\bm\gamma_{\bm{x}_0}(t),\partial\mathcal{C}_a\right)
	-d\bigl(\bm{x},\Gamma_\infty(\bm{x}_0)\bigr)
	\geq \delta_n.$$
	Since the distance to the closed set $\partial\mathcal{C}_a$ is $1$-Lipschitz, we have that
	\begin{align*}
		d\bigl(\bm{x} + \eta_n \hat{g}(\bm{x}),\partial\mathcal{C}_a\bigr) \geq \delta_n-\eta_n\norm{\hat g(\bm{x})}.
	\end{align*}
	Because $\eta_n=o(\delta_n)$ and
	$\sup_{\bm{x}\in\mathcal A_n}\norm{\hat g(\bm{x})}=O_P(1)$, we know that
	$\inf_{\bm{x}\in\mathcal A_n}
	d\bigl(\bm{x} + \eta_n \hat{g}(\bm{x}),\partial\mathcal{C}_a\bigr)>C_2 \delta_n$ for some constant $C_2>0$
	with probability tending to one. In particular, the one-step gradient update satisfies that $\bm{x} + \eta_n \hat{g}(\bm{x}) \in \mathcal{C}_a$ for all $\bm{x}\in \mathcal{A}_n$.

	(a) Fix $\bm{x}\in\mathcal A_n$ and let
	$\bm{y}\in B(\bm{x} + \eta_n \hat{g}(\bm{x}),\zeta_n)$. Since $\hat{p}$ is twice continuously differentiable and $\hat{g}=\nabla \hat{p}$, Taylor's expansion of $\hat{p}$ around $\bm{x}$ yields
	\begin{align*}
		\hat{p}(\bm{y}) - \hat{p}(\bm{x}) & \stackrel{\text{(i)}}{\geq} \eta_n\norm{\hat{g}(\bm{x})}^2 - \zeta_n\norm{\hat{g}(\bm{x})} - C_2\eta_n^2 \norm{\hat{g}(\bm{x})}^2 +O_P(\zeta_n^2) \\
		&\stackrel{\text{(ii)}}{\geq} C_3\eta_n\norm{\hat{g}(\bm{x})}^2
	\end{align*}
	for some constants $C_2,C_3>0$, where (i) leverages the condition that $\sup_{\bm{y}\in\mathcal A_n^+} \norm{\nabla^2\hat p(\bm{y})}_{\max}=O_P(1)$ and (ii) uses the condition $\eta_n + \frac{q_n}{\eta_n}=o(\min\{\epsilon_n,\delta_n\})$ while $\norm{\hat g(\bm{x})}\geq \frac{C_1}{2} \min\{\delta_n,\epsilon_n\}$.
	Hence,
	$$\inf_{\bm{x}\in\mathcal A_n}
	\inf_{\bm{y}\in B(\bm{x} + \eta_n\hat{g}(\bm{x}),\zeta_n)}
	\{\hat p(\bm{y})-\hat p(\bm{x})\}>0$$
	with probability tending to one. This in turn implies that, for every $\bm{x}\in\mathcal A_n$, there is an admissible observation
	$\bm{X}_i\in B(\bm{x} + \eta_n\hat{g}(\bm{x}),\zeta_n)$ with $\hat p(\bm X_i)>\hat p(\bm x)$, so Assumption~\ref{assump:sample_uphill} holds with $C_{\rm up}=1$ and probability tending to one.
	
	(b) By the definition of $\zeta_n$, we know that there exists at least one sample point $\bm{X}_i \in B(\bm{x} + \eta_n\hat{g}(\bm{x}), \zeta_n)$. Also, the Taylor's expansion argument in Lemma~\ref{lem:pop_1nn_tube}, with
	$\hat{\bm{x}}^+=\bm{x}+\eta_n\hat g(\bm{x})$ in place of
	$\bm{x}+\eta_n\nabla p(\bm{x})$, gives that
	\begin{align*}
		p(\hat{\bm{x}}^+)-p(\bm{x}) &=\eta_n\norm{\nabla p(\bm{x})}^2 +\eta_n\nabla p(\bm{x})^T\left[\hat{g}(\bm{x}) - \nabla p(\bm{x})\right] + O\left(\eta_n^2\left[\norm{\nabla p(\bm{x})}+\norm{\hat{g} - \nabla p}_{\infty}\right]^2\right)\\
		&\geq \eta_n\norm{\nabla p(\bm{x})}^2 - \eta_n\norm{\nabla p(\bm{x})} \norm{\hat{g} - \nabla p}_{\infty} +O\left(\eta_n^2\left[\norm{\nabla p(\bm{x})}+\norm{\hat{g} - \nabla p}_{\infty}\right]^2\right) \\
		&\stackrel{\text{(iii)}}{\geq} C_4 \eta_n \norm{\nabla p(\bm{x})}^2
	\end{align*}
	for some constant $C_4>0$, where (iii) utilizes the fact that near the boundary saddle point and local mode regions, $\norm{\nabla p(\bm{x})} \gtrsim \min\{\epsilon_n,\delta_n\}$. Additionally, since $\norm{\hat{g} - \nabla p}_{\infty}=o_P(\min\{\epsilon_n,\delta_n\})$, the cross term is $o_P(\norm{\nabla p(\bm{x})}^2)$ uniformly. Then, for $\bm{X}_i \in B(\bm{x} + \eta_n\hat{g}(\bm{x}), \zeta_n)$, by the Lipschitz continuity of $p$, we know that
	\begin{align*}
		\hat{p}(\bm{X}_i) &= p(\bm{X}_i) + \left[\hat{p}(\bm{X}_i) - p(\bm{X}_i)\right] \\
		&\geq p(\hat{\bm{x}}^+) - C_5 \norm{\nabla p(\hat{\bm{x}}^+)} \zeta_n + O(\zeta_n^2) + \left[\hat{p}(\bm{X}_i) - p(\bm{X}_i)\right]\\
		&\geq \hat{p}(\bm{x}) + C_4\eta_n\norm{\nabla p(\bm{x})}^2 - C_5 \norm{\nabla p(\hat{\bm{x}}^+)} \zeta_n + O(\zeta_n^2)+ \left[\hat{p}(\bm{X}_i) - p(\bm{X}_i)\right] - \left[\hat{p}(\bm{x}) - p(\bm{x})\right] \\
		&\stackrel{\text{(iv)}}{\geq} \hat{p}(\bm{x}) + C_6\eta_n\norm{\nabla p(\bm{x})}^2 + \left[\hat{p}(\bm{X}_i) - p(\bm{X}_i)\right] - \left[\hat{p}(\bm{x}) - p(\bm{x})\right]
	\end{align*}
	for some constants $C_5, C_6 >0$, where (iv) uses the rate condition $\frac{q_n}{\eta_n}=o(\min\left\{\delta_n,\epsilon_n\right\})$ with $\zeta_n=O_P(q_n)$ to argue that $C_5 \norm{\nabla p(\hat{\bm{x}}^+)} \zeta_n + O(\zeta_n^2) = o_P(\eta_n\norm{\nabla p(\bm{x})}^2)$ uniformly on $\mathcal{A}_n$. Finally, using the assumption that $\sup\limits_{\bm{x}\in\mathcal A_n} \sup\limits_{\substack{1\leq i\leq n:\\ \bm{X}_i\in B(\hat{\bm{x}}^+(\bm{x}),\zeta_n)}}
	\left| \left[\hat p(\bm{X}_i)-\hat p(\bm{x})\right] - \left[p(\bm{X}_i) - p(\bm{x})\right]\right| =o_P\left(\eta_n \min\left\{\epsilon_n^2,\delta_n^2\right\}\right)$, we conclude that
	$$\left| \left[\hat p(\bm{X}_i)-\hat p(\bm{x})\right] - \left[p(\bm{X}_i) - p(\bm{x})\right]\right| =o_P\left(\eta_n\norm{\nabla p(\bm{x})}^2\right)$$
	and $\hat{p}(\bm{X}_i) > \hat{p}(\bm{x})$ so that the observation $\bm{X}_i \in B(\bm{x} + \eta_n\hat{g}(\bm{x}), \zeta_n)$ is admissible for increasing the estimated density $\hat{p}$ after an estimated gradient update uniformly over $\bm{x}\in \mathcal{A}_n$. Assumption~\ref{assump:sample_uphill} thus follows with $C_{\rm up}=1$.
\end{proof}

\subsection{Supporting Lemmas}

%
%

\begin{lemma}[Sample GGDPC path through a boundary saddle]
\label{lem:sample_discrete_saddle_passage}
Suppose that Assumptions~\ref{assump:density_reg}, \ref{assump:sep_repulsion}, and \ref{assump:C1_linear} hold. For a
boundary saddle point $\bm s_j$ and a sufficiently small fixed
neighborhood $U_j$, we consider a finite path segment $\{\hat{\bm Y}_k^{(n)}\}_{k=k_0}^{k_1+1}\subset U_j\cap\mathcal C_a$ satisfying
\begin{equation}
	\label{sample_perturbed_euler_saddle}
	\hat{\bm Y}_{k+1}^{(n)}
	=\hat{\bm Y}_k^{(n)}
	+\eta_n\nabla p(\hat{\bm Y}_k^{(n)})+\bm R_k,
	\qquad
	\norm{\bm R_k}
	\leq C\{q_n+\eta_n\norm{\hat g-\nabla p}_\infty\}.
\end{equation}
Let $\delta_n\downarrow 0$.  If $d(\hat{\bm Y}_{k_0}^{(n)},\partial\mathcal C_a) \geq c_0\delta_n$ and $\eta_n+\frac{q_n}{\eta_n} +\norm{\hat g-\nabla p}_\infty=o_P(\delta_n)$, then, with probability tending to one, the segment remains on the same side of the local stable manifold and, for some fixed $c_1>0$,
$$\inf_{k_0\leq k\leq k_1+1} d(\hat{\bm Y}_k^{(n)},\partial\mathcal C_a) \geq c_1\delta_n.$$
If the segment traverses $U_j$ once and $U_j\cap B(\bm m^*,\epsilon_n)=\emptyset$, then
$$\sum_{k=k_0}^{k_1}
	\left|\norm{\hat{\bm Y}_{k+1}^{(n)}-\hat{\bm Y}_k^{(n)}}
	+L_{\epsilon_n}(\hat{\bm Y}_{k+1}^{(n)}) -L_{\epsilon_n}(\hat{\bm Y}_k^{(n)})\right|
	=O_P\left(\frac{1}{\delta_n}\left[\eta_n+\frac{q_n}{\eta_n} +\norm{\hat g-\nabla p}_\infty\right]\right).$$
\end{lemma}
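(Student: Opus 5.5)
The plan is to follow the proof of Lemma~\ref{lem:discrete_saddle_passage} almost verbatim. The only change is to replace the oracle perturbation $\bm\xi_k$, of size $\lesssim q_n$, by the sample perturbation $\bm R_k$, of size $\lesssim q_n+\eta_n\norm{\hat g-\nabla p}_\infty$. Wherever the oracle argument uses $q_n+\eta_n^2=o(\eta_n\delta_n)$, we will now need
$$q_n+\eta_n\norm{\hat g-\nabla p}_\infty+\eta_n^2=\eta_n\left[\frac{q_n}{\eta_n}+\norm{\hat g-\nabla p}_\infty+\eta_n\right]=o_P(\eta_n\delta_n).$$
This is exactly the rate hypothesis. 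We will work on the event where this $o_P$ bound holds with a fixed small constant, and on which $\zeta_n\le C q_n$; both events have probability tending to one.

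\textbf{Step 1: invariance.} Use the $C^2$ stable-manifold chart $\Psi_j=(u,\bm v)$ with $U_j\cap\partial\mathcal C_a=\{u=0\}$. Write the transformed vector field's first coordinate as $a(u,\bm v)\,u$ with $a\ge\lambda_j/2$ on $U_j$. Taylor expanding $\Psi_{j,1}$ along \eqref{sample_perturbed_euler_saddle} gives
$$u_{k+1}=u_k\bigl[1+\eta_n a(u_k,\bm v_k)\bigr]+O\bigl(q_n+\eta_n\norm{\hat g-\nabla p}_\infty+\eta_n^2\norm{\nabla p(\hat{\bm Y}_k^{(n)})}\bigr).$$
Since $|u_{k_0}|\gtrsim\delta_n$, induction yields $|u_{k+1}|\ge(1+\tfrac{\lambda_j}{4}\eta_n)|u_k|$ on the good event, uniformly over all segments. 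This gives both the same-side property (the sign of $u_k$ is constant) and $d(\hat{\bm Y}_k^{(n)},\partial\mathcal C_a)\ge c_1\delta_n$ by bi-Lipschitzness of the chart. The geometric growth then gives, as in \eqref{saddle_step_count_new}, the bounds $(k_1-k_0+1)\eta_n\lesssim 1+\log(1/\delta_n)\lesssim 1/\delta_n$ and $\sum_k \eta_n/|u_k|\lesssim 1/\delta_n$.

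\textbf{Step 2: length comparison.} Let $\bm Z_k=\bm\gamma_{\hat{\bm Y}_k^{(n)}}(\eta_n)$ and $\ell_k=\int_0^{\eta_n}\norm{\nabla p(\bm\gamma_{\hat{\bm Y}_k^{(n)}}(t))}dt$. By \eqref{grad_flow_ascent_diff1},
$$\norm{\hat{\bm Y}_{k+1}^{(n)}-\bm Z_k}\lesssim e_k:=q_n+\eta_n\norm{\hat g-\nabla p}_\infty+\eta_n^2,$$
and similarly $\bigl|\norm{\hat{\bm Y}_{k+1}^{(n)}-\hat{\bm Y}_k^{(n)}}-\ell_k\bigr|\lesssim e_k$. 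The exact-flow identity gives $L_{\epsilon_n}(\bm Z_k)-L_{\epsilon_n}(\hat{\bm Y}_k^{(n)})=-\ell_k$. The local Lipschitz estimate from the proof of Lemma~\ref{lem:L_holder}, applied with $|u_k|$ in place of $\delta_n$, gives
$$|L_{\epsilon_n}(\hat{\bm Y}_{k+1}^{(n)})-L_{\epsilon_n}(\bm Z_k)|\lesssim e_k/|u_k|.$$
This estimate requires the segment from $\bm Z_k$ to $\hat{\bm Y}_{k+1}^{(n)}$ to stay on one side of $\{u=0\}$, and Step 1 guarantees this because $e_k=o(\eta_n\delta_n)\ll|u_k|$. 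Summing the per-step errors and using the two bounds from Step 1 gives
$$\sum_k e_k\Bigl(1+\frac1{|u_k|}\Bigr)\lesssim\frac{1}{\eta_n\delta_n}\,e_k\lesssim\frac1{\delta_n}\Bigl[\eta_n+\frac{q_n}{\eta_n}+\norm{\hat g-\nabla p}_\infty\Bigr],$$
which is the claimed rate.

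\textbf{Main obstacle.} The delicate step is Step 1. The error $\eta_n\norm{\hat g-\nabla p}_\infty$ is random and applies at every step, so the induction must be run uniformly on a single high-probability event rather than pathwise. We must also ensure that the multiplicative expansion $1+\eta_n a$ dominates the additive error at the very first step, where $|u_{k_0}|$ is only of order $\delta_n$. This is exactly where $\norm{\hat g-\nabla p}_\infty=o_P(\delta_n)$ enters. I would fix the event first, then carry out the deterministic induction on it.
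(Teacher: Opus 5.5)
Your proposal is correct and follows essentially the same route as the paper's proof. The shared ingredients are the $C^2$ stable-manifold chart with the factorization $a(u,\bm v)\,u$, and the $u$-recurrence run on a single high-probability event to obtain sign preservation and geometric growth. The argument then uses the step-count bounds $(k_1-k_0+1)\eta_n\lesssim 1/\delta_n$ and $\sum_k\eta_n/|u_k|\lesssim 1/\delta_n$. It finishes with the comparison against the exact flow endpoint $\bm\gamma_{\hat{\bm Y}_k^{(n)}}(\eta_n)$ via the local $1/|u_k|$ Lipschitz bound from the proof of Lemma~\ref{lem:L_holder}. Your explicit check that the segment from $\bm Z_k$ to $\hat{\bm Y}_{k+1}^{(n)}$ stays on one side of $\{u=0\}$ is a small detail the paper leaves implicit.
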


\begin{proof}
All the constants denoted by $C_*$ below are fixed and independent of $n$. Let $\Psi_j$ be the $C^2$ chart used in the proof of Lemma~\ref{lem:discrete_saddle_passage}, and write
$$\Psi_j(\hat{\bm Y}_k^{(n)})=(u_k,\bm v_k).$$
	The local stable manifold is $\{u=0\}$ and the first coordinate of the population vector field has the factorization
	\begin{equation}
		\label{sample_saddle_normal_factor}
		D\Psi_{j,1}(\bm y)\nabla p(\bm y)
		=a(\Psi_j(\bm y))u(\bm y),
		\qquad
		a(0,\bm0)=\lambda_j>0.
	\end{equation}
	After decreasing $U_j$, assume $a\geq \frac{\lambda_j}{2}$ and
	$\norm{\nabla p}_{\infty} \leq 1$ on $U_j$. Let
	$\bm\Delta_k=\eta_n\nabla p(\hat{\bm Y}_k^{(n)})+\bm R_k$.
	Taylor's theorem for $\Psi_{j,1}$, \eqref{sample_perturbed_euler_saddle}, and \eqref{sample_saddle_normal_factor} give that
	\begin{equation}
		\label{sample_saddle_u_recurrence}
		u_{k+1}=u_k\left[1+\eta_na(u_k,\bm v_k)\right] +\xi_{k,n},
		\qquad
		|\xi_{k,n}| \leq C_1\left[q_n +\eta_n\norm{\hat g-\nabla p}_\infty
		+\eta_n^2\norm{\nabla p(\hat{\bm Y}_k^{(n)})}\right].
	\end{equation}
	Indeed, the linear term in $\bm R_k$ is bounded by the first two terms
	on the right, while
	$\norm{\bm\Delta_k}^2$ is bounded by the same expression after
	increasing $C_1$.
	
	The chart $\Psi_j$ is bi-Lipschitz, and the only part of
	$\partial\mathcal C_a$ in $U_j$ is the local stable manifold.  Hence
	\begin{equation}
		\label{sample_saddle_chart_distance}
		c|u_k|\leq
		d(\hat{\bm Y}_k^{(n)},\partial\mathcal C_a)
		\leq C|u_k|,
		\qquad
		|u_{k_0}|\geq c'\delta_n.
	\end{equation}
	On an event whose probability tends to one, $\eta_n+\frac{q_n}{\eta_n} +\norm{\hat g-\nabla p}_\infty=o_P(\delta_n)$
	and \eqref{sample_saddle_u_recurrence} imply that $|\xi_{k,n}|
	\leq\frac{\lambda_j}{4}\eta_n|u_k|$
	whenever $|u_k|\geq|u_{k_0}|$.  Induction therefore yields
	\begin{equation}
		\label{sample_saddle_geometric_growth}
		|u_{k+1}|
		\geq\left(1+\frac{\lambda_j}{4}\eta_n\right)|u_k|,
		\qquad
		\operatorname{sign}(u_{k+1})=\operatorname{sign}(u_k).
	\end{equation}
Hence, $\inf_{k_0\leq k\leq k_1+1} d(\hat{\bm Y}_k^{(n)},\partial\mathcal C_a) \geq c_1\delta_n$ follows.  Since a completed passage exits through a section $|u|=u_*>0$, they also give
\begin{equation}
	\label{sample_saddle_step_sums}
	(k_1-k_0+1)\eta_n
	\leq C\left(1+\log\frac1{\delta_n}\right)
	\leq\frac{C}{\delta_n},
	\qquad
	\sum_{k=k_0}^{k_1}\frac{\eta_n}{|u_k|}
	\leq\frac{C}{|u_{k_0}|}
	\leq\frac{C}{\delta_n}.
\end{equation}

Let $\hat{\bm Z}_k^{(n)}
:=\bm\gamma_{\hat{\bm Y}_k^{(n)}}(\eta_n)$ and $\ell_k:=\int_0^{\eta_n}
\norm{\nabla p(\bm\gamma_{\hat{\bm Y}_k^{(n)}}(t))}\,dt$.
The Lipschitz continuity of $\nabla p$ and
\eqref{sample_perturbed_euler_saddle} imply
\begin{align}
	\norm{\hat{\bm Y}_{k+1}^{(n)}-\hat{\bm Z}_k^{(n)}}
	&\leq C D_{k,n},\notag\\
	\left|\norm{\hat{\bm Y}_{k+1}^{(n)}-\hat{\bm Y}_k^{(n)}}
	-\ell_k\right|
	&\leq C D_{k,n},
	\label{sample_saddle_flow_errors}
\end{align}
where $D_{k,n}:=q_n+\eta_n\norm{\hat g-\nabla p}_\infty
+\eta_n^2\norm{\nabla p(\hat{\bm Y}_k^{(n)})}.$ We also know that
$L_{\epsilon_n}(\hat{\bm Z}_k^{(n)})
-L_{\epsilon_n}(\hat{\bm Y}_k^{(n)})=-\ell_k$ by definition.
Moreover, \eqref{sample_saddle_geometric_growth} and the local estimate
in Lemma~\ref{lem:L_holder} give
\begin{equation}
	\label{sample_saddle_L_comparison}
	\left|L_{\epsilon_n}(\hat{\bm Y}_{k+1}^{(n)})
	-L_{\epsilon_n}(\hat{\bm Z}_k^{(n)})\right|
	\leq\frac{C}{|u_k|}
	\norm{\hat{\bm Y}_{k+1}^{(n)}-\hat{\bm Z}_k^{(n)}}.
\end{equation}
Combining \eqref{sample_saddle_flow_errors} and
\eqref{sample_saddle_L_comparison}, and then summing, yields
\begin{align*}
	&\sum_{k=k_0}^{k_1}
	\left|
	\norm{\hat{\bm Y}_{k+1}^{(n)}-\hat{\bm Y}_k^{(n)}}
	+L_{\epsilon_n}(\hat{\bm Y}_{k+1}^{(n)})
	-L_{\epsilon_n}(\hat{\bm Y}_k^{(n)})
	\right| \leq
	C\sum_{k=k_0}^{k_1}D_{k,n}
	+C\sum_{k=k_0}^{k_1}\frac{D_{k,n}}{|u_k|}.
\end{align*}
Since $\nabla p$ is bounded on $U_j$, \eqref{sample_saddle_step_sums}
gives
$$\sum_kD_{k,n} +\sum_k\frac{D_{k,n}}{|u_k|} \leq \frac{C}{\delta_n}\left(\eta_n+\frac{q_n}{\eta_n} +\norm{\hat g-\nabla p}_\infty\right).$$
The result thus follows.
\end{proof}

\vspace{2mm}

\begin{lemma}[Stability of the critical points of $\hat p$]
\label{lem:estimated_critical_stability}
Suppose that Assumptions~\ref{assump:density_reg} and
\ref{assump:sample_den_diff} hold as well as that
$\norm{\hat g-\nabla p}_\infty=o_P(1)$ and
$\norm{\nabla^2\hat p-\nabla^2p}_\infty=o_P(1)$.  Let
$\{\bm s_1,\ldots,\bm s_J\}$ be the critical points of $p$.  With
probability tending to one, $\hat p$ has exactly one critical point
$\widetilde{\bm s}_{j,n}$ in a fixed neighborhood of $\bm s_j$, has no other critical points near $\mathcal C$, and
$$\max_{1\leq j\leq J} \norm{\widetilde{\bm s}_{j,n}-\bm s_j} =O_P(\norm{\hat g-\nabla p}_\infty).$$
Moreover, there exist fixed constants $r_0>0$ and $0<c'<C'<\infty$ such that, uniformly over $j$ and $\bm y\in B(\widetilde{\bm s}_{j,n},r_0)$,
$$c'\norm{\bm y-\widetilde{\bm s}_{j,n}}\leq \norm{\hat g(\bm y)} \leq C'\norm{\bm y-\widetilde{\bm s}_{j,n}},
\qquad |\hat p(\bm y)-\hat p(\widetilde{\bm s}_{j,n})| \leq C'\norm{\bm y-\widetilde{\bm s}_{j,n}}^2.$$
If $\bm s_j$ is a local mode, then $\nabla^2\hat p$ is uniformly
negative definite on this ball and
$$c'\norm{\bm y-\widetilde{\bm s}_{j,n}}^2 \leq\hat p(\widetilde{\bm s}_{j,n})-\hat p(\bm y) \leq C'\norm{\bm y-\widetilde{\bm s}_{j,n}}^2.$$
\end{lemma}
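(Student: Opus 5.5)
The plan is a standard implicit-function-theorem argument applied to the equation $\hat g(\bm y)=\bm 0$ near each nondegenerate critical point $\bm s_j$. Under Assumption~\ref{assump:density_reg}(c), the critical points $\bm s_1,\ldots,\bm s_J$ are finitely many and each Hessian $\nabla^2 p(\bm s_j)$ is nonsingular.

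First I fix constants. Choose a radius $r_0>0$ with three properties: the balls $B(\bm s_j,2r_0)$ are pairwise disjoint, lie in the open neighborhood where Assumption~\ref{assump:sample_den_diff} applies, and satisfy $\sigma_{\min}(\nabla^2 p(\bm y))\geq \sigma_0>0$ and $\norm{\nabla^2p(\bm y)}_2\leq \Sigma_0$ on each ball. For local modes, additionally require $\rho_{\max}(\nabla^2 p(\bm y))\leq -\sigma_0$. On the compact complement $K:=\overline{\mathcal C^{+}}\setminus\bigcup_j B(\bm s_j,r_0/2)$, where $\mathcal C^+$ is a fixed closed neighborhood of $\mathcal C$ containing no other critical points, continuity gives $\inf_K\norm{\nabla p}\geq g_0>0$.

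On the event $\norm{\hat g-\nabla p}_\infty<g_0/2$, $\hat p$ has no critical point on $K$. On the event $\norm{\nabla^2\hat p-\nabla^2 p}_\infty<\sigma_0/(2d)$, we have $\sigma_{\min}(\nabla^2\hat p)\geq\sigma_0/2$ on each ball $B(\bm s_j,2r_0)$. Both events have probability tending to one.

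Next, I would show existence and uniqueness by writing
$$\hat g(\bm y)-\hat g(\bm y')=\left[\int_0^1\nabla^2\hat p(\bm y'+t(\bm y-\bm y'))\,dt\right](\bm y-\bm y').$$
Uniform invertibility of the Hessian gives $\sigma_{\min}$ of the averaged matrix at least $\sigma_0/4$, provided the Hessian is also close to $\nabla^2p(\bm s_j)$ on the ball, which holds after shrinking $r_0$ using continuity of $\nabla^2 p$. Hence $\norm{\hat g(\bm y)-\hat g(\bm y')}\geq (\sigma_0/4)\norm{\bm y-\bm y'}$, so $\hat g$ is injective on the ball and has at most one zero.

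For existence, consider the Newton-type map
$$\bm T(\bm y)=\bm y-\nabla^2 p(\bm s_j)^{-1}\hat g(\bm y).$$
It is a contraction on $\overline{B(\bm s_j,\rho)}$. Moreover, $\norm{\bm T(\bm s_j)-\bm s_j}\leq \sigma_0^{-1}\norm{\hat g-\nabla p}_\infty$, because $\nabla p(\bm s_j)=\bm 0$. So for any fixed $\rho$ and large $n$, Banach's fixed-point theorem produces a zero $\widetilde{\bm s}_{j,n}$. The injectivity bound applied with $\bm y'=\bm s_j$ then gives
$$\norm{\widetilde{\bm s}_{j,n}-\bm s_j}\leq 4\sigma_0^{-1}\norm{\hat g(\bm s_j)}\leq 4\sigma_0^{-1}\norm{\hat g-\nabla p}_\infty.$$
This is the stated $O_P$ rate.

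Finally, the local bounds. The two-sided gradient bound comes from the same integral identity applied with $\bm y'=\widetilde{\bm s}_{j,n}$: the lower constant is $c'=\sigma_0/4$, and the upper constant comes from $\sup\norm{\nabla^2\hat p}_2\leq \Sigma_0+\sigma_0$. This requires $B(\widetilde{\bm s}_{j,n},r_0)\subset B(\bm s_j,2r_0)$, which holds since $\widetilde{\bm s}_{j,n}\to\bm s_j$. The density bound is a second-order Taylor expansion of $\hat p$ about $\widetilde{\bm s}_{j,n}$, whose gradient vanishes, with the remainder controlled by the same Hessian bound. For modes, negative definiteness of $\nabla^2\hat p$ on the ball follows from $\rho_{\max}(\nabla^2 p)\leq-\sigma_0$ and the uniform Hessian perturbation, and the same Taylor expansion then yields the two-sided quadratic bound.

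The main obstacle is bookkeeping rather than depth. One must arrange for the balls around $\widetilde{\bm s}_{j,n}$ to stay inside the fixed neighborhoods where Assumption~\ref{assump:sample_den_diff} applies, and fix the "near $\mathcal C$" region so that the compactness argument on $K$ is legitimate. The latter uses Assumption~\ref{assump:density_reg}(c), which places all critical points of the extension in the interior of $\mathcal C$.
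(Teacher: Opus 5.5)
Your proposal is correct and follows essentially the paper's route: the Newton-type contraction $\bm y\mapsto \bm y-\nabla^2 p(\bm s_j)^{-1}\hat g(\bm y)$ on a small ball around each $\bm s_j$, exclusion of critical points on the compact complement through uniform gradient consistency, and second-order Taylor expansion for the local two-sided bounds. Your averaged-Hessian injectivity bound, which you use for uniqueness, the $O_P(\norm{\hat g-\nabla p}_\infty)$ rate, and the gradient bounds, is just a more explicit form of what the paper extracts from the contraction mapping theorem.
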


\begin{proof}
Choose disjoint balls $B(\bm s_j,r_0)$ with $r_0>0$ such that
$$\sup_{\bm y\in B(\bm s_j,r_0)}
\norm{\nabla^2p(\bm y)- \nabla^2p(\bm s_j)}_2
\leq\frac{1}{8}\rho_{\min}(\nabla^2p(\bm s_j)),$$
where $\rho_{\min}(\nabla^2p(\bm s_j))$ is the smallest absolute eigenvalue of the Hessian matrix $\nabla^2p(\bm s_j)$.
On their compact complement, $\norm{\nabla p(\bm{x})}$ is bounded below, so the uniform gradient consistency excludes any critical points of $\hat p$ inside the compact complement. Additionally, with probability tending to one,
$$\sup_{\bm y\in B(\bm s_j,r_0)}
\norm{\nabla^2\hat p(\bm y)-\nabla^2p(\bm s_j)}_2
\leq\frac{1}{4} \rho_{\min}(\nabla^2p(\bm s_j))$$
simultaneously for all $j$. On this event, define $\mathcal{T}_{j,n}(\bm{y}) = \bm{y} - \left[\nabla^2 p(\bm{s}_j)\right]^{-1} \hat{g}(\bm{y})$. Then,
$$\sup_{\bm y\in B(\bm s_j,r_0)}\norm{D\mathcal{T}_{j,n}(\bm y)}_2 = \sup_{\bm y\in B(\bm s_j,r_0)}\norm{I_d - \left[\nabla^2 p(\bm{s}_j)\right]^{-1} \nabla^2\hat{p}(\bm{y})}_2
< \frac{1}{2}.$$
Moreover,
$\norm{\mathcal T_{j,n}(\bm s_j)-\bm s_j}
\leq\norm{A_j^{-1}}_2\norm{\hat g-\nabla p}_\infty=o_P(1)$. After decreasing $r_0$ once, $\mathcal{T}_{j,n}$ maps $\overline{B(\bm s_j,r_0)}$ into itself with probability tending to
one. The contraction mapping theorem gives a unique fixed point
$\widetilde{\bm s}_{j,n}$ in this ball, equivalently a unique zero of
$\hat g=\nabla\hat p$. Thus,
$$\norm{\widetilde{\bm s}_{j,n}-\bm s_j}
\leq C_1\norm{\hat g(\bm s_j)}
=O_P(\norm{\hat g-\nabla p}_\infty)$$
for some constant $C_1>0$.

On the compact complement of the union of these balls, $\norm{\nabla p}$ is bounded away from zero, so the uniform gradient consistency $\norm{\hat{g}-\nabla p}_{\infty}=o_P(1)$ excludes any additional critical points of $\hat p$.
Finally, $\nabla^2\hat{p}$ has the same Morse index (positive and negative eigenvalues) as $\nabla^2 p$ within balls $B(\bm{s}_j,r_0)$, respectively. By Taylor's theorem and our arguments in Lemma~\ref{lem:mode_geometry}, the results naturally follow.
\end{proof}

\vspace{2mm}

\begin{lemma}[Sample update before the terminal modal neighborhood]
	\label{lem:sample_admissible}
Suppose that Assumptions~\ref{assump:density_reg}, \ref{assump:sample_den_diff}, \ref{assump:sep_repulsion}, and \ref{assump:C1_linear} hold. Assume that, for a deterministic sequence $\delta_n\downarrow 0$,  $\eta_n+\frac{q_n}{\eta_n}=o(\delta_n)$, $\delta_n\left|\log\left(\frac{q_n}{\eta_n}\right)\right|=O(1)$, $\frac{q_n\log n}{\eta_n^{d+1}}=o(1)$, $\norm{\hat{p} - p}_{\infty}=o_P(1)$, $\norm{\hat{g} - \nabla p}_{\infty} = o_P(\delta_n)$, and $\norm{\nabla^2 \hat{p} - \nabla^2 p}_{\infty}=o_P(1)$ with $q_n = \left(\frac{\log n}{n}\right)^{\frac{1}{d}}$. Let $\epsilon_n=C\left[\frac{q_n}{\eta_n} + \norm{\hat{g}-\nabla p}_{\infty}\right]$, where $C>0$ is a sufficiently large fixed constant.  For every fixed $c>0$, with probability tending to one, uniformly over $\bm y\in\mathcal C_a\setminus B^o(\bm m^*,\epsilon_n)$ and $d(\bm y,\partial\mathcal C_a)\geq c\delta_n$, we have that
$$\hat\Phi_n(\bm y) =\bm y+\eta_n\nabla p(\bm y)+\bm R_n(\bm y),
\qquad \norm{\bm R_n(\bm y)} \leq  C_1 \left[q_n+\eta_n\norm{\hat g-\nabla p}_\infty + \eta_n^2 \norm{\nabla p(\bm{y})}\right]$$
for some fixed constant $C_1>0$.
\end{lemma}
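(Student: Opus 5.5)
The plan mirrors the oracle argument of Lemma~\ref{lem:pop_1nn_tube}, with $\hat p$ and $\hat g$ in place of $p$ and $\nabla p$. We work on the event $\zeta_n\leq C_\zeta q_n$ from Lemma~\ref{lem:1NN_sup_bnd}. We also work on the event where $\sup\norm{\nabla^2\hat p}=O_P(1)$ on a fixed neighborhood of $\mathcal C$ (Assumption~\ref{assump:sample_den_diff}) and $\norm{\hat g-\nabla p}_\infty\leq \delta_n$. Fix $\bm y$ in the stated region and set $\hat{\bm y}^+:=\bm y+\eta_n\hat g(\bm y)$.

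\textbf{Step 1: a lower bound on the gradient.} Using the Morse structure, exactly as in \eqref{gradient_lower_tube_shift_new}, we get $\norm{\nabla p(\bm y)}\geq c_g\min\{c\delta_n,\epsilon_n\}$ uniformly. The reason is that near the mode $\norm{\nabla p(\bm y)}\gtrsim\norm{\bm y-\bm m^*}\geq\epsilon_n$ by Lemma~\ref{lem:mode_geometry}; near a boundary saddle the distance to the separatrix is $\gtrsim\delta_n$; near interior non-modal critical points of $\mathcal C_a$ (none exist off the separatrix other than $\bm m^*$) the same holds; and elsewhere $\norm{\nabla p}$ is bounded below. Since $\norm{\hat g-\nabla p}_\infty\leq\epsilon_n/C$ and also $=o_P(\delta_n)$, we obtain $\norm{\hat g(\bm y)}\geq\frac12\norm{\nabla p(\bm y)}$ and $\norm{\hat g(\bm y)-\nabla p(\bm y)}\leq \frac{1}{C}\norm{\nabla p(\bm y)}\cdot O(1)$ after choosing $C$ large. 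Membership $\hat{\bm y}^+\in\mathcal C$ follows from $d(\bm y,\partial\mathcal C_a)\geq c\delta_n\gg\eta_n$ (near the mode, from forward invariance and contraction).

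\textbf{Step 2: admissibility of a nearby observation.} Choose $\bm X(\bm y)$ with $\norm{\bm X(\bm y)-\hat{\bm y}^+}\leq\zeta_n$. Taylor-expand $\hat p$ at $\bm y$:
\[
\hat p(\bm X(\bm y))-\hat p(\bm y)\geq \eta_n\norm{\hat g(\bm y)}^2-\zeta_n\norm{\hat g(\bm y)}-C_2\bigl(\eta_n^2\norm{\hat g(\bm y)}^2+\zeta_n^2\bigr).
\]
Dividing by $\eta_n\norm{\hat g(\bm y)}^2$, the error terms are bounded by $\frac{q_n}{\eta_n\norm{\hat g(\bm y)}}\lesssim\max\{\frac{q_n}{\eta_n\delta_n},\frac1C\}$, by $\eta_n$, and by the square of the first quantity. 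All of these can be made small by taking $C$ large and using $\frac{q_n}{\eta_n}=o(\delta_n)$. Hence $\hat p(\bm X(\bm y))>\hat p(\bm y)$ uniformly, so $\bm X(\bm y)$ is admissible.

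\textbf{Step 3: the representation.} By minimality in \eqref{gg_1NN_uphill}, $\norm{\hat\Phi_n(\bm y)-\hat{\bm y}^+}\leq\zeta_n$. Therefore
\[
\hat\Phi_n(\bm y)=\bm y+\eta_n\nabla p(\bm y)+\bm R_n(\bm y),\qquad \norm{\bm R_n(\bm y)}\leq\zeta_n+\eta_n\norm{\hat g-\nabla p}_\infty,
\]
which is within the claimed bound. The extra $\eta_n^2\norm{\nabla p(\bm y)}$ term is harmless slack; it accommodates comparison with the exact flow if one prefers to centre at $\bm\gamma_{\bm y}(\eta_n)$ as in \eqref{ARI_exact_flow}.

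\textbf{Main obstacle.} The delicate point is Step 1 near the mode. We need $\norm{\hat g}$ comparable to $\norm{\nabla p}$ down to radius $\epsilon_n$. This is exactly why $\epsilon_n$ must include the term $C\norm{\hat g-\nabla p}_\infty$: the sample critical point $\widetilde{\bm s}_{j,n}$ of Lemma~\ref{lem:estimated_critical_stability} lies within $O_P(\norm{\hat g-\nabla p}_\infty)$ of $\bm m^*$, and inside that radius $\hat g$ carries no directional information. All constants must also be chosen uniformly over the region before letting $n\to\infty$.
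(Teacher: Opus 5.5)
Your proof follows the paper's route step for step. You transfer the lower bound $\norm{\nabla p(\bm y)}\gtrsim\min\{\delta_n,\epsilon_n\}$ to $\hat g$ by taking $C$ large, use a second-order Taylor expansion of $\hat p$ (with $\hat g=\nabla\hat p$) to show that an observation near $\bm y+\eta_n\hat g(\bm y)$ is admissible, and conclude by minimality of $\hat\Phi_n$.

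One step, however, does not hold as justified, and it is exactly where the term you call harmless slack is needed. The hypothesis $d(\bm y,\partial\mathcal C_a)\geq c\delta_n$ controls only the distance to the separatrix, i.e.\ to the boundary of $\mathcal C_a$ \emph{relative to} $\mathcal C$. It gives no control on the distance to the boundary of the support $\mathcal C$ in $\mathbb R^d$, which $\mathcal C_a$ may well meet. Assumption~\ref{assump:density_reg}(d) allows $\nabla p$ to be tangent to a curved part of that boundary; for a disk, this happens wherever the radial derivative of $p$ vanishes but the angular one does not. At such points the Euler target $\hat{\bm y}^+$ lies outside $\mathcal C$ by an amount of order $\eta_n^2\norm{\nabla p(\bm y)}^2$, plus up to $\eta_n\norm{\hat g-\nabla p}_\infty$ if $\hat g$ tilts outward. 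The hypothesis $q_n\log n=o(\eta_n^{d+1})$ forces $\eta_n^2\gg q_n$. So there need not be any observation within $\zeta_n$ of $\hat{\bm y}^+$. Your sharper conclusion $\norm{\bm R_n(\bm y)}\leq\zeta_n+\eta_n\norm{\hat g-\nabla p}_\infty$ then genuinely fails at such points; it already fails in the oracle case $\hat p=p$.

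The repair stays inside your scheme. Pick $\bm X(\bm y)$ within $\zeta_n$ of the exact-flow point $\bm z=\bm\gamma_{\bm y}(\eta_n)$. This point lies in $\mathcal C_a$ by forward invariance of $\mathcal C$ and invariance of basins under the flow, as in Step~1 of the proof of \autoref{thm:GGDPC_ARI}. Since $\norm{\bm z-\bm y-\eta_n\nabla p(\bm y)}\leq C'\eta_n^2\norm{\nabla p(\bm y)}$, you get
$\norm{\bm X(\bm y)-\hat{\bm y}^+}\leq\zeta_n+\eta_n\norm{\hat g-\nabla p}_\infty+C'\eta_n^2\norm{\nabla p(\bm y)}$.
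By your Step~1, the two added terms are $O(1/C)$ or $o(1)$ multiples of $\eta_n\norm{\hat g(\bm y)}$. Hence your Taylor argument still gives admissibility, and minimality yields exactly the stated remainder, including the $\eta_n^2\norm{\nabla p(\bm y)}$ term. The paper's written proof makes the same unqualified claim that $\bm y+\eta_n\hat g(\bm y)\in\mathcal C_a$, but the bound it displays is the exact-flow one. With this change your argument is complete.
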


\begin{proof}
Work on the events $\zeta_n\leq C_\zeta q_n$,
$\norm{\nabla^2\hat p}_{\infty}=O(1)$, and $\norm{\hat{g} - \nabla p}_{\infty} = o_P(\delta_n)$. When $\bm{y}$ is close to $B(\bm{m}^*,\epsilon_n)$ with $\epsilon_n=o_P(\delta_n)$, we know that $\norm{\nabla p(\bm y)}\geq c_0\epsilon_n$. This bound also hold uniformly on $\mathcal C_a\setminus B^o(\bm m^*,\epsilon_n)$, since $\norm{\nabla p(\bm y)}$ is lower bounded by $\min\{\delta_n, 1\}$ in other region.  By choosing $C$ sufficiently large,
$$\norm{\hat g(\bm y)}\geq c_1\epsilon_n, \qquad
\frac{q_n}{\eta_n\norm{\hat g(\bm y)}}\lesssim \frac{1}{C}$$
with probability tending to one. The gradient update
$\bm y+\eta_n\hat g(\bm y)$ remains in $\mathcal C_a$ because
$\eta_n=o(\delta_n)$ and $\hat g$ is uniformly bounded.  Choose an
observation $\bm X(\bm y)$ within $\zeta_n$ of this target.  Taylor's theorem gives
\[
\hat p(\bm X(\bm y))-\hat p(\bm y)
\geq\eta_n\norm{\hat g(\bm y)}^2
-\zeta_n\norm{\hat g(\bm y)}
-C_1\left[\eta_n\norm{\hat g(\bm y)}+\zeta_n\right]^2>0
\]
uniformly, after increasing the constant $C_1$ and then taking $n$ sufficiently large. Thus, $\bm X(\bm y)$ is admissible.  Minimality of
$\hat\Phi_n(\bm y)$ yields
\begin{align*}
\norm{\hat\Phi_n(\bm y)-\left[\bm y+\eta_n\hat g(\bm y)\right]}
&\leq \norm{\bm X(\bm y)-\left[\bm y+\eta_n\hat g(\bm y)\right]}\\
&\leq \zeta_n+ \eta_n\norm{\hat g-\nabla p}_\infty +C\eta_n^2\norm{\nabla p(\bm y)}.
\end{align*}
Adding and subtracting $\eta_n\cdot \hat{g}(\bm y)$ proves the final result.
\end{proof}

\vspace{2mm}

\begin{lemma}[Basin invariance of the sample GGDPC path]
	\label{lem:sample_tube}
	Under the conditions of Lemma~\ref{lem:sample_admissible}, let
	$\epsilon_n=C\left[\frac{q_n}{\eta_n} + \norm{\hat{g}-\nabla p}_{\infty}\right]$.  There is a fixed $c_*>0$ such that, uniformly over $\bm x\in\mathcal C_a\ominus\left(\frac{2\delta_n}{C_{\mathcal S}}\right)$, with probability tending to one, the sample path remains in $\mathcal C_a$
	until it enters $B(\bm m^*,\epsilon_n)$ and
$$\inf_{0\leq k<\hat T_{n,\epsilon_n}}
		d(\hat{\bm Y}_k^{(n)},\partial\mathcal C_a)
		\geq c_*\delta_n.$$
\end{lemma}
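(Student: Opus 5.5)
The plan mirrors the proof of Lemma~\ref{lem:pop_tube_invariance}, with the sample one-step representation of Lemma~\ref{lem:sample_admissible} replacing Lemma~\ref{lem:pop_1nn_tube}. Fix $\bm x\in\mathcal C_a\ominus(2\delta_n/C_{\mathcal S})$. By Lemma~\ref{lem:flow_separation_sep}, $d(\bm\gamma_{\bm x}(t),\partial\mathcal C_a)\geq 2\delta_n$ for all $t\geq 0$. Hence the tube $U_{\delta_n}(\bm x)$ lies in $\mathcal C_a$, and every point of it is at distance at least $\delta_n$ from the separatrix.

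Define the exit time $\hat T^{\rm exit}_n=\inf\{k:\hat{\bm Y}_k^{(n)}\notin U_{\delta_n}(\bm x)\}$. The goal is to show that, uniformly in $\bm x$, the event $\hat T^{\rm exit}_n<\hat T_{n,\epsilon_n}$ has probability tending to zero. Before $\min\{\hat T^{\rm exit}_n,\hat T_{n,\epsilon_n}\}$, every iterate satisfies the hypotheses of Lemma~\ref{lem:sample_admissible} with $c=1$. This gives the perturbed Euler recursion
\[
\hat{\bm Y}_{k+1}^{(n)}=\hat{\bm Y}_k^{(n)}+\eta_n\nabla p(\hat{\bm Y}_k^{(n)})+\bm R_k,\qquad \norm{\bm R_k}\lesssim q_n+\eta_n\norm{\hat g-\nabla p}_\infty+\eta_n^2\norm{\nabla p(\hat{\bm Y}_k^{(n)})}.
\]
The event on which this holds is uniform over all $\bm y$ in the region, so a single high-probability event serves every starting point and every step.

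The argument then splits the support into three kinds of regions.

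\textbf{Regular region.} Away from fixed neighborhoods of the critical points, compare $\hat{\bm Y}_{k+1}^{(n)}$ with the exact flow endpoint $\bm\gamma_{\hat{\bm Y}_k^{(n)}}(\eta_n)$. The one-step error is $O(D_{k,n})$ by \eqref{grad_flow_ascent_diff1}. Since $\nabla p$ is bounded below there, the number of steps spent in the region is $O(\eta_n^{-1})$. A Gr\"onwall/Lipschitz argument for the flow map over this bounded time then bounds the accumulated transverse deviation from the flow by $O_P(\eta_n+q_n/\eta_n+\norm{\hat g-\nabla p}_\infty)=o_P(\delta_n)$, so the path stays inside the tube.

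\textbf{Modal region.} In $B(\bm m^*,\epsilon_0)\setminus B(\bm m^*,\epsilon_n)$, use Lemma~\ref{lem:mode_geometry} together with $\norm{\nabla p(\bm y)}\gtrsim\epsilon_n$. The choice $\epsilon_n=C[q_n/\eta_n+\norm{\hat g-\nabla p}_\infty]$ with $C$ large makes the errors $\norm{\bm R_k}$ at most a small multiple of $\eta_n\norm{\hat{\bm Y}_k^{(n)}-\bm m^*}$. This gives the contraction $\norm{\hat{\bm Y}_{k+1}^{(n)}-\bm m^*}\leq(1-c\eta_n)\norm{\hat{\bm Y}_k^{(n)}-\bm m^*}$, exactly as in Lemma~\ref{lem:pop_1nn_bnd}. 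The path therefore cannot leave this ball, which lies in $\mathcal C_a$ far from the separatrix, before entering $B(\bm m^*,\epsilon_n)$.

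\textbf{Boundary saddle neighborhoods.} Invoke Lemma~\ref{lem:sample_discrete_saddle_passage}. Its hypothesis $\eta_n+q_n/\eta_n+\norm{\hat g-\nabla p}_\infty=o_P(\delta_n)$ follows from the assumptions, and it gives $d(\hat{\bm Y}_k^{(n)},\partial\mathcal C_a)\geq c_1\delta_n$ throughout the passage. The path also remains on the same side of the local stable manifold.

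The main obstacle is chaining these three regimes. Any fixed trajectory crosses only finitely many regions, in a bounded number of passes, so the distance lower bounds can be concatenated with constants that do not compound. The delicate point is the entry distance into a saddle neighborhood: it must be at least a fixed multiple of $\delta_n$. This follows because the regular-region deviation is $o_P(\delta_n)$ while the flow itself sits at distance at least $2\delta_n$. The logarithmic passage time through saddles must also not inflate the errors; the condition $\delta_n|\log(q_n/\eta_n)|=O(1)$ controls the total number of steps for this. Taking $c_*=\min\{1/2,c_1\}$ over the finitely many saddles, and noting that all events are uniform in $\bm x$, yields the claim.
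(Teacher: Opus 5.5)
Your three-regime decomposition is the paper's: Gr\"onwall tracking on regular segments with $o_P(\delta_n)$ error, Lemma~\ref{lem:sample_discrete_saddle_passage} at boundary saddles, and the modal contraction with $C$ large. The gap is the invariant you build it on. You try to show the path never leaves $U_{\delta_n}(\bm x)$, the $\delta_n$-tube about the single trajectory $\Gamma_\infty(\bm x)$, and you get the hypothesis of Lemma~\ref{lem:sample_admissible} from membership in that tube. That containment can fail after a saddle passage, because nearby trajectories separate there. In the linearizing coordinates of Lemma~\ref{lem:L_holder}, let $\rho_j>0$ be the slowest stable rate. An orbit entering the saddle box at $|u|\asymp\delta_n$ along the slowest stable direction exits through $|u|=u_0$ with $\norm{\bm v}$ of order $(|u|/u_0)^{\rho_j/\lambda_j}$. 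So an $o(\delta_n)$ transverse offset $e$ at entry, which is all the regular-segment tracking gives you, can become an offset of order $e\,\delta_n^{\rho_j/\lambda_j-1}$ at exit. The same holds for the $O_P(\eta_n+q_n/\eta_n+\norm{\hat g-\nabla p}_\infty)$ error accumulated inside the box. When $\rho_j<\lambda_j$ and $e/\delta_n\to 0$ slowly, which the hypotheses allow, this offset is much larger than $\delta_n$. On the regular segment after a saddle passage, the step ``deviation from the flow is $o_P(\delta_n)$, hence the path stays in the tube'' therefore fails, and your induction loses its hypothesis. Your remark that the constants ``do not compound'' comes from the same choice of comparing against one trajectory throughout.

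The fix is to drop tube containment, which this statement does not ask for (unlike Lemma~\ref{lem:pop_tube_invariance}). Propagate only $d(\hat{\bm Y}_k^{(n)},\partial\mathcal C_a)\geq c\,\delta_n$; this is all Lemma~\ref{lem:sample_admissible} needs, since it holds for every fixed $c>0$. On each regular segment, compare with the flow from that segment's entry iterate, as the paper does with $\bm Z_k=\bm\gamma_{\bm y_0}(k\eta_n)$. Lemma~\ref{lem:flow_separation_sep} then costs one factor $C_{\mathcal S}$ (plus the $o_P(\delta_n)$ tracking loss) per segment. A path crosses boundedly many segments, so $c_*$ stays a fixed constant. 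With this change your argument coincides with the paper's. A minor point: $\delta_n|\log(q_n/\eta_n)|=O(1)$ plays no role here. The distance bound in Lemma~\ref{lem:sample_discrete_saddle_passage} uses only $\eta_n+q_n/\eta_n+\norm{\hat g-\nabla p}_\infty=o_P(\delta_n)$, through the geometric growth of $|u_k|$.
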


\begin{proof}
Choose disjoint fixed neighborhoods of the boundary saddles and a fixed modal neighborhood. On a compact regular segment, Lemma~\ref{lem:sample_admissible} and the exact gradient flow expansion give, with $\bm Z_k=\bm\gamma_{\bm y_0}(k\eta_n)$,
	\[
	\norm{\hat{\bm Y}_{k+1}^{(n)}-\bm Z_{k+1}}
	\leq(1+C_1\eta_n)\norm{\hat{\bm Y}_k^{(n)}-\bm Z_k}
	+C_1\left[q_n+\eta_n\norm{\hat g-\nabla p}_\infty+\eta_n^2\right].
	\]
	The population passage time of the segment is uniformly bounded.  Hence, we know that
	\begin{equation}
		\label{sample_regular_tracking_full}
	\begin{split}
	\max_k\norm{\hat{\bm Y}_k^{(n)}-\bm Z_k} &\lesssim \max_k \left[q_n+\eta_n\norm{\hat g-\nabla p}_\infty+\eta_n^2\right] \sum_{j=0}^{k-1} (1+C\eta_n)^j\\
	&=O_P\left(\eta_n+\frac{q_n}{\eta_n}
	+\norm{\hat g-\nabla p}_\infty\right) =o_P(\delta_n).
	\end{split}
	\end{equation}
Since $d(\bm\gamma_{\bm x}(t),\partial\mathcal C_a)
\geq2\delta_n $ for $t\geq0$ by Lemma~\ref{lem:flow_separation_sep}, the GGDPC updates stay inside $\mathcal{C}_a$.
	
At a saddle boundary region, Lemma~\ref{lem:sample_discrete_saddle_passage} applies to the resulting form of Lemma~\ref{lem:sample_admissible}.
	
Inside the fixed modal neighborhood, Lemmas~\ref{lem:mode_geometry} and \ref{lem:sample_admissible} imply, for
	$R_k=\norm{\hat{\bm Y}_k^{(n)}-\bm m^*}\geq\epsilon_n$, that
\begin{align*}
	\norm{\hat{\bm Y}_k^{(n)}-\bm m^* +\eta_n\nabla p(\hat{\bm Y}_k^{(n)})}^2 &=R_k^2
	+2\eta_n\nabla p(\hat{\bm Y}_k^{(n)})^T
	(\hat{\bm Y}_k^{(n)}-\bm m^*)
	+\eta_n^2\norm{\nabla p(\hat{\bm Y}_k^{(n)})}^2\\
	&\quad\leq
	\left(1-2c_0\eta_n+C_0^2\eta_n^2\right)R_k^2.
\end{align*}
Hence,
$$R_{k+1}
	\leq(1-c\eta_n+C_2\eta_n^2)R_k
	+C_2\left[q_n+\eta_n\norm{\hat g-\nabla p}_\infty\right]
	\leq(1-c'\eta_n)R_k$$
	when $C>0$ is sufficiently large. Hence, $c'>0$ is an absolute constant. The result thus follows by combining all these three cases.
\end{proof}

\begin{lemma}[Completion of the sample path in the modal region]
	\label{lem:sample_modal_completion}
	Under the conditions of Lemma~\ref{lem:sample_admissible}, suppose that the
	sample path enters $B(\bm m^*,\epsilon_n)$, where $\epsilon_n = C\left[\frac{q_n}{\eta_n} + \norm{\hat{g}-\nabla p}_{\infty}\right]$.  Let
	\[
	\hat{\bm m}_{a,n}\in
	\argmax_{\bm X_i\in\mathbb X_n\cap B(\bm m^*,r_0)}
	\hat p(\bm X_i),
	\]
	where $r_0>0$ is a sufficiently small fixed modal radius and ties are resolved by the standing deterministic rule.  With probability tending to one, $\hat{\bm m}_{a,n}$ is the terminal vertex of the path within $\mathcal C_a$, and the remaining within-basin path length is
$$O_P\left(
		\frac{q_n}{\eta_n}+\norm{\hat g-\nabla p}_\infty
		+\frac{q_n\log n}{\eta_n^{d+1}}\right).$$
\end{lemma}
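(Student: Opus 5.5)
The proof has three parts: identify the terminal vertex, trap the remaining path in a shrinking ball around the mode, and then count observations in that ball. The route mirrors Lemma~\ref{lem:oracle_modal_core}; the new ingredients are that density ordering is taken with respect to $\hat p$, and that the estimated critical point $\widetilde{\bm s}_{a,n}$ from Lemma~\ref{lem:estimated_critical_stability} replaces $\bm m^*$ as the centre of the $\hat p$-level sets.

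\emph{Terminal vertex.} Fix $r_0$ small enough that $B(\bm m^*,2r_0)\subset\mathcal C_a$ and Lemma~\ref{lem:estimated_critical_stability} applies on $B(\bm m^*,2r_0)$. Once the path is at some $\hat{\bm Y}_k^{(n)}\in B(\bm m^*,\epsilon_n)$, it moves only to observations with strictly larger $\hat p$. By the quadratic bounds of Lemma~\ref{lem:estimated_critical_stability}, with $\norm{\widetilde{\bm s}_{a,n}-\bm m^*}=O_P(\norm{\hat g-\nabla p}_\infty)\lesssim\epsilon_n$, every later vertex that stays in $\mathcal C_a$ lies in the $\hat p$ upper level set $\{\bm y\in\mathcal C_a:\hat p(\bm y)\ge \hat p(\widetilde{\bm s}_{a,n})-C'(2\epsilon_n)^2\}$. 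This set is contained in $B(\widetilde{\bm s}_{a,n},C_m\epsilon_n)\subset B(\bm m^*,C_m'\epsilon_n)$.

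The containment needs $\hat p$ to be bounded away from $\hat p(\widetilde{\bm s}_{a,n})$ on $\mathcal C_a\setminus B(\bm m^*,r_0)$. This follows from $\norm{\hat p-p}_\infty=o_P(1)$ together with the strict population gap from Lemma~\ref{lem:mode_geometry}. The path therefore cannot leave the ball, and it stops only at a vertex with no admissible higher-$\hat p$ observation inside $\mathcal C_a$.

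Every observation in the ball other than $\hat{\bm m}_{a,n}$ has $\hat{\bm m}_{a,n}$ as an admissible candidate. Its GGDPC update, like every candidate, satisfies $\norm{\hat\Phi_n(\cdot)-\cdot}\le 2C_m'\epsilon_n+2\eta_n\norm{\hat g}$. Hence $\hat\Phi_n(\cdot)$ is defined and stays in $B(\bm m^*,r_0)\subset\mathcal C_a$, and the path cannot terminate before reaching $\hat{\bm m}_{a,n}$.

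\emph{Length bound.} Ranks increase strictly along the path, so no vertex repeats. Each step has length $O(\epsilon_n)$, and the number of steps is at most $1+\#\{i:\bm X_i\in B(\bm m^*,C_m'\epsilon_n)\}$. Lemma~\ref{lem:uniform_ball_count} gives the count $O_P(n\epsilon_n^d+\log n)$. This yields a bound of $O_P(\epsilon_n(1+n\epsilon_n^d+\log n))$.

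\emph{Main obstacle.} This crude bound is where I expect the difficulty. Since $n\epsilon_n^{d+1}\gtrsim q_n\log n/\eta_n^{d+1}$ when $\epsilon_n\asymp q_n/\eta_n$, that part matches the stated rate. The additive $\epsilon_n\log n$ term, however, exceeds $q_n/\eta_n$, and the excess must be absorbed. I would first try to show that $\epsilon_n\log n\lesssim n\epsilon_n^{d+1}$, which holds because $n\epsilon_n^d\gtrsim\log n/\eta_n^d\ge\log n$.

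The $\norm{\hat g-\nabla p}_\infty$ part of $\epsilon_n$ also needs control. When $\norm{\hat g-\nabla p}_\infty$ dominates, I would instead split the region. In the annulus $B(\bm m^*,\epsilon_n)\setminus B(\bm m^*,C q_n/\eta_n)$, the contraction of Lemma~\ref{lem:sample_tube} still yields a geometric sum $O_P(\epsilon_n)=O_P(q_n/\eta_n+\norm{\hat g-\nabla p}_\infty)$. The counting argument is then applied only in the inner core $B(\bm m^*,C q_n/\eta_n)$, which gives the $q_n\log n/\eta_n^{d+1}$ term.
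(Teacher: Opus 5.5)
Your terminal-vertex argument is sound and matches the paper's: $\hat{\bm m}_{a,n}$ is an admissible competitor for every other vertex near the mode, ranks increase strictly, and the population density gap outside the fixed modal ball excludes higher-ranked observations in $\mathcal C_a$.

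The gap is in the length bound when $\norm{\hat g-\nabla p}_\infty\gg q_n/\eta_n$. This is exactly the case your crude count cannot cover; that count is fine when $\norm{\hat g-\nabla p}_\infty=O_P(q_n/\eta_n)$. Your repair centres the annulus and the core at $\bm m^*$ and invokes the contraction of Lemma~\ref{lem:sample_tube}. That contraction comes from $R_{k+1}\le(1-c\eta_n+C_2\eta_n^2)R_k+C_2[q_n+\eta_n\norm{\hat g-\nabla p}_\infty]$. It therefore needs $R_k=\norm{\hat{\bm Y}_k^{(n)}-\bm m^*}\gtrsim q_n/\eta_n+\norm{\hat g-\nabla p}_\infty$, so it holds only \emph{outside} $B(\bm m^*,\epsilon_n)$, which is the ball the path has already entered.

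In your annulus $B(\bm m^*,\epsilon_n)\setminus B(\bm m^*,Cq_n/\eta_n)$ there is no contraction toward $\bm m^*$ at all. The dynamics guided by $\hat g$ and ranked by $\hat p$ are attracted to the estimated mode $\widetilde{\bm s}_{a,n}$, and they end at $\hat{\bm m}_{a,n}$, within $O_P(q_n)$ of it. That point can sit at distance of order $\norm{\hat g-\nabla p}_\infty\gg q_n/\eta_n$ from $\bm m^*$. So the path need never enter your core $B(\bm m^*,Cq_n/\eta_n)$. You are then left with the count over $B(\bm m^*,C_m'\epsilon_n)$, a bound of order $n\epsilon_n^{d+1}\gtrsim n\norm{\hat g-\nabla p}_\infty^{d+1}$. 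This is not within the stated rate, and for KDE-type gradient rates in low dimension it need not even tend to zero.

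The missing idea is to recentre the dynamics, not only the level sets, at $\widetilde{\bm s}_{a,n}$. Since $\hat g=\nabla\hat p$ and admissibility is judged by $\hat p$, the one-step argument of Lemma~\ref{lem:pop_1nn_bnd} carries over verbatim to $\hat p$ on the strongly concave neighbourhood from Lemma~\ref{lem:estimated_critical_stability}:
\begin{itemize}
\item If $r=\norm{\bm y-\widetilde{\bm s}_{a,n}}\ge C_1q_n/\eta_n$, the gain $\hat p(\bm y+\eta_n\hat g(\bm y))-\hat p(\bm y)\gtrsim\eta_n r^2$ dominates the $O(q_n r)$ sampling loss.
\item Hence $\norm{\hat\Phi_n(\bm y)-\widetilde{\bm s}_{a,n}}\le(1-c\eta_n)r$ and $\norm{\hat\Phi_n(\bm y)-\bm y}\le C\eta_n r$.
\item No $\norm{\hat g-\nabla p}_\infty$ appears in the threshold, because the guiding vector is the exact gradient of the ranking function.
\end{itemize}
The entry point lies within $O_P(\epsilon_n)$ of $\widetilde{\bm s}_{a,n}$, so the geometric sum gives $O_P(\epsilon_n)=O_P(q_n/\eta_n+\norm{\hat g-\nabla p}_\infty)$. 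Your level-set trapping, applied at scale $q_n/\eta_n$, then confines the remaining vertices to $B(\widetilde{\bm s}_{a,n},C_3q_n/\eta_n)$. Lemma~\ref{lem:uniform_ball_count} uses a deterministic radius and takes a supremum over centres, so the random centre is harmless, and it yields the $q_n\log n/\eta_n^{d+1}$ term. This is exactly the paper's proof.
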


\begin{proof}
Let $\widetilde{\bm m}_n^*$ be the critical point of $\hat p$ paired with $\bm m^*$ in Lemma~\ref{lem:estimated_critical_stability}. By triangle's inequality,
\begin{align*}
\norm{\hat{\bm Y}_{\hat T_{n,\epsilon_n}}^{(n)} -\widetilde{\bm m}_n^*} &\leq \norm{\hat{\bm Y}_{\hat T_{n,\epsilon_n}}^{(n)} -\bm{m}^*} + \norm{\bm{m}^*-\widetilde{\bm m}_n^*}\\
&= O_P\left(\epsilon_n +\norm{\hat g-\nabla p}_\infty\right)\\ &=O_P\left(\frac{q_n}{\eta_n}
+\norm{\hat g-\nabla p}_\infty\right).
\end{align*}
On the event of Lemma~\ref{lem:estimated_critical_stability}, $\hat p$ is uniformly strongly concave in a fixed neighborhood of
$\widetilde{\bm m}_n^*$.  The Taylor's expansions in
Lemma~\ref{lem:pop_1nn_bnd}, applied to $\hat p$ and
$\hat g=\nabla\hat p$, gives, whenever
$r=\norm{\bm y-\widetilde{\bm m}_n^*}\geq \frac{C_1q_n}{\eta_n}$,
\begin{equation}
	\label{sample_modal_contraction_estimated}
	\norm{\hat\Phi_n(\bm y)-\widetilde{\bm m}_n^*}
	\leq(1-c\eta_n)r,
	\qquad
	\norm{\hat\Phi_n(\bm y)-\bm y}\leq C\eta_n r,
\end{equation}
for sufficiently large fixed $C_1$. Hence, the length accumulated before the path reaches the core
$B\left(\widetilde{\bm m}_n^*,\frac{C_1q_n}{\eta_n}\right)$ is at most
$$C_2\eta_n\sum_{k\geq0}(1-c\eta_n)^k
O_P\left(\frac{q_n}{\eta_n} +\norm{\hat g-\nabla p}_\infty\right) =O_P\left(\frac{q_n}{\eta_n} +\norm{\hat g-\nabla p}_\infty\right).$$

By Lemma~\ref{lem:estimated_critical_stability}, the strict monotonicity of the density rank keeps all subsequent within-basin vertices in
$B\left(\widetilde{\bm m}_n^*,\frac{C_3q_n}{\eta_n}\right)$ for a fixed $C_3>0$.
No sample vertex can be visited twice.  Lemma~
\ref{lem:uniform_ball_count} therefore gives
\begin{align*}
	\sum_{k:\,\hat{\bm Y}_k^{(n)}\text{ in the core}}
	\norm{\hat{\bm Y}_{k+1}^{(n)}-\hat{\bm Y}_k^{(n)}}
	&\stackrel{\text{(i)}}{\leq} C\frac{q_n}{\eta_n}
	\left[1+
	\sum_{i=1}^n\mathds1\left\{
	\bm X_i\in B\left(\widetilde{\bm m}_n^*,
	C_1\frac{q_n}{\eta_n}\right)\right\}\right]\\
	&=O_P\left(n\left(\frac{q_n}{\eta_n}\right)^{d+1}\right)
	=O_P\left(\frac{q_n\log n}{\eta_n^{d+1}}\right),
\end{align*}
where (i) uses $n\left(\frac{q_n}{\eta_n}\right)^{d}= \frac{\log n}{\eta_n^d} \geq \log n$.
	
	It remains to identify the terminal vertex.  Sample coverage gives an
	observation at distance $O_P(q_n)$ from
	$\widetilde{\bm m}_n^*$.  Strong concavity and
Lemma~\ref{lem:estimated_critical_stability} therefore imply that
$\norm{\hat{\bm m}_{a,n}-\widetilde{\bm m}_n^*}=O_P(q_n)$.
	
For every core vertex $\bm y\neq\hat{\bm m}_{a,n}$,
$\hat{\bm m}_{a,n}$ is admissible and
\[
\norm{\hat{\bm m}_{a,n}-[\bm y+\eta_n\hat g(\bm y)]}
=O_P\left(\frac{q_n}{\eta_n}\right).
\]
Minimality of $\hat\Phi_n(\bm y)$ therefore keeps the subsequent iterations inside the same fixed modal ball. Since density rank increases strictly along the finite path, the path reaches $\hat{\bm m}_{a,n}$. Moreover, compactness and the strict population density gap outside the fixed modal ball, together with uniform consistency of $\hat p$, imply that no observation in $\mathcal C_a$ ranks above $\hat{\bm m}_{a,n}$. Thus, its outgoing edge, if one exists, leaves $\mathcal C_a$, and the result follows.
\end{proof}

\subsection{Main Proof of \autoref{thm:GGDPC_estimated}}

\begin{proof}[Proof of \autoref{thm:GGDPC_estimated}]
All the constants denoted by $C_*$ below are fixed and independent of $n$.
By Lemma~\ref{lem:sample_tube}, the sample path remains in $U_{\delta_n}(\bm{x})$ until it enters $B(\bm{m}^*,\epsilon_n)$ with probability tending to one, where $\epsilon_n=C\left[\frac{q_n}{\eta_n} + \norm{\hat{g}-\nabla p}_{\infty}\right]$ and $C>0$ is a sufficiently large fixed constant. We decompose
\begin{align*}
|\hat L_n(\bm x)-L(\bm x)|
\leq \underbrace{|L(\bm x)-L_{\epsilon_n}(\bm x)|}_{\textbf{Term I}}
+\underbrace{|\hat L_{n,\epsilon_n}(\bm x) -L_{\epsilon_n}(\bm x)|}_{\textbf{Term II}}
+\underbrace{|\hat L_n(\bm x) -\hat L_{n,\epsilon_n}(\bm x)|}_{\textbf{Term III}}.
\end{align*}
By Lemma~\ref{lem:mode_geometry}, \textbf{Term I} is of order $O_P(\epsilon_n) = O_P\left(\frac{q_n}{\eta_n} + \norm{\hat{g}-\nabla p}_{\infty}\right)$.
	
For \textbf{Term II}, let $\hat T_{n,\epsilon_n}=\inf\{k\geq0:\hat{\bm Y}_k^{(n)}\in B(\bm{m}^*,\epsilon_n)\}$. For $k<\hat T_{n,\epsilon_n}$, Lemma~\ref{lem:sample_admissible} implies that
	$$\hat{\Phi}_n(\hat{\bm Y}_k^{(n)})
	= \hat{\bm Y}_k^{(n)} + \eta_n\nabla p(\hat{\bm Y}_k^{(n)})+\bm{R}_k, \qquad \norm{\bm{R}_k}\leq C_1\left[\zeta_n+\eta_n\norm{\hat{g} - \nabla p}_{\infty}\right]$$
for some constant $C_1>0$. Define $\bm Z_k^{(n)}
:=\bm\gamma_{\hat{\bm Y}_k^{(n)}}(\eta_n)$ and $\ell_k:=\int_0^{\eta_n}
\norm{\nabla p(\bm\gamma_{\hat{\bm Y}_k^{(n)}}(t))}\,dt$.
Similar to our arguments for \eqref{Y_Z_saddle_new} in Lemma~\ref{lem:discrete_saddle_passage}, we know that
\begin{align}
	\label{sample_main_exact_flow_error}
\begin{split}
\norm{\hat{\bm Y}_{k+1}^{(n)}-\bm Z_k^{(n)}}
&\leq C_1\left[q_n+\eta_n\norm{\hat g-\nabla p}_\infty
+\eta_n^2\norm{\nabla p(\hat{\bm Y}_k^{(n)})}\right],\\
\left|\norm{\hat{\bm Y}_{k+1}^{(n)}-\hat{\bm Y}_k^{(n)}}
-\ell_k\right|
&\leq C_1\left[q_n+\eta_n\norm{\hat g-\nabla p}_\infty
+\eta_n^2\norm{\nabla p(\hat{\bm Y}_k^{(n)})}\right].
\end{split}
\end{align}
We also know that $L_{\epsilon_n}(\bm Z_k^{(n)})
-L_{\epsilon_n}(\hat{\bm Y}_k^{(n)})=-\ell_k$. By Lemmas~\ref{lem:L_holder} and \ref{lem:sample_tube}, 
$$\left|L_{\epsilon_n}(\hat{\bm Y}_{k+1}^{(n)})
-L_{\epsilon_n}(\bm Z_k^{(n)})\right|
\leq\frac{C_2}{\delta_n}
\norm{\hat{\bm Y}_{k+1}^{(n)}-\bm Z_k^{(n)}}.$$
Combining the last three displays and summing from $k_0$ to $k_1$ in the regular segment yields that
\begin{align*}
	&\sum_{k=k_0}^{k_1}
	\left|
	\norm{\hat{\bm Y}_{k+1}^{(n)}-\hat{\bm Y}_k^{(n)}}
	+L_{\epsilon_n}(\hat{\bm Y}_{k+1}^{(n)})
	-L_{\epsilon_n}(\hat{\bm Y}_k^{(n)})
	\right|\\
	&\leq
	\frac{C_3}{\delta_n}
	\sum_{k=k_0}^{k_1}
	\left[q_n+\eta_n\norm{\hat g-\nabla p}_\infty
	+\eta_n^2\norm{\nabla p(\hat{\bm Y}_k^{(n)})}\right]\\
	&\stackrel{\text{(i)}}{=} O_P\left(\frac{1}{\delta_n}\left[\eta_n+\frac{q_n}{\eta_n}
	+\norm{\hat g-\nabla p}_\infty\right]\right),
\end{align*}
where (i) uses the facts that there are $O_P(\eta_n^{-1})$ iterates on a regular segment and $\nabla p$ is bounded there. Near the saddle point boundary, the same bound follows from Lemma~\ref{lem:sample_discrete_saddle_passage}. Near the modal region but outside $B(\bm{m}^*,\epsilon_n)$, we know from \eqref{sample_modal_contraction_estimated} that the GGDPC path has $O_P(\eta_n^{-1}|\log \epsilon_n|)$ steps, so the above telescoping error is bounded by $O_P\left( \eta_n+\left[\frac{q_n}{\eta_n}+\norm{\hat g-\nabla p}_\infty\right]|\log\epsilon_n|\right)$. Since
$\epsilon_n\geq \frac{Cq_n}{\eta_n}$ and
$\delta_n|\log\left(\frac{q_n}{\eta_n}\right)|=O(1)$, we know that $\eta_n=o\left(\frac{\eta_n}{\delta_n}\right)$ and
$$\norm{\hat g-\nabla p}_\infty|\log\epsilon_n| = O_P\left(\frac{\norm{\hat g-\nabla p}_\infty}{\delta_n}\right).$$
Consequently,
$$\textbf{Term II}
=O_P\left(\frac{q_n}{\eta_n} \left|\log\frac{q_n}{\eta_n}\right|
+ \frac{1}{\delta_n}\left[\eta_n+\frac{q_n}{\eta_n}
+\norm{\hat g-\nabla p}_\infty\right]\right).$$
Finally, Lemma~\ref{lem:sample_modal_completion} gives
$$\textbf{Term III} =O_P\left(\frac{q_n}{\eta_n}+\norm{\hat g-\nabla p}_\infty +\frac{q_n\log n}{\eta_n^{d+1}}\right).$$
Combining all these rates lead to the final conclusion.
\end{proof}

\section{Proof of Theorem~\ref{thm:GGDPC_graph_dist}}
\label{app:GGDPC_graph_dist_proof}

We begin by establishing an essential uniform bound for $\bar{d}_G(\bm{x})$ over $\mathcal{C}$ and then prove \autoref{thm:GGDPC_graph_dist}.

\subsection{Essential Uniform Finiteness of the Population GGDPC Graph Distance}
\label{app:graph_finite_proof}

\begin{proposition}[Essential uniform finiteness of the population GGDPC graph distance]
\label{prop:finite_graph_distance}
Suppose that Assumption~\ref{assump:density_reg} holds and that every gradient ascent flow starting at $\bm{x}\in \mathcal{C}$ remains in $\mathcal{C}$ and converges to a critical point of $p$. 
\begin{enumerate}[label=(\alph*)]
	\item $\sup_{\bm{x}\in \mathcal{C}} L(\bm{x}) <\infty$.
	
	\item If, in addition, Assumption~\ref{assump:unique_proj} holds, then $\sup_{\bm{x}\in\mathcal{C} \setminus \mathcal{N}_{\mathcal{C}}} \bar{d}_G(\bm{x}) <\infty$. Consequently, $$\operatorname*{ess\,sup}\limits_{\bm{x}\in \mathcal{C}} \bar{d}_G(\bm{x})<\infty,$$ 
	where the essential supremum is taken with respect to either Lebesgue measure or $P$.
\end{enumerate}
\end{proposition}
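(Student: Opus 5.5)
For part (a), I will bound the length of every gradient ascent trajectory uniformly by trading arc length for density increase. Along $\bm\gamma_{\bm x}$ we have $\frac{d}{dt}p(\bm\gamma_{\bm x}(t))=\norm{\nabla p(\bm\gamma_{\bm x}(t))}^2$, so wherever $\norm{\nabla p}\ge c$ the arc length is at most $c^{-1}$ times the density gain. Fix small disjoint balls $B(\bm s,r)$ around the finitely many critical points of $p$ (Assumption~\ref{assump:density_reg}(c)). On the compact set $\mathcal C\setminus\bigcup_{\bm s}B^o(\bm s,r)$ we have $\norm{\nabla p}\ge c_r>0$. Hence the total length spent outside the balls is at most $c_r^{-1}(\sup_{\mathcal C}p-\inf_{\mathcal C}p)<\infty$, and forward invariance (Assumption~\ref{assump:density_reg}(d)) keeps the trajectory in $\mathcal C$.

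It then remains to control the length inside the balls. First, a trajectory can enter a given ball only finitely many times, with a uniform bound. Each exit-and-reentry must traverse the annulus $B(\bm s,2r)\setminus B(\bm s,r)$ or travel a positive distance outside the balls, and therefore costs a fixed positive density gain, since $\norm{\nabla p}\ge c_{r}$ there. The total gain is bounded, so the number of visits is bounded uniformly.

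Second, each single visit has uniformly bounded length. Near a mode, Lemma~\ref{lem:mode_geometry} gives length $O(r)$. Near a saddle or minimum, Morse nondegeneracy gives $\norm{\nabla p(\bm y)}\asymp\norm{\bm y-\bm s}$. I would parametrize the passage by $\rho=\norm{\bm y-\bm s}$ and use the Morse lemma (a $C^2$ chart in which $p$ is a nondegenerate quadratic) to bound the length of a gradient curve inside the ball by a constant multiple of $r$. The passage time may blow up like $\log(1/\mathrm{dist})$, but the length does not, because the speed decays proportionally to $\rho$. The same bound covers trajectories converging to the saddle itself.

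I expect this saddle-passage length bound without linearization to be the main obstacle. As a fallback, I would use the density-gain argument with the quadratic estimate $|p(\bm y)-p(\bm s)|\lesssim\rho^2$ on a dyadic decomposition in $\rho$. On each shell $\{2^{-k-1}r\le\rho\le 2^{-k}r\}$, the gain is at most $O(4^{-k})$ and the speed is at least $c\,2^{-k}$, so the length spent there is $O(2^{-k})$ per traversal. A traversal count bounded by the monotonicity of $p$, together with the Morse structure, then gives a summable total $O(r)$.

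For part (b), fix $\bm x\in\mathcal C_j\setminus\mathcal N_{\mathcal C}$. By construction in \eqref{mode_seq}, the modal chain $j_1,\dots,j_{T_{\bm x}}$ strictly increases the modal density, so $T_{\bm x}\le|\mathcal M|$. Each summand in \eqref{GGDPC_graph_dist_pop} is either a projection distance $\psi_{j_\ell}\le\mathrm{diam}(\mathcal C)$ or a flow length $L(\bm z_{j_\ell})\le\sup_{\mathcal C}L$. Under Assumption~\ref{assump:unique_proj}, $\bm z_{j_\ell}$ is well defined and lies in a modal basin, so these lengths are finite. Therefore
\[
\bar d_G(\bm x)\le \sup_{\mathcal C}L+(|\mathcal M|-1)\bigl(\mathrm{diam}(\mathcal C)+\sup_{\mathcal C}L\bigr),
\]
which is a bound uniform in $\bm x$ by part (a). Since $\mathcal N_{\mathcal C}$ is Lebesgue-null, and $P\ll\mathrm{Leb}$ with a density bounded on $\mathcal C$, the essential supremum under either measure is finite regardless of the arbitrary finite values assigned on $\mathcal N_{\mathcal C}$.
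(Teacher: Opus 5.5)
Your proposal is correct in substance. Its working core, the dyadic-shell fallback, is the same mechanism as the paper's proof, organized by spatial shells instead of by density level. The paper never decomposes the trajectory. It defines $h(v)=\inf\{\norm{\nabla p(\bm y)}:\bm y\in\mathcal C,\ p(\bm y)=v\}$ and shows $h(v)\geq C_s|v-u|^{1/2}$ near each critical value $u$, using $\norm{\nabla p(\bm y)}\gtrsim\norm{\bm y-\bm s}$ and $|p(\bm y)-u|\lesssim\norm{\bm y-\bm s}^2$. The change of variables $u=p(\bm\gamma_{\bm x}(t))$ then gives $L(\bm x)\leq\int_{p_{\min}}^{p_{\max}}dv/h(v)<\infty$ in one step. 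This handles modes, saddles, minima and the regular region together, so the paper needs no visit counting and no per-visit saddle analysis.

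Those two extra steps in your primary plan are the weak points.
\begin{itemize}
\item Your bound on the number of entries into $B(\bm s,r)$ is not justified as written. The alternative ``travel a positive distance outside the balls'' gives no uniform density gain, so your argument does not exclude short excursions across the sphere. They can be excluded: in a small ball around a nondegenerate critical point, $\frac{d^2}{dt^2}\norm{\bm\gamma_{\bm x}(t)-\bm s}^2\geq c\norm{\bm\gamma_{\bm x}(t)-\bm s}^2>0$. But you would need to say this.
\item The Morse-lemma chart makes $p$ quadratic but does not straighten the Euclidean gradient flow. It therefore does not directly bound the passage length, as you suspected.
\end{itemize}

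Both steps become unnecessary if you state the shell estimate in aggregate rather than ``per traversal''. Since $p\circ\bm\gamma_{\bm x}$ is strictly increasing, the density gains over the disjoint time intervals spent in $\{2^{-k-1}r\leq\rho\leq 2^{-k}r\}$ are disjoint subintervals of an interval of length $O(4^{-k}r^2)$. So the total length spent in that shell is $O(4^{-k}r^2)/(c\,2^{-k-1}r)=O(2^{-k}r)$, however often the shell is visited. Summing over $k$ and adding your bound outside the balls proves (a), with no separate treatment of modes. Your part (b) coincides with the paper's argument.
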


\begin{proof}[Proof of Proposition~\ref{prop:finite_graph_distance}]
(a) For every density value $v \in [p_{\min}, p_{\max}]$ with $p_{\max}=\max_{\bm{x}\in \mathcal{C}} p(\bm{x})$ and the level set $\left\{\bm{y}\in \mathcal{C}:p(\bm{y}) = v\right\}$, we define $h(v) = \inf\left\{\norm{\nabla p(\bm{y})}: \bm{y}\in \mathcal{C}, p(\bm{y}) = v\right\}$. If the level set $\left\{\bm{y}\in \mathcal{C}: p(\bm{y})=v\right\}$ is empty, then we let $h(v)=\infty$ by convention. Whenever $v\neq p(\bm{s})$ for any critical point $\bm{s}$ of $p$, the compactness of $\left\{\bm{y}\in \mathcal{C}:p(\bm{y}) = v\right\}$ implies that $h(v)>0$.

Then, we can prove that for every $u\in \left\{p(\bm{s}): \nabla p(\bm{s})=\bm{0}\right\}$ at the critical point level, there exist constants $C_s, r_s >0$ such that
\begin{equation}
	\label{h_quadra}
	h(v) \geq C_s |v-u|^{\frac{1}{2}}, \quad 0< |u-v| < r_s.
\end{equation}
Indeed, we know from the arguments in Lemma~\ref{lem:mode_geometry} that around small neighborhoods $U_1,...,U_k$ of all the critical points $\bm{s}_1,...,\bm{s}_k$ with critical value $u$,
$$|p(\bm{y}) - u|\leq C_1\norm{\bm{y} - \bm{s}_j}^2, \quad \bm{y}\in U_j, \text{ for }j=1,...,k,$$
and $\norm{\nabla p(\bm{y})} \geq C_2\norm{\bm{y}-\bm{s}_j}$ for $j=1,...,k$ by the non-degeneracy of their Hessian matrices. On the compact complement of $\cup_{j=1}^k U_j$, $\norm{\nabla p(\bm{y})}$ is bounded away from 0 on all levels sufficiently close to $u$. It thus establishes \eqref{h_quadra}. 

There are only finitely many critical values. Integrating the bound in \eqref{h_quadra} over disjoint neighborhoods of those values and using the positive lower bound for $h$ on their compact complement gives that $\int_{p_{\min}}^{p_{\max}} \frac{dv}{h(v)} \leq \max\left\{\int_{p_{\min}}^{p_{\max}} \frac{dv}{C_s |v-u|^{\frac{1}{2}}}, C_3\right\}< \infty$
for some constant $C_3 \in (0,\infty)$.

Now, for any $\bm{x}\in \mathcal{C}$, if $\bm{x}$ is critical, $L(\bm{x})=0<\infty$. Otherwise, along the trajectory, $$\frac{d}{dt}p(\bm\gamma_{\bm x}(t)) =\norm{\nabla p(\bm\gamma_{\bm x}(t))}^2>0$$
until the limiting critical point is reached. Then, we obtain that
\begin{align*}
	L(\bm x)
	&=\int_0^\infty \norm{\nabla p(\bm\gamma_{\bm x}(t))} dt\\
	&\stackrel{\text{(i)}}{=} \int_{p(\bm{x})}^{p(\operatorname{dest}(\bm{x}))}
	\frac{du}{\norm{\nabla p(\bm\gamma_{\bm x}(t(u)))}} \leq \int_{p_{\min}}^{p_{\max}}\frac{du}{h(u)} <\infty,
\end{align*}
where (i) uses the change of variable $u=p(\bm{\gamma}_{\bm{x}}(t))$.

(b) The sequence of modal heights in \eqref{GGDPC_graph_dist_pop} is strictly increasing, so it contains at most $|\mathcal{M}|<\infty$ modes by Assumption~\ref{assump:density_reg}. Since $\norm{\bm{m}_{j_\ell} - \Pi_{\overline{\mathcal{U}}_{j_\ell}}(\bm{m}_{j_\ell})} \leq \mathrm{diam}(\mathcal{C})$,
$$\bar{d}_G(\bm{x}) \leq |\mathcal{M}| \cdot \sup_{\bm{x}\in \mathcal{C}} L(\bm{x}) + \left(|\mathcal{M}| - 1\right) \mathrm{diam}(\mathcal{C})$$
uniformly over $\bm{x}\in \mathcal{C}\setminus \mathcal{N}_{\mathcal{C}}$, where $\mathrm{diam}(\mathcal{C}) = \sup_{\bm{x},\bm{y}\in \mathcal{C}} \norm{\bm{x} -\bm{y}} < \infty$ by the compactness of $\mathcal{C}$. 

Finally, since $p(\bm{x})\in [p_{\min}, p_{\max}]$ for some $0< p_{\min} \leq p_{\max} < \infty$ on $\mathcal C$, $P$ and Lebesgue measure have the same null sets on $\mathcal{C}$. This proves the final essential supremum assertion.
\end{proof}

\subsection{Main Proof of \autoref{thm:GGDPC_graph_dist}}

\begin{proof}[Proof of \autoref{thm:GGDPC_graph_dist}]
Let $\bm{X}_{i^*(\bm{x})}$ be the nearest observation to $\bm{x}$. We know from Lemma~\ref{lem:1NN_sup_bnd} that $\norm{\bm{X}_{i^*(\bm{x})} - \bm{x}}=O_P(q_n)$ with $q_n=\left(\frac{\log n}{n}\right)^{\frac{1}{d}}$. 

The theorem's rate conditions imply that $q_n=o(\delta_n)$. Hence, uniformly over $\mathcal C\setminus\mathcal S_{\rm full}^{2\delta_n/C_{\mathcal S}}$, the nearest observation belongs to the same modal basin as $\bm x$ and remains a distance of order $\delta_n$ from its relative boundary, with probability tending to one.

If $|\mathcal M|=1$, the assertion follows directly from \autoref{thm:GGDPC_estimated}. Hence, we assume $|\mathcal M|\geq2$. By Assumption~\ref{assump:density_reg}, we know that $\psi_j>0$ due to the non-degeneracy of the Hessian matrix around $\bm{m}_j$, and the finiteness of $|\mathcal{M}|$ implies that 
$$\psi_{\min}:= \min_{2\leq j \leq |\mathcal{M}|} \psi_j >0.$$
Choose a fixed $\lambda_0 \in \left(0, \frac{\psi_{\min}}{2}\right)$. By \autoref{thm:modal_const}, with probability tending to one,
$$\hat{\mathcal M}_{\lambda_0} =\left\{\hat{\bm{m}}_{1,n},...,\hat{\bm{m}}_{|\mathcal{M}|,n}\right\},$$
where $\hat{\bm{m}}_{j,n}$ is naturally paired with $\bm{m}_j$ and \eqref{mode_rate} holds. Additionally, distinct modal heights and $\norm{\hat{p} - p}_{\infty}$ imply that the ordering of sample local modes by the estimated density agrees with the ordering of the population local modes by the true density with probability tending to one. To invoke the parent identification argument in Step 1 of the proof of \autoref{thm:conv_ggdpc_dendro}, we choose a deterministic sequence $a_n\to \infty$ sufficiently slowly that $a_n\delta_n\to 0$. Then, $\frac{a_nq_n}{\eta_n}=o(1)$ and $a_n\norm{\hat g-\nabla p}_\infty=o_P(1)$ under the assumptions in the theorem statement. Hence, the empirical modal edge from $\hat{\bm m}_{j,n}$ enters the component associated with $\bm m_{\pi(j)}$ for every $j\geq 2$ with probability tending to one. Thus, the empirical modal sequence agrees with \eqref{mode_seq}.

Now, we decompose the graph distance difference as:
\begin{align*}
	&\left|\hat{d}_G(\bm{x}) - \bar{d}_G(\bm{x}) \right| \\
	&\leq \norm{\bm{X}_{i^*(\bm{x})} - \bm{x}} + \left|L(\bm{X}_{i^*(\bm{x})}) - L(\bm{x})\right| + \left|\hat{L}_n(\bm{X}_{i^*(\bm{x})}) - L(\bm{X}_{i^*(\bm{x})})\right| \\
	&\quad + \sum_{\ell=1}^{T_{\bm{x}}-1} \left[\left|\hat{L}_n(\hat{\bm{z}}_{j_\ell,n}) - L(\hat{\bm{z}}_{j_\ell,n})\right| + \norm{\left(\hat{\bm{m}}_{j_\ell,n} - \hat{\bm{z}}_{j_\ell,n}\right) - \left(\bm{m}_{j_\ell} - \bm{z}_{j_\ell}\right)}  + \left|L(\hat{\bm{z}}_{j_\ell,n}) - L(\bm{z}_{j_\ell}) \right|\right],
\end{align*}
where $\hat{\bm{z}}_{j_\ell,n}=\hat{\Phi}_n(\hat{\bm{m}}_{j_\ell,n})$ is the 1NN uphill update of the sample local mode $\hat{\bm{m}}_{j_\ell,n}$ while $\bm{z}_{j_{\ell}} = \Pi_{\overline{\mathcal{U}}_{j_\ell}}(\bm{m}_{j_\ell})$.
By Lemma~\ref{lem:L_holder}, we know that
$$\left|L(\bm{X}_{i^*(\bm{x})}) - L(\bm{x})\right| \leq \frac{C_L\norm{\bm{X}_{i^*(\bm{x})} - \bm{x}}}{\delta_n} =O_P\left(\frac{q_n}{\delta_n}\right).$$
By Assumption~\ref{assump:unique_proj}, $\bm{z}_{j_\ell}$ and $\hat{\bm{z}}_{j_\ell,n}$ have fixed distances away from the separatrix, so Lemma~\ref{lem:emp_mode_proj} implies that
$$\left|L(\hat{\bm{z}}_{j_\ell,n}) - L(\bm{z}_{j_\ell}) \right| \leq C_1 \norm{\hat{\bm{z}}_{j_\ell,n} - \bm{z}_{j_\ell}} = O_P\left(\sqrt{q_n} + \norm{\hat{g} - \nabla p}_{\infty} \right)$$
for some constant $C_1>0$ and $\ell=1,...,T_{\bm{x}}-1$. Additionally, \eqref{mode_rate} and Lemma~\ref{lem:emp_mode_proj} imply that
$$\norm{\left(\hat{\bm{m}}_{j_\ell,n} - \hat{\bm{z}}_{j_\ell,n}\right) - \left(\bm{m}_{j_\ell} - \bm{z}_{j_\ell}\right)} = O_P\left(\sqrt{q_n} + \norm{\hat{g} - \nabla p}_{\infty} \right)$$
for $\ell=1,...,T_{\bm{x}}-1$. Finally, we apply \autoref{thm:GGDPC_estimated} to argue that both $\left|\hat{L}_n(\bm{X}_{i^*(\bm{x})}) - L(\bm{X}_{i^*(\bm{x})})\right|$ and $\sum_{\ell=1}^{T_{\bm{x}}-1}\left|\hat{L}_n(\hat{\bm{z}}_{j_\ell,n}) - L(\hat{\bm{z}}_{j_\ell,n})\right|$ have the rate $O_P\left(\frac{q_n}{\eta_n}\left|\log\left(\frac{q_n}{\eta_n}\right)\right| +\frac{q_n}{\eta_n\delta_n} + \frac{\eta_n}{\delta_n} + \frac{q_n\log n}{\eta_n^{d+1}} +\frac{\norm{\hat{g} - \nabla p}_{\infty}}{\delta_n}\right)$.

The remaining terms $\frac{q_n}{\delta_n}$, $\sqrt{q_n}$, and $\norm{\hat g-\nabla p}_\infty$ are absorbed by this rate, because for all large $n$, $\eta_n<1$, $\delta_n<1$, and
$\eta_n+\frac{q_n}{\eta_n\delta_n}\geq 2\sqrt{\frac{q_n}{\delta_n}}\geq 2\sqrt{q_n}$.
This proves the asserted uniform rate for $\left|\hat{d}_G(\bm{x}) - \bar{d}_G(\bm{x}) \right|$.
\end{proof}

\section{Proof of Theorem~\ref{thm:wasser_waterfall}}
\label{app:waterfall_proof}

We first derive an integrated graph distance bound near the separatrix and then use it to prove \autoref{thm:wasser_waterfall}.

\subsection{An Integrated Graph Distance Lemma}

\begin{lemma}[Integrated graph distance control near the full separatrix]
\label{lem:int_control_sep}
Suppose that Assumptions~\ref{assump:density_reg}, \ref{assump:sample_den_diff}, \ref{assump:sep_repulsion}, and \ref{assump:C1_linear} hold for every basin of attraction. If $\eta_n+\frac{q_n}{\eta_n} = o(1)$, $\norm{\hat{p} -p}_{\infty}=o_P(1)$, $\norm{\hat{g} - \nabla p}_{\infty} = o_P(1)$, and $\norm{\nabla^2\hat{p} - \nabla^2 p}_{\infty} =o_P(1)$ with $q_n=\left(\frac{\log n}{n}\right)^{\frac{1}{d}}$, then
$$\max_{1\leq i \leq n} \hat{d}_G(\bm{X}_i) = O_P\left(1+ \frac{q_n\log n}{\eta_n^{d+1}}\right).$$
Moreover, for every deterministic sequence $\delta_n \downarrow 0$, if $\frac{q_n\log n}{\eta_n^{d+1}} = o(1)$, then
$$\frac{1}{n}\sum_{i=1}^n \hat{d}_G(\bm{X}_i) \cdot \mathds{1}\left\{d(\bm{X}_i, \mathcal{S}_{\rm full}) \leq \frac{2\delta_n}{C_{\mathcal{S}}}\right\} = O_P\left(\delta_n\right),$$
where $\mathcal{S}_{\rm full} =\bigcup_{a=1}^{|\mathcal{M}|} \partial \mathcal{C}_a$ and $C_{\mathcal{S}}$ is the constant in Lemma~\ref{lem:flow_separation_sep}.
The same results hold with $\hat{d}_G$ replaced by $\hat{d}_{G_{\lambda}}$ for every fixed $\lambda \notin \left\{\psi_j: 1< j\leq |\mathcal{M}|\right\}$.
\end{lemma}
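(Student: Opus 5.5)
The plan is to reduce both claims to a uniform crude bound on the within-basin portion of every GGDPC path, together with the modal-tree structure already identified in the proof of \autoref{thm:conv_ggdpc_dendro}. Work on the intersection of the high-probability events from \autoref{thm:modal_const} (applied at a fixed $\lambda_0<\psi_{\min}/2$), Lemma~\ref{lem:1NN_sup_bnd}, and Lemma~\ref{lem:estimated_critical_stability}. On this event the path from any $\bm{X}_i$ decomposes into at most $|\mathcal{M}|$ within-basin segments joined by at most $|\mathcal{M}|-1$ modal edges $\hat{\bm m}_{j,n}\to\hat{\bm z}_{j,n}$. Each modal edge has length at most $\mathrm{diam}(\mathcal{C})$. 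It therefore suffices to show that every within-basin segment ending at a sample mode has length $O_P(1+\frac{q_n\log n}{\eta_n^{d+1}})$ uniformly in the starting vertex. A segment is the part of the path visited while the estimated density rank increases below the next sample mode, and it need not stay inside a single population basin. I will therefore bound it by the length of the whole path between consecutive modal vertices.

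For this uniform bound I will use a level-set argument rather than tube tracking, since tracking fails near the separatrix. Away from fixed balls $B(\bm s,r)$ around all critical points $\bm s$, we have $\norm{\hat g}\geq c>0$. The same admissibility argument as Case I in Step 3 of the proof of \autoref{thm:conv_ggdpc_dendro} then shows that each step has length $\asymp\eta_n$ and raises $\hat p$ by at least $c'\eta_n$, because $\hat{\Phi}_n$ lands within $\zeta_n$ of $\bm y+\eta_n\hat g(\bm y)$ and the Taylor gain dominates. Since $\hat p$ is bounded, the total length accumulated outside these balls is $O_P(1)$. Inside a ball around a non-modal critical point, I would adapt Case III together with Lemma~\ref{lem:sample_discrete_saddle_passage}. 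Without a distance-$\delta_n$ lower bound, I would instead use the crude estimate that the unstable coordinate either grows geometrically or stays below $O(q_n/\eta_n)$. In the latter regime, strict rank increase and Lemma~\ref{lem:uniform_ball_count} cap the number of vertices by $O_P(n(q_n/\eta_n)^d+\log n)$, each step being of length $O(q_n/\eta_n)$. This gives $O_P(\frac{q_n\log n}{\eta_n^{d+1}}+1)$ per saddle visit, and the finiteness of the critical set bounds the number of visits.

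Inside modal balls the bound follows from Lemma~\ref{lem:sample_modal_completion}: contraction contributes $O_P(1)$, and the core contributes $O_P(\frac{q_n\log n}{\eta_n^{d+1}})$. Summing the three regimes proves the first claim. The main obstacle is the saddle regime: a path can linger near a boundary saddle with an $O(1)$ number of steps per unit of rank gain. I would handle this by charging those steps to rank increase inside a shrinking ball, which is exactly where the ball-count lemma is needed. The level-set charging outside the balls is routine.

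For the second claim, on the event $\frac{q_n\log n}{\eta_n^{d+1}}=o(1)$ the first part gives $\max_i\hat d_G(\bm X_i)=O_P(1)$. Hence the average is at most $O_P(1)\cdot\frac1n\sum_i\mathds 1\{d(\bm X_i,\mathcal S_{\rm full})\le 2\delta_n/C_{\mathcal S}\}$. The separatrix is a finite union of compact pieces of $C^2$ hypersurfaces (Assumption~\ref{assump:sep_repulsion}), so its tubular neighborhood has Lebesgue measure $O(\delta_n)$. The expected fraction is therefore $O(\delta_n)$, and Markov's inequality gives $O_P(\delta_n)$, exactly as in Step 3 of the proof of \autoref{thm:GGDPC_ARI}. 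For $\hat d_{G_\lambda}$, the truncated path is a prefix of the full path, so both bounds carry over immediately because $\hat d_{G_\lambda}\le\hat d_G$.
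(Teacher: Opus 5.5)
Your handling of the modal edges, the charging argument outside fixed balls around the critical points, the second claim (via $\max_i\hat d_G(\bm X_i)$ times the $O_P(\delta_n)$ tube mass) and the truncation remark all match the paper. You also do not need the modal-tree structure from \autoref{thm:conv_ggdpc_dendro}, which would import Assumption~\ref{assump:unique_proj}: strict monotonicity of $\hat p$ already shows that each modal edge is used at most once.

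The gap is inside the fixed balls around non-modal critical points. Your dichotomy says that when the unstable coordinate stays below $O(q_n/\eta_n)$, Lemma~\ref{lem:uniform_ball_count} counts the vertices and each step has length $O(q_n/\eta_n)$. For $d\geq 2$, however, the set $\{|u|\lesssim q_n/\eta_n\}\cap B(\bm s,r)$ is a slab around the $(d-1)$-dimensional local stable manifold, not a ball. It has volume $\asymp q_n/\eta_n$ and contains $\asymp nq_n/\eta_n$ observations. At distance $\rho$ from $\bm s$, steps in it have length $\asymp\eta_n\rho$. Paths started within $2\delta_n/C_{\mathcal S}$ of the separatrix are exactly the ones the second claim concerns, and they can travel a fixed distance along this slab toward the saddle. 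Your argument leaves their length uncontrolled.

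There are further problems with the tracking route:
\begin{itemize}
\item If you linearize $p$ as in Lemma~\ref{lem:sample_discrete_saddle_passage}, the per-step error contains $\eta_n\norm{\hat g-\nabla p}_\infty$. The growth threshold is then $q_n/\eta_n+\norm{\hat g-\nabla p}_\infty$, and with only $\norm{\hat g-\nabla p}_\infty=o_P(1)$ the count is no longer $O_P(\log n/\eta_n^d)$.
\item That lemma and Lemma~\ref{lem:sample_modal_completion} are proved under hypotheses you do not have here: distance $\geq c_0\delta_n$ from the separatrix and $\norm{\hat g-\nabla p}_\infty=o_P(\delta_n)$.
\item Assumption~\ref{assump:C1_linear} covers only boundary saddles, so other non-modal critical points are not handled.
\item The claim that finiteness of the critical set bounds the number of visits to a ball is unjustified.
\end{itemize}

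The missing idea is to extend your own level-set charging inside the balls, using dyadic annuli around the critical points $\tilde{\bm s}_{j,n}$ of $\hat p$ given by Lemma~\ref{lem:estimated_critical_stability}.
\begin{itemize}
\item Take the annulus $\{r<\norm{\bm x-\tilde{\bm s}_{j,n}}\leq 2r\}$ with $r\geq b_n=Aq_n/\eta_n$. There $\norm{\hat g(\bm x)}\asymp r$, so an observation within $\zeta_n$ of $\bm x+\eta_n\hat g(\bm x)$ is admissible.
\item Each step from the annulus then has length $\lesssim\eta_n r$ and raises $\hat p$ by $\gtrsim\eta_n r^2$. Meanwhile $\hat p$ varies by only $O(r^2)$ near $\tilde{\bm s}_{j,n}$.
\item Strict monotonicity of $\hat p$ along the whole path therefore allows at most $O(1/\eta_n)$ steps starting in that annulus, contributing $O(r)$. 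The dyadic sum over $r$ is $O(1)$.
\item Only the core $B(\tilde{\bm s}_{j,n},b_n)$ needs Lemma~\ref{lem:uniform_ball_count}. A positive-curvature direction of $\nabla^2\hat p$, as in Case III, gives steps of length $O(b_n)$ there, for a total of $O_P(q_n\log n/\eta_n^{d+1})$. Modal cores are handled by admissibility of the sample mode.
\end{itemize}
This treats saddles of every index, minima and modes uniformly. It needs no charts, no distance to the separatrix and no visit counting, and it uses only $o_P(1)$ consistency of $\hat g$ and $\nabla^2\hat p$.
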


\begin{proof}[Proof of Lemma~\ref{lem:int_control_sep}]
All the constants denoted by $C_*$ or $c_*$ below are fixed and independent of $n$. 

By Assumption~\ref{assump:sep_repulsion}, $\mathcal{S}_{\rm full} =\bigcup_{a=1}^{|\mathcal{M}|} \partial \mathcal{C}_a$ is a
finite union of stable manifolds of saddle points whose Hessian matrices have exactly one positive eigenvalue and $d-1$ negative eigenvalues, and is a $C^2$ hypersurface away from finitely many critical points. Thus, $\mathcal{S}_{\rm full}$ can be covered by $O(\delta^{1-d})$ balls of radius $\delta$ for any $\delta >0$, so its $\delta$-neighborhood has Lebesgue measure $O(\delta)$. When $0< \delta < \delta_0$ for some $\delta_0 >0$, we have that $\mathbb{P}\left(\mathcal{S}_{\rm full}^{\delta}\right) = O(\delta)$ with $\mathcal{S}_{\rm full}^{\delta} = \left\{\bm{x}\in \mathcal{C}: d(\bm{x},\mathcal{S}_{\rm full}) < \delta\right\}$ for all $0< \delta <\delta_0$. In particular, by Markov's inequality, 
\begin{equation}
	\label{mass_around_S}
	\frac{1}{n}\sum_{i=1}^n \mathds{1}\left\{\bm{X}_i \in \mathcal{S}_{\rm full}^{\delta_n}\right\} = O_P\left(\delta_n\right) \quad \text{ when } \delta_n \downarrow 0.
\end{equation}

By \eqref{max_1nn_dist} and Lemma~\ref{lem:1NN_sup_bnd}, we know that $\zeta_n = O_P(q_n)$ with $q_n= \left(\frac{\log n}{n}\right)^{\frac{1}{d}}$. Fix any sufficiently large constant $C_\zeta>0$ and work on the event $\left\{\zeta_n \leq C_{\zeta} q_n \right\}$, whose probability is arbitrarily close to one for all sufficiently large $n$. We condition on the intersection of this event as well as the events on which $\norm{\hat{p} -p}_{\infty}=o(1)$, $\norm{\hat{g} - \nabla p}_{\infty} = o(1)$, and $\norm{\nabla^2\hat{p} - \nabla^2 p}_{\infty} =o(1)$ hold.

Let $\bm s_1,...,\bm s_J$ denote all critical points of $p$ in
$\mathcal C$, and let $\tilde{\bm s}_{1,n},...,\tilde{\bm s}_{J,n}$
be the corresponding critical points of $\hat p$ given by Lemma~\ref{lem:estimated_critical_stability}. Since the population
critical points are finite and lie in the interior of $\mathcal C$,
we may choose a sufficiently small fixed $r_0>0$ such that the balls
$B(\bm s_j, r_0)$ are pairwise disjoint and contained in the interior
of $\mathcal C$. Shrinking $r_0$ if necessary so that the results in
Lemma~\ref{lem:estimated_critical_stability} apply. Moreover, $\sup_{\bm x\in\mathcal C}\norm{\hat g(\bm x)}\leq C_0$, $\sup_{\bm x}\norm{\nabla^2\hat p(\bm x)}_2\leq C_0$, and 
\begin{equation}
	\label{regular_gradient_lower_graph}
	\inf_{\bm x\in\mathcal C:\min_j\norm{\bm x-\widetilde{\bm s}_{j,n}}\geq r_0} \norm{\hat g(\bm x)} \geq c_0 >0.
\end{equation}

We define $b_n=A\frac{q_n}{\eta_n} = o(1)$ a sufficiently large fixed constant $A>0$. Fix an arbitrary starting observation and write its directed GGDPC path as
$$\bm{Y}_0^{(n)} \to \bm{Y}_1^{(n)}\to\cdots \to\bm{Y}_T^{(n)},
\qquad
\bm{Y}_{k+1}^{(n)}=\hat{\Phi}_n(\bm{Y}_k^{(n)}).$$
The estimated density strictly increases along this path, and hence no
vertex can be visited more than once. We bound its total length by
considering three regions.

\textbf{Region 1: Away from all critical points.}
Suppose that $\min_j\norm{\bm x-\widetilde{\bm s}_{j,n}}\geq r_0$. Let $\bm z=\bm\gamma_{\bm x}(\eta_n)$ be the population gradient flow
endpoint after time $\eta_n$, where $\bm{z}\in \mathcal{C}$ under Assumption~\ref{assump:density_reg}(d). By Taylor's expansion,
$$\norm{\bm z-\bm x-\eta_n\nabla p(\bm x)} \leq C_1\eta_n^2.$$
Moreover, by Lemma~\ref{lem:estimated_critical_stability} and $\norm{\hat{g} - \nabla p}_{\infty} = o_P(1)$,
\begin{align*}
	\hat p(\bm z)-\hat p(\bm x)
	&=\int_0^{\eta_n} \hat g(\bm\gamma_{\bm x}(t))^T \nabla p(\bm\gamma_{\bm x}(t))\,dt\\
	&\geq c_1\eta_n
\end{align*}
for all sufficiently large $n$. Choose an observation $\bm X(\bm z)$ satisfying $\norm{\bm X(\bm z)-\bm z}\leq\zeta_n$. Since
$\frac{q_n}{\eta_n}=o(1)$ and $\hat g$ is uniformly bounded,
$$\hat p(\bm X(\bm z))-\hat p(\bm z) \geq-C_2\zeta_n=o(\eta_n),$$
so $\hat{p}(\bm X(\bm z)) > \hat{p}(\bm{x})$ for all sufficiently large $n$. Thus, $\bm{X}(\bm{z})$ is admissible in the definition of $\hat\Phi_n(\bm x)$. By its minimality,
\begin{align*}
\norm{\hat\Phi_n(\bm x)-[\bm x+\eta_n\hat g(\bm x)]}
&\leq \norm{\bm X(\bm z)-[\bm x+\eta_n\hat g(\bm x)]}\\
&\leq \zeta_n+C_3\eta_n^2+\eta_n \norm{\hat{g}-\nabla p}_{\infty} =o(\eta_n).
\end{align*}
Consequently, $\norm{\hat\Phi_n(\bm x)-\bm x}\leq C_4\eta_n$ and 
$$\hat p(\hat\Phi_n(\bm x))-\hat p(\bm x) \geq c_2\eta_n.$$
It implies that a path contains at most $O(\eta_n^{-1})$ updates starting
in this region, so their total length is $O(1)$.

\textbf{Region 2: Annular regions around the critical points.} For $r_{\ell,n}=2^\ell b_n$, we consider the annulus
$$\mathcal A_{j,\ell} = \left\{ \bm x: r_{\ell,n} < \norm{\bm x-\widetilde{\bm s}_{j,n}} \leq \min\{2r_{\ell,n},r_0\} \right\},$$
for those $\ell$ such that $r_{\ell,n}<r_0$. Fix $\bm x\in\mathcal A_{j,\ell}$. By Lemma~\ref{lem:estimated_critical_stability}, $c_0 r_{\ell,n} \leq \norm{\hat g(\bm x)} \leq C_0 r_{\ell,n}$. Let $\bm z=\bm x+\eta_n\hat g(\bm x)$. For all sufficiently large $n$, $\bm z$ remains in
$B(\widetilde{\bm s}_{j,n},2r_0)\subset\mathcal C$. Choose
$\bm X(\bm z)\in\mathbb X_n$ with
$\norm{\bm X(\bm z)-\bm z}\leq\zeta_n$.
Taylor's theorem gives that
$$ \hat p(\bm z)-\hat p(\bm x) \geq \eta_n\norm{\hat g(\bm x)}^2 -C_5\eta_n^2\norm{\hat g(\bm x)}^2 \geq
c_3\eta_n r_{\ell,n}^2.$$
All points on the segment joining $\bm z$ and $\bm X(\bm z)$ remain
within distance $C_6 r_{\ell,n}$ of $\widetilde{\bm s}_{j,n}$, so
Lemma~\ref{lem:estimated_critical_stability} and the bounded Hessian imply that
$$\left|\hat p(\bm X(\bm z))-\hat p(\bm z) \right| \leq C_7 r_{\ell,n} \zeta_n.$$
Since $r_{\ell,n} \geq b_n=A\frac{q_n}{\eta_n}$ and $\zeta_n\leq C_\zeta q_n$,
$$\frac{r_{\ell,n} \zeta_n}{\eta_n r_{\ell,n}^2} = \frac{\zeta_n}{\eta_n r_{\ell,n}} \leq \frac{C_\zeta}{A}.$$
Choosing $A$ sufficiently large therefore gives that $\hat p(\bm X(\bm z))>\hat p(\bm x)$, so $\bm X(\bm z)$ is admissible. Hence,
$$\norm{\hat\Phi_n(\bm x)-[\bm x+\eta_n\hat g(\bm x)]} \leq\zeta_n.$$
It follows that
\begin{equation}
	\label{annulus_shift_graph}
	\norm{\hat\Phi_n(\bm x)-\bm x} \leq C_8\eta_n r_{\ell,n},
\end{equation}
and another Taylor expansion gives that
$$\hat p(\hat\Phi_n(\bm x))-\hat p(\bm x) \geq c_4\eta_n r_{\ell,n}^2.$$

On the other hand, Lemma~\ref{lem:estimated_critical_stability} implies that every starting point
in $\mathcal A_{j,\ell}$ has estimated density in the interval
$$\left[ \hat p(\widetilde{\bm s}_{j,n})-C_9 r_{\ell,n}^2,\, \hat p(\widetilde{\bm s}_{j,n})+C_9 r_{\ell,n}^2 \right].$$
Suppose that a directed path contains $N_{j,\ell}$ vertices whose
outgoing edges start in this annulus. Since the estimated density is
strictly increasing along the path and every such update increases it
by at least $c_4\eta_n r_{\ell,n}^2$, we know that
$$(N_{j,\ell}-1)c_4\eta_n r_{\ell,n}^2 \leq 2C_9 r_{\ell,n}^2.$$
Hence, $N_{j,\ell}\leq \frac{C_{10}}{\eta_n}$. Together with \eqref{annulus_shift_graph},
the total length contributed by this annulus is at most $C_{11} r_{\ell,n}$.
Since the radii are dyadic, 
$$\sum_{\ell:r_{\ell,n}<r_0} r_{\ell,n}
\leq C_{11}r_0.$$
There are only finitely many critical points, so the total contribution
from all critical annuli is $O(1)$, uniformly over the starting
vertex.

\textbf{Region 3: Shrinking critical cores.}
It remains to consider $\mathcal B_{j,n} = B(\tilde{\bm s}_{j,n},b_n)$.
We consider the case when $\bm s_j$ is not a local mode, and the local modal scenario follows similarly. Since it is a
non-degenerate critical point, its Hessian has at least one positive
eigenvalue. Let $\bm e_j$ be a corresponding fixed unit eigenvector of
$\nabla^2p(\bm s_j)$. By shrinking $r_0$ and using $\norm{\nabla^2 \hat{p} - \nabla^2 p}=o_P(1)$, there exists $c_5>0$ such that
\[
\bm e_j^T\nabla^2\hat p(\bm y)\bm e_j\geq c_5
\]
uniformly over
$\bm y\in B(\widetilde{\bm s}_{j,n},2r_0)$.
For $\bm x\in\mathcal B_{j,n}$, set $\bm z=\bm x+A_L b_n\bm e_j$,
where $A_L>0$ is a fixed sufficiently large constant. By Taylor's theorem,
$$\hat p(\bm z)-\hat p(\bm x) \geq c_6 b_n^2.$$
Choose $\bm X(\bm z)$ within $\zeta_n$ of $\bm z$. Since
$\norm{\hat g(\bm y)}\lesssim b_n$ in the relevant neighborhood,
$$\left| \hat p(\bm X(\bm z))-\hat p(\bm z)\right| \leq C_{13}b_n\zeta_n.$$
Moreover, $\frac{\zeta_n}{b_n}
\leq \frac{C_\zeta}{A}\eta_n=o(1)$, so $\hat p(\bm X(\bm z))>\hat p(\bm x)$ for all sufficiently large
$n$. Hence,
\begin{align*}
	\norm{\hat\Phi_n(\bm x)-[\bm x+\eta_n\hat g(\bm x)]}
	&\leq \norm{\bm X(\bm z)-[\bm x+\eta_n\hat g(\bm x)]} \leq C_{14}b_n,
\end{align*}
and therefore,
\begin{equation}
	\label{core_nonmode_edge_graph}
	\norm{\hat\Phi_n(\bm x)-\bm x}
	\leq C_{15}b_n.
\end{equation}

It remains to count the observations contained in the shrinking
critical cores. Since $b_n\geq Aq_n$ for all sufficiently large $n$, we have that $\log(1/b_n)=O(\log n)$. Lemma~\ref{lem:uniform_ball_count} implies that 
\begin{align*}
	\sum_{j=1}^J \sum_{i=1}^n\mathds{1}\left\{\bm X_i\in\mathcal B_{j,n}\right\}&= O_P\left(nb_n^d+\log n\right) =O_P\left(
	\frac{\log n}{\eta_n^d}\right),
\end{align*}
because $nq_n^d=\log n$.
Since a directed path visits every observation at most once,
\eqref{core_nonmode_edge_graph} implies that the total contribution from
all non-exceptional core edges is 
$$O_P\left(b_n\frac{\log n}{\eta_n^d}\right)=O_P\left(\frac{q_n\log n}{\eta_n^{d+1}}\right).$$

Combining the regular-region, annular, critical-core, and exceptional
modal-edge bounds gives, uniformly over all starting observations,
$$\max_{1\leq i\leq n}
\hat d_G(\bm X_i) =O_P\left(1+\frac{q_n\log n}{\eta_n^{d+1}}\right).$$

Combining with \eqref{mass_around_S}, we obtain that 
$$\frac{1}{n}\sum_{i=1}^n \hat{d}_G(\bm{X}_i) \cdot \mathds{1}\left\{d(\bm{X}_i, \mathcal{S}_{\rm full}) \leq \frac{2\delta_n}{C_{\mathcal{S}}}\right\} = O_P\left(\delta_n + \frac{\delta_n q_n \log n}{\eta_n^{d+1}}\right) = O_P(\delta_n)$$
when $\frac{q_n \log n}{\eta_n^{d+1}}=o(1)$.
Truncating the graph can only stop a path earlier, so the same proof applies to $\hat{d}_{G_{\lambda}}$.
\end{proof}

\subsection{Main Proof of \autoref{thm:wasser_waterfall}}

\begin{proof}[Proof of \autoref{thm:wasser_waterfall}]
We first know from Proposition~\ref{prop:finite_graph_distance} that $\sup_{\bm{x}\in \mathcal{C}} \bar{d}_G(\bm{x}) < \infty$. Since $p$ is bounded by Assumption~\ref{assump:density_reg}, we know that the population (global) waterfall measure $\left(\bar{d}_G(\bm{X}), p(\bm{X})\right)$ is supported on a fixed compact set in $\mathbb{R}^2$.
Again, by triangle's inequality, we know that
$$\mathrm{Wass}_1(\hat{Q}_n, Q) \leq \underbrace{\mathrm{Wass}_1(\hat{Q}_n, Q_n^0)}_{\textbf{Term I}} + \underbrace{\mathrm{Wass}_1(Q_n^0, Q)}_{\textbf{Term II}}.$$

\textbf{Term I:} Coupling the $i$-th observation of $\hat{Q}_n$ with the $i$-th observation of $Q_n^0$ gives that
\begin{equation}
	\label{emp_coupling_waterfall}
	\mathrm{Wass}_1(\hat Q_n,Q_n^0) \lesssim \frac{1}{n}\sum_{i=1}^n \left|\hat{d}_G(\bm{X}_i) -\bar{d}_G(\bm{X}_i)\right| + \norm{\hat{p} - p}_{\infty},
\end{equation}
where the universal constant in ``$\lesssim$'' depends only on the choice of norm in $\mathrm{Wass}_1(\cdot,\cdot)$ on $\mathbb{R}^2$.

Let $C_{\mathcal{S}}\in(0,1)$ denote the separation constant in Lemma~\ref{lem:flow_separation_sep}. We separate the support $\mathcal{C}$ of $p$ into two different regions:
$$\mathcal{C}_n^{\rm far} = \mathcal{C} \setminus \mathcal{S}_{\rm full}^{\frac{2\delta_n}{C_{\mathcal{S}}}}, \qquad \mathcal{C}_n^{\rm close} = \mathcal{S}_{\rm full}^{\frac{2\delta_n}{C_{\mathcal{S}}}}.$$
That is, one region is farther away from the separatrices, while the other one is close to them. Let $I_n^{\rm far}:=\left\{1\leq i\leq n:\bm{X}_i\in \mathcal{C}_n^{\rm far}\right\}$ and $I_n^{\rm close}:=\left\{1\leq i\leq n:\bm{X}_i\in \mathcal{C}_n^{\rm close}\right\}$. By \autoref{thm:GGDPC_graph_dist}, we know that
$$\sup_{\bm{x}\in \mathcal{C}_n^{\rm far}} \left|\hat{d}_G(\bm{x}) - \bar{d}_G(\bm{x}) \right| = O_P\left(\frac{q_n}{\eta_n}\left|\log\left(\frac{q_n}{\eta_n}\right)\right| +\frac{q_n}{\eta_n\delta_n} + \frac{\eta_n}{\delta_n} + \frac{q_n\log n}{\eta_n^{d+1}} +\frac{\norm{\hat{g} - \nabla p}_{\infty}}{\delta_n} \right).$$
Consequently,
$$\frac{1}{n} \sum_{i\in I_n^{\rm far}} \left|\hat{d}_G(\bm{X}_i) - \bar{d}_G(\bm{X}_i) \right| = O_P\left(\frac{q_n}{\eta_n}\left|\log\left(\frac{q_n}{\eta_n}\right)\right| +\frac{q_n}{\eta_n\delta_n} + \frac{\eta_n}{\delta_n} + \frac{q_n\log n}{\eta_n^{d+1}} +\frac{\norm{\hat{g} - \nabla p}_{\infty}}{\delta_n} \right).$$
On the other hand, by Proposition~\ref{prop:finite_graph_distance} and Lemma~\ref{lem:int_control_sep} with \eqref{mass_around_S}, 
\begin{align*}
	\frac{1}{n} \sum_{i\in I_n^{\rm close}} \left|\hat{d}_G(\bm{X}_i) - \bar{d}_G(\bm{X}_i) \right| &\leq \frac{1}{n} \sum_{i\in I_n^{\rm close}} \hat{d}_G(\bm{X}_i) + \left(\operatorname*{ess\,sup}_{\bm{x}\in \mathcal{C}} \bar{d}_G(\bm{x})\right)\cdot \frac{\left|I_n^{\rm close}\right|}{n} = O_P\left(\delta_n\right).
\end{align*}
Combining the above results with \eqref{emp_coupling_waterfall} and $\eta_n=o(\delta_n)$ yields that
\begin{align*}
	&\mathrm{Wass}_1(\hat Q_n,Q_n^0) = O_P\left(\delta_n + \frac{q_n}{\eta_n}\left|\log\left(\frac{q_n}{\eta_n}\right)\right| +\frac{q_n}{\eta_n\delta_n} + \frac{\eta_n}{\delta_n} + \frac{q_n\log n}{\eta_n^{d+1}} +\frac{\norm{\hat{g} - \nabla p}_{\infty}}{\delta_n} +\norm{\hat{p} - p}_{\infty}\right).
\end{align*}

\textbf{Term II:} By Theorem 1 in \cite{fournier2015rate}, we know that $\mathrm{Wass}_1(Q_n^0, Q) = O_P\left(\frac{\log n}{\sqrt{n}}\right)$. Indeed, since $\left(\bar{d}_G(\bm{X}_i), p(\bm{X}_i)\right)$ are i.i.d. samples supported on a fixed compact rectangle in $\mathbb{R}^2$, we can rescale this rectangle into $[0,1]^2$ and let $\mathcal{D}_k$ be the dyadic partition of $[0,1]^2$ into squares of side length $2^{-k}$. For each square $A\in \mathcal{D}_k$, $\left|Q_n^0(A) - Q(A)\right|$ measures the mass imbalance between $Q_n^0$ and $Q$, while the diameter of $A$ is at most order $2^{-k}$. Thus, for every integer $J\geq 1$,
$$\mathrm{Wass}_1(Q_n^0, Q) \lesssim 2^{-J} + \sum_{k=0}^J 2^{-k} \sum_{A\in\mathcal{D}_k} \left|Q_n^0(A) - Q(A)\right|.$$
Additionally, for each $A\in \mathcal{D}_k$, by Cauchy-Schwarz inequality,
$$\mathbb{E}\left|Q_n^0(A) - Q(A) \right| \leq \sqrt{\frac{Q(A)\left[1-Q(A)\right]}{n}} \leq \sqrt{\frac{Q(A)}{n}}.$$
Hence,
$$\mathbb{E}\left[\sum_{A\in\mathcal{D}_k} \left|Q_n^0(A) - Q(A) \right|\right] \leq \frac{1}{\sqrt n}\sum_{A\in\mathcal P_k}\sqrt{Q(A)}
\leq \sqrt{\frac{|\mathcal P_k|}{n}} =\frac{2^k}{\sqrt n},$$
where $|\mathcal{D}_k|=4^k$. Thus, $\mathbb{E}\left[\mathrm{Wass}_1(Q_n^0, Q) \right]\lesssim 2^{-J} + \frac{J}{\sqrt{n}}$. Choosing $J=\ceil{\log_2\sqrt{n}}$ and applying Markov's inequality yield that $\mathrm{Wass}_1(Q_n^0,Q) = O_P\left(\frac{\log n}{\sqrt n}\right)$ as expected.

Finally, combining the rates in \textbf{Term I} and \textbf{Term II} gives us that
\begin{align*}
	\mathrm{Wass}_1(\hat{Q}_n, Q) &= O_P\left(\delta_n + \frac{q_n}{\eta_n}\left|\log\left(\frac{q_n}{\eta_n}\right)\right| +\frac{q_n}{\eta_n\delta_n} + \frac{\eta_n}{\delta_n} + \frac{q_n\log n}{\eta_n^{d+1}} +\frac{\norm{\hat{g} - \nabla p}_{\infty}}{\delta_n} +\norm{\hat{p} - p}_{\infty} + \frac{\log n}{\sqrt{n}}\right).
\end{align*}
The result follows.
\end{proof}

\end{document}

%% file: Figures/Cluster_Tree.tikz
\begin{tikzpicture}[
	x=0.55cm,y=0.95cm, >=Latex,
	axis/.style={->,line width=0.6pt},
	density/.style={black,line width=1.05pt},
	mode/.style={circle,fill=blue!65!black,draw=blue!65!black,inner sep=1.45pt},
	proj/.style={regular polygon,regular polygon sides=3,fill=red!75!black,draw=red!75!black,inner sep=1.1pt,rotate=90},
	projline/.style={red!75!black,dashed,line width=0.8pt},
	tree/.style={red!75!black,line width=1.15pt},
	every node/.style={font=\small}
	]
	
	\begin{scope}
		\draw[axis] (0,0) -- (8.85,0) node[below] {$x$};
		\draw[axis] (0,0) -- (0,3.72) node[left] {$p(x)$};
		
		\draw[density]
		(0.12,0.03)
		.. controls (0.50,0.25) and (0.80,1.55) .. (1.15,1.55)
		.. controls (1.27,1.55) and (1.40,0.55) .. (1.52,0.55)
		.. controls (1.62,0.55) and (1.70,1.35) .. (1.80,1.55)
		.. controls (1.93,1.83) and (2.08,2.15) .. (2.25,2.15)
		.. controls (2.70,2.15) and (3.50,0.95) .. (4.45,0.95)
		.. controls (4.82,0.95) and (5.22,1.80) .. (5.55,2.15)
		.. controls (5.78,2.43) and (6.02,2.70) .. (6.25,2.70)
		.. controls (6.48,2.70) and (6.72,2.20) .. (6.95,2.20)
		.. controls (7.05,2.20) and (7.16,2.55) .. (7.25,2.70)
		.. controls (7.34,2.90) and (7.39,3.35) .. (7.48,3.35)
		.. controls (7.78,3.35) and (8.18,0.35) .. (8.58,0.03);
		
		\node[mode,label={[yshift=1pt]above:$m_4$}] at (1.15,1.55) {};
		\node[mode,label={[yshift=1pt]above:$m_3$}] at (2.25,2.15) {};
		\node[mode,label={[yshift=1pt]above:$m_2$}] at (6.25,2.70) {};
		\node[mode,label={[yshift=1pt]above:$m_1$}] at (7.48,3.35) {};
		
		\draw[projline] (1.15,1.55) -- (1.80,1.55);
		\node[proj] at (1.80,1.55) {};
		\node[font=\scriptsize,red!75!black,anchor=north west]
		at (1.83,1.48) {$z_4$};
		
		\draw[projline] (2.25,2.15) -- (5.55,2.15);
		\node[proj] at (5.55,2.15) {};
		\node[font=\scriptsize,red!75!black,anchor=north west]
		at (5.58,2.08) {$z_3$};
		
		\draw[projline] (6.25,2.70) -- (7.25,2.70);
		\node[proj] at (7.25,2.70) {};
		\node[font=\scriptsize,red!75!black,anchor=north east]
		at (7.97,2.6) {$z_2$};
		
		\node[font=\scriptsize,red!75!black,anchor=south] at (4.40,-0.10)
		{$z_j:=\Pi_{\overline{\mathcal U}_j}(m_j),\quad j=2,3,4$};
		
		\node[align=center,font=\small] at (4.42,-0.8)
		{(a) Density $p$ and modal projections};
	\end{scope}
	
	\begin{scope}[xshift=6.05cm]
		\draw[axis] (0.35,0) -- (0.35,3.55) node[left] {$\lambda$};
		
		\coordinate (a4) at (1.30,0.15);
		\coordinate (a3) at (2.85,0.15);
		\coordinate (a2) at (5.95,0.15);
		\coordinate (a1) at (7.50,0.15);
		
		\def\yFour{0.45}
		\def\yTwo{0.80}
		\def\yThree{2.10}
		
		\draw[tree] (a4) -- (1.30,\yFour)
		-- (2.85,\yFour) -- (a3);
		\coordinate (c43) at (2.075,\yFour);
		
		\draw[tree] (a2) -- (5.95,\yTwo)
		-- (7.50,\yTwo) -- (a1);
		\coordinate (c21) at (6.725,\yTwo);
		
		\draw[tree] (c43) -- (2.075,\yThree)
		-- (6.725,\yThree) -- (c21);
		
		\coordinate (cAll) at (4.40,\yThree);
		
		\draw[tree] (cAll) -- (4.40,3);
		
		\draw[red!75!black,dashed,line width=0.65pt]
		(0.35,\yFour) -- (2.075,\yFour);
		\draw[red!75!black,dashed,line width=0.65pt]
		(0.35,\yTwo) -- (6.725,\yTwo);
		\draw[red!75!black,dashed,line width=0.65pt]
		(0.35,\yThree) -- (4.40,\yThree);
		
		\node[font=\scriptsize,red!75!black,anchor=east]
		at (0.30,\yFour) {$\psi_4$};
		\node[font=\scriptsize,red!75!black,anchor=east]
		at (0.30,\yTwo) {$\psi_2$};
		\node[font=\scriptsize,red!75!black,anchor=east]
		at (0.30,\yThree) {$\psi_3$};
		
		\node[below] at (a4) {$m_4$};
		\node[below] at (a3) {$m_3$};
		\node[below] at (a2) {$m_2$};
		\node[below] at (a1) {$m_1$};
		
		\node[align=center,font=\small] at (4.42,-0.8)
		{(b) Modal-distance dendrogram $\mathcal T_{\rm mode}$};
	\end{scope}
	
	\begin{scope}[xshift=12.15cm]
		\draw[axis] (0,0) -- (8.85,0) node[below] {$x$};
		\draw[axis] (0,0) -- (0,3.72) node[left] {$p(x)$};
		
		\draw[density]
		(0.12,0.03)
		.. controls (0.50,0.25) and (0.80,1.55) .. (1.15,1.55)
		.. controls (1.27,1.55) and (1.40,0.55) .. (1.52,0.55)
		.. controls (1.62,0.55) and (1.70,1.35) .. (1.80,1.55)
		.. controls (1.93,1.83) and (2.08,2.15) .. (2.25,2.15)
		.. controls (2.70,2.15) and (3.50,0.95) .. (4.45,0.95)
		.. controls (4.82,0.95) and (5.22,1.80) .. (5.55,2.15)
		.. controls (5.78,2.43) and (6.02,2.70) .. (6.25,2.70)
		.. controls (6.48,2.70) and (6.72,2.20) .. (6.95,2.20)
		.. controls (7.05,2.20) and (7.16,2.55) .. (7.25,2.70)
		.. controls (7.34,2.90) and (7.39,3.35) .. (7.48,3.35)
		.. controls (7.78,3.35) and (8.18,0.35) .. (8.58,0.03);
		
		\node[mode] at (1.15,1.55) {};
		\node[mode] at (2.25,2.15) {};
		\node[mode] at (6.25,2.70) {};
		\node[mode] at (7.48,3.35) {};
		\node[font=\scriptsize,above] at (1.15,1.59) {$m_4$};
		\node[font=\scriptsize,above] at (2.25,2.19) {$m_3$};
		\node[font=\scriptsize,above] at (6.25,2.74) {$m_2$};
		\node[font=\scriptsize,above] at (7.48,3.39) {$m_1$};
		
		\def\hFourThree{0.55}
		\def\hThreeTwo{0.95}
		\def\hTwoOne{2.20}
		
		\draw[tree] (7.48,3.30) -- (7.48,\hTwoOne)
		-- (6.25,\hTwoOne) -- (6.25,2.65);
		\coordinate (c12) at (6.865,\hTwoOne);
		
		\draw[tree] (c12) -- (6.865,\hThreeTwo)
		-- (2.25,\hThreeTwo) -- (2.25,2.10);
		\coordinate (c123) at (4.5575,\hThreeTwo);
		
		\draw[tree] (c123) -- (4.5575,\hFourThree)
		-- (1.15,\hFourThree) -- (1.15,1.50);
		
		
		\coordinate (c1234) at (2.85375,\hFourThree);
		\draw[tree] (c1234) -- (2.85375,0);
		
		\node[align=center,font=\small] at (4.42,-0.8)
		{(c) Density cluster tree $\mathcal T_p$};
	\end{scope}
	
\end{tikzpicture}

%% file: Figures/LM1_figure.tikz
\definecolor{lmregion}{HTML}{7DF9FF}
\definecolor{lmcore}{HTML}{98FB98}
\definecolor{lmink}{HTML}{263238}
\definecolor{lmtriangle}{HTML}{DC143C} 
\definecolor{lmmode}{HTML}{0437F2}     

\begin{tikzpicture}[x=0.01cm,y=-0.0085cm,every node/.style={text=lmink}]
	\path[draw=lmink,line width=0.02cm,line cap=butt,line join=miter]
	(645.119,237.294)
	.. controls (696.592,161.499) and (769.884,102.973) .. (855.839,71.25)
	.. controls (1005.02,16.193) and (1161.69,49.966) .. (1311.008,97.559)
	.. controls (1418.417,131.795) and (1525.91,174.785) .. (1604.811,274.826)
	.. controls (1663.934,349.792) and (1700.182,443.831) .. (1723.504,538.824)
	.. controls (1745.919,630.127) and (1754.538,729.629) .. (1700.818,806.131)
	.. controls (1577.238,982.114) and (1291.438,920.683) .. (1220.19,687.49)
	.. controls (1179.388,553.946) and (1178.451,375.816) .. (1043.303,377.956)
	.. controls (982.122,378.926) and (940.038,431.305) .. (901.153,479.164)
	.. controls (864.546,524.22) and (822.219,568.201) .. (765.069,577.627)
	.. controls (703.729,587.744) and (643.991,555.346) .. (612.466,501.232)
	.. controls (564.525,418.941) and (590.528,317.68) .. (645.119,237.294);
	\path[fill=lmregion,draw=none]
	(669.905,367.857)
	.. controls (676.84,250.123) and (766.113,162.713) .. (872.605,129.506)
	.. controls (984.598,94.584) and (1102.095,118.109) .. (1216.212,142.212)
	.. controls (1294.247,158.694) and (1374.041,176.424) .. (1438.005,227.823)
	.. controls (1478.196,260.12) and (1509.324,309.174) .. (1491.898,357.051)
	.. controls (1467.031,425.372) and (1377.216,428.237) .. (1303.225,392.052)
	.. controls (1260.499,371.157) and (1221.408,343.302) .. (1177.909,324.133)
	.. controls (1117.839,297.662) and (1051.848,289.313) .. (986.286,291.764)
	.. controls (944.735,293.318) and (901.913,299.751) .. (870.719,328.108)
	.. controls (827.319,367.561) and (814.963,442.535) .. (758.351,460.348)
	.. controls (730.833,469.007) and (701.057,459.409) .. (684.138,434.676)
	.. controls (671.18,415.733) and (668.517,391.419) .. (669.905,367.857)
	-- cycle;
	\path[draw=lmink,line width=0.02cm,line cap=butt,line join=miter]
	(669.905,367.857)
	.. controls (676.84,250.123) and (766.113,162.713) .. (872.605,129.506)
	.. controls (984.598,94.584) and (1102.095,118.108) .. (1216.212,142.212)
	.. controls (1294.247,158.694) and (1374.042,176.424) .. (1438.005,227.823)
	.. controls (1478.196,260.12) and (1509.323,309.174) .. (1491.898,357.051)
	.. controls (1467.031,425.372) and (1377.216,428.237) .. (1303.225,392.052)
	.. controls (1260.499,371.157) and (1221.408,343.302) .. (1177.909,324.133)
	.. controls (1117.839,297.662) and (1051.848,289.313) .. (986.286,291.764)
	.. controls (944.735,293.318) and (901.913,299.751) .. (870.719,328.108)
	.. controls (827.319,367.561) and (814.963,442.535) .. (758.351,460.348)
	.. controls (730.833,469.007) and (701.057,459.409) .. (684.138,434.676)
	.. controls (671.18,415.733) and (668.517,391.419) .. (669.905,367.857);
	\path[draw=lmink,line width=0.02cm,line cap=butt,line join=miter]
	(805.167,225.603)
	.. controls (838.138,209.116) and (867.853,186.845) .. (902.448,173.713)
	.. controls (950.076,155.635) and (1002.258,156.045) .. (1053.61,158.936)
	.. controls (1101.079,161.608) and (1148.452,166.418) .. (1195.434,173.87)
	.. controls (1262.18,184.458) and (1329.195,200.156) .. (1385.89,231.652)
	.. controls (1431.68,257.089) and (1462.66,303.434) .. (1436.279,337.07)
	.. controls (1404.747,377.274) and (1337.184,344.314) .. (1280.902,315.478)
	.. controls (1245.294,297.234) and (1208.615,285.1) .. (1170.012,274.748)
	.. controls (1111.938,259.175) and (1049.969,251.489) .. (987.477,246.194)
	.. controls (946.714,242.74) and (903.588,240.491) .. (871.673,259.734)
	.. controls (847.027,274.594) and (832.413,300.256) .. (803.814,309.689)
	.. controls (783.976,316.233) and (761.268,312.152) .. (752.267,295.41)
	.. controls (737.169,267.327) and (770.986,242.694) .. (805.167,225.603);
	\path[draw=lmink,line width=0.019cm,line cap=butt,line join=miter]
	(1386.512,265.283)
	.. controls (1421.296,292.96) and (1413.462,314.93) .. (1369.014,314.354)
	.. controls (1324.567,313.779) and (1260.337,290.876) .. (1225.553,263.2)
	.. controls (1190.769,235.523) and (1198.603,213.553) .. (1243.051,214.128)
	.. controls (1287.499,214.703) and (1351.728,237.606) .. (1386.512,265.283);
	\path[fill=lmregion,draw=none]
	(1606.483,547.285)
	.. controls (1661.949,599.419) and (1661.949,683.946) .. (1606.483,736.08)
	.. controls (1551.017,788.214) and (1461.088,788.214) .. (1405.621,736.08)
	.. controls (1350.155,683.946) and (1350.155,599.419) .. (1405.621,547.285)
	.. controls (1461.088,495.15) and (1551.017,495.15) .. (1606.483,547.285)
	-- cycle;
	\path[draw=lmink,line width=0.02cm,line cap=butt,line join=miter]
	(1606.483,547.285)
	.. controls (1661.949,599.419) and (1661.949,683.946) .. (1606.483,736.08)
	.. controls (1551.016,788.214) and (1461.087,788.214) .. (1405.621,736.08)
	.. controls (1350.154,683.946) and (1350.154,599.419) .. (1405.621,547.285)
	.. controls (1461.087,495.15) and (1551.016,495.15) .. (1606.483,547.285);
	\path[draw=lmink,line width=0.02cm,line cap=butt,line join=miter]
	(1584.135,571.113)
	.. controls (1627.259,610.088) and (1627.259,673.277) .. (1584.135,712.251)
	.. controls (1541.011,751.226) and (1471.093,751.226) .. (1427.969,712.251)
	.. controls (1384.845,673.277) and (1384.845,610.088) .. (1427.969,571.113)
	.. controls (1471.093,532.139) and (1541.011,532.139) .. (1584.135,571.113);
	\path[draw=lmink,line width=0.02cm,line cap=butt,line join=miter]
	(1565.917,586.091)
	.. controls (1598.98,616.793) and (1598.98,666.571) .. (1565.917,697.273)
	.. controls (1532.855,727.975) and (1479.25,727.975) .. (1446.187,697.273)
	.. controls (1413.124,666.571) and (1413.124,616.793) .. (1446.187,586.091)
	.. controls (1479.25,555.39) and (1532.855,555.39) .. (1565.917,586.091);
	\path[fill=lmcore,draw=none]
	(1551.547,599.623)
	.. controls (1576.672,622.852) and (1576.672,660.513) .. (1551.547,683.742)
	.. controls (1526.421,706.971) and (1485.684,706.971) .. (1460.558,683.742)
	.. controls (1435.432,660.513) and (1435.432,622.852) .. (1460.558,599.623)
	.. controls (1485.684,576.394) and (1526.421,576.394) .. (1551.547,599.623)
	-- cycle;
	\path[draw=lmink,line width=0.02cm,line cap=butt,line join=miter]
	(1551.546,599.623)
	.. controls (1576.672,622.852) and (1576.672,660.513) .. (1551.546,683.742)
	.. controls (1526.42,706.971) and (1485.683,706.971) .. (1460.557,683.742)
	.. controls (1435.432,660.513) and (1435.432,622.852) .. (1460.557,599.623)
	.. controls (1485.683,576.394) and (1526.42,576.394) .. (1551.546,599.623);
	\path[
	draw=lmink,
	line width=0.02cm,
	line cap=butt,
	line join=miter,
	scale around={1.18:(818,706)}
	]
	(766.216,661.339)
	.. controls (796.06,628.247) and (845.386,620.113) .. (873.67,650.473)
	.. controls (893.798,672.078) and (901.516,713.112) .. (877.038,730.794)
	.. controls (807.027,781.367) and (769.828,790.598) .. (752.34,741.929)
	.. controls (737.438,700.456) and (745.693,684.096) .. (766.216,661.339);
	\path[
	fill=lmmode,
	draw=none,
	shift={(0,18)},
	scale around={0.80:(1506.052,641.683)}
	]
	(1506.052,620.851)
	-- (1511.913,636.764)
	-- (1530.875,636.765)
	-- (1515.534,646.6)
	-- (1521.394,662.514)
	-- (1506.052,652.679)
	-- (1490.711,662.514)
	-- (1496.57,646.6)
	-- (1481.229,636.765)
	-- (1500.192,636.764)
	-- (1506.052,620.851)
	-- cycle;
	\path[fill=lmmode,draw=none,scale around={0.80:(818.536,681.944)}]
	(818.536,681.944)
	-- (824.396,697.857)
	-- (843.359,697.858)
	-- (828.018,707.693)
	-- (833.877,723.607)
	-- (818.536,713.772)
	-- (803.194,723.607)
	-- (809.053,707.693)
	-- (793.713,697.858)
	-- (812.675,697.857)
	-- (818.536,681.944)
	-- cycle;
	\path[fill=lmmode,draw=none,scale around={0.80:(1332.876,243.41)}]
	(1332.876,243.41)
	-- (1338.737,259.323)
	-- (1357.699,259.324)
	-- (1342.359,269.159)
	-- (1348.218,285.073)
	-- (1332.876,275.238)
	-- (1317.535,285.073)
	-- (1323.394,269.159)
	-- (1308.054,259.324)
	-- (1327.016,259.323)
	-- (1332.876,243.41)
	-- cycle;
	\path[
	fill=lmtriangle,
	draw=none,
	scale around={0.80:(768.008,460.942)}
	]
	(768.008,440.695)
	-- (788.372,471.065)
	-- (747.645,471.065)
	-- (768.008,440.695)
	-- cycle;
	\path[
	fill=lmtriangle,
	draw=none,
	scale around={0.80:(1471.136,593.258)}
	]
	(1471.136,573.012)
	-- (1491.499,603.381)
	-- (1450.772,603.381)
	-- (1471.136,573.012)
	-- cycle;
	\path[draw=lmink,line width=0.016cm,line cap=butt,line join=miter]
	(815.352,674.218)
	-- (775.769,488.212)
	-- (775.353,486.256);
	\path[fill=lmink,draw=none]
	(783.985,486.464)
	-- (772.272,471.78)
	-- (767.553,489.961)
	-- (783.985,486.464)
	-- cycle;
	\path[draw=lmink,line width=0.016cm,line cap=butt,line join=miter]
	(1337.415,280.675)
	-- (1459.422,565.225)
	-- (1460.21,567.064);
	\path[fill=lmink,draw=none]
	(1451.701,568.536)
	-- (1466.042,580.666)
	-- (1467.142,561.915)
	-- (1451.701,568.536)
	-- cycle;
	\path[draw=lmink,line width=0.04cm,dash pattern=on 0.08cm off 0.08cm,line cap=butt,line join=miter]
	(760.922,440.6)
	.. controls (761.618,437.272) and (754.763,414.201) .. (750.321,390.794)
	.. controls (746.303,369.624) and (744.204,347.629) .. (750.058,326.525)
	.. controls (762.702,280.943) and (805.781,253.878) .. (850.097,236.087)
	.. controls (902.273,215.141) and (957.725,203.478) .. (1013.928,201.904)
	.. controls (1071,200.305) and (1127.669,209.147) .. (1183.286,221.922)
	.. controls (1220.6,230.493) and (1257.481,240.839) .. (1293.803,252.926)
	-- (1295.701,253.568);
	\path[fill=lmink,draw=none]
	(1287.134,259.536)
	-- (1305.741,256.967)
	-- (1292.522,243.623)
	-- (1293.806,252.927)
	-- (1287.134,259.536)
	-- cycle;
	\path[draw=lmink,line width=0.03cm,dash pattern=on 0.057cm off 0.057cm,line cap=butt,line join=miter]
	(1475.251,606.68)
	-- (1496.004,645.433)
	-- (1497.418,646.847);
	\path[fill=lmink,draw=none]
	(1487.095,648.403)
	-- (1504.914,654.342)
	-- (1498.974,636.523)
	-- (1496.004,645.433)
	-- (1487.095,648.403)
	-- cycle;
	\path[fill=lmink,draw=none]
	(697.058,797.007)
	.. controls (700.916,801.098) and (700.916,807.732) .. (697.058,811.823)
	.. controls (693.2,815.914) and (686.945,815.914) .. (683.087,811.823)
	.. controls (679.229,807.732) and (679.229,801.098) .. (683.087,797.007)
	.. controls (686.945,792.915) and (693.2,792.915) .. (697.058,797.007)
	-- cycle;
	\path[draw=lmink,line width=0.04cm,dash pattern=on 0.08cm off 0.08cm,line cap=butt,line join=miter]
	(705.63,810.702)
	.. controls (718.41,813.947) and (731.962,811.91) .. (743.217,805.041)
	.. controls (754.297,798.278) and (761.999,787.65) .. (769.476,777.284)
	.. controls (778.299,765.054) and (787.133,752.803) .. (795.985,740.527)
	-- (797.171,738.883);
	\path[fill=lmink,draw=none]
	(800.357,748.825)
	-- (803.371,730.285)
	-- (786.731,738.998)
	-- (796.001,740.505)
	-- (800.357,748.825)
	-- cycle;
	\node[anchor=base west,inner sep=0pt,outer sep=0pt,font=\fontsize{13pt}{13pt}\selectfont] at (825,690) {{\color{lmmode}$\bm{m}_{j_1}$}};
	\node[anchor=base west,inner sep=0pt,outer sep=0pt,font=\fontsize{13pt}{13pt}\selectfont] at (1345,250) {{\color{lmmode}$\bm{m}_{j_2}$}};
	\node[anchor=base west,inner sep=0pt,outer sep=0pt,font=\fontsize{13pt}{13pt}\selectfont] at (1505,710) {\color{lmmode}{$\bm{m}_{j_3}$}};
	\node[anchor=base west,inner sep=0pt,outer sep=0pt,font=\fontsize{11pt}{11pt}\selectfont] at (790,470) {{\color{lmtriangle}$\bm{z}_{j_1}:=\Pi_{\overline{\mathcal{U}}_{j_1}}\!(\bm{m}_{j_1})$}};
	\node[
	anchor=base east,
	inner sep=1.2pt,
	outer sep=0pt,
	font=\fontsize{11pt}{11pt}\selectfont
	] at (1550,597) {{\color{lmtriangle}$\bm{z}_{j_2}$}};
	\node[anchor=base west,inner sep=0pt,outer sep=0pt,font=\fontsize{17.217pt}{17.217pt}\selectfont] at (646.504,785.349) {$\bm{x}$};
\end{tikzpicture}